\documentclass[11pt,letterpaper]{amsart}
\usepackage[foot]{amsaddr}

\usepackage{mathtools}
\usepackage{amsmath}
\usepackage[T1]{fontenc}
\usepackage[latin9]{inputenc}
\usepackage{geometry}
\usepackage{wrapfig}
\usepackage{multicol}
\usepackage{graphicx}
\usepackage{soul}
\usepackage{xcolor}
\usepackage{amssymb}
\usepackage{placeins}
\usepackage{bbm}
\usepackage{cite}
\usepackage{caption}
\usepackage{enumerate}
\usepackage{afterpage}
\usepackage{enumitem}
\usepackage{bmpsize}
\usepackage{hyperref}
\usepackage{tabu}
\usepackage{enumitem}
\numberwithin{equation}{section}
\usepackage{stmaryrd}
\usepackage{tikz}
\usetikzlibrary{matrix,graphs,arrows,positioning,calc,decorations.markings,shapes.symbols}
\usepackage{ifthen}
\usetikzlibrary{math}
\usetikzlibrary{matrix,graphs,arrows,positioning,calc,decorations.markings,shapes.symbols}

\newcommand{\im}{\mathrm{i}}
 \DeclareMathOperator{\Ai}{Ai}

\newcommand{\R}{\mathbb R}
\newcommand{\N}{\mathbb N}

\newtheorem{theorem}{Theorem}[section]
\newtheorem{proposition}[theorem]{Proposition}
\newtheorem{lemma}[theorem]{Lemma}

\theoremstyle{definition}
\newtheorem{definition}[theorem]{Definition}
\theoremstyle{remark}
\newtheorem{remark}[theorem]{Remark}

\DeclareMathOperator{\diag}{\mathrm{diag}}

\newcommand{\Seq}{\Lambda}
\newcommand{\weyl}{W^{\circ}}

\title{The periodic Airy line ensembles}
\author{Evgeni Dimitrov} 
\author{Samir Sarieddine}
\author{Zongrui Yang} 
\author{Zhengye Zhou}

\date{\today}

\begin{document}

\begin{abstract}
In this paper we construct a one-parameter family of periodic Airy line ensembles $\mathcal{A}^{\mathrm{per},R}$ for $R>0$ as weak scaling limits of Borodin's periodic Schur processes. We establish several properties of $\mathcal{A}^{\mathrm{per},R}$, including a determinantal point process structure with an explicit correlation kernel. We also show that, after a parabolic shift and normalization, $\mathcal{A}^{\mathrm{per},R}$ satisfies, on its domain $[-R/2,R/2]$, the same Brownian Gibbs property enjoyed by the usual Airy line ensemble.

A main technical contribution of the paper is a general tightness criterion for determinantal line ensembles, which extends the argument used by Johansson in \cite{Kurt03} to prove a functional limit theorem for the discrete polynuclear growth model.
\end{abstract}

\maketitle
\tableofcontents

%----------------------------------------------------------------------
%
% Section 1
%
%----------------------------------------------------------------------
\section{Introduction and main results}

%----------------------------------------------------------------------
%
% Section 1.1
%
%----------------------------------------------------------------------
\subsection{Preface}\label{Section1.1}

The {\em Airy line ensemble} $\mathcal A=\{\mathcal A_i\}_{i\geq1}$ is a collection of real-valued random continuous functions, all defined on $\mathbb{R}$ and strictly ordered in the sense that $\mathcal{A}_{i}(t) > \mathcal{A}_{i+1}(t)$ for every $i \geq 1$ and $t \in \mathbb{R}$. It appears as the edge scaling limit of Wigner matrices \cite{Sod15}, lozenge tilings \cite{AH21}, non-intersecting Brownian bridges (also called {\em Brownian watermelons}) \cite{CorHamA}, as well as various integrable models of non-intersecting random walkers and last passage percolation (LPP) \cite{DNV23}. The list of models becomes quite large if one also includes those converging to various projections of $\mathcal{A}$ such as $(\mathcal{A}_1(t): t \in \mathbb{R})$ (known as the {\em Airy process}), $(\mathcal{A}_i(t_0): i \geq 1)$ for a fixed $t_0 \in \mathbb{R}$ (known as the {\em Airy point process}), and $\mathcal{A}_{1}(t_0)$ for a fixed $t_0$ (known as the {\em Tracy--Widom GUE distribution}). Because of its emergence as a universal scaling limit and its role in the construction of the {\em directed landscape} in \cite{DOV22}, the Airy line ensemble has become a central object in the {\em Kardar-Parisi-Zhang (KPZ)} universality class \cite{CU2}.

Building on the earlier works \cite{BBCS18,BCD25}, \cite{DDY26,DSY26,DY25} recently introduced a one-parameter family of {\em half-space} analogues of the Airy line ensemble and showed that these arise as scaling limits of the half-space (or symmetrized) geometric LPP model. The construction of these ensembles is part of a broader research program aimed at defining key objects in the KPZ universality class in different geometries and under different boundary conditions. Recent progress in the half-space setting includes the derivation of the transition probabilities for the half-space KPZ fixed point in \cite{Zha24} and the construction of the half-space directed landscape in \cite{DZ26}.

Another natural setting, which has received considerable attention, is the KPZ universality class on a {\em finite} interval with {\em periodic} boundary conditions. Recent progress in this direction includes the derivation of the transition probabilities for the periodic KPZ fixed point in \cite{BLL26} and the construction of the periodic directed landscape in \cite{ACS26}. Motivated in part by these developments, we introduce in the present paper a one-parameter family of line ensembles, indexed by $R \in (0, \infty)$, that are analogues of $\mathcal{A}$ in the periodic setting. We denote the $R$-indexed member of this family by $\mathcal{A}^{\mathrm{per},R}=\{\mathcal{A}^{\mathrm{per},R}_i\}_{i\geq1}$, and refer to it as the {\em periodic Airy line ensemble of period $R$}.

We construct the periodic Airy line ensemble $\mathcal{A}^{\mathrm{per},R}$ as a weak scaling limit of the {\em trace--seam--volume} cylindric plane partition model from \cite{BO22}, which is a special case of the {\em periodic Schur processes} from \cite{Bor07}. In addition to establishing this convergence in Theorem~\ref{Thm.Convergence}, we prove several properties of $\mathcal{A}^{\mathrm{per},R}$, including the following.
\begin{enumerate}[leftmargin=*,labelsep=0.5em]
\item[1.] For each $i \geq 1$, $\mathcal{A}^{\mathrm{per},R}_i$ is a random continuous function on $[-R/2,R/2]$ and almost surely satisfies the periodicity condition $\mathcal{A}^{\mathrm{per},R}_i(-R/2) = \mathcal{A}^{\mathrm{per},R}_i(R/2)$.
\item[2.] Almost surely $\mathcal{A}^{\mathrm{per},R}_i(t) > \mathcal{A}^{\mathrm{per},R}_{i+1}(t)$ for all $i \geq 1$ and $t \in [-R/2,R/2]$.
\item[3.] The point process formed by the points $(t_j,\mathcal{A}^{\mathrm{per},R}_i(t_j))$ for any finitely many $t_1, \dots, t_m$ and all $i\geq1$ is determinantal with an explicit correlation kernel $K^{\mathrm{per},R}(s,x;t,y)$, see Theorem~\ref{Thm.DeterminantalStructure}.
\item[4.] After a parabolic shift and normalization, $\mathcal{A}^{\mathrm{per},R}$ satisfies the {\em Brownian Gibbs property} of \cite{CorHamA}, see Theorem~\ref{Thm.GibbsProperty}.
\item[5.] The sequence of line ensembles $\mathcal{A}^{\mathrm{per},R_N}$ converges weakly to $\mathcal{A}$ as $R_N \to \infty$, see Theorem~\ref{Thm.ConvergenceToAiryLE}.
\end{enumerate}

\medskip

Let us briefly explain why we believe $\mathcal{A}^{\mathrm{per},R}$ is the natural periodic analogue of the Airy line ensemble $\mathcal{A}$. First, upon setting $R=b$ and $u=t-s$, the kernel $K^{\mathrm{per},R}(s,x;t,y)$ becomes the finite-temperature extended Airy kernel $\mathcal{K}^{\mathrm{edge}}_b(x,y,u)$ introduced in \cite[(99)]{le2017periodic}. The latter is a natural one-parameter deformation of the extended Airy kernel, which characterizes the finite-dimensional distributions of $\mathcal{A}$, see \cite[Definition~1.3 and Theorem~1.4]{CS14}. We mention that the extended Airy kernel is frequently attributed to \cite{Spohn}, where it arises in the context of the polynuclear growth model, although it appeared earlier in \cite{Mac94,FNH99}. The finite-temperature extended Airy kernel arises in the scaling limit of non-intersecting periodic Ornstein--Uhlenbeck bridges in \cite{le2017periodic} and of cylindric Plancherel processes in \cite{BB19}. In view of the kernel convergence established in these works, we expect both models to converge, at the level of their full line ensembles, to $\mathcal{A}^{\mathrm{per},R}$. Thus, already from the determinantal perspective, $\mathcal{A}^{\mathrm{per},R}$ appears as a natural universal limit for periodic models in the KPZ class.

A second indication comes from property~4 above. In \cite{AH23}, it was shown that, under a suitable condition on the global behavior of the top curve, the parabolically shifted Airy line ensemble is the unique (up to an independent affine shift) line ensemble that satisfies the Brownian Gibbs property. It is therefore natural to view the Brownian Gibbs structure of $\mathcal{A}^{\mathrm{per},R}$ as further evidence that it is the correct periodic counterpart of $\mathcal{A}$. This interpretation is also strongly supported by the weak convergence in property~5 above.

In the full-space setting, there are well-established connections between the Airy line ensemble of \cite{CorHamA}, the KPZ fixed point of \cite{MQR21}, and the directed landscape of \cite{DOV22}. It is therefore natural to ask whether analogous connections hold in the periodic setting between $\mathcal{A}^{\mathrm{per},R}$, the periodic KPZ fixed point of \cite{BLL26}, and the periodic directed landscape of \cite{ACS26}. Despite our efforts, we have not been able to establish such connections. Understanding whether $\mathcal{A}^{\mathrm{per},R}$ can be related directly to either of these periodic objects is an interesting open problem that we hope to pursue in future work.

%----------------------------------------------------------------------
%
% Section 1.2
%
%----------------------------------------------------------------------
\subsection{Main results}\label{Section1.2}

In this section, we summarize the main results of the paper. We begin by introducing the discrete model we consider. To this end, we first recall a few key concepts.

A {\em partition} is a decreasing sequence of non-negative integers $\lambda=(\lambda_1\geq\lambda_2\geq\cdots)$ with finitely many nonzero terms, and the {\em weight} of $\lambda$ is $|\lambda|=\sum_{i=1}^{\infty}\lambda_i$. We denote by $\emptyset=(0,0,\ldots)$ the {\em empty partition}. Given two partitions $\lambda$ and $\mu$, we write $\mu\preceq\lambda$, or equivalently $\lambda\succeq\mu$, and say that they {\em interlace} if $\lambda_1\geq\mu_1\geq\lambda_2\geq\mu_2\geq\cdots$. 

The state space of our model depends on an integer $M\in\mathbb{N}$ and consists of periodic sequences of interlacing partitions of the form
\begin{equation}\label{Eq.DefLambda}
\Seq=
\left\{(\lambda^{(-M)},\ldots,\lambda^{(M)}): 
\lambda^{(-M)}
\preceq\cdots\preceq \lambda^{(-1)} \preceq
\lambda^{(0)}
\succeq\lambda^{(1)}
\succeq\cdots\succeq
\lambda^{(M)}
=\lambda^{(-M)}
\right\}.
\end{equation}

The elements of the state space $\Seq$ can naturally be interpreted as {\em cylindric plane partitions}, as we explain next. For each $(\lambda^{(-M)},\ldots,\lambda^{(M)})\in\Seq$, we associate an integer array $\pi=(\pi_{i,j}:i,j\geq1)$ as follows. For every $r\geq1$, set
$$
\pi_{r-k,r}=\lambda^{(k)}_r,\qquad -M\leq k\leq0,
\qquad\text{and}\qquad
\pi_{r,r+k}=\lambda^{(k)}_r,\qquad 0\leq k\leq M,
$$
and extend the array to all $i,j\geq1$ using the periodicity relation
$$
\pi_{i+M,j}=\pi_{i,j+M}.
$$
One readily verifies that this defines a bijection between $\Seq$ and the set $\Pi$ of arrays $\pi=(\pi_{i,j}:i,j\geq1)$ satisfying the following conditions:
\begin{enumerate}
\item $\pi_{i,j}\in\mathbb Z_{\geq 0}$ for all $i,j\geq1$;
\item $\pi_{i,j}\geq\pi_{i,j+1}$ and $\pi_{i,j}\geq\pi_{i+1,j}$ for all $i,j\geq1$;
\item $\pi_{i+M,j}=\pi_{i,j+M}$ for all $i,j\geq1$;
\item $\pi_{i,j}=0$ whenever $i+j$ is sufficiently large.
\end{enumerate}
The elements of $\Pi$ are special cases of cylindric plane partitions, which were first introduced by Gessel and Krattenthaler in \cite{GK97}. In the notation of \cite[Section~5]{Bor07}, they correspond to cylindric partitions with $d=l=M$ and $\mu=\emptyset$.

For $\pi\in\Pi$, we define its {\em diagonal sums} $\diag_k(\pi)$ and {\em norm} $|\pi|$ via
$$
\diag_k(\pi):=\sum_{i=1}^{\infty}\pi_{i-k,i}, \quad k \leq 0, \quad \diag_k(\pi):=\sum_{i=1}^{\infty}\pi_{i,i+k}, \quad k \geq 0, \quad |\pi|:=\sum_{k = -M}^{M-1}\diag_k(\pi).
$$
Under the preceding bijection, these quantities satisfy
$$
\diag_k(\pi)=|\lambda^{(k)}|, \quad -M \leq k \leq M, \quad |\pi| = \sum_{k=-M}^{M-1}|\lambda^{(k)}|.
$$

The probability distribution we consider on $\Seq$ depends on two parameters $b,q\in(0,1)$, as well as the parameter $M \geq 1$. It is defined by
\begin{equation}\label{Eq.S1ProbDistr}
\mathbb{P}\left(\lambda^{(-M)},\ldots,\lambda^{(M)}\right) = \frac{1}{Z_{M,b,q}} \cdot b^{2|\lambda^{(0)}| - 2|\lambda^{(M)}|} \cdot q^{-M|\lambda^{(M)}|} \cdot q^{\sum_{k=-M}^{M-1}|\lambda^{(k)}|},
\end{equation}
where $Z_{M,b,q}$ is the normalization constant. As we explain in Section~\ref{Section3}, see (\ref{Eq.FormulaForZ}) and (\ref{Eq.Specialization}), the normalization constant is given by
$$
Z_{M,b,q} =\frac{1}{(q^M;q^M)_\infty} \cdot \prod_{i,j=1}^{M} \frac{1}{(b^2q^{i+j-1};q^M)_\infty},
$$
where $(a;q)_{\infty} = \prod_{k \geq 0} (1-aq^k)$ is the $q$-Pochhammer symbol. The corresponding probability distribution on $\Pi$ is given by
$$ \mathbb{P}(\pi) = \frac{1}{Z_{M,b,q}} \cdot b^{2\diag_0(\pi) - 2 \diag_M(\pi)} \cdot q^{-M \diag_M(\pi)} \cdot q^{|\pi|}. $$

The following definition introduces the scaling of the parameters that we use and constructs a natural sequence of random continuous curves encoding the cylindric plane partition. A sequence of random continuous functions $\mathcal{L}=\{\mathcal{L}_i\}_{i\geq1}$ on a common interval is known as an {\em $\mathbb{N}$-indexed line ensemble}. The precise definition of a line ensemble is given in Section~\ref{Section2.2}.
\begin{definition}\label{Def.S1Scaling}
Fix $b\in(0,1)$ and $R\in(0,\infty)$, and set
$$c_1=-2\log(1-b), \qquad c_2=b^{1/3}(1-b)^{-2/3}. $$
For each $N\in \mathbb{N}$, define
$$ q_N=e^{-1/N}, \qquad M_N=\left\lceil c_2^{-1}RN^{2/3}\right\rceil.$$
Let $(\lambda^{(-M_N)},\ldots,\lambda^{(M_N)})$
be distributed according to \eqref{Eq.S1ProbDistr} with $q=q_N$ and $M=M_N$, and denote its law by $\mathbb{P}_N$. For $-M_N\leq k\leq M_N$, define
$t_{k,N}=Rk/(2M_N)$. For each $i\geq1$, we define the scaled curve $\mathcal{L}_i^N\in C([-R/2,R/2])$ to be the unique function that is linear on every interval $[t_{k,N},t_{k+1,N}]$, where $-M_N\leq k<M_N$, and satisfies
\begin{equation}\label{Eq.S1ScaledCurves} 
\mathcal{L}_i^N(t_{k,N}) = c_2^{-1}N^{-1/3} \left( \lambda_i^{(k)}-c_1N +\frac{b|k|}{1-b}-\frac{bk^2}{4(1-b)^2N} \right)
\end{equation}
for every $-M_N\leq k\leq M_N$. In this way $\mathcal{L}^N = \{\mathcal{L}_i^N\}_{i \geq 1}$ defines a random element of $C(\mathbb{N} \times [-R/2,R/2])$, that is, an $\mathbb{N}$-indexed line ensemble on $[-R/2,R/2]$ in the sense of Definition~\ref{Def.LineEnsembles}.
\end{definition}

With the above notation in place, we are ready to state the main result we establish about the measures in (\ref{Eq.S1ProbDistr}).
\begin{theorem}\label{Thm.Convergence}
Assume the same notation as in Definition~\ref{Def.S1Scaling}. As $N\to\infty$, the line ensembles $\mathcal{L}^N$ converge weakly to an $\mathbb{N}$-indexed line ensemble on $[-R/2,R/2]$, denoted by $\mathcal{A}^{\mathrm{per},R}= \{\mathcal{A}^{\mathrm{per},R}_i\}_{i\geq1}$. Moreover, $\mathcal{A}^{\mathrm{per},R}$ is ordered, in the sense that almost surely
\begin{equation}\label{Eq.S1OrderedPerAiry}
\mathcal{A}^{\mathrm{per},R}_i(t) \geq \mathcal{A}^{\mathrm{per},R}_{i+1}(t) \mbox{ for all } i \geq 1 \mbox{ and } t \in [-R/2,R/2],
\end{equation}
and is periodic, in the sense that almost surely
\begin{equation}\label{Eq.S1PeriodicPerAiry}
\mathcal{A}^{\mathrm{per},R}_i(-R/2) = \mathcal{A}^{\mathrm{per},R}_i(R/2) \mbox{ for all } i \geq 1.
\end{equation}
We call $\mathcal{A}^{\mathrm{per},R}$ the {\em periodic Airy line ensemble} of period $R$.
\end{theorem}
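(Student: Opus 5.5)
The proof follows the standard two-part scheme for weak convergence of line ensembles: convergence of finite-dimensional distributions together with tightness in $C(\mathbb{N}\times[-R/2,R/2])$.

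\textbf{Determinantal structure and kernel convergence.} The measure (\ref{Eq.S1ProbDistr}) is a periodic Schur process in the sense of \cite{Bor07}. The first step is to use Borodin's theorem, after a suitable shift-mixing (or by working directly with the fermionic formalism), to write the point processes $\{(k,\lambda^{(k)}_i - i)\}$ as a determinantal process with an explicit double contour integral kernel built from $\theta$-type ratios. For our specialization the only nontrivial data are geometric parameters $bq^{\cdot}$.

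Next, we perform a steepest descent analysis under the scaling of Definition~\ref{Def.S1Scaling}. The center $c_1N$, the parabolic correction, and the fluctuation scale $c_2N^{1/3}$ are chosen so that the phase function has a double critical point at $z=b$ (or $1-b$). The periodic sum over windings produces the Fermi factor $1/(1+e^{-R(\cdot)})$, with $R$ entering as $c_2M_N N^{-2/3}\to R$. This should give locally uniform convergence of the rescaled kernels to $K^{\mathrm{per},R}$, together with exponential decay estimates in $x,y\to+\infty$ that are uniform in $N$. From these we obtain convergence of gap probabilities and hence finite-dimensional convergence of $(\mathcal{L}^N_i(t_j))$. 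The technical points are contour deformation, controlling the infinite product/sum coming from periodicity (uniform in $N$ since $q_N^{M_N}\to1$ only slowly relative to scale), and identifying the top particles with the top curves. The last point requires that there is no mass escaping to $-\infty$ in the top $k$ levels, which follows from Fredholm determinant bounds.

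\textbf{Tightness.} This is the main obstacle. I would invoke the general tightness criterion for determinantal line ensembles developed in the paper, which extends Johansson's argument \cite{Kurt03}. The criterion requires a modulus-of-continuity bound for each curve, obtained via a Kolmogorov-type estimate on increments $\mathcal{L}^N_i(t)-\mathcal{L}^N_i(s)$. Following Johansson, one controls increments by counting particles: the number of particles of $\lambda^{(k)}$ versus $\lambda^{(l)}$ above a level differs by interlacing by a controlled amount. The needed moment bounds come from kernel estimates $|K_N(s,x;t,y)-K_N(t,x;t,y)|\lesssim |t-s|^{\alpha}$ with exponential decay, uniformly in $N$ and also for lattice-scale $|t-s|$. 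The latter is handled by the interlacing and geometric jump structure: between adjacent $k$, $\lambda_i$ changes by a sum of geometric-type amounts, of order $O(1)\ll N^{1/3}$. I expect the uniform kernel bounds near the time difference $t-s\approx 0$, and near the wraparound $t=\pm R/2$, to be the hardest part. I would handle these by using periodicity to recentre times, since the model is rotation-invariant in $k$ up to the deterministic shift.

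\textbf{Ordering and periodicity.} Given convergence, these properties are immediate. Ordering (\ref{Eq.S1OrderedPerAiry}) holds because $\lambda^{(k)}_i\geq\lambda^{(k)}_{i+1}$ makes $\mathcal{L}^N_i\geq\mathcal{L}^N_{i+1}$ pointwise, and this closed condition passes to weak limits. Periodicity (\ref{Eq.S1PeriodicPerAiry}) holds because $\lambda^{(M_N)}=\lambda^{(-M_N)}$ and the shift in (\ref{Eq.S1ScaledCurves}) is even in $k$, so $\mathcal{L}^N_i(-R/2)=\mathcal{L}^N_i(R/2)$ exactly. This closed condition also passes to the limit by the Portmanteau theorem, and we take $\mathcal{A}^{\mathrm{per},R}$ to be the unique limit.
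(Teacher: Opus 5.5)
Your overall architecture matches the paper's: a shift-mixed periodic Schur process, steepest descent in which the theta ratio $\kappa_N$ becomes the Fermi factor, a Johansson-type tightness criterion, and ordering and periodicity surviving weak limits. However, the tightness step, which is the core of the proof, rests on an estimate that is false, and it misses the mechanism that produces the fourth-moment bound.

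The bound $|K_N(s,x;t,y)-K_N(t,x;t,y)|\lesssim|t-s|^{\alpha}$ cannot hold for $s>t$. There the kernel contains the term $-\phi^N_{ts}(y,x)=-\mathbb{P}(h_ND_{k,\ell,N}=y-x)$, the transition function of a geometric random walk. After the $h_N^{-1}$ rescaling this is a density of width $\asymp\sqrt{s-t}$ and height $\asymp(s-t)^{-1/2}$, of order $h_N^{-1}$ for adjacent times, so it is not small at all. This term is exactly what cancels the coincident-point contributions in the moments of $H_N(f,v)-H_N(f,u)$. Hence no pointwise kernel-difference estimate can give $\mathbb{E}[(H_N(f,v)-H_N(f,u))^4]\leq\mathfrak{C}(f)(v-u)^2$.

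Accordingly, the criterion of Lemma~\ref{Lem.DPPTightnessCriterion} asks for something else. It needs the decomposition $K_{vu}(x,y)=\widetilde K_{vu}(x,y)-\phi_{uv}(y,x)$ together with the contraction identities \eqref{Eq.FourthMomentContractionUU}--\eqref{Eq.FourthMomentContractionVU}, which here follow from \eqref{Eq.S51DiscreteContraction}. With these, Lemma~\ref{Lem.FourthMomentBound} rewrites the fourth moment exactly in terms of $\widetilde K_{uv}$, $\phi_{uv}$, and increments $(f(z)-f(x))^{k}$ along random-walk steps. Each increment then gains $(v-u)^{k/2}$ from the moment bounds of Lemma~\ref{Lem.S51MomentInputs}, uniformly down to adjacent mesh points. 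Interlacing plays no role here; it only controls particle counts between adjacent times.

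You also do not address how the one-time tails interact with the moment constants, which is where the criterion is delicate (Assumption B.3). The constants are not uniform in the cutoff. From $|\widetilde K^N_{st}(x,y)|\leq\mathsf{C}_6h_Ne^{-Rx/8-3Ry/4}$ one only gets $\mathfrak{C}(g^-_\varsigma)\lesssim e^{7R\varsigma/2}$, so controlling $\min_jX^{j,N}_k$ requires a one-time lower tail that beats this growth. The paper uses the bound $2^{k-1}e^{-\mathsf{c}_5\varsigma^{3/2}}$, obtained from Soshnikov's estimate and $K^{\mathrm{Ai},R}(x,x)\gtrsim\sqrt{-x}$. Your decay as $x,y\to+\infty$ only serves the upper tail.

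Your fix for the wraparound is also unavailable. The discrete model is not rotation invariant in $k$, even up to deterministic shifts: the increments $\lambda^{(k+1)}_i-\lambda^{(k)}_i$ are nonnegative for $k<0$ and nonpositive for $k\geq0$. Moreover, $\widehat M^N$ omits the slice $k=-M_N$. The paper instead proves tightness on $[0,R/2]$, where fixed contours give uniform kernel bounds, and transfers it to $[-R/2,0]$ by the reflection symmetry \eqref{Eq.S44DistrEq}.

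Two smaller points. Borodin's formula applies only to the shift-mixed particles $\lambda^{(k)}_i-i+S_N$, so you must add that $h_NS_N\to0$ in probability to return to $\mathcal{L}^N$. And the double critical point is at $z=1$, not $z=b$.
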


\begin{remark}\label{Rem.Convergence}
In view of (\ref{Eq.S1PeriodicPerAiry}), we may equivalently view $\mathcal{A}^{\mathrm{per},R}$ as a line ensemble on the circle of circumference $R$, obtained by identifying the endpoints of $[-R/2,R/2]$.
\end{remark}

In the remainder of this section, we state a sequence of theorems describing the main properties of the periodic Airy line ensemble $\mathcal{A}^{\mathrm{per},R}$ constructed in Theorem~\ref{Thm.Convergence}. We begin with its determinantal structure, which characterizes its finite-dimensional distributions. For the precise definition of a determinantal point process and related background, we refer the reader to Section~\ref{Section2.1}. To formulate the result, we first introduce the relevant reference measures and correlation kernel.

\begin{definition}\label{Def.S1Measures}
For a set $\mathsf{S}\subset\mathbb{R}$, we let $\mu_{\mathsf{S}}$ denote the counting measure on $\mathbb{R}$ defined by
$$
\mu_{\mathsf{S}}(A)=|A\cap\mathsf{S}|.
$$
We also let $\mathrm{Leb}$ denote Lebesgue measure on $\mathbb{R}$ and write
$\mu_{\mathsf{S}}\times\mathrm{Leb}$ for the corresponding product measure on $\mathbb{R}^2$.
\end{definition}

\begin{definition}\label{Def.S1PeriodicAiryKernel}
Fix $R\in(0,\infty)$. For $s,t\in(-R/2,R/2]$ and $x,y\in\mathbb{R}$, we define the kernel
\begin{equation}\label{Eq.PeriodicAiryKernel}
K^{\mathrm{per},R}(s,x;t,y) =
\begin{cases}
\displaystyle\int_{-\infty}^{\infty}\frac{e^{-(t-s)v}} {1+e^{-Rv}}
\Ai(x+v)\Ai(y+v)\,dv,
&\quad \text{if }s\leq t,\\[3ex] \displaystyle -\int_{-\infty}^{\infty}
\frac{e^{-(t-s)v}}{1+e^{Rv}} \Ai(x+v)\Ai(y+v)\,dv, &\quad \text{if }s>t,
\end{cases}
\end{equation}
where $\Ai$ denotes the Airy function.
\end{definition}

We may now state the determinantal characterization of the periodic Airy line ensemble.

\begin{theorem}\label{Thm.DeterminantalStructure}
Let $\mathcal{A}^{\mathrm{per},R}$ be as in Theorem~\ref{Thm.Convergence}. Fix a finite set
$$\mathsf{S}=\{s_1,\ldots,s_m\}\subset(-R/2,R/2],\qquad s_1<\cdots<s_m,$$
and define the random measure on $\mathbb{R}^2$ by setting, for every Borel set $A\subseteq\mathbb{R}^2$,
\begin{equation}\label{Eq.PeriodicAiryPointProcess}
M^{\mathsf{S};R}(A) = \sum_{i=1}^{\infty}\sum_{j=1}^{m} {\bf 1} \left\{ \left(s_j,\mathcal{A}^{\mathrm{per},R}_i(s_j)\right)\in A \right\}.
\end{equation}
Then $M^{\mathsf{S};R}$ is a determinantal point process on $\mathbb{R}^2$ with reference measure $\mu_{\mathsf{S}}\times\mathrm{Leb}$, as in Definition~\ref{Def.S1Measures}, and correlation kernel $K^{\mathrm{per},R}$ as in Definition~\ref{Def.S1PeriodicAiryKernel}.
\end{theorem}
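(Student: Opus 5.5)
The plan is to derive the determinantal structure of $\mathcal{A}^{\mathrm{per},R}$ from the determinantal structure of the prelimit periodic Schur process together with the weak convergence of Theorem~\ref{Thm.Convergence}.

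\emph{Prelimit kernel.} By Borodin's result for periodic Schur processes \cite{Bor07}, the point configuration $\{(k,\lambda^{(k)}_i - i + \text{shift}_k)\}$, after the standard "shift-mixing" by an independent random shift $S$ with discrete Gaussian/theta-type law, is determinantal with an explicit double contour integral kernel. The kernel involves the factor $\frac{1}{1-q^{M}\cdots}$ coming from the theta function. I would use the shift-mixed version, noting that $S$ only affects a bounded number of top particles' labelling. Since the scaling concerns the top curves near $c_1N$ and the shift is $O(1)$, it becomes negligible after dividing by $N^{1/3}$. Alternatively, I would argue through a first-order expansion that its effect disappears in the limit.

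\emph{Kernel asymptotics.} Under Definition~\ref{Def.S1Scaling}, I would perform steepest descent on the double contour integral at the double critical point determined by $b$. The saddle-point analysis gives the cubic behaviour producing Airy functions. The periodic denominator $\sum_{n} q^{Mn}(\cdots)$, equivalently the factor $1/(1+e^{-Rv})$, arises from writing the kernel as an integral over a spectral variable $v$ with Fermi factor: $M_N/N^{2/3}\to R/c_2$ gives exactly $e^{-Rv}$. Concretely, I would aim to show
\[
c_2 N^{1/3} K_N\bigl(k,x;\ell,y\bigr)\to K^{\mathrm{per},R}(s,x;t,y)
\]
uniformly for $x,y$ in compacts, with conjugation factors, and with exponential decay bounds in $x,y\to+\infty$, for instance $|K_N|\le Ce^{-(x+y)}$.

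\emph{Passage to the limit.} Given Theorem~\ref{Thm.Convergence}, I would invoke a standard lemma, presumably in Section~\ref{Section2.1}: if rescaled determinantal point processes have kernels converging locally uniformly with uniform upper-tail bounds, then their correlation functions converge, and the limit point process is determinantal with the limit kernel. Weak convergence of the line ensembles identifies the limiting point process as $M^{\mathsf{S};R}$. For this I would need that no mass escapes to $-\infty$, which is irrelevant for tests with compact support bounded below; that atoms are simple; and that the points $\mathcal{A}_i(s_j)$ are distinct, which follows from absolute continuity implied by the kernel. The step $s_j=R/2$ is handled by periodicity.

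\emph{Main obstacle.} The hard step is the asymptotic analysis. It requires choosing contours that pass through the critical point while respecting the interlacing of poles $|z|<|w|$ versus $|w|<|z|$, which gives the $s\le t$ versus $s>t$ cases. It also requires controlling the infinitely many poles coming from the theta factor, producing the sum $\sum_n (-1)^n e^{-nRv}=1/(1+e^{-Rv})$, with tail bounds uniform in $N$. I would first try expanding the theta/Fermi factor as a geometric series, doing the asymptotics term by term with uniform bounds, and resumming.
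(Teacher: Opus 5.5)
This is essentially the paper's argument: Borodin's shift-mixed kernel is rescaled via $z=e^{h_NZ}$ so that the phase tends to the cubic $P_s$, and the theta factor, through its Laurent expansion $\kappa_N(z,w)=\sum_{n\in\mathbb Z}(z/w)^n/(1+u_N^{-n-1/2})$, becomes a Riemann sum for $\int_{\mathbb R}e^{(Z-W)v}(1+e^{Rv})^{-1}\,dv$ (your ``spectral variable'' option, which makes term-by-term resummation unnecessary); this is followed by \cite[Propositions~2.18 and~2.19]{ED24a} and the tightness underlying Theorem~\ref{Thm.Convergence} to identify the limit with $M^{\mathsf{S};R}$. One correction: $S_N$ is not $O(1)$, and it translates every particle rather than affecting a few labels --- since $u_N=q_N^{M_N}\to1$, it is of order $\sqrt{N/M_N}\asymp N^{1/6}$ --- but $h_NS_N\to0$ in probability, so it is still negligible at scale $N^{1/3}$ as you claim.
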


\begin{remark}\label{Rem.DeterminantalUniqueness}
From \cite[Proposition~2.13(3)]{ED24a}, \cite[Corollary~2.20]{ED24a}, and \cite[Lemma~3.1]{DM21}, there is at most one ordered line ensemble satisfying the conditions of Theorem~\ref{Thm.DeterminantalStructure}. Consequently, one may define $\mathcal{A}^{\mathrm{per},R}$ as the unique ordered line ensemble satisfying these conditions, whose existence is guaranteed by Theorems~\ref{Thm.Convergence} and~\ref{Thm.DeterminantalStructure}.
\end{remark}

The determinantal characterization in Theorem~\ref{Thm.DeterminantalStructure} also reveals a fundamental symmetry of the periodic Airy line ensemble. Namely, its periodic extension is translation invariant, as stated in the following theorem.
\begin{theorem}\label{Thm.TranslationInvariance}
Let $\mathcal{A}^{\mathrm{per},R}$ be as in Theorem~\ref{Thm.Convergence}, and let
$\widehat{\mathcal{A}}^{\mathrm{per},R} = \{\widehat{\mathcal{A}}^{\mathrm{per},R}_i\}_{i\geq1}$
denote its continuous $R$-periodic extension to $\mathbb{R}$. Then, for every $\theta \in\mathbb{R}$,
$$ \left\{ \widehat{\mathcal{A}}^{\mathrm{per},R}_i(t+\theta ):
i\geq1,\ t\in\mathbb{R}
\right\} \overset{d}{=} \left\{ \widehat{\mathcal{A}}^{\mathrm{per},R}_i(t): i\geq1,\ t\in\mathbb{R} \right\}.
$$
\end{theorem}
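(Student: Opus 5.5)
Since both sides are random elements of $C(\mathbb{N}\times\mathbb{R})$, whose law is determined by finite-dimensional distributions, it suffices to show that for every finite $t_1<\dots<t_m$ and every $k$, the vectors $(\widehat{\mathcal{A}}^{\mathrm{per},R}_i(t_j+\theta))_{i\le k,j\le m}$ and $(\widehat{\mathcal{A}}^{\mathrm{per},R}_i(t_j))$ have the same law. By periodicity we may replace each $t_j$ and $t_j+\theta$ by its representative in $(-R/2,R/2]$; if two times coincide the coordinates are equal on both sides, so we may assume distinct representatives. Since the ensemble is ordered (Theorem~\ref{Thm.Convergence}), the vector $(\mathcal{A}^{\mathrm{per},R}_i(s_j))$ is a measurable function of the point process $M^{\mathsf{S};R}$: $\mathcal{A}_i(s_j)$ is the $i$-th largest atom on the line $\{s_j\}\times\mathbb{R}$. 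Its law is therefore determined by the correlation functions of $M^{\mathsf{S};R}$, given in Theorem~\ref{Thm.DeterminantalStructure}. This is the same uniqueness mechanism as in Remark~\ref{Rem.DeterminantalUniqueness}; in particular the finite-dimensional marginals of the top curves are determined by $\det$'s of $K^{\mathrm{per},R}$ via Fredholm-type gap probabilities, which needs the standard trace-class/decay bounds for the Airy kernel.

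The key step is then a kernel identity. Let $\widehat K(s,x;t,y)$ be the $R$-periodic extension of $K^{\mathrm{per},R}$ in $s,t$. I will show that $\widehat K(s+\theta,x;t+\theta,y)$ equals $\widehat K(s,x;t,y)$, possibly up to a conjugation $f(s,x)/f(t,y)$, which does not change determinants. Write $\tau=t-s$. For $s\le t$ with both in the fundamental domain, the kernel is $\int g_\tau(v)\Ai(x+v)\Ai(y+v)\,dv$ with $g_\tau(v)=e^{-\tau v}/(1+e^{-Rv})$, so it depends only on $\tau\in[0,R)$. For $s>t$ it is $-\int e^{-\tau v}/(1+e^{Rv})\cdots$ with $\tau\in(-R,0)$. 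Then check
\[
-\frac{e^{-\tau v}}{1+e^{Rv}}=-\frac{e^{-(\tau+R)v}}{1+e^{-Rv}},
\]
so the $s>t$ branch equals \emph{minus} the $s\le t$ branch evaluated at $\tau+R$. Thus the kernel is a function $F(\tau)$ of the cyclic difference, antiperiodic under $\tau\mapsto\tau+R$, i.e.\ $F(\tau)=-F(\tau+R)$ where $F$ is defined on $[0,R)$ by the first formula.

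Shifting all times by $\theta$ and reducing modulo $R$ preserves each cyclic difference, except that some pairs switch between the $s\le t$ and $s>t$ branches. By antiperiodicity, each such switch flips the sign of the entry $K(s_a,\cdot;s_b,\cdot)$. Concretely, let $A$ be the set of times that wrapped around. The entry for the pair $(a,b)$ changes sign exactly when precisely one of $a,b$ lies in $A$. This is the conjugation by $\epsilon_a=\pm1$ ($-1$ on $A$), i.e.\ $K\mapsto \epsilon_a K \epsilon_b$, under which all correlation determinants are invariant. One must handle the convention $s=t$ (diagonal blocks) carefully: same-time entries use the $s\le t$ formula with $\tau=0$ on both sides, which is consistent.

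The main obstacle I expect is bookkeeping rather than analysis. The branch conventions, the half-open domain $(-R/2,R/2]$ and the wrap-around must be tracked so that the sign pattern is really of product form $\epsilon_a\epsilon_b$. The convergence of the integrals for $\tau\in[0,R)$ needs $e^{-\tau v}/(1+e^{-Rv})$ to be controlled against the Airy decay as $v\to\pm\infty$, which holds since $\tau<R$. With equality of all correlation functions, the point processes agree in law, hence so do the ordered vectors, and the theorem follows.
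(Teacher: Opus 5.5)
Your proposal is correct and follows essentially the same route as the paper. Both arguments reduce to finite-dimensional distributions at times in $(-R/2,R/2]$, identify the ordered vectors with the point process via orderedness, and show that the kernel at the shifted times differs from $K^{\mathrm{per},R}$ only by a gauge factor $g(s)/g(t)$ with $g=\pm1$ according to wrap-around. That factor is exactly your $\epsilon_a\epsilon_b$, coming from the antiperiodicity $\frac{e^{-\tau v}}{1+e^{Rv}}=\frac{e^{-(\tau+R)v}}{1+e^{-Rv}}$.

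The only difference is minor. The paper gets uniqueness of the law from \cite[Proposition~2.13(3) and Corollary~2.20]{ED24a}, which needs only local boundedness of the kernel, so your detour through Fredholm gap probabilities and trace-class bounds is unnecessary.
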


The next result shows that the periodic Airy line ensemble has a local description in terms of avoiding Brownian bridges.
\begin{theorem}\label{Thm.GibbsProperty}
Let $\mathcal{A}^{\mathrm{per},R}$ be as in Theorem~\ref{Thm.Convergence}, and define the $\mathbb{N}$-indexed line ensemble
$\mathcal{L}^{\mathrm{per},R}
=
\{\mathcal{L}^{\mathrm{per},R}_i\}_{i\geq1}$ on $[-R/2,R/2]$
through
\begin{equation}\label{Eq.ParabolicPeriodicAiry}
\mathcal{L}^{\mathrm{per},R}_i(t)
= 2^{-1/2}\left(\mathcal{A}^{\mathrm{per},R}_i(t)-t^2\right),
\qquad
i\geq1,\quad t\in[-R/2,R/2].
\end{equation}
Then $\mathcal{L}^{\mathrm{per},R}$ satisfies the periodic Brownian Gibbs property from Definition~\ref{Def.PeriodicAreaTiltedBGP}. In particular, the line ensemble $\mathcal{A}^{\mathrm{per},R}$ is non-intersecting, i.e., $\mathcal{A}^{\mathrm{per},R}_i(t) > \mathcal{A}^{\mathrm{per},R}_j(t)$ for all $t\in[-R/2,R/2]$ and $1\leq i<j$ almost surely.
\end{theorem}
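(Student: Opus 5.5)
The plan is to prove the Brownian Gibbs property by first establishing a discrete Gibbs property for the prelimit ensembles $\mathcal{L}^N$ and then passing it to the limit. The non-intersection claim then follows from the Gibbs property.

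\textbf{Step 1: discrete Gibbs property.} Under (\ref{Eq.S1ProbDistr}), fix all partitions except the top $k$ rows on the index window $a\le j\le b$ (cyclically, since $\lambda^{(M)}=\lambda^{(-M)}$). The conditional law of $(\lambda^{(j)}_i)_{i\le k,\,a\le j\le b}$ is then uniform on interlacing configurations, reweighted by $q^{\sum_j \lambda^{(j)}_i}$ times the factors $b^{2|\lambda^{(0)}|-2|\lambda^{(M)}|}q^{-M|\lambda^{(M)}|}$. These factors are products over rows, so the conditional law is a family of interlacing geometric-type random walks. The walks go up before index $0$ and down after it, carry an exponential weight in their area, and carry an extra tilt at the seam $k=0$ and $k=\pm M$.

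\textbf{Step 2: the weights in the scaling limit.} Under the scaling of Definition~\ref{Def.S1Scaling}, I will check that the individual steps of these walks, after centering by the drift $\pm b/(1-b)$, are i.i.d. geometric-type increments. I expect their variance to become $2$ in the Brownian scaling, which would produce the factor $2^{-1/2}$. I will also check that:
\begin{itemize}
\item the weight $q^{\sum\lambda}=e^{-N^{-1}\sum\lambda}$, combined with the centering, produces an area tilt $\exp(-c\int L)$;
\item the parabola $t^2$ in (\ref{Eq.ParabolicPeriodicAiry}) and the quadratic term $bk^2/(4(1-b)^2N)$ in (\ref{Eq.S1ScaledCurves}) are exactly what cancel this tilt, or what convert it into the one specified in Definition~\ref{Def.PeriodicAreaTiltedBGP}.
\end{itemize}
The wrap-around at $\pm R/2$ means the resampling interval may cross the seam, so the limiting property is periodic Brownian bridges on the circle. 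I will treat an interval containing the seam by rotating indices. The constants $c_1,c_2$ are fixed precisely so that these limits are nondegenerate, so the computation reduces to a KMT-type strong coupling of the walk bridges with Brownian bridges.

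\textbf{Step 3: passage to the limit.} Here I use the standard argument of \cite{CorHamA} and the later discrete-to-continuous Gibbs arguments. Take a subsequence with $\mathcal{L}^N\Rightarrow \mathcal{L}^{\mathrm{per},R}$ (Theorem~\ref{Thm.Convergence}), and use Skorohod to get almost sure convergence. For a bounded continuous test functional $F$ of the top $k$ curves on $[a,b]$ and a bounded functional $G$ of the outside data, write
\[
\mathbb{E}[F\,G] = \mathbb{E}\big[G\cdot \mathbb{E}_{\text{bridge}}[F\,W]/\mathbb{E}_{\text{bridge}}[W]\big],
\]
where $W$ is the avoidance (interlacing) indicator times the tilt. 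I will pass each side to the limit using the strong coupling.

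\textbf{Main obstacle.} The hard part is controlling the normalizing constant, i.e. the acceptance probability $\mathbb{E}_{\text{bridge}}[W]$, so that it stays away from zero. This is needed both to justify the limit and because discrete interlacing weak inequalities must become strict non-intersection, which requires that the limiting boundary data do not touch. My first attempt is to prove that the top $k+1$ curves of $\mathcal{L}^{\mathrm{per},R}$ at the endpoints $a,b$ are almost surely strictly ordered. For this I would use Theorem~\ref{Thm.DeterminantalStructure}: the one-point marginals are determinantal with a continuous kernel, so they have no atoms and no coincident points. Given strict ordering, the avoidance probability for Brownian bridges with separated endpoints is positive and continuous in the data, and the standard argument then closes. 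Non-intersection of $\mathcal{A}^{\mathrm{per},R}$ follows immediately, since avoiding Brownian bridges are almost surely strictly ordered on the whole interval, including the interior.
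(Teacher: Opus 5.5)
Your plan is correct in outline and is the paper's strategy: a discrete Gibbs property in which the top rows are interlacing geometric walks with an area weight, KMT-based convergence to Brownian bridges, Skorohod plus bounded convergence, positivity of the limiting acceptance probability from almost sure strict ordering at a fixed time, and non-intersection read off from the resampling identity. The strict ordering comes from simplicity of the one-time determinantal process, moved to $t=-R/2$ by periodicity.

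The paper is more economical. It only resamples the full loop $\llbracket -M_N,M_N\rrbracket$ of the top $k_2$ rows, given the lower rows and the seam values $\lambda^{(\pm M_N)}$. This is exactly what Definition~\ref{Def.PeriodicAreaTiltedBGP} asks for, it makes the seam factors $b^{-2|\lambda^{(M)}|}q^{-M|\lambda^{(M)}|}$ external constants, and sub-windows then come for free via Lemma~\ref{Lem.PeriodicBGPCharacterization}.

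Windows that wrap around $\pm R/2$ are never needed, and ``rotating indices'' would not handle them. The discrete measure is not rotation invariant. Moreover, across the seam the factor $q_N^{-M_N\lambda_i^{(M_N)}}$ leaves a non-degenerate tilt, asymptotically proportional to $e^{R\mathcal{A}^{\mathrm{per},R}_i(R/2)}$; this reflects the kink of the periodically extended parabola. So on such windows $\mathcal{L}^{\mathrm{per},R}$ is not a plain avoiding Brownian ensemble, contrary to your expectation of ``periodic Brownian bridges on the circle''.

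Two of your ``I will check'' items also hide a specific ingredient.

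\textbf{The area tilt.} It does not cancel termwise. Removing only the linear drift, the rescaled walks converge to $2^{-1/2}(\mathcal{A}^{\mathrm{per},R}+t^2)$, and $q_N^{\sum_i\|Q_i-L_{k_2+1}\|_1}$ converges to an area tilt with coefficient $2^{3/2}$. So what passes to the limit is the periodic Gibbs property with $2^{3/2}$-area tilt for that ensemble. Passing from it to $\mathcal{L}^{\mathrm{per},R}$ subtracts $2^{1/2}t^2$. This removes the tilt only through the Girsanov-type fact that an $A$-area-tilted Brownian bridge on $[a,b]$ has the law of an untilted one plus $\tfrac{A}{2}(t-a)(t-b)$. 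That is the parabolic-shift lemma \cite[Lemma~2.8]{DimitrovSerio} used in the paper, and it is what your ``cancel'' has to mean.

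\textbf{The peak of the loop.} Since the loop contains the peak index $k=0$, a KMT coupling of walk bridges does not by itself give its limit. You first need the Gaussian limit of the rescaled peak value. The paper obtains it from Legendre asymptotics of $F(M,M;1;z)$, and only then splices two KMT-coupled halves (Lemma~\ref{Lem.ConvBM}).
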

\begin{remark}\label{Rem.GibbsProperty}
As shown in Lemma~\ref{Lem.PeriodicBGPCharacterization}, the statement that $\mathcal{L}^{\mathrm{per},R}$ satisfies the periodic Brownian Gibbs property is equivalent to the following two statements: (1) $\mathcal{L}^{\mathrm{per},R}_i(-R/2)=\mathcal{L}^{\mathrm{per},R}_i(R/2)$ for every $i\geq1$ almost surely; and (2) $\mathcal{L}^{\mathrm{per},R}$ satisfies the usual Brownian Gibbs property introduced by Corwin and Hammond in \cite{CorHamA}.
\end{remark}

The next result describes the behavior of the periodic Airy line ensemble as its period tends to infinity. In this limit, the periodic constraint becomes negligible and one recovers the full-space Airy line ensemble.

\begin{theorem}\label{Thm.ConvergenceToAiryLE}
For each $R>0$, let $\mathcal{A}^{\mathrm{per},R}$ be the periodic Airy line ensemble from Theorem~\ref{Thm.Convergence}, and let
$\widehat{\mathcal{A}}^{\mathrm{per},R}$
denote its continuous $R$-periodic extension to $\mathbb{R}$, as in Theorem~\ref{Thm.TranslationInvariance}. Then, $\widehat{\mathcal{A}}^{\mathrm{per},R} \Rightarrow \mathcal{A}$ as $R\to\infty$, where $\mathcal{A}$ is the full-space Airy line ensemble
$\mathcal{A}=\{\mathcal{A}_i\}_{i\geq1}$ from \cite{CorHamA}.
\end{theorem}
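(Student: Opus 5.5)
The plan is to prove the convergence $\widehat{\mathcal{A}}^{\mathrm{per},R}\Rightarrow\mathcal{A}$ in $C(\mathbb{N}\times\mathbb{R})$ in two steps. First we show convergence of finite-dimensional distributions through the determinantal structure of Theorem~\ref{Thm.DeterminantalStructure}. Then we show tightness using the Brownian Gibbs property of Theorem~\ref{Thm.GibbsProperty}. Once both are in hand, a standard argument finishes the proof: every subsequential limit of $\widehat{\mathcal{A}}^{\mathrm{per},R_N}$ is an ordered line ensemble whose finite-dimensional point processes are determinantal with the extended Airy kernel, and by the uniqueness statement cited in Remark~\ref{Rem.DeterminantalUniqueness} (\cite[Proposition~2.13(3), Corollary~2.20]{ED24a}, \cite[Lemma~3.1]{DM21}) this limit must be $\mathcal{A}$.

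\textbf{Finite-dimensional convergence.} Fix $s_1<\cdots<s_m$ in $\mathbb{R}$. For $R$ large, all $s_j$ lie in $(-R/2,R/2]$, and $\widehat{\mathcal{A}}^{\mathrm{per},R}$ agrees with $\mathcal{A}^{\mathrm{per},R}$ at these points. As $R\to\infty$, the weight $(1+e^{-Rv})^{-1}$ converges to ${\bf 1}\{v>0\}$, and $(1+e^{Rv})^{-1}$ converges to ${\bf 1}\{v<0\}$. So $K^{\mathrm{per},R}$ converges pointwise to the extended Airy kernel in its standard form: $\int_0^\infty e^{-(t-s)v}\Ai(x+v)\Ai(y+v)\,dv$ for $s\le t$, and $-\int_{-\infty}^0 e^{-(t-s)v}\Ai(x+v)\Ai(y+v)\,dv$ for $s>t$.

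To pass from kernel convergence to convergence of the point processes, we need uniform bounds of the form $|K^{\mathrm{per},R}(s,x;t,y)|\le Ce^{-c(x+y)}$ for $x,y\ge a$ and $R\ge R_0$. For the $s\le t$ branch, bound $(1+e^{-Rv})^{-1}$ by $e^{Rv}$ when $v<0$ and note that $Rv-(t-s)v\le 0$ there for $R>t-s$. The part $v<0$ then has integrand at most $|\Ai(x+v)\Ai(y+v)|$, while $v\ge 0$ is controlled by the Airy decay. The $s>t$ branch is handled symmetrically. These bounds give convergence of Fredholm determinants (gap probabilities) on sets of the form $\{s_j\}\times(a_j,\infty)$. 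Combined with the ordering of the curves, this yields convergence of the joint laws of $(\mathcal{A}^{\mathrm{per},R}_i(s_j))$ for finitely many $i$. The most convenient route is to quote the general criterion from Section~\ref{Section2.1} / \cite{ED24a} relating kernel convergence with such tail bounds to weak convergence of the top curves at finitely many times.

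\textbf{Tightness.} This is the main obstacle. On a fixed window $[-T,T]$ and for $R>2T+2$, we apply the Brownian Gibbs property to $\mathcal{L}^{\mathrm{per},R}$ from Theorem~\ref{Thm.GibbsProperty}, which is the ordinary Corwin--Hammond property on that window by Remark~\ref{Rem.GibbsProperty}. We then run the standard argument of \cite{CorHamA}: one-point tightness of the top $k$ curves at the times $\pm T$ and at a grid of intermediate times, which follows from the finite-dimensional convergence above, implies control of the moduli of continuity of the top $k$ curves uniformly in $R$. We will invoke the general tightness result for Brownian Gibbsian line ensembles with tight one-point marginals (e.g.\ \cite[Theorem~2.26]{ED24a}, or the paper's own criterion if applicable). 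The point that needs care is that the parabola $t^2$ and the normalization $2^{-1/2}$ do not depend on $R$, so the Gibbs resampling estimates are uniform in $R$.

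Once tightness on every $[-T,T]$ is established, a diagonal argument gives tightness in $C(\mathbb{N}\times\mathbb{R})$, and the identification step described at the start completes the proof.
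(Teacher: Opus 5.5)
Your plan matches the paper's proof: finite-dimensional convergence from the kernel limit $K^{\mathrm{per},R}\to K^{\mathrm{Ai}}$ via the \cite{ED24a} machinery (locally uniform kernel convergence, plus a one-time diagonal bound for the upper tail of the top curve), then tightness from Theorem~\ref{Thm.GibbsProperty} through \cite[Proposition~3.6]{CorHamA}, and identification via \cite[Lemma~3.1]{DM21}; note that the hypotheses (H2)--(H3) of \cite[Proposition~3.6]{CorHamA} require one-time convergence to the strictly ordered Airy ensemble, which you have, rather than mere one-point tightness. One correction: on $v<0$ in the $s\le t$ branch you cannot drop the exponential factor, because $|\Ai(x+v)\Ai(y+v)|$ decays only like $|v|^{-1/2}$ (on average) as $v\to-\infty$ and is not integrable; instead take $R\ge t-s+1$, so that $e^{-(t-s)v}(1+e^{-Rv})^{-1}\le e^{v}$ there, which is the domination the paper uses.
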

\begin{remark}
The weak convergence in Theorem~\ref{Thm.ConvergenceToAiryLE} is in $C(\mathbb{N}\times\mathbb{R})$; see Section~\ref{Section2.2}.
\end{remark}

We conclude with the opposite, short-period regime. As $R\to0+$, the suitably centered and scaled curve values at a fixed time converge, in the finite-dimensional sense, to the decreasingly ordered atoms of a Poisson point process with intensity $e^{-x}\,dx$.
\begin{theorem}\label{Thm.ConvergenceToGumbel}
For each $R>0$, let $\mathcal{A}^{\mathrm{per},R}$ be the periodic Airy line ensemble from Theorem~\ref{Thm.Convergence}. Let $ \mathcal{G}_1>\mathcal{G}_2>\cdots$ denote the decreasing enumeration of the atoms of a Poisson point process on $\mathbb{R}$ with intensity $e^{-x}\,dx$. Then, as $R\to0+$,
\begin{equation}\label{Eq.ConvToGumbelK}
\left(
R\mathcal{A}^{\mathrm{per},R}_i(0)
+\frac{1}{2}\log(4\pi R^3)
\right)_{i=1}^{k}
\Rightarrow
\left(\mathcal{G}_i\right)_{i=1}^{k}
\end{equation}
for every $k\in\mathbb{N}$. In particular,
\begin{equation}\label{Eq.ConvToGumbel}
\lim_{R\to0+} \mathbb{P}\left(R\mathcal{A}^{\mathrm{per},R}_1(0) +\frac{1}{2}\log(4\pi R^3) \leq s \right) = e^{-e^{-s}}, \qquad s\in\mathbb{R}.
\end{equation}
\end{theorem}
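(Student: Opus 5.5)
The plan is to work entirely at the fixed time $s=0$, using the determinantal structure from Theorem~\ref{Thm.DeterminantalStructure} with $\mathsf{S}=\{0\}$. By that theorem, $\{\mathcal{A}^{\mathrm{per},R}_i(0)\}_{i\ge1}$ is a determinantal point process on $\mathbb{R}$ with kernel
\[
K_R(x,y)=\int_{-\infty}^{\infty}\frac{\Ai(x+v)\Ai(y+v)}{1+e^{-Rv}}\,dv .
\]
Under the affine map $x\mapsto Rx+\tfrac12\log(4\pi R^3)$, the rescaled points $\xi=Rx+c_R$ with $c_R=\tfrac12\log(4\pi R^3)$ again form a determinantal process. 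Its kernel is
\[
\widetilde K_R(\xi,\eta)=R^{-1}K_R\big((\xi-c_R)/R,(\eta-c_R)/R\big).
\]
The Poisson process with intensity $e^{-x}dx$ is determinantal with the diagonal kernel $e^{-\xi}\mathbf{1}\{\xi=\eta\}$. Equivalently, all of its correlation functions factor as products of $e^{-\xi_j}$. The key steps are as follows.

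\textbf{Step 1.} Prove kernel asymptotics on the relevant scale. The points sit near $x\approx -c_R/R\to+\infty$. For $x$ large, $\Ai(x+v)$ is negligible unless $v\gtrsim -x$, where $1/(1+e^{-Rv})\approx e^{Rv}$ is exponentially small. I will therefore substitute $w=x+v$ and approximate $1/(1+e^{-Rv})$ by $e^{Rv}$, since $Rv\to-\infty$ on the important region. This gives
\[
K_R(x,y)\approx e^{-Rx}\int \Ai(w)\Ai(w+y-x)e^{Rw}\,dw .
\]
Using $\int_{\mathbb{R}} e^{Rw}\Ai(w)\Ai(w+d)\,dw=(4\pi R)^{-1/2}\exp\!\big(R^3/12-d^2/(4R)-Rd/2\big)$, I set $y-x=(\eta-\xi)/R$. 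Then $d^2/R=(\eta-\xi)^2/R^3$ blows up off the diagonal, so the kernel concentrates on $\xi=\eta$. On the diagonal I get
\[
\widetilde K_R(\xi,\xi)\approx R^{-1}(4\pi R)^{-1/2}e^{R^3/12}e^{-(\xi-c_R)}=e^{-\xi}\cdot R^{-3/2}(4\pi)^{-1/2}e^{c_R}e^{R^3/12}.
\]
With $c_R=\tfrac12\log(4\pi R^3)$ this equals $e^{-\xi}(1+o(1))$, because $e^{R^3/12}\to1$. Off the diagonal the kernel is Gaussian-like in $\xi-\eta$ with width $R^{3/2}\to0$ in $\xi$-units. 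For instance $\widetilde K_R(\xi,\eta)^2\to0$ pointwise off the diagonal, and the Gaussian mass shrinks.

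\textbf{Step 2.} Derive convergence of correlation functions. Expand each $n$-point correlation $\det[\widetilde K_R(\xi_i,\xi_j)]$. The diagonal product tends to $\prod e^{-\xi_j}$, while off-diagonal terms vanish for distinct points. This yields convergence of the integrals of correlation functions against compactly supported test functions, and the integrated off-diagonal contribution is $O(R^{3/2})$.

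\textbf{Step 3.} Pass to the top $k$ points. Here convergence on compacts is not enough: I need tail control at $+\infty$ to show that no points escape to the right. Since $\mathbb{E}\,\#\{\xi>a\}=\int_a^\infty\widetilde K_R(\xi,\xi)\,d\xi$, this reduces to a uniform bound $\widetilde K_R(\xi,\xi)\le Ce^{-\xi}$ for $\xi\ge a$. I would derive it from $1/(1+e^{-Rv})\le e^{Rv}$, which makes the approximation of Step 1 an exact upper bound on the diagonal. With this bound, Fredholm determinants/gap probabilities on $(a,\infty)$ converge, and by standard arguments (e.g.\ the criterion in \cite{ED24a}) the joint law of the $k$ largest points converges to $(\mathcal{G}_i)_{i=1}^k$. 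Equation~\eqref{Eq.ConvToGumbel} then follows from $\mathbb{P}(\mathcal{G}_1\le s)=\exp(-\int_s^\infty e^{-x}dx)$.

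\textbf{Main obstacle.} The hard step is controlling the error from replacing $1/(1+e^{-Rv})$ by $e^{Rv}$ uniformly, together with the trace-class/Hadamard bounds needed for Fredholm determinant convergence on $(a,\infty)$. For this I will bound the determinants by Hadamard's inequality using the diagonal domination $|\widetilde K_R(\xi,\eta)|\le \sqrt{\widetilde K_R(\xi,\xi)\widetilde K_R(\eta,\eta)}$, which holds since the kernel is positive semidefinite (it is symmetric with a positive weight). This gives summable dominating bounds $n^{n/2}C^n\big(\int_a^\infty e^{-\xi}\big)^n/n!$, and dominated convergence finishes the argument.
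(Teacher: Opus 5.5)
Your proposal is correct and follows essentially the same route as the paper: rescale the one-time determinantal process at $s=0$, obtain the pointwise kernel limit $e^{-\xi}\mathbf{1}\{\xi=\eta\}$, dominate the correlation functions using positive semidefiniteness, Hadamard's inequality and the exact diagonal bound $\widetilde K_R(\xi,\xi)\le e^{R^3/12-\xi}$ (from $1/(1+e^{-Rv})\le e^{Rv}$ and the Airy identity), and reuse that bound as the upper-tail input to the ordered-particle criteria of \cite{ED24a}. The only difference is that the paper simply cites \cite[Theorem~1.2]{Joh07} for the kernel limit, whereas you rederive it; your derivation works, since the replacement error $\int \frac{e^{2Rv}}{1+e^{Rv}}\Ai(x+v)\Ai(y+v)\,dv$ becomes $O(R^{3/2})e^{-(\xi+\eta)}$ after rescaling, by Cauchy--Schwarz and the same identity with $r=2R$.
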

\begin{remark}
For each fixed $k\in\mathbb{N}$, the convergence in Theorem~\ref{Thm.ConvergenceToGumbel} is weak convergence of random vectors in $\mathbb{R}^k$.
\end{remark}

%----------------------------------------------------------------------
%
% Section 1.3
%
%----------------------------------------------------------------------

\subsection{Proof ideas and paper outline}\label{Section1.3}

The weak convergence in Theorem~\ref{Thm.Convergence} is established by showing that the sequence of line ensembles $\{\mathcal{L}^N\}_{N\geq1}$ from Definition~\ref{Def.S1Scaling}
\begin{enumerate}
\item[(1)] converges in the finite-dimensional sense, and
\item[(2)] is tight in $C(\mathbb{N} \times [-R/2,R/2])$. 
\end{enumerate}
The starting point for the analysis of the measures in (\ref{Eq.S1ProbDistr}) is that they are special cases of the periodic Schur processes introduced in \cite{Bor07}. This observation was made in \cite{BO22}, where the measures in (\ref{Eq.S1ProbDistr}) were introduced and studied under the name ``trace--seam--volume'' distributions.

The periodic Schur processes are recalled in Section~\ref{Section3.1}, where the identification with our models is spelled out explicitly. The same section explains that if one performs a random shift 
$$\lambda_i^{(k)} \mapsto \lambda_i^{(k)} - i + S,$$ 
where $(\lambda^{(-M)}, \dots, \lambda^{(M)})$ is distributed according to (\ref{Eq.S1ProbDistr}) and $S$ is an independent discrete Gaussian random variable, then the elements of $\{(k, \lambda_i^{(k)}- i + S): M\geq k > -M, i \geq 1\}$ form a determinantal point process with an explicit correlation kernel. This determinantal structure is the key to establishing the two points above. In particular, it allows us to study the ensembles $\widehat{\mathcal{L}}^N$, constructed in the same way as $\mathcal{L}^N$ but using the shifted variables above. Under the scaling we are considering, the shift between $\mathcal{L}^N$ and $\widehat{\mathcal{L}}^N$ disappears and so it suffices to establish the two points above for $\widehat{\mathcal{L}}^N$.

The correlation kernel for the point process associated with $\widehat{\mathcal{L}}^N$ is derived in Proposition~\ref{Prop.ScaledKernel} by specializing and changing variables in Borodin's correlation kernel formula recalled in Proposition~\ref{Prop.SchurDPP}. The kernel has a double-contour integral representation suitable for asymptotic analysis using the method of steepest descent, and its limit is derived in Proposition~\ref{Prop.S4KernelLimit}. The asymptotic analysis of the kernel then feeds into the general framework for convergence of determinantal point processes developed in \cite{ED24a}, which ultimately allows us to establish the finite-dimensional convergence of $\widehat{\mathcal{L}}^N$.

\medskip 
The main challenge is therefore establishing tightness for $\widehat{\mathcal{L}}^N$. In previous related works, including \cite{DNV23} and \cite{D24b}, tightness of line ensembles arising from Schur processes was established by recognizing a discrete Gibbsian line ensemble structure. This structure identifies the curves of the discrete ensemble as trajectories of avoiding or interlacing random walk bridges, which can then be compared with regular Brownian bridges. In such a comparison, one leverages the well-understood structure of Brownian paths to argue that discrete paths have a well-behaved modulus of continuity along subsequences, ultimately establishing tightness.

Our model does admit a Gibbsian line ensemble structure, which we use to establish the Brownian Gibbs property of the periodic Airy line ensemble in Theorem~\ref{Thm.GibbsProperty}. However, as we explain next, this structure is not well suited for proving tightness by the usual techniques for Gibbsian line ensembles. The essential difficulty is that the curves of our discrete ensembles do not simply behave as locally interacting random walks; their law is further reweighted by a factor depending on the area enclosed by the paths, giving rise to what is sometimes called an {\em area tilt}. This area tilt can be traced to the fact that the periodic Schur process corresponding to our model is not homogeneous. Rather, the parameters of the specializations used to define the process form a geometric sequence in $q$. 

This area tilt causes a problem in the usual argument showing that the minimum of each curve $\widehat{\mathcal{L}}^N_k$ is not too low on a given interval $[\alpha, \beta]$, which roughly proceeds as follows:
\begin{itemize}
\item $\widehat{\mathcal{L}}^N_k(\alpha)$ and $\widehat{\mathcal{L}}^N_k(\beta)$ are not too low (which follows from finite-dimensional convergence);
\item pushing $\widehat{\mathcal{L}}^N_{k + i}$ for $i \geq 1$ down to $-\infty$ causes $\widehat{\mathcal{L}}^N_k$ to go down by stochastic monotonicity;
\item on $[\alpha, \beta]$, the resulting paths $\{\widehat{\mathcal{L}}^N_i\}_{i = 1}^k$ (after removing all others) behave like $k$ avoiding Brownian bridges with endpoints that are not too low, and their minimum is therefore unlikely to be too low;
\item as the finite-path ensemble is stochastically lower than the original one, the minimum of the original $\widehat{\mathcal{L}}^N_k$ is also not too low.
\end{itemize}
This argument completely breaks down in our case, since the moment we push the curve $\widehat{\mathcal{L}}^N_{k+1}$ to $-\infty$, the curve $\widehat{\mathcal{L}}^N_k$ goes down with it due to the exponential penalty of the area tilt.

It is worth mentioning that area-tilted Gibbsian line ensembles have recently been studied, see \cite{HKS25,Ser23} and the references therein, in situations where exact formulas are not available. However, in all of these works, the paths under consideration are assumed to be non-negative. Consequently, zero plays the role of a uniform barrier that prevents low minima from ever occurring in these ensembles, allowing one to avoid the aforementioned problem.

\medskip 

In view of the issues outlined above, we decided to approach the tightness problem differently, avoiding Gibbsian line ensembles altogether and relying instead entirely on the determinantal point process structure of our model. To this end, we develop a general tightness criterion based on fourth-moment estimates for a large family of determinantal point processes. This idea goes back to Johansson's work on the polynuclear growth model \cite{Kurt03}, where a fourth-moment argument is used to prove a functional limit theorem for the top curve of the associated line ensemble. In particular, he showed tightness of the top curve and convergence to the Airy process. We also mention that the fourth-moment method was used in \cite{Spohn} to show that the Airy process has a continuous modification.

The tightness criterion we develop is described in Sections~\ref{Section2.3} and~\ref{Section2.4}, and we view it as one of the main technical contributions of the paper. It applies to line ensembles obtained by interpolating ordered particles of space-time determinantal point processes on increasingly fine time meshes. Roughly speaking, it requires that the two-time correlation kernels satisfy certain decomposition and contraction identities, together with integrability and quantitative bounds on the corresponding difference kernels that yield a fourth-moment estimate for linear statistics. Combined with suitable one-time upper- and lower-tail bounds for the particle locations, these estimates imply tightness of the entire line ensemble in $C(\mathbb{N}\times[\alpha,\beta])$. We refer the interested reader to Section~\ref{Section2.5} for a detailed discussion of the assumptions underlying the criterion and a comparison with Johansson's argument in \cite{Kurt03}, including the main differences between the two approaches and a point in the latter argument that appears to require additional justification.

\medskip

In Sections~\ref{Section5.1} and~\ref{Section5.2}, we verify that our model satisfies the tightness criterion of Section~\ref{Section2.4}. In Section~\ref{Section5.3}, we combine this with the kernel asymptotics from Proposition~\ref{Prop.S4KernelLimit} to establish the weak convergence in Theorem~\ref{Thm.Convergence}. Theorem~\ref{Thm.DeterminantalStructure} is proved concurrently in the same section. 

In Section~\ref{Section6}, we turn to the Brownian Gibbs property. We first introduce Brownian Gibbsian line ensembles with area tilts and establish a characterization of the corresponding periodic Brownian Gibbs property. We then identify a discrete resampling structure for the random cylindric plane partition model in terms of certain geometric random walk loops and prove that, under our scaling, these loops converge to Brownian bridges. Combining this convergence with the discrete Gibbs property allows us to pass the resampling identity to the limit and prove Theorem~\ref{Thm.GibbsProperty}. In Section~\ref{Section7}, we establish the remaining properties of the periodic Airy line ensemble stated in Section~\ref{Section1.2}. We prove translation invariance using the determinantal structure of $\mathcal{A}^{\mathrm{per},R}$ and a gauge transformation of its correlation kernel, show that $\mathcal{A}^{\mathrm{per},R}$ converges to the usual Airy line ensemble as $R\to\infty$ by combining kernel convergence, one-time tail estimates, and the Brownian Gibbs property, and finally analyze the regime $R\to0$, where the appropriately rescaled one-time point process converges to a Poisson point process, yielding Theorem~\ref{Thm.ConvergenceToGumbel}.

%----------------------------------------------------------------------
%
% Section 1.4
%
%----------------------------------------------------------------------
\subsection{Related works and future directions}\label{Section1.4}

As mentioned earlier, the distribution (\ref{Eq.S1ProbDistr}) is the trace--seam--volume cylindric plane partition model studied in \cite{BO22}. In the notation of Theorem~\ref{Thm.Convergence}, \cite[Theorem~1(ii)]{BO22} proves that the cdf of $\mathcal{L}_1^N(0)$ converges to a Fredholm determinant involving the kernel $K^{\mathrm{per},R}$ from (\ref{Eq.PeriodicAiryKernel}) with $s = t$. Thus, their result establishes one-point convergence for the top curve at a single time. Theorem~\ref{Thm.Convergence} substantially strengthens this by proving weak convergence of the entire discrete line ensemble to $\mathcal{A}^{\mathrm{per},R}$ in $C(\mathbb{N}\times[-R/2,R/2])$.

As mentioned in Section~\ref{Section1.1}, upon setting $R=b$ and $u=t-s$, the kernel $K^{\mathrm{per},R}(s,x;t,y)$ from (\ref{Eq.PeriodicAiryKernel}) becomes the finite-temperature extended Airy kernel $\mathcal{K}^{\mathrm{edge}}_b(x,y,u)$ introduced in \cite[(99)]{le2017periodic} in the context of non-intersecting Ornstein--Uhlenbeck bridges, which also appears in the edge-scaling limit of the stationary cylindric Plancherel process; see \cite[Proposition~6.9]{BB19}. In particular, Theorem~\ref{Thm.DeterminantalStructure} shows that the top curve of $\mathcal{A}^{\mathrm{per},R}$ has the finite-dimensional distributions of the periodic Airy process from \cite{le2017periodic}, while the full collection of curves provides a continuous line-ensemble lift of this process.

When $s = t$, the kernel $K^{\mathrm{per},R}(s,x;s,y)$ in \eqref{Eq.PeriodicAiryKernel} has an earlier history. Johansson introduced this kernel in his study of a grand canonical version of the Moshe--Neuberger--Shapiro (MNS) random matrix model (see \cite[(1.14)]{Joh07} with $\alpha = R$) and proved that the resulting determinantal point process interpolates between the Poisson point process with intensity $e^{-x}\,dx$ and the Airy point process; see \cite[Proposition~1.1 and Theorem~1.2]{Joh07}. The corresponding edge crossover for the canonical MNS model was subsequently obtained in \cite{LW20}. The same kernel was also identified at the edge of a gas of non-interacting fermions in a harmonic trap at finite temperature in \cite{DLMS15,DLMS16}, where its connection with the one-point distribution of the KPZ equation at finite time was also observed.

As explained in Section~\ref{Section1.3}, the tightness criterion developed in Sections~\ref{Section2.3} and~\ref{Section2.4} is inspired by Johansson's fourth-moment argument for the discrete polynuclear growth model in \cite{Kurt03}. Our formulation isolates the structural and analytic properties of the correlation kernel that are needed for the argument and yields tightness of the entire line ensemble rather than only of the top curve. We refer to Section~\ref{Section2.5} for a detailed comparison with \cite{Kurt03}.

The finite-dimensional convergence arguments used throughout the paper rely on the general framework for determinantal point processes and ordered particle configurations developed in \cite{ED24a}. In particular, the passage from convergence of correlation kernels to convergence of the associated point processes, and then to convergence of the ordered particles, is based on \cite[Section 2]{ED24a}. We use this framework in the proofs of Theorems~\ref{Thm.Convergence} and~\ref{Thm.DeterminantalStructure}, in the large-period convergence of Theorem~\ref{Thm.ConvergenceToAiryLE}, and in the short-period limit of Theorem~\ref{Thm.ConvergenceToGumbel}. We mention that a Pfaffian analogue of this framework was subsequently developed in \cite[Section~5]{DY25} and used there in the construction of half-space Airy line ensembles.

Finally, the short-period limit in Theorem~\ref{Thm.ConvergenceToGumbel} is closely related to the high-temperature limit studied by Johansson. For the one-time finite-temperature Airy kernel, \cite[Theorem~1.2]{Joh07} proves the corresponding kernel limit to the Poisson kernel with intensity $e^{-x}\,dx$, while \cite[Theorem~1.3]{Joh07} deduces convergence of the rightmost particle to the Gumbel distribution. In the present setting, we prove convergence of the entire suitably centered and scaled ordered configuration
$\{\mathcal{A}^{\mathrm{per},R}_i(0)\}_{i\geq1}$ to the decreasingly ordered atoms of the Poisson point process with intensity $e^{-x}\,dx$. The Gumbel convergence of the top curve then follows as an immediate consequence.

\medskip

We conclude by discussing two possible directions for future research. The first concerns applications of the tightness criterion developed in Sections~\ref{Section2.3} and~\ref{Section2.4}. As explained in Section~\ref{Section2.5}, the structural assumptions underlying our fourth-moment estimates arise naturally for determinantal processes whose correlation kernels have an Eynard--Mehta-type representation in terms of transition kernels. We therefore expect the criterion to apply to a broad class of models, provided the required analytic estimates and one-time tail bounds can be established. Of particular interest are determinantal models for which no suitable Gibbsian line-ensemble structure is apparent, or for which the Gibbs resampling property involves area tilts. Natural candidates include Schur processes \cite{OR03}, $q^{\mathrm{vol}}$-weighted lozenge tilings \cite{Pet14}, and dimer models on rail yard graphs \cite{BBCCR17}. It would be interesting to investigate whether our approach can be used to establish tightness and construct continuous limiting line ensembles for such models without relying on Gibbs resampling arguments.

A second direction is to investigate the relationship between the periodic Airy line ensemble constructed in this paper and the periodic KPZ fixed point and directed landscape, recently constructed in \cite{ACS26,BLL26}. The connections between these objects are well understood in the full-space setting, and establishing their periodic counterparts would be an important step toward a broader understanding of periodic KPZ universality. We have made some attempts to identify such connections, but have not yet been successful, and hope to pursue this question in future work.

%----------------------------------------------------------------------
%
% Section 1.5
%
%----------------------------------------------------------------------
\subsection{Use of AI}
We used ChatGPT, developed by OpenAI, primarily for proofreading and improving the exposition of the paper. The only substantive use beyond this concerns the tightness criterion developed in Sections~\ref{Section2.3} and~\ref{Section2.4}. Our original proof was considerably longer and followed Johansson's argument in \cite{Kurt03} more closely. ChatGPT suggested a change of coordinates under which several error terms in the relevant moment expansions vanish, substantially simplifying the argument and shortening the proof. The authors then completely rewrote the proof and checked all details. They take full responsibility for the mathematical content of the paper.

\subsection*{Acknowledgments}
E.D. was partially supported by Simons Foundation International through Simons Award TSM-00014004. The authors gratefully acknowledge the hospitality and support of the American Institute of Mathematics during the workshops ``Algebraic and combinatorial structures in exactly solvable models'' and ``Gibbsian line ensembles,'' where part of this work was completed.
Z.Y. thanks Ivan Corwin for helpful discussions and was partially supported by the Miller Institute for Basic Research in Science at the University of California, Berkeley, Ivan Corwin's NSF grants DMS-1811143, DMS-2246576, Simons Foundation Grant 929852, and the Fernholz Foundation's ``Summer Minerva Fellows'' program.

%----------------------------------------------------------------------
%
% Section 2
%
%----------------------------------------------------------------------
\section{Determinantal point processes}\label{Section2}
In this section, we introduce the frameworks of determinantal point processes and line ensembles and prove two auxiliary results that will be used later in the paper. In Section~\ref{Section2.1}, we recall the basic definitions and notation for determinantal point processes, and in Section~\ref{Section2.2}, we introduce line ensembles. In Section~\ref{Section2.3}, we establish a fourth moment bound for differences of linear statistics of determinantal point processes restricted to two vertical slices. In Section~\ref{Section2.4}, we use this bound to derive a general tightness criterion for line ensembles obtained by linearly interpolating the particle locations of determinantal point processes on increasingly fine time meshes. Finally, in Section~\ref{Section2.5}, we compare these results with the functional tightness argument of Johansson~\cite{Kurt03} and discuss the role of the assumptions introduced in Sections~\ref{Section2.3} and \ref{Section2.4}.

%----------------------------------------------------------------------
%
% Section 2.1
%
%----------------------------------------------------------------------
\subsection{Definitions and notation}\label{Section2.1} In this section, we introduce the core notation, definitions, and results about determinantal point processes that we require in the rest of the paper. Our exposition follows \cite[Section 2]{ED24a}, to which we refer for additional details.

Throughout this section we assume that $E$ is a closed subset of
$\mathbb{R}^k$, with the inherited topology and Borel $\sigma$-algebra $\mathcal E$, for some fixed $k\in\mathbb{N}$. We also fix a
probability space $(\Omega,\mathcal{F},\mathbb{P})$.

We say that $M$ is a \emph{random measure} on $(E,\mathcal{E})$ if
$M:\Omega\times\mathcal{E}\rightarrow[0,\infty]$ is such that
$M_\omega=M(\omega,\cdot)$ is a measure on $\mathcal{E}$ for each
$\omega\in\Omega$ and $M(\cdot,A)=M(A)$ is an extended random variable
for each $A\in\mathcal{E}$. We also define the \emph{mean} of $M$ to be the measure $\mu$ on $(E,\mathcal{E})$ given by
\[
\mu(A)=\mathbb{E}[M(A)],\qquad A\in\mathcal{E}.
\]

We say that $M$ is \emph{locally finite} or \emph{locally bounded} if,
for each bounded Borel set $B\in\mathcal{E}$, we have that $M(B)$ is a
real random variable. In particular, if $M$ is locally finite, then
$M_\omega\in\mathcal{M}_E$, the space of locally bounded measures on
$E$ equipped with the vague topology and its Borel $\sigma$-algebra
$\mathcal B(\mathcal M_E)$. By \cite[Chapter 4]{kallenberg2017random},
$\mathcal M_E$ is Polish, so a locally finite random measure may be
viewed as a random element of
$(\mathcal M_E,\mathcal B(\mathcal M_E))$ in the sense of
\cite[Section 3]{Billing}.

We say that a random element $M$ in
$(\mathcal M_E,\mathcal B(\mathcal M_E))$ is a
\emph{point process} if
\[
M(\omega,B)\in\mathbb{Z}_{\geq 0}
\]
for each $\omega\in\Omega$ and each bounded Borel set
$B\in\mathcal{E}$.

Suppose that $(\bar E,\bar{\mathcal E})$ is a measurable space such that
$E\subseteq\bar E$ and $\mathcal E\subseteq\bar{\mathcal E}$. We say
that a sequence of random elements $\{X_n\}_{n\geq 1}$ in
$(\bar E,\bar{\mathcal E})$, all defined on
$(\Omega,\mathcal{F},\mathbb{P})$, \emph{forms} the random measure $M$
if
\begin{equation}\label{FormX}
M(\omega,A)
=
\sum_{n\geq 1}{\bf 1}\{X_n(\omega)\in A\}
\end{equation}
for each $\omega\in\Omega$ and $A\in\mathcal E$. Unless otherwise
specified, we assume that
\[
\bar E=E\cup\{\partial\},
\qquad
\bar{\mathcal E}=\sigma(\mathcal E,\{\partial\}),
\]
where $\partial$ is an extra point added to $E$. The addition of $\partial$ allows us to represent random measures having a finite (possibly random) number of atoms using an infinite sequence of random elements.

\begin{lemma}\label{Lem.Formed}
Suppose that $M$ is a point process on $(E,\mathcal E)$. There exists a
sequence of random elements $\{X_n\}_{n\geq 1}$ in
$(\bar E,\bar{\mathcal E})$ that satisfies \eqref{FormX}.
\end{lemma}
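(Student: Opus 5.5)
We build the sequence $\{X_n\}$ by enumerating the atoms of $M_\omega$ through a fixed countable partition of $E$ into bounded Borel pieces, and then check measurability.

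\emph{Partition and enumeration rule.} Let
\[
B_j=E\cap\{x\in\mathbb{R}^k: j-1\leq |x|<j\},\qquad j\geq 1.
\]
These are disjoint bounded Borel sets covering $E$. Since $M$ is a point process, $N_j(\omega):=M(\omega,B_j)$ is a $\mathbb{Z}_{\geq0}$-valued random variable for each $j$. For each $\omega$, the restriction of $M_\omega$ to $B_j$ is a locally finite integer-valued measure with finite total mass, so it is a finite sum of unit Dirac masses at points of $B_j$, counted with multiplicity. To fix an ordering, list these atoms in lexicographic order of their coordinates in $\mathbb{R}^k$, repeating each according to its multiplicity. Call the resulting points $Y_{j,1}(\omega),\dots,Y_{j,N_j(\omega)}(\omega)$, and set $Y_{j,i}(\omega)=\partial$ for $i>N_j(\omega)$. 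Finally, fix a bijection $\mathbb{N}\to\mathbb{N}^2$, $n\mapsto(j_n,i_n)$, and define $X_n=Y_{j_n,i_n}$. By construction, $\sum_n{\bf 1}\{X_n(\omega)\in A\}=\sum_j M_\omega(A\cap B_j)=M_\omega(A)$ for every $A\in\mathcal{E}$, which is \eqref{FormX}.

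\emph{Measurability (the main obstacle).} It remains to show that each $Y_{j,i}$ is $\mathcal F$-measurable into $(\bar E,\bar{\mathcal E})$. It suffices to prove that $\{Y_{j,i}\le a\}$, in the lexicographic sense, is measurable for each $a$.

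Start with the simplest case $k=1$. There the lexicographic order is the usual order, and
\[
\{Y_{j,i}\le a,\ Y_{j,i}\neq\partial\}=\{M(B_j\cap(-\infty,a])\geq i\},
\]
which is measurable. Also $\{Y_{j,i}=\partial\}=\{N_j<i\}$.

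For general $k$, proceed coordinate by coordinate. The first coordinate of $Y_{j,i}$ is the $i$-th order statistic of the first coordinates of the atoms, so it is measurable by the same count argument. Conditionally on the first coordinate being equal to $c$, one would like to count atoms in $B_j\cap(\{c\}\times\mathbb{R}^{k-1})$. Here $c$ is random, so this is not immediate. Two routes are available:
\begin{itemize}
\item express the relevant events through countable unions over rationals of dyadic-cube counts, using that the atoms lie in a discrete set; or
\item avoid the coordinatewise ordering altogether by working in $\omega$-wise dyadic cubes.
\end{itemize}

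The second route is cleaner, and it is the one I would try first. Fix a nested sequence of dyadic partitions $\mathcal D_m$ of $B_j$ with ordered cells, where the cells of $\mathcal D_{m+1}$ refine those of $\mathcal D_m$ in a way consistent with the ordering. Define $Y^{(m)}_{j,i}$ to be the center of the cell in $\mathcal D_m$ containing the $i$-th atom in this cell ordering. Each $Y^{(m)}_{j,i}$ takes countably many values, and each event $\{Y^{(m)}_{j,i}=\text{center of } D\}$ is a finite Boolean combination of events of the form $\{M(D')\ge r\}$. Hence $Y^{(m)}_{j,i}$ is measurable.

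As $m\to\infty$, the cells shrink and the orderings are consistent, so $Y^{(m)}_{j,i}\to Y'_{j,i}$ pointwise, where $Y'_{j,i}$ is an enumeration of the atoms of $M_\omega$ in $B_j$ (using the ordering induced by the cells rather than the lexicographic one). A pointwise limit of measurable maps into a metric space is measurable, and the event $\{Y'_{j,i}=\partial\}=\{N_j<i\}$ is measurable. Therefore $Y'_{j,i}$ is measurable.

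Using the $Y'_{j,i}$ in place of the $Y_{j,i}$ in the definition of $X_n$ still satisfies \eqref{FormX}, since only the enumeration of the atoms has changed. This completes the proof.
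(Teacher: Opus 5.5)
Your final argument, through the nested dyadic cells, is correct and gives a complete proof. The paper gives no argument of its own here; it simply quotes \cite[Lemma~2.1]{ED24a}. Your route is self-contained and shows where measurability comes from. It uses only two inputs: the count variables $M(\cdot,D\cap B_j)$ over the countably many dyadic cells $D$, and the fact that $M_\omega$ is $\mathbb{Z}_{\geq 0}$-valued on bounded sets for every $\omega$. The second input is what lets \eqref{FormX} hold for every $\omega$, as the statement requires, rather than only almost surely. The same argument works on any space that admits nested finite partitions of bounded sets with mesh tending to zero. The citation buys only brevity.

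A few details in your write-up should be fixed.

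\begin{itemize}
\item The reduction ``it suffices to prove that $\{Y_{j,i}\le a\}$, in the lexicographic sense, is measurable for each $a$'' is false for $k\geq 2$. Lexicographic rays do not generate the Borel $\sigma$-algebra of $\mathbb{R}^k$. Consider the Borel sets $S$ whose sections $\{y\in\mathbb{R}^{k-1}:(b,y)\in S\}$ equal $\emptyset$ or $\mathbb{R}^{k-1}$ for all but countably many $b$. These form a $\sigma$-algebra that contains every lexicographic ray but not $\{x_2\leq c\}$. Your final argument never uses this reduction, so delete it; the coordinatewise route would instead check each coordinate.
\item ``Consistent'' must mean the hierarchical order: if $D$ precedes $D'$ in $\mathcal{D}_m$, then every subcell of $D$ precedes every subcell of $D'$ in $\mathcal{D}_{m+1}$. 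Ordering each level lexicographically by cube corners fails this when $k\geq 2$. With the hierarchical order, for fixed $\omega$ the induced order on the finitely many atoms in $B_j$ stops changing once distinct atoms occupy distinct cells. That is the precise reason $Y^{(m)}_{j,i}$ converges pointwise.
\item Centers of dyadic cubes need not lie in $E$. Either treat $Y^{(m)}_{j,i}$ as $\mathbb{R}^k\cup\{\partial\}$-valued, or choose a point of $D\cap B_j$ instead of the center. This is harmless, since the limit lies in $E$ and $\mathcal{E}$ is the trace $\sigma$-algebra.
\end{itemize}
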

\begin{proof} This is \cite[Lemma 2.1]{ED24a}.
\end{proof}

Suppose that $M$ is a point process on $(E,\mathcal E)$ and let
$\{X_n\}_{n\geq 1}$ be as in Lemma~\ref{Lem.Formed}. With this data, for any
$n\in\mathbb N$ we define a new point process $M_n$ on
$(E^n,\mathcal E^{\otimes n})$ by
\begin{equation}\label{Eq.RM1}
M_n(\omega,A)
=
\sum_{\substack{i_1,\dots,i_n=1\\
|\{i_1,\dots,i_n\}|=n}}^\infty
{\bf 1}
\bigl\{
(X_{i_1}(\omega),\dots,X_{i_n}(\omega))\in A
\bigr\},
\end{equation}
for $\omega\in\Omega$ and $A\in\mathcal E^{\otimes n}$. We denote the
mean of $M_n$ by $\mu_n$, which is a measure on $(E^n,\mathcal E^{\otimes n})$.

\begin{definition}\label{CorrelationFunctions}
Suppose that $\lambda$ is a locally finite measure on
$(E,\mathcal E)$, called a \emph{reference measure}. Suppose that $M$
is a point process on $(E,\mathcal E)$ and let $\mu_n$ be as above. If $\mu_n$ is absolutely continuous with respect to $\lambda^n$, the
product measure of $n$ copies of $\lambda$ on $E^n$, then we call its
density $\rho_n$ the \emph{$n$-th correlation function of $M$} with
respect to the reference measure $\lambda$. Specifically,
$\rho_n:E^n\rightarrow[0,\infty]$ is
$\mathcal E^{\otimes n}$-measurable and satisfies
\begin{equation}\label{RN1}
\mu_n(A)
=
\int_A
\rho_n(x_1,\dots,x_n)\,\lambda^n(dx)
\end{equation}
for all $A\in\mathcal E^{\otimes n}$.
\end{definition}

\begin{definition}\label{Def.DPP}
Suppose that $M$ is a point process on $(E,\mathcal E)$ and $\lambda$
is a locally finite measure on $(E,\mathcal E)$. Suppose further that
$K:E\times E\rightarrow\mathbb C$ is a locally bounded measurable
function, called the \emph{correlation kernel}. We say that $M$ is a
\emph{determinantal point process} with correlation kernel $K$ and
reference measure $\lambda$ if the $n$-th correlation function
$\rho_n$ as in Definition~\ref{CorrelationFunctions} exists for each
$n\geq 1$ and
\begin{equation}\label{DP1}
\rho_n(x_1,\dots,x_n)
=
\det\left[K(x_i,x_j)\right]_{i,j=1}^n.
\end{equation}
\end{definition}
\begin{remark}\label{Rem.KernelSupport}
In some applications below, we define $K(x,y)$ only for $x,y \in F$ as opposed to $x,y \in E$, where $\mathrm{supp}(\lambda) \subseteq F$. As explained in \cite[Remark~2.12]{ED24a}, the values of $K(x,y)$ for which $x$ or $y$ lies outside the support of $\lambda$ are immaterial. More specifically, if $M$ is as in Definition \ref{Def.DPP} and $\widetilde{K}: E\times E\rightarrow\mathbb C$ is another locally bounded measurable function such that $\widetilde{K}(x,y) = K(x,y)$ for all $x,y \in \mathrm{supp}(\lambda)$, then $M$ is also a determinantal point process with reference measure $\lambda$ and correlation kernel $\widetilde{K}$, as follows from \cite[(2.13) and Lemma 2.8]{ED24a}. For the sake of concreteness, whenever we define $K(x,y)$ only for $x,y \in F$ below, we set $K(x,y)=0$ whenever $\{x,y\} \not \subseteq F$.
\end{remark}

We end this section by introducing some additional notation and making a few observations. If $\nu$ is a measure on $(E,\mathcal{E})$ and $f:E\to[0,\infty)$ is measurable, we write
\begin{equation}\label{Eq.S2Pair}
\langle \nu,f\rangle:=\int_E f(x)\,\nu(dx).
\end{equation}
In addition, if $f:E\to\mathbb{R}$ is measurable and dominated by a $\nu$-integrable nonnegative function, we use the same notation for the resulting well-defined integral.

If $M_n$ and $\mu_n$ are as above and $f:E^n\to[0,\infty)$ is measurable, then, by approximating $f$ from below by simple functions and applying the monotone convergence theorem, $\langle M_n,f\rangle$ is an extended real-valued random variable, and
\begin{equation}\label{Eq.S2PairMean}
\mathbb{E}\!\left[\langle M_n,f\rangle\right]=\langle\mu_n,f\rangle.
\end{equation}
If $f:E^n\to\mathbb{R}$ is measurable and dominated by a $\mu_n$-integrable nonnegative function, then $\langle M_n,f\rangle$ is almost surely finite, integrable, and satisfies \eqref{Eq.S2PairMean}. In particular, if $M$ is a determinantal point process with reference measure $\lambda$ and correlation kernel $K$, then
\begin{equation}\label{Eq.S2PairMeanDPP}
\mathbb{E}\!\left[\langle M_n,f\rangle\right]
=
\int_{E^n}
\det\!\left[K(x_i,x_j)\right]_{i,j=1}^n
f(x)\,\lambda^n(dx).
\end{equation}

%----------------------------------------------------------------------
%
% Section 2.2
%
%----------------------------------------------------------------------
\subsection{Line ensembles}\label{Section2.2}
In this section, we give a brief introduction to line ensembles. Our notation and presentation largely follow \cite[Section~2]{DEA21}, and we refer the interested reader to this paper for additional details.

Given two integers $p \leq q$, we let $\llbracket p, q \rrbracket$ denote the set $\{p, p+1, \dots, q\}$. Given an interval $\Lambda \subseteq \mathbb{R}$ we endow it with the subspace topology of the usual topology on $\mathbb{R}$. We let $(C(\Lambda), \mathcal{C})$ denote the space of continuous functions $f: \Lambda \rightarrow \mathbb{R}$ with the topology of uniform convergence over compact sets, see \cite[Chapter 7, Section 46]{Munkres}, and Borel $\sigma$-algebra $\mathcal{C}$. Given a set $\Sigma \subseteq \mathbb{Z}$ we endow it with the discrete topology and denote by $\Sigma \times \Lambda$ the set of all pairs $(i,x)$ with $i \in \Sigma$ and $x \in \Lambda$ with the product topology. We also denote by $\left(C (\Sigma \times \Lambda), \mathcal{C}_{\Sigma}\right)$ the space of continuous functions on $\Sigma \times \Lambda$ with the topology of uniform convergence over compact sets and Borel $\sigma$-algebra $\mathcal{C}_{\Sigma}$. By
\cite[Lemma~2.2]{DEA21}, this topology is induced by a complete and separable metric. In particular, $C(\Sigma\times\Lambda)$ is a Polish space.

The following defines the notion of a line ensemble.
\begin{definition}\label{Def.LineEnsembles}
Let $\Sigma \subseteq \mathbb{Z}$ and $\Lambda \subseteq \mathbb{R}$ be an interval. A {\em $\Sigma$-indexed line ensemble $\mathcal{L}$} is a random variable defined on a probability space $(\Omega, \mathcal{F}, \mathbb{P})$ that takes values in $\left(C (\Sigma \times \Lambda), \mathcal{C}_{\Sigma}\right)$. Intuitively, $\mathcal{L}$ is a collection of random continuous curves (sometimes referred to as {\em lines}), indexed by $\Sigma$,  each of which maps $\Lambda$ to $\mathbb{R}$. We will often slightly abuse notation and write $\mathcal{L}: \Sigma \times \Lambda \rightarrow \mathbb{R}$, even though it is not $\mathcal{L}$ which is such a function, but $\mathcal{L}(\omega)$ for every $\omega \in \Omega$. For $i \in \Sigma$ we write $\mathcal{L}_i(\omega) = (\mathcal{L}(\omega))(i, \cdot)$ for the curve of index $i$ and note that the latter is a map $\mathcal{L}_i: \Omega \rightarrow C(\Lambda)$, which is $\mathcal{F}/\mathcal{C}$-measurable. 
\end{definition}

As mentioned above, $C(\Sigma \times \Lambda)$ is a Polish space, and so a line ensemble $\mathcal{L}$ is just a random element of $C(\Sigma \times \Lambda)$ in the sense of \cite[Section 3]{Billing}.

We say that a $\Sigma$-indexed line ensemble $\mathcal{L}$ on $\Lambda$ is {\em ordered} if, almost surely, $\mathcal{L}_i(t)\geq\mathcal{L}_j(t)$ for all $i,j\in\Sigma$ with $i<j$ and all $t\in\Lambda$. We say that $\mathcal{L}$ is {\em non-intersecting} if, almost surely, $\mathcal{L}_i(t)>\mathcal{L}_j(t)$ for all $i,j\in\Sigma$ with $i<j$ and all $t\in\Lambda$. 

For $\Sigma'\subseteq\Sigma$ and a subinterval $\Lambda'\subseteq\Lambda$, we write $\mathcal{L}|_{\Sigma'\times \Lambda'}$ for the restriction of $\mathcal{L}$ to $\Sigma'\times \Lambda'$. For $i\in\Sigma$, we similarly write $\mathcal{L}_i|_{\Lambda'}$ for the restriction of $\mathcal{L}_i$ to $\Lambda'$. Notice that $\mathcal{L}|_{\Sigma'\times \Lambda'}$ is a random element of $C(\Sigma'\times \Lambda')$, while $\mathcal{L}_i|_{\Lambda'}$ is a random element of $C(\Lambda')$.

%----------------------------------------------------------------------
%
% Section 2.3
%
%----------------------------------------------------------------------
\subsection{A fourth moment bound}\label{Section2.3} We continue with the notation from Section \ref{Section2.1}. The goal of this section is to derive a fourth moment bound for determinantal point processes on $\mathbb{R}^2$; see Lemma \ref{Lem.FourthMomentBound} below. We first introduce the assumptions under which this bound will be proved. Throughout the section, the point processes under consideration are supported on two vertical slices of $\mathbb{R}^2$, that is, on a set of the form $\{u,v\}\times\mathbb{R}$. As explained in Section \ref{Section2.4}, more general determinantal point processes naturally give rise to point processes of this form by restriction. Consequently, Lemma \ref{Lem.FourthMomentBound} can be applied to a broader class of determinantal point processes on $\mathbb{R}^2$.

Fix $u,v\in\mathbb{R}$ with $u<v$, and let $\lambda$ be a locally
finite measure on $\mathbb{R}$. Suppose that $M$ is a determinantal point process on $\mathbb{R}^2$ with
reference measure $\mu_{\{u,v\}}\times\lambda$ and correlation
kernel $K$, where $\mu_{\{u,v\}}$ is the counting measure from Definition \ref{Def.S1Measures}. For $a\in\{u,v\}$, define the random measure $M^a$ on
$\mathbb R$ by
\begin{equation}\label{Eq.FourthMomentTimeSlices}
M^a(A)=M(\{a\}\times A),
\qquad A\subseteq\mathbb{R}\text{ Borel},
\end{equation}
and write
\begin{equation}\label{Eq.FourthMomentKernelNotation}
K_{aa'}(x,y)=K(a,x;a',y),
\qquad a,a'\in\{u,v\},\quad x,y\in\mathbb{R}.
\end{equation}
Finally, fix a bounded Borel-measurable $f: \mathbb{R} \rightarrow \mathbb{R}$. We make the following assumptions.

\medskip
\noindent
{\bf \raggedleft Assumption A.1.}
\textit{Kernel decomposition and contraction identities.}

There exist locally bounded measurable functions $\phi_{uv}:\mathbb{R}^2\to\mathbb{C}$
and
\begin{equation}\label{Eq.FourthMomentModifiedKernels}
\widetilde K_{aa'}:\mathbb{R}^2\to\mathbb{C},
\qquad a,a'\in\{u,v\},
\end{equation}
such that
\begin{equation}\label{Eq.FourthMomentKernelDecomposition}
K_{aa'}(x,y) = \widetilde K_{aa'}(x,y) - {\bf 1}\{a=v,\ a'=u\}\phi_{uv}(y,x).
\end{equation}
Moreover,
\begin{equation}\label{Eq.FourthMomentContractionUU}
\widetilde K_{uu}(x,y)
=
\int_{\mathbb{R}}
\widetilde K_{uv}(x,z)\phi_{uv}(y,z)\,\lambda(dz),
\end{equation}
\begin{equation}\label{Eq.FourthMomentContractionVV}
\widetilde K_{vv}(x,y)
=
\int_{\mathbb{R}}
\phi_{uv}(z,x)\widetilde K_{uv}(z,y)\,\lambda(dz),
\end{equation}
\begin{equation}\label{Eq.FourthMomentContractionVU}
\widetilde K_{vu}(x,y)=\int_{\mathbb{R}^2}\phi_{uv}(z,x)\widetilde K_{uv}(z,w)\phi_{uv}(y,w)\,\lambda(dz)\lambda(dw),
\end{equation}
for every $x,y\in\mathbb{R}$, and all the integrals in
\eqref{Eq.FourthMomentContractionUU}--\eqref{Eq.FourthMomentContractionVU}
are absolutely convergent.

\medskip
\noindent
{\bf \raggedleft Assumption A.2.}
\textit{Absolute integrability.}

For every $1\leq m\leq4$ and all positive integers
$k_1,\ldots,k_m$ satisfying $k_1+\cdots+k_m=4$, we have
\begin{equation}\label{Eq.FourthMomentAbsoluteIntegrability}
\begin{split}
&\sum_{\sigma\in\mathfrak{S}_m}
\int_{\mathbb{R}^{2m}}
\prod_{i=1}^m
\Bigl[
|\phi_{uv}(z_i,x_i)|
\bigl(|f(z_i)|+|f(x_i)|\bigr)^{k_i}
|\widetilde K_{uv}(z_i,x_{\sigma(i)})|
\Bigr]
\prod_{i=1}^m\lambda(dx_i)\lambda(dz_i)
<\infty,
\end{split}
\end{equation}
where $\mathfrak{S}_m$ denotes the set of permutations of
$\{1,\ldots,m\}$.

\medskip
\noindent
{\bf \raggedleft Assumption A.3.}
\textit{Bounds on the difference kernels.}

For $1\leq k\leq4$, define
\begin{equation}\label{Eq.FourthMomentDifferenceKernels}
L_k(x,y)
=
\int_{\mathbb{R}}
\phi_{uv}(z,x)
\bigl(f(z)-f(x)\bigr)^k
\widetilde K_{uv}(z,y)\,\lambda(dz).
\end{equation}
There exist nonnegative measurable functions $U,V:\mathbb{R}\to[0,\infty)$
and finite nonnegative constants $B_1,\ldots,B_4,S$ such that
\begin{equation}\label{Eq.FourthMomentEnvelopeIntegrability}
\int_{\mathbb{R}}U(x)V(x)\,\lambda(dx)\leq S
\end{equation}
and
\begin{equation}\label{Eq.FourthMomentDifferenceKernelBounds}
|L_k(x,y)|
\leq
B_k(v-u)^{k/2}U(x)V(y)
\end{equation}
for every $x,y\in\mathbb{R}$ and $1\leq k\leq4$.

\medskip
With the above notation in place, we can state the main result of this section.
\begin{lemma}\label{Lem.FourthMomentBound}
Under Assumptions A.1--A.3 above, the following statements hold. 
\begin{itemize}
\item[(a)] We have $\mathbb{E}\left[\langle M^u,|f|\rangle^4\right] < \infty$ and $\mathbb{E}\left[\langle M^v,|f|\rangle^4\right] < \infty$.
In particular, $\langle M^u, f\rangle$ and $\langle M^v,f\rangle$ are well-defined random variables with finite fourth moments.
\item[(b)] We have
\begin{equation}\label{Eq.FourthMomentBound}
\begin{split}
\mathbb{E}\left[\left(
\langle M^u,f\rangle-\langle M^v,f\rangle
\right)^4\right]
\leq{}&(v-u)^2
\Bigl[ B_4S+\bigl(8B_1B_3+6B_2^2\bigr)S^2+36B_1^2B_2S^3+24B_1^4S^4
\Bigr].
\end{split}
\end{equation}
\end{itemize}
\end{lemma}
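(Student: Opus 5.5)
The plan is to compute $\mathbb{E}[X^4]$ exactly, where $X=\langle M^u,f\rangle-\langle M^v,f\rangle$. I would write $X=\langle M,F\rangle$ with $F(u,x)=f(x)$ and $F(v,x)=-f(x)$, and expand $X^4$ over the factorial measures $M_m$, $m=1,\dots,4$. For each composition $k_1+\cdots+k_m=4$, the combinatorial factor is a Stirling-type coefficient, and the term is $\langle M_m,\prod_i F(y_i)^{k_i}\rangle$. By \eqref{Eq.S2PairMeanDPP} this equals $\int \det[K(y_i,y_j)]\prod_i F(y_i)^{k_i}$ over $(\{u,v\}\times\mathbb{R})^m$.

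Part (a) comes first and serves as the justification step. The same expansion with $|f|$ in place of $F$, restricted to a single slice, expresses $\mathbb{E}[\langle M^a,|f|\rangle^4]$ through $\int|\det[K_{aa}]|\prod|f|^{k_i}$. Expanding the determinant over permutations and inserting the contraction identities \eqref{Eq.FourthMomentContractionUU}--\eqref{Eq.FourthMomentContractionVV} for $K_{uu}=\widetilde K_{uu}$ and $K_{vv}=\widetilde K_{vv}$, each term is dominated by an integral of exactly the form in \eqref{Eq.FourthMomentAbsoluteIntegrability}. For the $v$-slice we use the variables $x_i$; for the $u$-slice we use the variables $z_i$, with $f(z_i)$ controlled by $|f(z_i)|+|f(x_i)|$. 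This gives finiteness of both fourth moments. The same domination then makes every Fubini interchange below legitimate.

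For part (b), I would expand $\det[K(y_i,y_j)]$ over permutations $\sigma$ and sum over the slice labels $a_i\in\{u,v\}$. Each entry $K_{a_ia_{\sigma(i)}}$ is rewritten via \eqref{Eq.FourthMomentKernelDecomposition} and \eqref{Eq.FourthMomentContractionUU}--\eqref{Eq.FourthMomentContractionVU} in the unified form $\int\phi_{uv}(z,\cdot)\widetilde K_{uv}(z,w)\phi_{uv}(\cdot,w)$, where the outer $\phi$ factors are dropped when a slice is $u$ in the appropriate slot. The $-\phi_{uv}(y,x)$ correction arises for $(v,u)$ entries. This is the change of coordinates: attach to each point $i$ a pair $(z_i,x_i)$, with $z_i$ the "$u$-coordinate" and $x_i$ the "$v$-coordinate" linked by $\phi_{uv}(z_i,x_i)$.

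After summing over the labels, the choice $u$ versus $v$ for point $i$ contributes $f(z_i)$ versus $-f(x_i)$. The $\phi$-correction terms are designed to cancel the cross contributions, so that the weight becomes $(f(z_i)-f(x_i))^{k_i}$. I expect the result to be
\[
\mathbb{E}[X^4]=\sum_{m}\sum_{k_1+\cdots+k_m=4}c_{k}\sum_{\sigma\in\mathfrak{S}_m}\operatorname{sgn}(\sigma)\int\prod_{i=1}^m L_{k_i}(x_i,x_{\sigma(i)})\,\lambda(dx_i),
\]
with combinatorial constants $c_k$, so that the trace products of the difference kernels \eqref{Eq.FourthMomentDifferenceKernels} appear. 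Equivalently, $\mathbb{E}[e^{tX}]=\det(I+\sum_k t^kL_k/k!)$, and the fourth moment is its $t^4$ coefficient times $4!$.

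The final estimate then follows from \eqref{Eq.FourthMomentDifferenceKernelBounds}. Any product along the cycles of $\sigma$ is bounded by $\prod_i B_{k_i}(v-u)^{k_i/2}$ times $\prod_{\text{cycles}}\int\prod U(x_i)V(x_{\sigma(i)})$. Each such cycle integral factorizes into $\prod_i\int UV\,d\lambda\le S^m$. Since $\sum k_i=4$, the powers of $(v-u)$ combine to $(v-u)^2$. Counting the compositions and permutations produces the coefficients $B_4S$, $8B_1B_3S^2+6B_2^2S^2$, $36B_1^2B_2S^3$ and $24B_1^4S^4$, which I would check by direct enumeration.

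The main obstacle is the algebraic step in part (b): organizing the sum over slice labels and permutations so that the decomposition and contraction identities telescope into the $(f(z)-f(x))^k$ weights. One has to track the sign of the $-\phi_{uv}(y,x)$ terms and verify that no uncancelled cross terms remain. I would first verify this for $m=1,2$ by hand, and then argue cycle by cycle for general $m$.
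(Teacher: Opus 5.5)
Your part (a) is sound. Since $\det[K_{aa}(y_i,y_j)]=\rho_\ell\geq0$, expanding it over permutations and inserting the contraction identity for $K_{uu}=\widetilde K_{uu}$ or $K_{vv}=\widetilde K_{vv}$ bounds each term by an integrand of Assumption~A.2 after relabeling the auxiliary variables. This is a slightly more direct route than the paper's joint truncation argument. Your target identity for $\mathbb{E}[X^4]$ in terms of the $L_{k_i}$ is correct, and so is the final estimate from A.3, including the coefficients. The gap is that this identity is the whole content of part (b), and the mechanism you describe for it does not produce it.

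\textbf{Why the per-point label sum fails.} All $k$ factors attached to a point see the same slice label. Summing over that label therefore gives $f(z)^k+(-f(x))^k$, not $(f(z)-f(x))^k$. The binomial cross terms do not arise from any cancellation. They are generated by the correction terms, and these change the number of points. Suppose $\pi$ has an arrow $i\to\pi(i)$ with $a_i=v$ and $a_{\pi(i)}=u$, and you select $-\phi_{uv}(y_{\pi(i)},y_i)$ from $K_{vu}(y_i,y_{\pi(i)})$. Then the two points merge into one pair $(z,x)=(y_{\pi(i)},y_i)$, linked by $\phi_{uv}(z,x)$ and carrying weight $f(z)^{|\mathcal B_{\pi(i)}|}(-f(x))^{|\mathcal B_i|}$. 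This is a term of the shape belonging to the coarser partition in which $\mathcal B_i\cup\mathcal B_{\pi(i)}$ is a single block.

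Already for $\mathbb{E}[X^2]$ you can see this. The one-point term gives $\iint\phi_{uv}(z,x)\widetilde K_{uv}(z,x)\bigl(f(z)^2+f(x)^2\bigr)\lambda(dz)\lambda(dx)$. The missing $-2f(z)f(x)$ comes from the correction parts of the two-point transposition terms with labels $(u,v)$ and $(v,u)$. Hence terms must be regrouped across partitions with different numbers of blocks. Verifying $m=1,2$ and then arguing within a fixed $m$ or a fixed permutation $\pi$ cannot close the argument.

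\textbf{What is needed.} There are three ingredients.
\begin{itemize}
\item A bijection between the data $(\mathcal P,\vec a,\pi,I)$ and the data $(\mathcal C,(J_i\subseteq C_i)_i,\sigma)$. Here $I\subseteq\{i:a_i=v,\ a_{\pi(i)}=u\}$ records the arrows at which the correction is chosen, and $(\mathcal C,(J_i)_i,\sigma)$ is obtained by contracting those arrows.
\item The sign identity $\operatorname{sgn}(\pi)(-1)^{|I|}=\operatorname{sgn}(\sigma)$. It holds because each contraction preserves the number of cycles and lowers the size by one.
\item A check, via the contraction identities and relabeling, that corresponding integrals agree.
\end{itemize}
Summing over $J_i\subseteq C_i$ then gives $(f(z_i)-f(x_i))^{|C_i|}$, and hence $\det[L_{|C_i|}(x_i,x_j)]$.

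\textbf{Two related points.} The same merging is what makes the mixed-slice terms absolutely integrable: a term with $|I|$ corrections is dominated by an A.2 integrand with $\ell-|I|$ pairs. So the domination from part (a) does not by itself justify the Fubini steps in (b). Your generating-function identity $\mathbb{E}[e^{tX}]=\det(I+\sum_k t^kL_k/k!)$ is formally correct. However, A.1--A.3 provide no exponential moments or trace-class control, and extracting its $t^4$ coefficient rigorously brings back exactly this bookkeeping.
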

\begin{proof}
For $r > 0$, put $f_r=f{\bf 1}_{[-r,r]}$.  For either
$(g_1,g_2)=(f_r,-f_r)$ or $(g_1,g_2)=(|f_r|,|f_r|)$, set
\begin{equation}\label{Eq.DefWQ}
 W(g_1,g_2)=\langle M^u,g_1\rangle+\langle M^v,g_2\rangle,
 \hspace{2mm}
 Q_k^{g_1,g_2}(x,y)=\int_{\mathbb R}\phi_{uv}(z,x)
 (g_1(z)+g_2(x))^k\widetilde K_{uv}(z,y)\,\lambda(dz).
\end{equation}
Later in the proof, once the corresponding quantities are shown to be well-defined, we use the same notation with $f_r$ replaced by $f$.

In addition, put $E = \mathbb{R}^2$ and $E_0=\{u,v\}\times\mathbb R$, and let $\{X_n\}_{n \geq 1}$ be a sequence of random elements of
$(\bar E,\bar{\mathcal E})$ that forms $M$ (its existence is ensured by Lemma \ref{Lem.Formed}). We extend any function $g: E \rightarrow \mathbb{C}$ to $\bar{E}$ by setting $g(\partial) = 0$. We also observe from (\ref{Eq.S2PairMean}) that since the reference measure $\nu = \mu_{\{u,v\}}\times\lambda$ is supported on $E_0$, the same is true almost surely for $M$. In particular, if we set $g(a,x)={\bf 1}\{a = u\} g_1(x) + {\bf 1}\{a = v\} g_2(x)$, then 
\begin{equation}\label{Eq.DecomposeMg}
\langle M, g\rangle = \langle M^u, g_1 \rangle + \langle M^v, g_2 \rangle.
\end{equation}

For clarity, we divide the remainder of the proof into four steps. In the first step, all integrals (or sums) are finite because the truncation $ {\bf 1}_{[-r,r]}$ above restricts the integration variables to a compact set, while the relevant kernels are locally bounded and the reference measure is locally finite. When we use the contraction identities in Step 3, Assumptions~A.1 and A.2 guarantee the absolute convergence of the resulting integrals. Consequently, all applications of Fubini's and Tonelli's theorems throughout the proof are justified, and we use them without further comment.

\medskip

{\bf \raggedleft Step 1.} In this step, we express $\mathbb{E}\left[W(g_1,g_2)^4\right]$ in terms of $K$, $\lambda$ and $g$, see (\ref{Eq.FourthMomentExpandedKernels}), using the formulas for determinantal point processes from Section \ref{Section2.1}.

For a partition $\mathcal P=\{\mathcal B_1,\ldots,\mathcal B_\ell\}$ of $\{1,2,3,4\}$, whose blocks are ordered by their least elements, let $\ell = |\mathcal P|$ and set
$$G_{\mathcal P}(p_1,\ldots,p_\ell) = \prod_{i=1}^\ell g(p_i)^{|\mathcal B_i|}.$$
Using (\ref{Eq.DecomposeMg}), rearranging terms, and applying (\ref{Eq.RM1}), we get
\begin{equation}\label{Eq.FourthMomentFactorialExpansion}
\begin{split}
&W(g_1,g_2)^4 = \langle M, g\rangle^4 = \sum_{i_1, i_2, i_3, i_4 \geq 1}g(X_{i_1})g(X_{i_2})g(X_{i_3})g(X_{i_4}) \\
&= \sum_{\mathcal P}\sum_{\substack{i_1, \dots, i_{\ell} \geq 1 \\ |\{i_1, \dots, i_{\ell}\}| = \ell }}G_{\mathcal P}(X_{i_1},\dots, X_{i_\ell}) = \sum_{\mathcal P}\langle M_{\ell},G_{\mathcal P}\rangle,
\end{split}
\end{equation}
where the sum is over partitions $\mathcal{P}$ of $\{1,2,3,4\}$.

Taking expectations on both sides of \eqref{Eq.FourthMomentFactorialExpansion} and applying \eqref{Eq.S2PairMeanDPP} yields
\begin{equation*}
\mathbb{E}\left[W(g_1,g_2)^4\right] =\sum_{\mathcal P}\int_{E_0^{\ell}} G_{\mathcal P}(p_1,\ldots,p_{\ell})
\det[K(p_i,p_j)]_{i,j=1}^{\ell}
\,\nu^{\ell}(dp).
\end{equation*}
Writing $p_i=(a_i,y_i)$, expanding the product measure $\nu^\ell$
over $a_i\in\{u,v\}$, and using the Leibniz formula for the
determinant gives
\begin{equation}\label{Eq.FourthMomentPartitionExpansion}
 \mathbb{E}\left[W(g_1,g_2)^4\right]
 =\sum_{\mathcal P}\sum_{a_1,\ldots,a_\ell\in\{u,v\}}
 \sum_{\pi\in\mathfrak S_\ell}\!\operatorname{sgn}(\pi)
 \int_{\mathbb R^\ell}\prod_{i=1}^\ell
 g(a_i,y_i)^{|\mathcal B_i|}
 K_{a_i a_{\pi(i)}}(y_i,y_{\pi(i)})\prod_{i = 1}^{\ell}\lambda(dy_i).
\end{equation}
We mention that in deriving the above expression we used (\ref{Eq.FourthMomentKernelNotation}).

For fixed $\vec{a} = (a_1,\ldots,a_\ell)\in\{u,v\}^\ell$ and
$\pi\in\mathfrak S_\ell$, set
$$ D(\vec{a},\pi) = \{i\in\{1,\ldots,\ell\}:a_i=v,\ a_{\pi(i)}=u\}. $$
Substituting the following identities
\begin{equation*}
K_{a_i a_{\pi(i)}}(y_i,y_{\pi(i)}) = \widetilde K_{a_i a_{\pi(i)}}(y_i,y_{\pi(i)}) - {\bf 1}\{a_i=v,\ a_{\pi(i)}=u\}\phi_{uv}(y_{\pi(i)},y_i),
\end{equation*}
which follow from (\ref{Eq.FourthMomentKernelDecomposition}), into (\ref{Eq.FourthMomentPartitionExpansion}), we obtain
\begin{equation}\label{Eq.FourthMomentExpandedKernels}
\begin{split}
\mathbb E\left[W(g_1,g_2)^4\right]
={}&
\sum_{\mathcal P}
\sum_{a_1,\ldots,a_\ell\in\{u,v\}}
\sum_{\pi\in\mathfrak S_\ell}
\sum_{I\subseteq D(\vec{a},\pi)}
\operatorname{sgn}(\pi)(-1)^{|I|} \\
&\times
\int_{\mathbb R^\ell}
\prod_{i=1}^\ell g(a_i,y_i)^{|\mathcal B_i|}
\prod_{i\in I}\phi_{uv}(y_{\pi(i)},y_i)
\prod_{i\notin I}
\widetilde K_{a_i a_{\pi(i)}}(y_i,y_{\pi(i)})
\prod_{i=1}^\ell\lambda(dy_i).
\end{split}
\end{equation}

{\bf \raggedleft Step 2.} The goal of this and the next step is to establish the following identity
\begin{equation}\label{Eq.FourthMomentContractedExpansion}
\begin{split}
\mathbb E\left[W(g_1,g_2)^4\right]
={}&
\sum_{\mathcal{C}}
\sum_{J_1\subseteq C_1,\ldots,J_m\subseteq C_m}
\sum_{\sigma\in\mathfrak S_m}
\operatorname{sgn}(\sigma) \\
&\times
\int_{\mathbb R^{2m}}
\prod_{i=1}^m
\phi_{uv}(z_i,x_i)
\widetilde K_{uv}(z_i,x_{\sigma(i)})
g_2(x_i)^{|J_i|}
g_1(z_i)^{|C_i|-|J_i|}
\prod_{i=1}^m\lambda(dx_i)\lambda(dz_i),
\end{split}
\end{equation}
where the sum is over partitions $\mathcal C=\{C_1,\ldots,C_m\}$ of $\{1,2,3,4\}$, ordered by their least elements. 

We prove \eqref{Eq.FourthMomentContractedExpansion} by constructing a bijection between the data indexing the summands on the right-hand sides of \eqref{Eq.FourthMomentExpandedKernels} and \eqref{Eq.FourthMomentContractedExpansion}, and then showing that the summands corresponding under this bijection are equal. In the remainder of this step, we describe the bijective correspondence.

\medskip

It is convenient to reinterpret the indexing data in terms of directed graphs, as we explain next. First, we regard the blocks $\mathcal B_1,\ldots,\mathcal B_\ell$ as the vertices of a directed labeled graph. For every $1\leq i\leq\ell$, we assign the label $a_i$ to the vertex $\mathcal B_i$, and we draw a directed arrow
$$\mathcal B_i[a_i]\longrightarrow \mathcal B_{\pi(i)}[a_{\pi(i)}].$$
Since $\pi$ is a permutation, each vertex in this graph has exactly one incoming and one outgoing arrow; equivalently, the graph is $(1,1)$-regular. Consequently, the data 
$$
\mathcal P=\{\mathcal B_1,\ldots,\mathcal B_\ell\},
\qquad (a_1,\ldots,a_\ell)\in\{u,v\}^{\ell},
\qquad \pi\in\mathfrak S_\ell,
\qquad I\subseteq D(\vec{a},\pi)
$$
correspond to a directed labeled $(1,1)$-regular graph, as well as a distinguished collection of arrows which start from a $v$-labeled vertex and end at a $u$-labeled vertex (namely those indexed by $I$).

Similarly, we regard the blocks $C_1,\ldots, C_m$ as the vertices of an unlabeled directed graph, and for every $1\leq i\leq m$, we draw a directed arrow
$$C_i\longrightarrow C_{\sigma(i)}.$$
Since $\sigma$ is a permutation, the data 
$$
\mathcal C=\{C_1,\ldots,C_m\},
\qquad J_1 \subseteq C_1, \dots, J_m \subseteq C_m,
\qquad \sigma\in\mathfrak S_m,
$$
correspond to a directed $(1,1)$-regular graph, as well as the collection $\{J_i\}_{i = 1}^m$ of subsets $J_i \subseteq C_i$.

The first labeled graph description can be converted to the second as follows. For each $i \in I$, we contract the arrow connecting $\mathcal B_i$ to $\mathcal{B}_{\pi(i)}$ and replace the merged vertices by the vertex $\mathcal B_i \cup \mathcal{B}_{\pi(i)}$. In this way we obtain a new $(1,1)$-regular graph whose vertices are $C_1, \dots, C_m$, the blocks of a new partition $\mathcal{C}$, ordered by their least elements. We construct the subsets $J_i \subseteq C_i$ as follows.
\begin{itemize}
\item If $C_i = \mathcal B_j \cup \mathcal{B}_{\pi(j)}$ for some $j \in I$, we set $J_i = \mathcal{B}_j$. 
\item If $C_i = \mathcal{B}_j$ for some $j\in \{1, \dots, \ell\}$ and $a_j = v$, we set $J_i = C_i$.
\item If $C_i = \mathcal{B}_j$ for some $j\in \{1, \dots, \ell\}$ and $a_j = u$, we set $J_i = \emptyset$.
\end{itemize}

For the reverse direction, we proceed as follows. If $\emptyset\neq J_i\neq C_i$, we split the vertex $C_i$ into two labeled vertices, joined by the directed arrow
$$J_i[v]\longrightarrow (C_i\setminus J_i)[u].$$
The incoming arrow to $C_i$ is redirected to $J_i[v]$, while the outgoing arrow from $C_i$ is redirected to start at $(C_i\setminus J_i)[u]$. If $J_i=C_i$ or $J_i=\emptyset$, we leave $C_i$ unchanged and assign it the label $v$ or $u$, respectively. Performing these operations for all $1\leq i\leq m$, we obtain a new labeled $(1,1)$-regular directed graph whose vertices form the blocks $\mathcal B_1,\ldots,\mathcal B_\ell$ of a partition $\mathcal P$, ordered by their least elements. The set $I$ consists of those indices $j \in \{1, \dots, \ell\}$ such
that $\mathcal B_j=J_i$ for some $i\in\{1,\dots,m\}$ with
$\emptyset\neq J_i\neq C_i$. By construction, every arrow indexed by $I$ goes from a $v$-labeled vertex to a $u$-labeled vertex, so $I\subseteq D(\vec{a},\pi)$.

One readily verifies that the transformations described in the last two paragraphs are inverses of each other, which establishes the bijective correspondence of the indexing data on the right-hand sides of \eqref{Eq.FourthMomentExpandedKernels} and \eqref{Eq.FourthMomentContractedExpansion}.

\medskip

{\bf \raggedleft Step 3.} In this step, we show that terms in (\ref{Eq.FourthMomentExpandedKernels}) and (\ref{Eq.FourthMomentContractedExpansion}) with corresponding indexing data under the bijection from Step 2 are equal, thereby establishing (\ref{Eq.FourthMomentContractedExpansion}).

Fix corresponding data
$$
(\mathcal P,(a_j)_{j=1}^{\ell},\pi,I)
\qquad\text{and}\qquad
(\mathcal C,(J_i)_{i=1}^m,\sigma).
$$
Since precisely the arrows indexed by $I$ are contracted, we have $m=\ell-|I|$. Moreover, contracting an arrow does not change the number of directed cycles. Thus, if $c$ is the common number of cycles of $\pi$ and $\sigma$, then
\begin{equation*}
\operatorname{sgn}(\pi)(-1)^{|I|} =(-1)^{\ell-c+|I|}=(-1)^{m-c}=\operatorname{sgn}(\sigma).
\end{equation*}
In view of the last identity, what remains is to show
\begin{equation}\label{Eq.FourthMomentRED1}
\begin{split}
&\int_{\mathbb R^\ell}
\prod_{j=1}^\ell g(a_j,y_j)^{|\mathcal B_j|}
\prod_{j\in I}\phi_{uv}(y_{\pi(j)},y_j)
\prod_{j\notin I}
\widetilde K_{a_j a_{\pi(j)}}(y_j,y_{\pi(j)})
\prod_{j=1}^\ell\lambda(dy_j) \\
= &\int_{\mathbb R^{2m}}
\prod_{i=1}^m
\phi_{uv}(z_i,x_i)
\widetilde K_{uv}(z_i,x_{\sigma(i)})
g_2(x_i)^{|J_i|}
g_1(z_i)^{|C_i|-|J_i|}
\prod_{i=1}^m\lambda(dx_i)\lambda(dz_i).
\end{split}
\end{equation}

For each $j\notin I$, we use
(\ref{Eq.FourthMomentContractionUU})--
(\ref{Eq.FourthMomentContractionVU}) to write
\begin{equation}\label{Eq.FourthMomentEdgeExpansions}
\begin{array}{rcl}
\widetilde K_{uv}(y_j,y_{\pi(j)})
&=&
\widetilde K_{uv}(y_j,y_{\pi(j)}),
\\[2mm]
\widetilde K_{uu}(y_j,y_{\pi(j)})
&=&
\displaystyle
\int_{\mathbb R}
\widetilde K_{uv}(y_j,\xi_j)
\phi_{uv}(y_{\pi(j)},\xi_j)\,\lambda(d\xi_j),
\\[4mm]
\widetilde K_{vv}(y_j,y_{\pi(j)})
&=&
\displaystyle
\int_{\mathbb R}
\phi_{uv}(\zeta_j,y_j)
\widetilde K_{uv}(\zeta_j,y_{\pi(j)})\,\lambda(d\zeta_j),
\\[4mm]
\widetilde K_{vu}(y_j,y_{\pi(j)})
&=&
\displaystyle
\int_{\mathbb R^2}
\phi_{uv}(\zeta_j,y_j)
\widetilde K_{uv}(\zeta_j,\xi_j)
\phi_{uv}(y_{\pi(j)},\xi_j)
\,\lambda(d\zeta_j)\lambda(d\xi_j).
\end{array}
\end{equation}

Define $I_v=\{j\notin I:a_j=v\}$, $I_u=\{j\notin I:a_{\pi(j)}=u\}$. Thus, the variable $\zeta_j$ is introduced precisely for
$j\in I_v$, while $\xi_j$ is introduced precisely for
$j\in I_u$. For $j\notin I$, set
$$
r_j=
\begin{cases}
y_j,&a_j=u,\\
\zeta_j,&a_j=v,
\end{cases}
\qquad
s_j=
\begin{cases}
y_{\pi(j)},&a_{\pi(j)}=v,\\
\xi_j,&a_{\pi(j)}=u.
\end{cases}
$$
Substituting (\ref{Eq.FourthMomentEdgeExpansions}) into the left-hand side of (\ref{Eq.FourthMomentRED1}) and relabeling variables gives
\begin{equation}\label{Eq.FourthMomentExpandedAuxiliary}
\begin{split}
&\mbox{[LHS of (\ref{Eq.FourthMomentRED1})]} = \int_{\mathbb{R}^{2m}}
\prod_{j=1}^{\ell}
g(a_j,y_j)^{|\mathcal B_j|}
\prod_{j\in I}
\phi_{uv}(y_{\pi(j)},y_j) \prod_{j\in I_v}
\phi_{uv}(\zeta_j,y_j)\\
&\qquad\times \prod_{j\in I_u}
\phi_{uv}(y_{\pi(j)},\xi_j)
\prod_{j\notin I}
\widetilde K_{uv}(r_j,s_j)
\prod_{j=1}^{\ell}\lambda(dy_j)
\prod_{j\in I_v}\lambda(d\zeta_j)
\prod_{j\in I_u}\lambda(d\xi_j),
\end{split}
\end{equation}
where we used that the dimension of the integral is 
\begin{equation*}
\begin{split}
\ell + |I_v| + |I_{u}| = &\ell + |\{j \in \{ 1, \dots, \ell\}: a_j = v \}| - |I| + |\{j \in \{ 1, \dots, \ell\}: a_{\pi(j)} = u\}| - |I| \\
= & 2\ell - 2|I| = 2m.
\end{split}
\end{equation*}
The last equality used our earlier identity $m = \ell - |I|$.

We relabel the variables using the bijection between indexing data. For each $1\leq i\leq m$, define
\begin{equation*}
(x_i,z_i)
=
\begin{cases}
\bigl(y_j,y_{\pi(j)}\bigr) = \bigl(s_{\pi^{-1}(j)},r_{\pi(j)}\bigr),
&
\mbox{if }C_i=\mathcal B_j\cup\mathcal B_{\pi(j)}
\text{ for some }j\in I,
\\[1mm]
\bigl(y_j,\zeta_j\bigr) = \bigl(s_{\pi^{-1}(j)},r_j\bigr),
&
\mbox{if }C_i=\mathcal B_j\text{ and }a_j=v,
\\[1mm]
\bigl(\xi_{\pi^{-1}(j)},y_j\bigr) = \bigl(s_{\pi^{-1}(j)},r_j\bigr),
&
\mbox{if }C_i=\mathcal B_j\text{ and }a_j=u.
\end{cases}
\end{equation*}
Under this relabeling, and recalling that $g(a,x)={\bf 1}\{a = u\} g_1(x) + {\bf 1}\{a = v\} g_2(x)$, we get
\begin{equation}\label{Eq.FourthMomentRelabeledFactors}
\begin{split}
&
\prod_{j\in I}\phi_{uv}(y_{\pi(j)},y_j)
\prod_{j\in I_v}\phi_{uv}(\zeta_j,y_j)
\prod_{j\in I_u}\phi_{uv}(y_{\pi(j)},\xi_j)
=
\prod_{i=1}^m\phi_{uv}(z_i,x_i),
\\
&\prod_{j\notin I}\widetilde K_{uv}(r_j,s_j)
=
\prod_{i=1}^m
\widetilde K_{uv}(z_i,x_{\sigma(i)}), \quad \prod_{j=1}^{\ell}g(a_j,y_j)^{|\mathcal B_j|}
= \prod_{i=1}^m
g_2(x_i)^{|J_i|}
g_1(z_i)^{|C_i|-|J_i|}, \\
&\prod_{j=1}^{\ell}\lambda(dy_j)
\prod_{j\in I_v}\lambda(d\zeta_j)
\prod_{j\in I_u}\lambda(d\xi_j)
= \prod_{i=1}^m\lambda(dx_i)\lambda(dz_i).
\end{split}
\end{equation}
Substituting (\ref{Eq.FourthMomentRelabeledFactors}) into (\ref{Eq.FourthMomentExpandedAuxiliary}), we obtain (\ref{Eq.FourthMomentRED1}).

\medskip

{\bf \raggedleft Step 4.} In this step, we use
(\ref{Eq.FourthMomentContractedExpansion}) together with
Assumptions~A.2 and A.3 to complete the proof of the lemma.

Summing (\ref{Eq.FourthMomentContractedExpansion}) over the subsets $J_i$ and using the following trivial identity
$$
\sum_{J_i\subseteq C_i}
g_2(x_i)^{|J_i|}
g_1(z_i)^{|C_i|-|J_i|}
=
\bigl(g_1(z_i)+g_2(x_i)\bigr)^{|C_i|},
$$
gives
\begin{equation*}
\mathbb E\left[W(g_1,g_2)^4\right]
=\sum_{\mathcal{C}}
\hspace{-1mm}\sum_{\sigma\in\mathfrak S_m}
\hspace{-1mm}\operatorname{sgn}(\sigma)\hspace{-1mm}
\int_{\mathbb R^{2m}}
\prod_{i=1}^m
\phi_{uv}(z_i,x_i)
\bigl(g_1(z_i)+g_2(x_i)\bigr)^{|C_i|}
\widetilde K_{uv}(z_i,x_{\sigma(i)})
\prod_{i=1}^m\lambda(dx_i)\lambda(dz_i).
\end{equation*}
Integrating in $z_1,\ldots,z_m$ and using the definition of $Q_k^{g_1,g_2}$ from (\ref{Eq.DefWQ}) gives
$$
\mathbb E\left[W(g_1,g_2)^4\right]
=\sum_{\mathcal{C}}
\sum_{\sigma\in\mathfrak S_m}
\operatorname{sgn}(\sigma)
\int_{\mathbb R^m}
\prod_{i=1}^m
Q_{|C_i|}^{g_1,g_2}(x_i,x_{\sigma(i)}) \prod_{i=1}^m\lambda(dx_i).$$
Finally, by the Leibniz formula for the determinant, this becomes
\begin{equation}\label{Eq.FourthMomentQExpansion}
\mathbb E\left[W(g_1,g_2)^4\right]
=\sum_{\mathcal{C}}
\int_{\mathbb R^m}
\det\left[
Q_{|C_i|}^{g_1,g_2}(x_i,x_j)
\right]_{i,j=1}^m
\prod_{i=1}^m\lambda(dx_i).
\end{equation}

We next replace the sum over partitions $\mathcal C$ by a sum
over their block sizes. The integral in
\eqref{Eq.FourthMomentQExpansion} is unchanged if the blocks
$C_1,\ldots,C_m$ are simultaneously relabeled. Hence we may first order the blocks arbitrarily and then divide by $m!$. For fixed
positive integers $k_1,\ldots,k_m$ satisfying
$$
k_1+\cdots+k_m=4,
$$
the number of ordered decompositions of $\{1,2,3,4\}$ into blocks
of sizes $k_1,\ldots,k_m$ is
$$
\frac{4!}{k_1!\cdots k_m!}.
$$
Consequently,
\begin{equation}\label{Eq.PairingIdentity}
\mathbb E\left[W(g_1,g_2)^4\right]
=
\sum_{m=1}^4\frac{1}{m!}
\sum_{\substack{k_1,\ldots,k_m\geq1\\
k_1+\cdots+k_m=4}}
\frac{4!}{k_1!\cdots k_m!}
\int_{\mathbb R^m}
\det\left[
Q_{k_i}^{g_1,g_2}(x_i,x_j)
\right]_{i,j=1}^m
\prod_{i=1}^m\lambda(dx_i).
\end{equation}

For $g_1=g_2=|f_r|$, we may expand the determinants in
\eqref{Eq.PairingIdentity}, substitute the definition of $Q_k^{g_1,g_2}$ from (\ref{Eq.DefWQ}), apply the inequality $|f_r(z)|+|f_r(x)|\le |f(z)|+|f(x)|$, and use \eqref{Eq.FourthMomentAbsoluteIntegrability} to conclude that for some constant $C \in (0, \infty)$ and all $r > 0 $
\begin{equation}\label{Eq.UniformIntegralBoundFR}
\sum_{m=1}^4\frac{1}{m!}
\sum_{\substack{k_1,\ldots,k_m\geq1\\
k_1+\cdots+k_m=4}}
\frac{4!}{k_1!\cdots k_m!}
\int_{\mathbb R^m}
\left|\det\left[ Q_{k_i}^{g_1,g_2}(x_i,x_j)\right]_{i,j=1}^m
\right|\prod_{i=1}^m\lambda(dx_i) \leq C.
\end{equation}
Using the definition of $W(g_1,g_2)$ from (\ref{Eq.DefWQ}), applying the monotone convergence theorem to the left-hand side of (\ref{Eq.PairingIdentity}) with $g_1=g_2=|f_r|$, and using (\ref{Eq.UniformIntegralBoundFR}) gives
\begin{equation}\label{Eq.UnsignedFourthMomentFinite}
 \mathbb E\left[(\langle M^u,|f|\rangle+\langle M^v,|f|\rangle)^4 \right] = \mathbb E\left[W(|f|,|f|)^4\right] = \lim_{r \rightarrow \infty} \mathbb E\left[W(|f_r|,|f_r|)^4\right] \leq C,
\end{equation}
which verifies part (a) of the lemma.

We now substitute $(g_1,g_2)=(f_r,-f_r)$ in
\eqref{Eq.PairingIdentity} and send $r\to\infty$, obtaining 
\begin{equation}\label{Eq.FinalDifferenceV1}
\mathbb E\left[W(f,-f)^4\right]
=
\sum_{m=1}^4\frac{1}{m!}
\sum_{\substack{k_1,\ldots,k_m\geq1\\
k_1+\cdots+k_m=4}}
\frac{4!}{k_1!\cdots k_m!}
\int_{\mathbb R^m}
\det\left[
Q_{k_i}^{f,-f}(x_i,x_j)
\right]_{i,j=1}^m
\prod_{i=1}^m\lambda(dx_i).
\end{equation}
Let us briefly justify the limits on both sides by dominated convergence. On the left,
$$
|W(f_r,-f_r)|^4
\leq
(\langle M^u,|f_r|\rangle+\langle M^v,|f_r|\rangle)^4
\leq W(|f|,|f|)^4,
$$
and the latter is integrable by \eqref{Eq.UnsignedFourthMomentFinite}, while $W(f_r,-f_r)\to W(f,-f)$ almost surely.

For the right-hand side, fix $m$, $k_1,\ldots,k_m$, and
$\sigma\in\mathfrak S_m$. After expanding the determinant and
substituting the definition of $Q_k^{f_r,-f_r}$ from (\ref{Eq.DefWQ}), the corresponding
integrand on $\mathbb R^{2m}$ is
$$
\prod_{i=1}^m
\phi_{uv}(z_i,x_i)
\bigl(f_r(z_i)-f_r(x_i)\bigr)^{k_i}
\widetilde K_{uv}(z_i,x_{\sigma(i)}).
$$
Since $|f_r(z_i)-f_r(x_i)|\leq |f(z_i)|+|f(x_i)|$, its absolute value is bounded by
$$
\prod_{i=1}^m
|\phi_{uv}(z_i,x_i)|
\bigl(|f(z_i)|+|f(x_i)|\bigr)^{k_i}
|\widetilde K_{uv}(z_i,x_{\sigma(i)})|,
$$
which is integrable by \eqref{Eq.FourthMomentAbsoluteIntegrability}.

By \eqref{Eq.FourthMomentDifferenceKernels} and \eqref{Eq.DefWQ}, we have $Q_k^{f,-f}(x,y)=L_k(x,y)$ for $1 \leq k \leq 4$. Combining the latter with \eqref{Eq.FourthMomentEnvelopeIntegrability} and \eqref{Eq.FourthMomentDifferenceKernelBounds}, we conclude that for any $m\in\{1,\ldots,4\}$ and positive integers $k_1,\ldots,k_m$
satisfying $k_1+\cdots+k_m=4$, we have
\begin{align*}
&\left|
\int_{\mathbb R^m}
\det\left[Q^{f,-f}_{k_i}(x_i,x_j)\right]_{i,j=1}^m
\prod_{i=1}^m\lambda(dx_i)
\right|  \leq
\sum_{\sigma\in\mathfrak S_m}
\int_{\mathbb R^m}
\prod_{i=1}^m
\left|L_{k_i}(x_i,x_{\sigma(i)})\right|
\prod_{i=1}^m\lambda(dx_i)\\
&\qquad\leq
(v-u)^{(k_1+\cdots+k_m)/2}
\prod_{i=1}^m B_{k_i}
\sum_{\sigma\in\mathfrak S_m}
\int_{\mathbb R^m}
\prod_{i=1}^m U(x_i)V(x_{\sigma(i)})
\prod_{i=1}^m\lambda(dx_i) \leq m!S^m(v-u)^2\prod_{i=1}^m B_{k_i}.
\end{align*}
Substituting the latter bound into (\ref{Eq.FinalDifferenceV1}) and using that $W(f,-f) = \langle M^u,f\rangle-\langle M^v,f\rangle$ by (\ref{Eq.DefWQ}), we obtain
\begin{equation*}
\mathbb E\left[(\langle M^u,f\rangle-\langle M^v,f\rangle)^4\right]
\leq (v-u)^2 \sum_{m=1}^4
\sum_{\substack{k_1,\ldots,k_m\geq1\\
k_1+\cdots+k_m=4}} \frac{4!}{k_1!\cdots k_m!} S^m\prod_{i=1}^m B_{k_i}.
\end{equation*}
Evaluating the finite sum on the right-hand side gives \eqref{Eq.FourthMomentBound}.
\end{proof}

%----------------------------------------------------------------------
%
% Section 2.4
%
%----------------------------------------------------------------------
\subsection{A tightness criterion}\label{Section2.4} We continue with the notation from Sections \ref{Section2.1} and \ref{Section2.2}. In this section, we consider a sequence of random arrays $X^N$ that give rise, on the one hand, to a sequence of point processes $M^N$ and, on the other hand, to a sequence of line ensembles $\mathcal{L}^N$. Our goal is to give general conditions on $X^N$ that ensure that the corresponding sequence of line ensembles $\mathcal{L}^N$ is tight. The precise result is stated in Lemma \ref{Lem.DPPTightnessCriterion} below. Our main assumption is that each $M^N$ is a determinantal point process on $\mathbb{R}^2$ whose restrictions to pairs of vertical slices satisfy the assumptions of Section \ref{Section2.3}. We now describe the setup precisely.

\medskip
Fix a compact interval $[\alpha,\beta]\subset\mathbb{R}$ with $\alpha<\beta$, and let $\{d_N\}_{N\geq 1}$ be a sequence of positive integers such that $d_N\to\infty$. For each $N$, define the uniform time mesh
$$
\mathcal{T}_N=\{t_0^N,\ldots,t_{d_N}^N\}, \qquad t_j^N=\alpha+\frac{j}{d_N}(\beta-\alpha), \qquad 0\leq j\leq d_N.
$$
Let $\mu_{\mathcal{T}_N}$ denote the counting measure from Definition~\ref{Def.S1Measures}.

We fix a probability space $(\Omega, \mathcal{F}, \mathbb{P})$. For each $N$, let
$$
X^N=\left(X_i^{j,N}:i\geq 1,\ 0\leq j\leq d_N\right)
$$
be an array of real-valued random variables such that, almost surely,
\begin{equation}\label{Eq.S24Decreasing}
X_1^{j,N}>X_2^{j,N}>\cdots, \qquad \lim_{i\to\infty}X_i^{j,N}=-\infty, \qquad 0\leq j\leq d_N.
\end{equation}
Define the point process $M^N$ on $\mathbb{R}^2$ by
\begin{equation}\label{Eq.S24RandomMeasure}
M^N(A)=\sum_{j=0}^{d_N}\sum_{i\geq 1}{\bf 1}\bigl\{(t_j^N,X_i^{j,N})\in A\bigr\}, \qquad A\subseteq\mathbb{R}^2\text{ Borel}.
\end{equation}

For each $N$, define the $\mathbb{N}$-indexed line ensemble $\mathcal{L}^N=\{\mathcal{L}_i^N\}_{i\geq 1}$ on $[\alpha,\beta]$ by linearly interpolating the points $\{(t_j^N,X_i^{j,N})\}_{j=0}^{d_N}$. More precisely, for $i\geq 1$, $0\leq j\leq d_N-1$, and $t\in[t_j^N,t_{j+1}^N]$, set
$$
\mathcal{L}_i^N(t)=\frac{t_{j+1}^N-t}{t_{j+1}^N-t_j^N}X_i^{j,N}+\frac{t-t_j^N}{t_{j+1}^N-t_j^N}X_i^{j+1,N}.
$$
We mention that $\mathcal{L}^N$ is indeed a line ensemble, since it is obtained by composing the measurable map $\omega\mapsto X^N(\omega)$ from $(\Omega,\mathcal{F})$ to $\mathbb{R}^{\infty}$, see \cite[Example 1.2]{Billing}, with a continuous map from $\mathbb{R}^{\infty}$ to $C(\mathbb{N}\times[\alpha,\beta])$.

We assume that $\{M^N\}_{N\geq 1}$ is a sequence of determinantal point processes on $\mathbb{R}^2$ with correlation kernels $K^N(s,x;t,y)$ and reference measures $\mu_{\mathcal{T}_N}\times\lambda_N$, where each $\lambda_N$ is a locally finite Borel measure on $\mathbb{R}$. If $u=t_a^N$, $v=t_b^N\in\mathcal{T}_N$, and $u<v$, we define the restriction of $M^N$ to $\{u,v\}\times\mathbb{R}$ by
$$
M^N\big|_{\{u,v\}\times\mathbb{R}}(A)
=
\sum_{i\geq 1}\left({\bf 1}\bigl\{(t_a^N,X_i^{a,N})\in A\bigr\}+{\bf 1}\bigl\{(t_b^N,X_i^{b,N})\in A\bigr\}\right),
\qquad A\subseteq\mathbb{R}^2\text{ Borel}.
$$
It follows from Definition \ref{Def.DPP} and \cite[(2.13) and Lemma 2.8]{ED24a} that $M^N\big|_{\{u,v\}\times\mathbb{R}}$ is a determinantal point process on $\mathbb{R}^2$ with correlation kernel $K^N(s,x;t,y)$ and reference measure $\mu_{\{u,v\}}\times\lambda_N$.

We write $\operatorname{Lip}_b(\mathbb{R})$ for the space of bounded Lipschitz functions from $\mathbb{R}$ to $\mathbb{R}$, and set
\begin{equation}\label{Eq.BoundedLipschitzLeftVanishing}
\operatorname{Lip}_{b,-}(\mathbb{R})
=
\left\{f\in\operatorname{Lip}_b(\mathbb{R}):f|_{(-\infty,a]}=0\text{ for some }a\in\mathbb{R}\right\}.
\end{equation}
In words, $\operatorname{Lip}_{b,-}(\mathbb{R})$ is the space of bounded Lipschitz functions that vanish on a neighborhood of $-\infty$. For $f\in\operatorname{Lip}_{b,-}(\mathbb{R})$ and $t=t_j^N\in\mathcal{T}_N$, define the linear statistic
\begin{equation}\label{Eq.LinearStatistic}
H_N(f,t)=\left\langle M^{N,t},f\right\rangle=\sum_{i\geq 1}f(X_i^{j,N}), \mbox{ where }
M^{N,t}(A)=\sum_{i\geq 1}{\bf 1}\{X_i^{j,N}\in A\},
\hspace{2mm}
A\subseteq\mathbb{R}\text{ Borel}.
\end{equation}

We next describe the assumptions imposed on the above setup.

\medskip
\noindent
{\bf \raggedleft Assumption B.1.}
\textit{Two-time fourth-moment hypotheses.}

There exists $N_0\in\mathbb{N}$ such that the following holds. For every $N\geq N_0$ and every $u,v\in\mathcal{T}_N$ with $u<v$, there exist measurable functions $\phi_{uv}^N:\mathbb{R}^2\to \mathbb{C}$ and $\widetilde K_{aa'}^N:\mathbb{R}^2\to\mathbb{C}$, $a,a'\in\{u,v\}$, such that, upon making the identifications
\begin{equation}\label{Eq.TightnessFourthMomentIdentification}
\begin{split}
M&=M^N\big|_{\{u,v\}\times\mathbb{R}},
\qquad \lambda=\lambda_N, \qquad \widetilde K_{aa'}(x,y)=\widetilde K_{aa'}^N(x,y),\\
K_{aa'}(x,y)&=K^N(a,x;a',y), \quad \phi_{uv}(z,x)=\phi_{uv}^N(z,x),
\end{split}
\end{equation}
Assumption A.1 holds, and Assumption A.2 holds for every $f\in\operatorname{Lip}_{b,-}(\mathbb{R})$.

\medskip
\noindent
{\bf \raggedleft Assumption B.2.}
\textit{Uniform bounds on the difference kernels.}

For every $f\in\operatorname{Lip}_{b,-}(\mathbb{R})$, there exist finite nonnegative constants
\begin{equation}\label{Eq.DifferenceKernelConstants}
B_1(f),\ldots,B_4(f),S(f)
\end{equation}
that do not depend on $N$, $u$, or $v$, such that the following holds. For every $N\geq N_0$ and every $u,v\in\mathcal{T}_N$ with $u<v$, there exist nonnegative measurable functions $U_{u,v}^{N,f}$ and $V_{u,v}^{N,f}$ for which Assumption A.3 holds after making the identifications \eqref{Eq.TightnessFourthMomentIdentification} together with the additional identifications
\begin{equation}\label{Eq.TightnessFourthMomentBoundsIdentification}
\begin{split}
L_k&=L_{k;u,v}^{N,f},\qquad
U=U_{u,v}^{N,f},\qquad V=V_{u,v}^{N,f},\\
B_k&=B_k(f),\qquad S=S(f),
\qquad 1\leq k\leq4.
\end{split}
\end{equation}
Here $L_{k;u,v}^{N,f}$ denotes the function defined by \eqref{Eq.FourthMomentDifferenceKernels} under the identifications \eqref{Eq.TightnessFourthMomentIdentification}.

For $f\in\operatorname{Lip}_{b,-}(\mathbb{R})$, define
\begin{equation}\label{Eq.FourthMomentConstant}
\begin{split}
\mathfrak{C}(f)
={}&
1+B_4(f)S(f)
+\bigl(8B_1(f)B_3(f)+6B_2(f)^2\bigr)S(f)^2\\
&+36B_1(f)^2B_2(f)S(f)^3
+24B_1(f)^4S(f)^4.
\end{split}
\end{equation}

\medskip
\noindent
{\bf \raggedleft Assumption B.3.}
\textit{One-time tail bounds compatible with the linear-statistic
estimates.}

Let
\begin{equation}\label{Eq.CutoffFunction}
\chi(x)=\min\{1,\max\{0,x\}\},
\end{equation}
and, for $\varsigma\geq0$, define
\begin{equation}\label{Eq.TranslatedCutoffFunctions}
g_\varsigma^+(x)=\chi(x-\varsigma),
\qquad
g_\varsigma^-(x)=\chi(x+\varsigma+1).
\end{equation}
Set
\begin{equation}\label{Eq.UpperTailProbability}
p_+(\varsigma) =\limsup_{N\to\infty} \max_{0\leq j\leq d_N} \mathbb{P}\bigl(X_1^{j,N}>\varsigma\bigr)
\end{equation}
and, for each $k\geq1$, set
\begin{equation}\label{Eq.LowerTailProbability}
p_{k,-}(\varsigma) = \limsup_{N\to\infty} \max_{0\leq j\leq d_N} \mathbb{P}\bigl(X_k^{j,N}<-\varsigma\bigr).
\end{equation}
We assume that
\begin{equation}\label{Eq.UpperTailCompatibility}
p_+(\varsigma)\mathfrak{C}(g_\varsigma^+) \rightarrow 0 \qquad\text{as }\varsigma\to\infty
\end{equation}
and, for every fixed $k\geq1$,
\begin{equation}\label{Eq.LowerTailCompatibility}
p_{k,-}(\varsigma)\mathfrak{C}(g_\varsigma^-) \rightarrow 0 \qquad\text{as }\varsigma\to\infty.
\end{equation}

\medskip
With the above notation in place, we can state the main result of this section.
\begin{lemma}\label{Lem.DPPTightnessCriterion}
Under Assumptions B.1--B.3 above, the sequence $\{\mathcal{L}^N\}_{N\geq 1}$ is tight in $C(\mathbb{N}\times[\alpha,\beta])$.
\end{lemma}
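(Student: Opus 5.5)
The plan is to prove tightness curve by curve. Since $C(\mathbb{N}\times[\alpha,\beta])$ carries the product-type topology, it suffices to show that for each fixed $k\geq1$ the sequence $\{\mathcal{L}^N_k\}_{N\geq1}$ is tight in $C([\alpha,\beta])$. For each curve we verify the two conditions of the standard criterion \cite[Theorem~7.3]{Billing}.

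\textbf{One-point tightness.} First we show that $\mathcal{L}^N_k(\alpha)=X_k^{0,N}$ is tight. The upper tail is controlled by $X_k^{0,N}\leq X_1^{0,N}$ and $p_+(\varsigma)\to0$. The lower tail is controlled by $p_{k,-}(\varsigma)\to 0$. Both facts follow from (\ref{Eq.UpperTailCompatibility})--(\ref{Eq.LowerTailCompatibility}), since $\mathfrak{C}\geq1$.

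\textbf{Modulus of continuity.} Fix $\varsigma$ large and set $g=g_\varsigma^+-g_{\varsigma}^-+\ldots$. More precisely, we work with the statistics $H_N(h,t)$ for $h\in\{g_\varsigma^-\}$ and with Lipschitz approximations of counting functions. By Lemma~\ref{Lem.FourthMomentBound}, applied through Assumptions B.1--B.2 to the pair $u=t_a^N<v=t_b^N$, we get
$$\mathbb{E}\big[(H_N(f,u)-H_N(f,v))^4\big]\leq \mathfrak{C}(f)(v-u)^2$$
for every $f\in\operatorname{Lip}_{b,-}(\mathbb{R})$. Since the exponent of $(v-u)$ is $2>1$, a Kolmogorov--Chentsov/Garsia type chaining argument on the dyadic-like mesh $\mathcal{T}_N$ yields a bound of the following form for the linearly interpolated process $t\mapsto H_N(f,t)$:
$$\mathbb{P}\Big(\sup_{|s-t|\leq\delta}|H_N(f,s)-H_N(f,t)|>\eta\Big)\leq C\eta^{-4}\mathfrak{C}(f)\delta^{1/2}\cdot(\text{const}).$$
This works because, for grid points, the fourth moment bound holds uniformly in $N$. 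One must check that interpolation between mesh points does not spoil this, which is immediate since increments of a piecewise linear function are controlled by grid increments.

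\textbf{From linear statistics to particles.} The key translation is this. Take $f_x(y)=\chi(y-x)$, a smoothed count of particles above level $x$. On the event that all $X_1^{j,N}\leq\varsigma$ and $X_k^{j,N}\geq-\varsigma$ for all $j$, the function $t\mapsto\mathcal{L}^N_k(t)$ is sandwiched by level sets of the statistics $H_N(f_x,\cdot)$ for $x$ in a finite $\epsilon$-net of $[-\varsigma-1,\varsigma]$. Concretely, if $\mathcal{L}_k^N$ jumps by more than $3\epsilon$ between nearby times, then for some net level $x$ the count $H_N(f_x,\cdot)$ changes by at least about $1$. Ordering of particles is used here: with $\chi$ of slope $1/\epsilon$ rescaled, the count $\sum_i f_x(X_i)$ is monotone in each particle. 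A union bound over the finitely many net levels then gives the modulus estimate.

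\textbf{Restricting to the good event.} The hard part is restricting to the good event uniformly in $j$. The tail events must be handled over all $d_N+1$ times simultaneously, whereas $p_\pm$ only control one time. This is exactly why B.3 couples $p_+$ with $\mathfrak{C}(g^+_\varsigma)$. The statistic $H_N(g_\varsigma^+,t)$ vanishes at a single time with probability at least $1-p_+(\varsigma)$. If at some other time $X_1>\varsigma+1$, then $H_N(g_\varsigma^+,\cdot)$ must increase by $1$. By the chaining bound over the whole interval, the probability of such an oscillation is at most $C\,\mathfrak{C}(g^+_\varsigma)\,p_+(\varsigma)$-type quantities. To get this, I would apply the chaining bound conditionally on the starting value, or use the Hölder/Markov interplay $\mathbb{P}(A\cap B)\leq \ldots$.

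Making this product structure appear rigorously, rather than a sum $p_++\mathfrak{C}$, is the main obstacle. I would first try to bound the probability by the sum of two terms:
\begin{itemize}
\item the probability that $X_1>\varsigma$ at a $\delta$-dense set of times, which is at most $p_+$;
\item the probability of a large oscillation of the statistic, which is at most $\mathfrak{C}\,\delta^{1/2}$.
\end{itemize}
I would then choose $\delta$ depending on $\varsigma$, for instance $\delta=p_+(\varsigma)^{2}$ up to logarithms, so that the combination becomes small exactly when $p_+\mathfrak{C}\to0$. The analogous argument with $g_\varsigma^-$ and the $k$-th particle, which counts at least $k$ particles above $-\varsigma-1$, handles the lower tail.
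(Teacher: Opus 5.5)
Your overall architecture is sound. Tightness curve by curve, a fourth-moment maximal estimate for $t\mapsto H_N(f,t)$, and a finite net of cutoffs $\chi((\cdot-x)/\epsilon)$ on an event where the curves stay in a fixed window all work. The level-net step, where ordering forces some count to move by at least $1$ when $\mathcal{L}_k^N$ oscillates, is essentially how the paper gets the modulus of continuity (its Claim~2).

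The genuine gap is the step you flag as the main obstacle: the uniform-in-time containment $\lim_{\varsigma\to\infty}\limsup_{N}\mathbb{P}(\max_{0\le j\le d_N}X_1^{j,N}\ge\varsigma+1)=0$, and its lower-tail analogue for $X_k$. Without it the level net is not finite, so the modulus argument cannot start. Your accounting for this step does not close, for three reasons.
\begin{enumerate}
\item The event that $X_1>\varsigma$ at some time of a $\delta$-spaced net is not bounded by $p_+(\varsigma)$. Since $p_+$ is one-time information, the only available bound is the union bound, of order $(D/\delta)\,p_+(\varsigma)$ with $D=\beta-\alpha$. With this correct count, your choice $\delta=p_+(\varsigma)^2$ produces a term of order $D/p_+(\varsigma)\to\infty$.
\item Your oscillation bound $C\mathfrak{C}(f)\delta^{1/2}$ is too weak. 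Optimizing $(D/\delta)p_+ + C\mathfrak{C}\delta^{1/2}$ over $\delta$ gives a quantity of order $(p_+\mathfrak{C}^2)^{1/3}$, which B.3 does not force to vanish.
\item The alternatives you mention do not help. The fourth-moment bound is unconditional, so you cannot condition on the starting value, and a whole-interval chaining bound does not involve $p_+$ at all.
\end{enumerate}

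The missing ingredient is the sharp block version of the maximal inequality. Split $[\alpha,\beta]$ into $J\le (D+1)/\ell$ consecutive blocks of length in $[\ell/4,2\ell]$. Apply \cite[Theorem~10.2]{Billing} to $H_N(g_\varsigma^+,\cdot)$ on each block: the probability of an oscillation of size at least $1$ within a block is at most $C\mathfrak{C}(g_\varsigma^+)\ell^2$. The event $\{\max_{j\in\text{block}}X_1^{j,N}\ge\varsigma+1\}$ is contained in the union of two events: $X_1>\varsigma$ at the block's left endpoint, or such an oscillation. A union bound over blocks then gives $(D+1)\bigl(p_+(\varsigma)/\ell+4C\mathfrak{C}(g_\varsigma^+)\ell\bigr)$.

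The point is that the total oscillation cost is linear in $\ell$. Choosing $\ell=\varepsilon/\mathfrak{C}(g_\varsigma^+)$, which is allowed because $\mathfrak{C}\ge1$, yields $(D+1)\bigl(p_+(\varsigma)\mathfrak{C}(g_\varsigma^+)/\varepsilon+4C\varepsilon\bigr)$. Sending $\varsigma\to\infty$ and then $\varepsilon\downarrow0$ uses exactly (\ref{Eq.UpperTailCompatibility}).

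The lower tail is identical with $g_\varsigma^-$, using two facts. If $X_k^{j,N}\ge-\varsigma$ then $H_N(g_\varsigma^-,t_j^N)\ge k$, and if $X_k^{j,N}\le-\varsigma-1$ then $H_N(g_\varsigma^-,t_j^N)\le k-1$. With this containment in hand, your level-net argument completes the proof, provided you fix the compact window before sending the block length to $0$.
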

\begin{proof}
The proof we present is similar to that of \cite[Lemmas~5.2 and~5.3]{Kurt03}, which is itself based on ideas from \cite[Section~10]{Billing}.

By Assumptions B.1 and B.2, the conditions of Lemma~\ref{Lem.FourthMomentBound} are satisfied by $M^N\big|_{\{u,v\}\times\mathbb R}$ for every $N\geq N_0$, every $u,v\in\mathcal T_N$ with $u<v$, and every $f\in\operatorname{Lip}_{b,-}(\mathbb R)$. Consequently, (\ref{Eq.FourthMomentBound}), the definition of $H_N$ in (\ref{Eq.LinearStatistic}), and (\ref{Eq.FourthMomentConstant}) imply that, for every $f\in\operatorname{Lip}_{b,-}(\mathbb R)$, $N\geq N_0$, and $u,v\in\mathcal T_N$,
\begin{equation}\label{Eq.FourthMomentBoundTightnessProof}
\mathbb E\left[(H_N(f,v)-H_N(f,u))^4\right]
\leq \mathfrak C(f)(v-u)^2.
\end{equation}

Equation~(\ref{Eq.FourthMomentBoundTightnessProof}) is the only consequence of Assumptions B.1 and B.2 that we use, and hence the only place where Lemma~\ref{Lem.FourthMomentBound} enters the proof. The remainder of the argument relies only on (\ref{Eq.FourthMomentBoundTightnessProof}) and Assumption B.3 and is organized as follows. First, we state two claims that form the main ingredients of the proof. We then show how these claims imply the statement of the lemma. Finally, we prove each of the two claims. In what follows, we always assume that $N\geq N_0$ and $f\in\operatorname{Lip}_{b,-}(\mathbb R)$, without further mention.

\medskip
\noindent
{\bf Claim 1.}
For every fixed $k\geq1$,
\begin{equation}\label{Eq.MeshCompactContainmentTightnessProof}
\lim_{A\to\infty}\limsup_{N\to\infty}
\mathbb P\!\left(
\max_{0\leq j\leq d_N}|X_k^{j,N}|>A
\right)=0.
\end{equation}

\medskip
\noindent
{\bf Claim 2.}
Define
$$
w_{k,N}^{\rm disc}(a)
=
\max_{0\leq i,j\leq d_N}
\left\{
|X_k^{i,N}-X_k^{j,N}|:
|t_i^N-t_j^N|\leq a
\right\}.
$$
Then, for every fixed $k\geq1$ and $\eta>0$,
\begin{equation}\label{Eq.DiscreteModulusLimitTightnessProof}
\lim_{a\downarrow0}\limsup_{N\to\infty}
\mathbb P\!\left( w_{k,N}^{\rm disc}(a)>\eta \right)=0.
\end{equation}

We next show how Claims~1 and 2 imply the statement of the lemma. We apply the tightness criterion of \cite[Lemma~2.4]{DEA21} with
$$
\Sigma=\mathbb N,\qquad
\Lambda=[\alpha,\beta],\qquad
a_m=\alpha,\quad b_m=\beta\quad\text{for every }m\geq1.
$$

For $z\in C([\alpha,\beta])$, define its modulus of continuity by
$$
w(z,a)=
\sup_{\substack{s,t\in[\alpha,\beta]\\|s-t|\leq a}}
|z(s)-z(t)|.
$$
By the definition of $\mathcal L_k^N$ through linear interpolation and the definition of $w_{k,N}^{\rm disc}$,
\begin{equation}\label{Eq.LEXCorrespondence}
\|\mathcal L_k^N\|_\infty
=
\max_{0\leq j\leq d_N}|X_k^{j,N}|,
\qquad
w(\mathcal L_k^N,a)
\leq
w_{k,N}^{\rm disc}\left(a+2(\beta-\alpha)/d_N\right).
\end{equation}

By (\ref{Eq.MeshCompactContainmentTightnessProof}) and the first equality in (\ref{Eq.LEXCorrespondence}), condition {\rm (i)} of \cite[Lemma~2.4]{DEA21} holds with $a_0=\alpha$. Moreover, (\ref{Eq.DiscreteModulusLimitTightnessProof}), the second inequality in (\ref{Eq.LEXCorrespondence}), and the fact that $d_N \rightarrow \infty$ imply condition {\rm (ii)} of that lemma. This proves the desired tightness.

It remains to prove Claims~1 and 2.

\medskip
\noindent
{\bf Proof of Claim 1.} We begin by introducing some notation and deriving a key consequence of (\ref{Eq.FourthMomentBoundTightnessProof}), namely (\ref{Eq.MaximalEstimateTightnessProof}) below. We then establish (\ref{Eq.MeshCompactContainmentTightnessProof}) by proving upper- and lower-tail estimates for $X_k^{j,N}$ using (\ref{Eq.MaximalEstimateTightnessProof}) and Assumption B.3.

Put $D=\beta-\alpha$ and set $h_N=D/d_N$. Let $t_p^N,\ldots,t_q^N$ be consecutive mesh points and set $n=q-p$. Define, for $0\leq m\leq n$,
$$
S_m=H_N(f,t_{p+m}^N)-H_N(f,t_p^N).
$$
By (\ref{Eq.FourthMomentBoundTightnessProof}) and Markov's inequality, for $0\leq i\leq j\leq n$ and $a>0$,
$$
\mathbb P\bigl(|S_j-S_i|\geq a\bigr)
\leq a^{-4}\mathfrak C(f)
\bigl(t_{p+j}^N-t_{p+i}^N\bigr)^2.
$$
Thus, $\{S_i\}_{i=0}^n$ satisfies the conditions of \cite[Theorem~10.2]{Billing}, with the parameters denoted there by $\alpha$ and $\beta$ both equal to $1$, and with $u_l=\mathfrak C(f)^{1/2}h_N$ for $1\leq l\leq n$. Applying that theorem, we obtain a universal constant $C>0$ such that, for every $a>0$,
\begin{equation}\label{Eq.MaximalEstimateTightnessProof}
\mathbb P\!\left(
\max_{p\leq j\leq q}
|H_N(f,t_j^N)-H_N(f,t_p^N)|\geq a
\right)
\leq C\mathfrak C(f)a^{-4}(t_q^N-t_p^N)^2.
\end{equation}

We next fix $0<\ell<\min\{1,D\}$ and put $J=\lceil D/\ell\rceil$. For all sufficiently large $N$, depending on $\ell$, we may choose integers
$$
0=q_{0,N}<q_{1,N}<\cdots<q_{J,N}=d_N
$$
and define
$$
r_{s,N}=t_{q_{s,N}}^N \mbox{ for } 0\leq s \leq J, \mbox{ and }
\mathcal B_{s,N}
=\{q_{s,N},q_{s,N}+1,\ldots,q_{s+1,N}\} \mbox{ for } 0\leq s<J,
$$
so that for $0 \leq s < J$
\begin{equation}\label{Eq.BlockBoundsTightnessProof}
\frac{\ell}{4}\leq r_{s+1,N}-r_{s,N}\leq2\ell,
\qquad
J\leq\frac{D+1}{\ell}.
\end{equation}
For later use, we also define
\begin{equation}\label{Eq.TightnessProofDefDelta}
\Delta_{s,N}(f)
=
\max_{j\in\mathcal B_{s,N}}
\left|H_N(f,t_j^N)-H_N(f,r_{s,N})\right|,
\end{equation}
and note that by (\ref{Eq.MaximalEstimateTightnessProof}) and (\ref{Eq.BlockBoundsTightnessProof}), we have
\begin{equation}\label{Eq.BlockLinearStatisticBound}
\mathbb P\bigl(\Delta_{s,N}(f)\geq a\bigr)
\leq
4C\mathfrak C(f)a^{-4}\ell^2.
\end{equation}

We now turn to the proof of (\ref{Eq.MeshCompactContainmentTightnessProof}), beginning with the upper tail. By (\ref{Eq.LinearStatistic}), (\ref{Eq.CutoffFunction}), and (\ref{Eq.TranslatedCutoffFunctions}), the inequality $X_1^{j,N}\leq\varsigma$ implies $H_N(g_\varsigma^+,t_j^N)=0$, while $X_1^{j,N}\geq\varsigma+1$ implies $H_N(g_\varsigma^+,t_j^N)\geq1$. Together with the definition of $\Delta_{s,N}(g_\varsigma^+)$ from (\ref{Eq.TightnessProofDefDelta}), this yields
$$
\left\{
\max_{j\in\mathcal B_{s,N}}X_1^{j,N}\geq\varsigma+1
\right\}
\subseteq
\left\{X_1^{q_{s,N},N}>\varsigma\right\}
\cup
\left\{\Delta_{s,N}(g_\varsigma^+)\geq1\right\}.
$$
Taking a union over $0\leq s<J$ and using the preceding inclusion together with (\ref{Eq.UpperTailProbability}), (\ref{Eq.BlockBoundsTightnessProof}), and (\ref{Eq.BlockLinearStatisticBound}) with $a = 1$, we obtain
\begin{equation*}
\limsup_{N\to\infty}
\mathbb P\!\left(
\max_{0\leq j\leq d_N}X_1^{j,N}\geq\varsigma+1
\right)
\leq
(D+1)\left\{
\frac{p_+(\varsigma)}{\ell}
+
4C\mathfrak C(g_\varsigma^+)\ell
\right\}.
\end{equation*}

Fix $0<\varepsilon<\min\{1,D\}$ and set
$\ell=\varepsilon/\mathfrak C(g_\varsigma^+)$. By (\ref{Eq.FourthMomentConstant}), $\mathfrak C(g_\varsigma^+)\geq1$, and hence $0<\ell<\min\{1,D\}$. Substituting this choice of $\ell$ into the preceding inequality gives
$$
\limsup_{N\to\infty}
\mathbb P\!\left(
\max_{0\leq j\leq d_N}X_1^{j,N}\geq\varsigma+1
\right)
\leq
(D+1)\left\{
\frac{p_+(\varsigma)\mathfrak C(g_\varsigma^+)}{\varepsilon}
+
4C\varepsilon
\right\}.
$$
Letting first $\varsigma\to\infty$ and then $\varepsilon\downarrow0$ and using (\ref{Eq.UpperTailCompatibility}), we conclude that
\begin{equation}\label{Eq.UpperContainmentBoundTightnessProof}
\lim_{\varsigma\to\infty}\limsup_{N\to\infty}
\mathbb P\!\left(
\max_{0\leq j\leq d_N}X_1^{j,N}\geq\varsigma+1
\right)=0.
\end{equation}

We next control the lower tail. By (\ref{Eq.LinearStatistic}), (\ref{Eq.CutoffFunction}), and (\ref{Eq.TranslatedCutoffFunctions}), the inequality $X_k^{j,N}\geq-\varsigma$ implies $H_N(g_\varsigma^-,t_j^N)\geq k$, while $X_k^{j,N}\leq-\varsigma-1$ implies $H_N(g_\varsigma^-,t_j^N)\leq k-1$. Together with the definition of $\Delta_{s,N}(g_\varsigma^-)$ from (\ref{Eq.TightnessProofDefDelta}), this yields
$$
\left\{
\min_{j\in\mathcal B_{s,N}}X_k^{j,N}\leq-\varsigma-1
\right\}
\subseteq
\left\{X_k^{q_{s,N},N}<-\varsigma\right\}
\cup
\left\{\Delta_{s,N}(g_\varsigma^-)\geq1\right\}.
$$
Taking a union over $0\leq s<J$ and using the preceding inclusion together with (\ref{Eq.LowerTailProbability}), (\ref{Eq.BlockBoundsTightnessProof}), and (\ref{Eq.BlockLinearStatisticBound}), we obtain
\begin{equation*}
\limsup_{N\to\infty}
\mathbb P\!\left(
\min_{0\leq j\leq d_N}X_k^{j,N}\leq-\varsigma-1
\right)
\leq
(D+1)\left\{
\frac{p_{k,-}(\varsigma)}{\ell}
+
4C\mathfrak C(g_\varsigma^-)\ell
\right\}.
\end{equation*}

Fix $0<\varepsilon<\min\{1,D\}$ and set
$\ell=\varepsilon/\mathfrak C(g_\varsigma^-)$. By (\ref{Eq.FourthMomentConstant}), $\mathfrak C(g_\varsigma^-)\geq1$, and hence $0<\ell<\min\{1,D\}$. Substituting this choice of $\ell$ into the preceding inequality gives
$$
\limsup_{N\to\infty}
\mathbb P\!\left(
\min_{0\leq j\leq d_N}X_k^{j,N}\leq-\varsigma-1
\right)
\leq
(D+1)\left\{
\frac{p_{k,-}(\varsigma)\mathfrak C(g_\varsigma^-)}{\varepsilon}
+
4C\varepsilon
\right\}.
$$
Letting first $\varsigma\to\infty$ and then $\varepsilon\downarrow0$ and using (\ref{Eq.LowerTailCompatibility}), we conclude that
\begin{equation}\label{Eq.LowerContainmentBoundTightnessProof}
\lim_{\varsigma\to\infty}\limsup_{N\to\infty}
\mathbb P\!\left(
\min_{0\leq j\leq d_N}X_k^{j,N}\leq-\varsigma-1
\right)=0.
\end{equation}

Combining (\ref{Eq.UpperContainmentBoundTightnessProof}) with (\ref{Eq.LowerContainmentBoundTightnessProof}) and the fact that $X_1^{j,N} \geq X_k^{j,N}$ for all $k \geq 1$, we conclude (\ref{Eq.MeshCompactContainmentTightnessProof}).

\medskip
\noindent
{\bf Proof of Claim 2.} We begin by deriving a local oscillation estimate on the blocks $\mathcal B_{s,N}$ introduced in the proof of Claim~1. We then combine this estimate with (\ref{Eq.MeshCompactContainmentTightnessProof}) to establish (\ref{Eq.DiscreteModulusLimitTightnessProof}).

For $A,\rho>0$ and $n\in\mathbb Z$, define
$$
\mathcal U_{k,A,N} =
\left\{
\max_{0\leq j\leq d_N}|X_k^{j,N}|\leq A
\right\},
\qquad
\psi_{n,\rho}(x)
=
\chi\left(\frac{x-(n-1)\rho}{\rho}\right),
$$
where we recall that $\chi$ is as in (\ref{Eq.CutoffFunction}). We also introduce the finite set
$$
I_{A,\rho} = \left\{ n\in\mathbb Z: [n\rho,(n+1)\rho)\cap[-A,A]\neq\emptyset \right\}.
$$
For $0\leq s<J$, define the events
$$
\mathcal E_{s,N}^{k,A,\rho}
=
\left\{
\max_{j\in\mathcal B_{s,N}}
|X_k^{j,N}-X_k^{q_{s,N},N}|
\geq2\rho
\right\}
\cap\mathcal U_{k,A,N}.
$$
We claim that
\begin{equation}\label{Eq.BlockOscillationInclusion}
\mathcal E_{s,N}^{k,A,\rho}
\subseteq
\bigcup_{n\in I_{A,\rho}}
\left\{\Delta_{s,N}(\psi_{n,\rho})\geq1\right\},
\end{equation}
where we recall that $\Delta_{s,N}$ is as in (\ref{Eq.TightnessProofDefDelta}).

Indeed, suppose that $\mathcal E_{s,N}^{k,A,\rho}$ occurs. Then there exists $j\in\mathcal B_{s,N}$ such that
$$
|X_k^{j,N}-X_k^{q_{s,N},N}|\geq2\rho.
$$
Set $x=\max\{X_k^{j,N},X_k^{q_{s,N},N}\}$, $y=\min\{X_k^{j,N},X_k^{q_{s,N},N}\}$, and choose $n\in\mathbb Z$ so that $x\in[n\rho,(n+1)\rho)$. Since $\mathcal U_{k,A,N}$ occurs, we have $x\in[-A,A]$, and hence $n\in I_{A,\rho}$. Moreover,
$$
y\leq x-2\rho<(n-1)\rho.
$$
From the above inequalities, we conclude 
$$X_k^{j,N} \geq n\rho \mbox{ and }X_k^{q_{s,N},N} < (n-1)\rho \mbox{, or }X_k^{j,N} < (n-1)\rho \mbox{ and }X_k^{q_{s,N},N} \geq n\rho.$$
Combining the latter with the fact that particles are ordered, (\ref{Eq.LinearStatistic}), (\ref{Eq.CutoffFunction}), and the definition of $\psi_{n,\rho}$, we conclude $$H_N(\psi_{n,\rho},t_j^N) \geq k \mbox{ and }H_N(\psi_{n,\rho},r_{s,N}) \leq k -1 \mbox{, or } H_N(\psi_{n,\rho},t_j^N) \leq k-1 \mbox{ and }H_N(\psi_{n,\rho},r_{s,N}) \geq k.$$
Either way, using (\ref{Eq.TightnessProofDefDelta}) we see that $\Delta_{s,N}(\psi_{n,\rho}) \geq 1$, proving (\ref{Eq.BlockOscillationInclusion}).

Using (\ref{Eq.BlockLinearStatisticBound}) with $a = 1$ and (\ref{Eq.BlockOscillationInclusion}), followed by a union bound over $n\in I_{A,\rho}$, we obtain
\begin{equation}\label{Eq.LocalModulusBoundTightnessProof}
\mathbb P\bigl(\mathcal E_{s,N}^{k,A,\rho}\bigr)
\leq C_{A,\rho}\ell^2,
\mbox{ where }
C_{A,\rho}
=
4C\sum_{n\in I_{A,\rho}}\mathfrak C(\psi_{n,\rho})
<\infty.
\end{equation}

We now prove (\ref{Eq.DiscreteModulusLimitTightnessProof}). Fix $\eta,\varepsilon>0$ and set $\rho=\eta/6$. By (\ref{Eq.MeshCompactContainmentTightnessProof}), we may choose $A>0$ such that
$$
\limsup_{N\to\infty}
\mathbb P\bigl(\mathcal U_{k,A,N}^c\bigr)
<\varepsilon.
$$
Fix $0<\ell<\min\{1,D\}$ and use the blocks $\mathcal B_{s,N}$ constructed in the proof of Claim~1. From the last inequality, (\ref{Eq.BlockBoundsTightnessProof}), (\ref{Eq.LocalModulusBoundTightnessProof}), and a union bound over $0\leq s<J$, we conclude
$$
\limsup_{N\to\infty}
\mathbb P\left(
\mathcal U_{k,A,N}^c
\cup
\bigcup_{s=0}^{J-1}\mathcal E_{s,N}^{k,A,\rho}
\right)
\leq
\varepsilon+(D+1)C_{A,\rho}\ell.
$$

On the complement of the event in the preceding display, for every $0\leq s<J$ and every $j\in\mathcal B_{s,N}$,
\begin{equation}\label{Eq.NeedTriangle}
|X_k^{j,N}-X_k^{q_{s,N},N}|<2\rho.
\end{equation}
By the lower bound in (\ref{Eq.BlockBoundsTightnessProof}), two mesh points whose time coordinates are at distance at most $\ell/8$ must lie either in the same block or in two adjacent blocks. In the first case, the corresponding particle locations differ by less than $4\rho$ in view of (\ref{Eq.NeedTriangle}) and the triangle inequality. In the second case, using the common endpoint of the two adjacent blocks, (\ref{Eq.NeedTriangle}), and the triangle inequality, they differ by less than $6\rho=\eta$. Consequently,
$$
\limsup_{N\to\infty}
\mathbb P\left(
w_{k,N}^{\rm disc}(\ell/8)>\eta
\right)
\leq
\varepsilon+(D+1)C_{A,\rho}\ell.
$$
Letting first $\ell\downarrow0$ and then $\varepsilon\downarrow0$, we obtain (\ref{Eq.DiscreteModulusLimitTightnessProof}).
\end{proof}

%----------------------------------------------------------------------
%
% Section 2.5
%
%----------------------------------------------------------------------
\subsection{Comparison with Johansson's argument and the role of the assumptions}\label{Section2.5}

The proofs of Lemmas~\ref{Lem.FourthMomentBound} and \ref{Lem.DPPTightnessCriterion} are inspired by the arguments surrounding Lemmas~5.1--5.3 of Johansson~\cite{Kurt03}. In that work, Johansson proves a fourth moment estimate for differences of linear statistics in Lemma~5.1, derives from it a maximal inequality in Lemma~5.2, and then uses suitable cutoff functions to obtain a modulus of continuity estimate for the top curve in Lemma~5.3. The general structure of our argument is similar. There are, however, several differences that motivate the formulation adopted in Sections~\ref{Section2.3} and \ref{Section2.4}.

First, our goal is to prove tightness of the entire line ensemble, rather than only of its top curve. Thus, the lower-tail estimates in Section~\ref{Section2.4} are formulated for every fixed curve index $k$, and ultimately yield tightness in $C(\mathbb{N}\times[\alpha,\beta])$. On the other hand, Johansson works with the top curve and proves estimates that contain additional decay in the time variable; see, for example, \cite[(5.2)]{Kurt03}. This stronger control is useful for the subsequent study of the maximum of the top curve over $\mathbb{R}$ and of the location where this maximum is attained. Since our tightness argument takes place on a fixed compact interval, no analogous decay away from the origin is needed.

A second difference is that the argument of \cite{Kurt03} is closely intertwined with the particular form of the correlation kernel of the discrete polynuclear growth (PNG) model. Our purpose in Sections~\ref{Section2.3} and \ref{Section2.4} is instead to isolate a small collection of structural and analytic properties of the kernel that suffice for the argument. This makes the resulting fourth moment bound and tightness criterion applicable without repeating the model-specific cancellations and contour calculations appearing in \cite{Kurt03}.

\medskip

The most important of these structural requirements is Assumption~A.1, which isolates the algebraic identities responsible for the cancellations in the fourth moment expansion. The same structure is present in \cite{Kurt03}. Indeed, Johansson writes his correlation kernel in the form
\begin{equation}\label{Eq.JohanssonKernelDiscussion}
K_{n,M}(r,x;s,y)=\widetilde K_{n,M}(r,x;s,y)-\phi_{r,s}(x,y),
\end{equation}
see \cite[(1.22)]{Kurt03}, where
\begin{equation}\label{Eq.JohanssonModifiedKernelDiscussion}
\widetilde K_{n,M}(r,x;s,y)
=\sum_{i,j=1}^n\phi_{r,M}(x,x_i^M)(A^{-1})_{ij}\phi_{-M,s}(x_j^{-M},y);
\end{equation}
see \cite[(1.23)]{Kurt03}. The transition functions are defined by
\begin{equation}\label{Eq.JohanssonTransitionDefinitionDiscussion}
\phi_{r,s}=\phi_{r,r+1}*\cdots *\phi_{s-1,s},\qquad r<s,
\end{equation}
and are set to be identically zero when $r\geq s$; see the definition immediately preceding \cite[(1.21)]{Kurt03}. In particular, for $r<t<s$,
\begin{equation}\label{Eq.JohanssonConvolutionDiscussion}
\phi_{r,s}=\phi_{r,t}*\phi_{t,s}.
\end{equation}

There is a minor difference in conventions. In Section~5, Johansson abbreviates
$K_N(2u,x;2v,y)$ from \cite[(3.24)]{Kurt03} as $K_{uv}(x,y)$. Thus, for $u<v$, his
kernel decomposition takes the form
$$
K_{uv}(x,y) = \widetilde K_{uv}(x,y)-\phi_{uv}(x,y).
$$
In Assumption~A.1, by contrast, the corresponding correction term
$-\phi_{uv}(y,x)$ appears in $K_{vu}(x,y)$. Our convention is therefore obtained
from Johansson's by transposing the space-time kernel, namely by replacing $K(a,x;a',y)$ with $K(a',y;a,x)$. After this transposition, formulas \eqref{Eq.JohanssonModifiedKernelDiscussion} and \eqref{Eq.JohanssonConvolutionDiscussion} imply exactly the three contraction identities in Assumption~A.1.

The structure in \eqref{Eq.JohanssonKernelDiscussion}--\eqref{Eq.JohanssonConvolutionDiscussion} is not specific to the PNG model considered in \cite{Kurt03}. Johansson's general setup in Section~1.2 concerns measures built from products of determinants of transition functions, for which the associated space-time point process is determinantal with kernel of the form \eqref{Eq.JohanssonKernelDiscussion}--\eqref{Eq.JohanssonModifiedKernelDiscussion}. In the PNG model, such a measure arises from a system of non-intersecting paths through the Karlin--McGregor or Lindstr\"om--Gessel--Viennot formula; see the discussion following \cite[(1.32)]{Kurt03} and Proposition~3.13 therein. More generally, kernels of this form are characteristic of Eynard--Mehta-type constructions of space-time determinantal processes from transition kernels; see \cite[Section 1]{BR05}. Thus, Assumption~A.1 captures a structural feature naturally satisfied by a broad class of space-time determinantal processes, rather than one peculiar to the kernels appearing in \cite{Kurt03} or in the present paper.

For the fourth moment estimate, the relevance of this structure is that the identities \eqref{Eq.FourthMomentContractionUU}--\eqref{Eq.FourthMomentContractionVU} allow kernels involving different time slices to be contracted to copies of the single kernel $\widetilde K_{uv}$. These contractions are the main source of the cancellations in the proof of Lemma~\ref{Lem.FourthMomentBound}. Johansson's proof uses the same mechanism implicitly, but there the contractions are intertwined with the particular contour-integral representation of the PNG kernel; for example, the main contribution in \cite[(5.48)]{Kurt03} is explicitly identified as a ``contraction term'' and is used immediately afterward to cancel another contribution to the fourth moment expansion. Assumption~A.1 isolates this algebraic mechanism from the model-specific estimates that accompany it.

The remaining assumptions separate the analytic estimates from this algebraic mechanism. Assumption~A.2 provides the absolute integrability needed to justify the expansions, contractions, and changes in the order of integration. Assumption~A.3 contains the quantitative input: the difference kernels $L_k$ are required to gain a factor $(v-u)^{k/2}$. Since the total degree in the fourth moment expansion is four, these bounds produce the factor $(v-u)^2$ in \eqref{Eq.FourthMomentBound}. 

\medskip
We next turn to the assumptions used in the tightness argument. Just as Lemma~2.6 provides the fourth moment estimate that underlies Lemma~\ref{Lem.DPPTightnessCriterion}, Johansson's Lemma~5.1 provides the corresponding input for his proof of tightness. Assumptions~B.1 and B.2 ensure that the hypotheses of Lemma~2.6 hold uniformly over the determinantal processes and pairs of time slices appearing in Section~\ref{Section2.4}, while Assumption~B.3 supplies the additional one-time tail information needed to pass from estimates for linear statistics to tightness of the individual curves.

There is one point in the tightness argument of \cite{Kurt03} that deserves some care. Lemma~5.1 there gives the fourth moment estimate \cite[(5.2)]{Kurt03} with a constant $C(f,\alpha)$ depending on the test function $f$. In the proof of Claim~5.4, Johansson takes $f=g_M$, where $g_M(x)=g(x-M)$, with $g$ defined in \cite[(5.8)]{Kurt03} and essentially equal to our function $\chi$ from (\ref{Eq.CutoffFunction}). In the upper-tail argument $M\geq0$ is taken large, so that $g_M$ is translated toward $+\infty$. Immediately after \cite[(5.8)]{Kurt03}, Johansson states that, for $f=g_M$, the constant $C(f,\alpha)$ in \cite[(5.2)]{Kurt03} ``can be taken to be independent of $M$,'' explaining that ``only sup-norms of $f$ and its derivatives enter.'' This uniformity is then used, through the maximal estimate of Lemma~5.2, together with the one-point convergence of the top curve to $F_2$, to obtain the upper-tail part of Claim~5.4; see the remainder of the proof on \cite[p.~312]{Kurt03}. The corresponding lower-tail argument would require the same type of control when the cutoff is translated toward $-\infty$, which in the above notation corresponds to $M\leq0$.

As far as we can tell, however, the estimates given in the proof of Lemma~5.1 do not directly provide the corresponding uniformity of $C(g_M,\alpha)$ over $M\leq0$. One place where the location of the translate enters explicitly is \cite[(5.55)]{Kurt03}. There the summation variable is restricted by the location of the transition region of $f$, namely through the condition
$$
\xi - \tau^2 = \frac{y-cN}{dN^{1/3}}\in[K_1-1,K_2+1],
$$
and the summand contains an exponential factor depending on $\xi$. When $M\to+\infty$, this factor decays, in agreement with the uniformity needed for the upper-tail argument. When $M\to-\infty$, the same estimate instead produces a growing factor, so an additional argument appears to be needed to obtain a bound uniform in $M$. Thus, although Johansson remarks after \cite[(5.8)]{Kurt03} that the constant depends only on bounds for $f$ and its derivatives, we have not found in the proof a verification of the corresponding uniformity for negative translates. The lower-tail case is not treated separately: after completing the upper-tail estimate in Claim~5.4, Johansson states that ``the case $\max_{|j|\leq L}H_{N,0}(t_j)\leq-M$ is analogous''; see \cite[p.~312]{Kurt03}.

Assumption~B.3 is formulated so that no such uniformity is required. We allow the fourth moment constants $\mathfrak C(g_\varsigma^\pm)$ to depend on the location of the cutoff and require only that any growth of these constants be compensated by the corresponding one-time tail probabilities:
$$
p_+(\varsigma)\mathfrak C(g_\varsigma^+)\rightarrow 0,
\qquad
p_{k,-}(\varsigma)\mathfrak C(g_\varsigma^-)\rightarrow 0.
$$
In particular, we do not need a uniform bound on $\mathfrak C(g_\varsigma^-)$ as the cutoff is translated toward $-\infty$. Moreover, because we prove tightness of the full line ensemble rather than only of the top curve, the lower-tail condition is imposed separately for every fixed curve index $k$.

%----------------------------------------------------------------------
%
% Section 3
%
%----------------------------------------------------------------------
\section{Determinantal structure of the model}\label{Section3}
The goal of this section is to identify determinantal point processes associated with the probability measures introduced in Section~\ref{Section1.2}. In Section~\ref{Section3.1}, we recall the periodic Schur process of \cite{Bor07} and identify the specialization that yields the measures from Section~\ref{Section1.2}. There, we introduce the associated shift-mixed point process and, using Borodin's determinantal formula, derive a double-contour representation for its correlation kernel. In Section~\ref{Section3.2}, we introduce the scaled point processes corresponding to the setup of Definition~\ref{Def.S1Scaling} and derive a formula for their correlation kernel that is suitable for verifying the conditions in Section~\ref{Section2.4}. Throughout this section, we freely use the notation from Section~\ref{Section2.1}.

%----------------------------------------------------------------------
%
% Section 3.1
%
%----------------------------------------------------------------------
\subsection{The periodic Schur process}\label{Section3.1}

We continue to use the notation introduced at the beginning of Section~\ref{Section1.2}, including the notions of a partition and interlacing. We also recall from Section \ref{Section2.2} that $\llbracket p,q \rrbracket$ denotes the set $\{p, p+1, \dots, q\}$ for any two integers $p \leq q$.

The following definition introduces the periodic Schur process, which is the main object of interest in this section.

\begin{definition}\label{Def.PeriodicSchur}
Fix $M\in\mathbb{N}$ and $u\in(0,1)$, as well as $a_i,b_i\in(0,1)$ for $i\in\llbracket 1,M\rrbracket$. We define the \emph{periodic Schur process} to be the probability measure on $\Lambda$, as defined in \eqref{Eq.DefLambda}, given by
\begin{equation}\label{Eq.DefPeriodicSchur}
\mathbb{P}\left(\lambda^{(-M)},\ldots,\lambda^{(M)}\right) = \frac{1}{Z}u^{|\lambda^{(M)}|}
\prod_{i=1}^{M}s_{\lambda^{(-i+1)}/\lambda^{(-i)}}(a_i)
\prod_{i=1}^{M}s_{\lambda^{(i-1)}/\lambda^{(i)}}(b_i).
\end{equation}
Here, $s_{\lambda/\mu}(x) = {\bf 1}\{\lambda\succeq\mu\}x^{|\lambda|-|\mu|}$
is the skew Schur polynomial in a single variable, and
\begin{equation}\label{Eq.FormulaForZ}
Z= \frac{1}{(u;u)_\infty} \prod_{i,j=1}^{M}\frac{1}{(a_ib_j;u)_\infty}.
\end{equation}
\end{definition}

\begin{remark}\label{Rem.CorrespondenceBorodin}
The measures in \eqref{Eq.DefPeriodicSchur} are special cases of the periodic Schur processes introduced in \cite[Section~1]{Bor07}. More precisely, denote the sequence appearing in \cite[Section~1]{Bor07} by $\widetilde{\lambda}^{(0)},\ldots,\widetilde{\lambda}^{(N)}$. Then the measure in \eqref{Eq.DefPeriodicSchur} corresponds to the choices
$t=u$, $N=2M$, $\widetilde{\lambda}^{(j)}=\lambda^{(j-M)}$ for $j\in\llbracket 0,2M\rrbracket$, and the $\alpha$-specializations
$$
a[j] = \begin{cases}
\varnothing, & j\in\llbracket 1,M\rrbracket,\\
(b_{j-M}), & j\in\llbracket M+1,2M\rrbracket,
\end{cases}
\qquad
b[j]
=
\begin{cases}
(a_{M+1-j}), & j\in\llbracket 1,M\rrbracket,\\
\varnothing, & j\in\llbracket M+1,2M\rrbracket.
\end{cases}
$$

In particular, under these parameter choices, the formula for the normalization constant $Z$ in \eqref{Eq.FormulaForZ} follows from \cite[Proposition~1.1 and Remarks~1.2--1.3]{Bor07}.
\end{remark}

Using the formula for $s_{\lambda/\mu}(x)$, we readily observe that the measure in (\ref{Eq.S1ProbDistr}) is a special case of the one in (\ref{Eq.DefPeriodicSchur}), corresponding to setting
\begin{equation}\label{Eq.Specialization}
u=q^M,\quad a_i=b_i =bq^{i-1/2}\quad\mbox{for }i\in\llbracket1,M\rrbracket.
\end{equation}

For the remainder of this section, we use the notation of Definition~\ref{Def.PeriodicSchur} and introduce a family of shift-mixed point processes associated with \eqref{Eq.DefPeriodicSchur}. These point processes were introduced in \cite{Bor07}, where they were shown to be determinantal.

Let $(\lambda^{(-M)},\ldots,\lambda^{(M)})$ be distributed according to \eqref{Eq.DefPeriodicSchur}, and let $S$ be an independent integer-valued random variable with distribution
\begin{equation}\label{Eq.ShiftVar}
\mathbb{P}(S=s)=\frac{u^{s^2/2}}{\theta_3(1;u)},\qquad s\in\mathbb{Z}.
\end{equation}
Here $\theta_3$ denotes the third Jacobi theta function,
\begin{equation}\label{Eq.DefTheta}
\theta_3(z;t)=\sum_{m\in\mathbb{Z}}z^m t^{m^2/2}
=(t;t)_\infty (-zt^{1/2};t)_{\infty} (-t^{1/2}/z;t)_{\infty}.
\end{equation}
The second equality is a form of the Jacobi triple product; see \cite[Chapter~13, \S19, (16)]{EMOT}. In the notation of that reference, one sets $t=q^2$ and $z=e^{2\pi\mathrm{i}v}$. Both expressions in (\ref{Eq.DefTheta}) are defined for $z\in\mathbb{C}^{\times}$ and $t\in (0,1)$.

With this data, we define the point process $\mathfrak{S}$ on $\mathbb{R}^2$ by setting, for every Borel set $A\subseteq\mathbb{R}^2$,
\begin{equation}\label{Eq.PointProcessSchur}
\mathfrak{S}(A) =\sum_{i \geq 1} \sum_{j = 1}^{2M}{\bf 1} \left\{(j,\lambda^{(j-M)}_i-i+S) \in A \right\}.
\end{equation} 

Proposition \ref{Prop.SchurDPP} below shows that $\mathfrak{S}$ is a determinantal point process. To state it we require some additional notation that we summarize in the following definition.

\begin{definition}\label{Def.Funct}
Assume the notation of Definition~\ref{Def.PeriodicSchur}. For
$s\in\llbracket1,2M\rrbracket$, define
\begin{equation}\label{Eq.DefFInhom}
F_s(z)
=
\begin{cases}
\displaystyle
\frac{\prod_{k=1}^{M}(b_k/z;u)_\infty}
{\prod_{k=1}^{M-s}(ua_kz;u)_\infty
 \prod_{k=M-s+1}^{M}(a_kz;u)_\infty},
& 1\leq s\leq M,\\[3ex]
\displaystyle
\frac{\prod_{k=1}^{s-M}(ub_k/z;u)_\infty
 \prod_{k=s-M+1}^{M}(b_k/z;u)_\infty}
{\prod_{k=1}^{M}(a_kz;u)_\infty},
& M+1\leq s\leq2M.
\end{cases}
\end{equation}
Here and below, empty products are understood to equal $1$.

For $z,w\in\mathbb{C}^{\times}$, define the meromorphic function
\begin{equation}\label{Eq.DefKappa}
\kappa(z,w) = -\frac{(u;u)_\infty^3\theta_3(z/w;u)}
{\theta_3(-u^{-1/2}z/w;u)\theta_3(1;u)}.
\end{equation}
\end{definition}
\begin{remark}\label{Rem.SpecializedF}
Under the specialization (\ref{Eq.Specialization}), the functions
defined in (\ref{Eq.DefFInhom}) admit a simpler expression. Indeed, using $u=q^M$ and $a_j=b_j=bq^{j-1/2}$, together with
$$\prod_{j=1}^{M}(cq^{j-1};q^M)_\infty=(c;q)_\infty,$$
we obtain
\begin{equation}\label{Eq.SpecializedF}
F_s(z) = 
\begin{cases}
\displaystyle
\frac{(bq^{1/2}/z;q)_\infty}
{(bq^{M-s+1/2}z;q)_\infty},
& 1\leq s\leq M,\\
\displaystyle
\frac{(bq^{s-M+1/2}/z;q)_\infty}
{(bq^{1/2}z;q)_\infty},
& M+1\leq s\leq2M.
\end{cases}
\end{equation}
\end{remark}

With the above notation in place, we can state the following consequence of \cite[Theorem A]{Bor07}. 
\begin{proposition}\label{Prop.SchurDPP}
Assume the same notation as in Definitions~\ref{Def.PeriodicSchur} and~\ref{Def.Funct}, and let $\mathfrak{S}$ be as in \eqref{Eq.PointProcessSchur}. Then $\mathfrak{S}$ is a determinantal point process on $\mathbb{R}^2$ with reference measure given by the counting measure on $\llbracket 1, 2M \rrbracket \times \mathbb{Z}$ and correlation kernel given for $s,t \in \llbracket 1, 2M \rrbracket$ and $x,y \in \mathbb{Z}$ by
\begin{equation}\label{Eq.SchurKernel}
\begin{split}
K(s,x;t,y) = & -\frac{{\bf 1}\{s>t\}}{2\pi\mathrm{i}}
\oint_{C_{1}}dw\frac{F_s(w)}{F_t(w)}w^{y-x-1} \\
&+ \frac{1}{(2\pi\mathrm{i})^2} \oint_{C_{r_z}}dz\oint_{C_{r_w}}dw \frac{F_s(z)}{F_t(w)}\kappa(z,w)z^{-x-1}w^{y-1}.
\end{split}
\end{equation}
Here $C_r$ denotes the positively oriented circle of radius $r$, and $r_z,r_w$ are chosen in $(\alpha, \alpha^{-1})$ so that $1<r_z/r_w<\min(\alpha^{-1}, u^{-1})$, where $\alpha = \max_{1\leq i\leq M}\{a_i,b_i\}$.
\end{proposition}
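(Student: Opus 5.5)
The plan is to deduce the proposition directly from Borodin's \cite[Theorem~A]{Bor07} by translating his notation into ours via the dictionary of Remark~\ref{Rem.CorrespondenceBorodin}. Borodin's theorem asserts that, for the periodic Schur process with general specializations $a[j],b[j]$ and parameter $t$, the shift-mixed configuration $\{(j,\widetilde\lambda^{(j)}_i-i+S)\}$, with $S$ independent and distributed as in (\ref{Eq.ShiftVar}), is determinantal on $\llbracket1,N\rrbracket\times\mathbb{Z}$ with counting reference measure. His kernel is a double contour integral of a ratio $F(j,z)/F(k,w)$ against a theta-function kernel in $z/w$. Here $F(j,z)$ is built from products over $m$ of ratios of $q$-Pochhammer-type factors $H(a[m];\cdot)$ and $H(b[m];\cdot)^{-1}$, with some factors shifted by $t$ according to whether $m\le j$ or $m>j$. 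Accordingly, I would carry out the proof in three steps.

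\emph{Identification of the point process and the functions $F_s$.} Under $\widetilde\lambda^{(j)}=\lambda^{(j-M)}$ with $N=2M$, the indices $j\in\llbracket1,2M\rrbracket$ label $\lambda^{(j-M)}$. Hence $\mathfrak{S}$ in (\ref{Eq.PointProcessSchur}) is exactly Borodin's shift-mixed process restricted to times $1,\dots,2M$, where time $2M$ and time $0$ carry the same partition by periodicity. With single-variable $\alpha$-specializations, $H((x);z)=(1-xz)^{-1}$ and its $t$-shifted products become $(x z;u)_\infty^{-1}$. Substituting the choices of $a[j],b[j]$ and checking which factors acquire the extra $u$, I would verify the two cases $1\le s\le M$ and $M+1\le s\le 2M$ of (\ref{Eq.DefFInhom}). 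The case $s\le M$ amounts to deciding which $b[j]=(a_{M+1-j})$ with $j\le s$ are shifted, giving the $(ua_kz;u)_\infty$ for $k\le M-s$ and the $(a_kz;u)_\infty$ otherwise. Any common $s$-independent factor cancels in the ratio $F_s/F_t$.

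\emph{The kernel.} Next I would match Borodin's theta-function kernel with $\kappa(z,w)$ in (\ref{Eq.DefKappa}), using the triple-product form in (\ref{Eq.DefTheta}) to convert between his $\theta_3$ conventions and ours; this includes the normalization by $\theta_3(1;u)$ coming from the law of $S$. Borodin's theorem has a single double-contour formula with contour ordering depending on whether $s\le t$ or $s>t$. For $s>t$ one deforms $|z|$ across $|w|$, and the residue of $\kappa$ at $z=w$ produces the single integral $-\oint F_s(w)/F_t(w)\,w^{y-x-1}dw$. Indeed, $\theta_3(-u^{-1/2}z/w;u)$ vanishes at $z=w$, and the residue normalization $(u;u)_\infty^3/\theta_3(1;u)$ combines with the numerator $\theta_3(1;u)$ to give residue $1$ up to sign. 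This yields (\ref{Eq.SchurKernel}) with the uniform contour condition $r_z>r_w$.

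\emph{Contours.} Finally, the admissible radii must avoid the poles of $F_s(z)$ (the zeros of $(a_kz;u)_\infty$ lie at $|z|\ge \alpha^{-1}$, and those of the numerator in $w$ at $|w|\le\alpha$) and the poles of $\kappa$ at $z/w=u^{-1}$ and $z/w=1$. This forces $r_z,r_w\in(\alpha,\alpha^{-1})$ and $1<r_z/r_w<\min(\alpha^{-1},u^{-1})$. I expect the main obstacle to be purely bookkeeping: matching Borodin's indexing (which slices receive $t$-shifted factors, his orientation and sign conventions, and his variable names versus our transposed $z^{-x-1}w^{y-1}$), and confirming the residue sign in the $s>t$ term. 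I would resolve these by checking the case $M=1$ against a direct computation of the one-point density.
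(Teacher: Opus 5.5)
Your proposal is correct and follows essentially the paper's route. You apply Borodin's Theorem~A (with the kernel of his Remark~2.4) under the dictionary of Remark~\ref{Rem.CorrespondenceBorodin}, verify $F(\sigma,\zeta)=F_\sigma(\zeta)$, and substitute $\eta=w^{-1}$ to recover $\kappa$. For $s>t$ you deform the $z$-contour from $r_z/r_w\in(u,1)$ to $(1,u^{-1})$, which crosses only the simple pole $z=w$, where $\kappa(z,w)\sim w/(z-w)$.

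Two small bookkeeping corrections:
\begin{itemize}
\item Borodin's shift-mixed process (mixing parameter $1$) lives on $\llbracket 1,2M\rrbracket\times(\mathbb{Z}+\tfrac12)$, not on $\llbracket 1,2M\rrbracket\times\mathbb{Z}$. You therefore also need the shift $x\mapsto x-\tfrac12$, which the paper handles by a change of variables for determinantal point processes.
\item A common $s$-independent factor would not cancel in the double integral $F_s(z)/F_t(w)$, contrary to your remark. This is moot here, since Borodin's $F(\sigma,\cdot)$ equals $F_\sigma$ exactly.
\end{itemize}
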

\begin{proof}
Let $\mathfrak{S}'$ be the point process on $\mathbb{R}^2$ defined, for every Borel set $A\subseteq\mathbb{R}^2$, by
$$\mathfrak{S}'(A) = \sum_{i\geq1}\sum_{j=1}^{2M}
{\bf 1}\left\{\left(j,\lambda^{(j-M)}_i-i+\frac12+S\right)\in A \right\}.$$
Under the correspondence in Remark~\ref{Rem.CorrespondenceBorodin}, $\mathfrak{S}'$ is precisely the shift-mixed periodic Schur process considered in \cite{Bor07}, with Borodin's parameters $N=2M$, $t=u$, and $z = 1$, where $z$ is the shift-mixed parameter from \cite[Section 2]{Bor07}. Hence, by \cite[Theorem~A]{Bor07}, $\mathfrak{S}'$ is a determinantal point process with reference measure given by counting measure on $\llbracket1,2M\rrbracket\times \left(\mathbb{Z} + 1/2\right)$. By \cite[Remark~2.4]{Bor07} its correlation kernel is given, for $\sigma,\tau \in \llbracket 1, 2M \rrbracket$ and $X,Y \in \mathbb{Z} + 1/2$, by
\begin{equation}\label{Eq.BorKernel}
K'\left(\sigma,X;\tau,Y\right)
=-\frac{(u;u)_\infty^3}{\theta_3(1;u)(2\pi\mathrm{i})^2}
\oint_{C_{r_\zeta}} d\zeta\oint_{C_{r_\eta}} d\eta
\frac{F(\sigma,\zeta)}{F(\tau,\eta^{-1})}
\frac{\theta_3(\zeta\eta;u)}
{\theta_3(-\zeta\eta u^{-1/2};u)}
\frac{1}
{\zeta^{X+1/2}\eta^{Y+1/2}}.
\end{equation}
Here $r_\zeta, r_\eta \in (\alpha, \alpha^{-1})$ and $1 < r_{\zeta} r_{\eta} < \min(\alpha^{-1}, u^{-1})$ if $\sigma\leq\tau$, while $\max(\alpha, u)<r_{\zeta}r_{\eta}<1$ if $\sigma > \tau$. In addition, from the definition of $F(\sigma,\zeta)$ immediately preceding Theorem A in \cite{Bor07}, we have
\begin{equation}\label{Eq.BorF}
\begin{split}
F(\sigma,\zeta)
=
\exp\Bigg(
\sum_{n\geq1}
&\frac{\zeta^n}{1-u^n}\sum_{k=1}^{\sigma}b_n[k]
+ \frac{(u\zeta)^n}{1-u^n}\sum_{k=\sigma+1}^{2M}b_n[k] \\
&-\frac{(u/\zeta)^n}{1-u^n}\sum_{k=1}^{\sigma}a_n[k]
-\frac{(1/\zeta)^n}{1-u^n}\sum_{k=\sigma+1}^{2M}a_n[k]
\Bigg),
\end{split}
\end{equation}
where $a_n[k]=p_n(a[k])/n$ and $b_n[k]=p_n(b[k])/n$. Under the specialization in Remark~\ref{Rem.CorrespondenceBorodin},
$$a_n[k] =
\begin{cases}
0, & 1\leq k\leq M,\\
b_{k-M}^n/n, & M+1\leq k\leq2M,
\end{cases}
\qquad
b_n[k]
=
\begin{cases}
a_{M+1-k}^n/n, & 1\leq k\leq M,\\
0, & M+1\leq k\leq2M.
\end{cases}
$$
Substituting these expressions in (\ref{Eq.BorF}) and using the identity
$$
\exp\left(\sum_{n\geq1}\frac{c^n}{n(1-u^n)}\right) = \frac{1}{(c;u)_\infty}, $$
we obtain precisely the right-hand side of (\ref{Eq.DefFInhom}). Hence,
\begin{equation}\label{Eq.LabelMatch}
F(\sigma,\zeta)=F_\sigma(\zeta).
\end{equation}

Let $\phi(s,x) = (s, x-1/2)$ and note that $\mathfrak{S} = \mathfrak{S}'\phi^{-1}$. From \cite[Proposition~2.13(5)]{ED24a}, we conclude that $\mathfrak{S}$ is a determinantal point process with reference measure given by the counting measure on $\llbracket1,2M\rrbracket\times\mathbb{Z}$ and correlation kernel $K'\left(s,x+1/2;t,y+1/2\right)$. To finish the proof of the proposition, what remains is to check that  
\begin{equation}\label{Eq.SchurDPPRed1}
K(s,x;t,y) = K'\left(s,x+1/2;t,y+1/2\right).
\end{equation}

Set $\sigma=s$, $\tau=t$, $X=x+1/2$, $Y=y+1/2$ in (\ref{Eq.BorKernel}) and change variables $\zeta=z$, $\eta=w^{-1}$ to get
\begin{equation}\label{Eq.NewBorKer}
K'\left(s,x+\frac12;t,y+\frac12\right)
=\frac{1}{(2\pi\mathrm{i})^2} \oint_{C_{r_\zeta}}dz
\oint_{C_{r_\eta^{-1}}}dw \frac{F_s(z)}{F_t(w)}\kappa(z,w)z^{-x-1}w^{y-1},
\end{equation}
where $r_{\zeta}$ and $r_{\eta}$ satisfy the same conditions as before. When $s \leq t$, we can set $r_{\zeta} = r_z$ and $r_{\eta} = r_w^{-1}$, and so (\ref{Eq.SchurDPPRed1}) follows from (\ref{Eq.SchurKernel}) and (\ref{Eq.NewBorKer}).

Suppose now that $s>t$. Choose $\widetilde r_z\in(\alpha,\alpha^{-1})$, sufficiently close to $r_w$ from below, so that
$$
\max(\alpha,u)<\widetilde r_z/r_w<1.
$$
The contour conditions for $s>t$ allow us to choose $r_\zeta=\widetilde r_z$ and $r_\eta=r_w^{-1}$ in \eqref{Eq.NewBorKer}. 

We proceed to deform the $z$-contour from $C_{\widetilde r_z}$ to $C_{r_z}$. The functions $F_s(z)$ and $F_t(w)^{-1}$ are holomorphic for $\alpha<|z|,|w|<\alpha^{-1}$. Moreover, by \eqref{Eq.DefTheta},
$$
\theta_3(-u^{-1/2}z/w;u)=(u;u)_\infty(z/w;u)_\infty(uw/z;u)_\infty.
$$
Together with \eqref{Eq.DefKappa}, this shows that the possible poles of $\kappa(z,w)$, as a function of $z$, are $z=u^k w$, $k\in\mathbb{Z}$. In particular, there is a simple pole at $z=w$ because the corresponding zero of the denominator arises from
$$
(z/w;u)_\infty=(1-z/w)(uz/w;u)_\infty,
$$
while all other factors are nonzero at $z=w$. More precisely, we have
$$
\kappa(z,w)=\frac{w}{z-w}+O(1)\qquad\text{as }z\to w.
$$
Since $u<\widetilde r_z/r_w<1<r_z/r_w<u^{-1}$, the only pole crossed during the deformation is $z=w$. Its residue in the double-integral integrand is
$$
\frac{F_s(w)}{F_t(w)}w^{y-x-1}.
$$
The residue theorem therefore gives
\begin{equation*}
K'\left( s,x+\frac12;t,y+\frac12 \right)
= \frac{1}{(2\pi\mathrm{i})^2} \hspace{-0.5mm}
\oint_{C_{r_z}} \hspace{-2.5mm}dz\oint_{C_{r_w}} \hspace{-2.5mm}dw\,
\frac{F_s(z)}{F_t(w)} 
\kappa(z,w)z^{-x-1}w^{y-1} -\frac{1}{2\pi\mathrm{i}}
\oint_{C_{r_w}}\hspace{-2.5mm} dw\,
\frac{F_s(w)}{F_t(w)}w^{y-x-1}.
\end{equation*}
The right-hand side of the last equation agrees with (\ref{Eq.SchurKernel}) once we deform the second $C_{r_w}$ contour to $C_1$. Since no poles are crossed, Cauchy's theorem shows that the value of the integral is unchanged. We therefore obtain \eqref{Eq.SchurDPPRed1} when $s>t$, completing the proof of the proposition.
\end{proof}

%----------------------------------------------------------------------
%
% Section 3.2
%
%----------------------------------------------------------------------
\subsection{Scaled correlation kernel}\label{Section3.2}

The goal of this section is to introduce an $N$-indexed sequence of point processes that, after restriction to an appropriate time interval, fits into the framework of Section~\ref{Section2.4} and whose atoms are close to the values of the curves from Definition~\ref{Def.S1Scaling}. These point processes are obtained from the shift-mixed point processes of Section~\ref{Section3.1} by applying appropriate $N$-dependent shifts and rescalings, and will play a key role in the proofs of Theorems~\ref{Thm.Convergence} and~\ref{Thm.DeterminantalStructure}. In Proposition~\ref{Prop.ScaledKernel} we show that these point processes are determinantal and we derive a formula for their correlation kernels. For the remainder of this section, we use the notation of Definition~\ref{Def.S1Scaling}.

We introduce the following additional notation: 
\begin{equation}\label{Eq.S3Parameters}
\alpha_N=bq_N^{1/2}, \hspace{2mm} u_N = q_N^{M_N}, \hspace{2mm} h_N = c_2^{-1}N^{-1/3}, \hspace{2mm} A_{k,N} = \frac{b|k|}{1-b}-\frac{bk^2}{4(1-b)^2N},\hspace{2mm} m_{k,N}=\lfloor A_{k,N}\rfloor, 
\end{equation}
where $k \in \llbracket -M_N, M_N \rrbracket$. Let $(\lambda^{(-M_N)},\ldots,\lambda^{(M_N)})$ have law $\mathbb{P}_N$ and let $S_N$ be an independent integer-valued random variable whose distribution, as in (\ref{Eq.ShiftVar}), is given by
\begin{equation}\label{Eq.S3ShiftVar}
\mathbb{P}(S_N=s) = \frac{u_N^{s^2/2}}{\theta_3\left(1;u_N\right)}, \qquad s\in\mathbb{Z}.
\end{equation}
With this data, we define the random variables
\begin{equation}\label{Eq.S3ScaledParticles}
\widehat{X}_i^{k,N} = h_N\left(\lambda_i^{(k)}-i+S_N+m_{k,N}-c_1N\right),
\qquad i\geq1,\quad -M_N\leq k\leq M_N,
\end{equation}
and the point process $\widehat{M}^N$ on $\mathbb{R}^2$ by setting, for every Borel set $A\subseteq\mathbb{R}^2$,
\begin{equation}\label{Eq.S3ScaledPointProcess}
\widehat{M}^N(A) = \sum_{i\geq1}\sum_{k=-M_N+1}^{M_N}
{\bf 1}\left\{ \left(t_{k,N},\widehat{X}_i^{k,N}\right)\in A \right\}.
\end{equation}
We mention that the sum in (\ref{Eq.S3ScaledPointProcess}) over $k$ starts from $-M_N + 1$. This is intentional and allows us to apply Proposition~\ref{Prop.SchurDPP} directly in the proof of Proposition~\ref{Prop.ScaledKernel}.
\begin{remark}\label{Rem.PointProcessClose} From (\ref{Eq.S1ScaledCurves}) and (\ref{Eq.S3ScaledParticles}), we have
\begin{equation*}
\widehat{X}_i^{k,N} - \mathcal{L}_i^N(t_{k,N}) =
h_N\left(S_N-i+m_{k,N}-A_{k,N}\right).
\end{equation*}
Since $-1<m_{k,N}-A_{k,N}\leq0$, we conclude that, for every $i\geq1$,
\begin{equation}\label{Eq.S3ParticleComparisonBound}
\max_{-M_N\leq k\leq M_N}
\left|\widehat{X}_i^{k,N}-\mathcal{L}_i^N(t_{k,N})\right|\leq h_N\left(|S_N|+i+1\right).
\end{equation}
From \cite[Lemma~2.1]{BB19} with $t=1$ and $u=u_N$, we have that $S_N \sqrt{M_N/N} \Rightarrow Z$, where $Z$ is a standard normal variable. Since $M_{N}/N \sim c_2^{-1}R N^{-1/3}$ and $h_N = c_2^{-1} N^{-1/3}$, we conclude that 
\begin{equation}\label{Eq.S53SmallSN}
h_NS_N \overset{\mathbb{P}}{\rightarrow } 0.
\end{equation} 
It then follows from (\ref{Eq.S3ParticleComparisonBound}) that, for every fixed $i \geq 1$, the variables $\widehat{X}_i^{k,N}$ are close to $\mathcal{L}_i^N(t_{k,N})$ uniformly in $k \in \llbracket - M_N, M_N \rrbracket$. 
\end{remark}

Proposition~\ref{Prop.ScaledKernel} below shows that $\widehat{M}^N$ is a determinantal point process. To state it we require some additional notation that we summarize in the following definition.

\begin{definition}\label{Def.ScaledKernelNotation}
Assume the same notation as in Definition~\ref{Def.S1Scaling} and (\ref{Eq.S3Parameters}). Define
$$\widehat{\mathcal{T}}_N=\left\{t_{k,N}:k\in\llbracket-M_N+1,M_N\rrbracket\right\},\qquad\Lambda_N=h_N\left(\mathbb{Z}-c_1N\right),
$$
and, for $x\in\Lambda_N$, set
$$
\chi_N(x)=h_N^{-1}x+c_1N.
$$
Notice that $\chi_N(x)\in\mathbb{Z}$ for every $x\in\Lambda_N$.

For any $s=t_{k,N}\in\widehat{\mathcal{T}}_N$, define
\begin{equation}\label{Eq.S3GFunctions}
G_{s,N}(z)
=
\frac{F_{k+M_N}(z)}{F_{k+M_N}(1)}z^{m_{k,N}},
\end{equation}
where $F_{k+M_N}$ is as in Definition~\ref{Def.Funct}, under the specialization (\ref{Eq.Specialization}) with $q=q_N$ and $M=M_N$. Equivalently, by (\ref{Eq.SpecializedF}), we have
\begin{equation}\label{Eq.GPochSymbol}
G_{s,N}(z)
=
z^{m_{k,N}}
\begin{cases}
\displaystyle
\frac{(bq_N^{1/2}/z;q_N)_\infty}
     {(bq_N^{1/2};q_N)_\infty}
\frac{(bq_N^{-k+1/2};q_N)_\infty}
     {(bq_N^{-k+1/2}z;q_N)_\infty},
& k\in\llbracket-M_N+1,0\rrbracket,
\\[3ex]
\displaystyle
\frac{(bq_N^{k+1/2}/z;q_N)_\infty}
     {(bq_N^{k+1/2};q_N)_\infty}
\frac{(bq_N^{1/2};q_N)_\infty}
     {(bq_N^{1/2}z;q_N)_\infty},
& k\in\llbracket0,M_N\rrbracket.
\end{cases}
\end{equation}
We also define
\begin{equation}\label{Eq.S3KappaN}
\kappa_N(z,w)
=
-\frac{(u_N;u_N)_\infty^3\theta_3(z/w;u_N)}
{\theta_3(-u_N^{-1/2}z/w;u_N)\theta_3(1;u_N)}.
\end{equation}
\end{definition}

With the above notation in place, we can state the following
consequence of Proposition~\ref{Prop.SchurDPP}.
\begin{proposition}\label{Prop.ScaledKernel}
Assume the same notation as in Definitions~\ref{Def.S1Scaling}
and~\ref{Def.ScaledKernelNotation}, and let $\widehat{M}^N$ be as
in (\ref{Eq.S3ScaledPointProcess}). Then $\widehat{M}^N$ is a
determinantal point process on $\mathbb{R}^2$ with reference measure
given by the counting measure on
$\widehat{\mathcal{T}}_N\times\Lambda_N$ and correlation kernel
given, for $s,t\in\widehat{\mathcal{T}}_N$ and
$x,y\in\Lambda_N$, by
\begin{equation}\label{Eq.S3ScaledKernel}
\begin{split}
\widehat{K}^N(s,x;t,y)
={}&
-\frac{{\bf 1}\{s>t\}}{2\pi\mathrm{i}}
\oint_{C_{1}}dw\,
\frac{G_{s,N}(w)}{G_{t,N}(w)}
w^{\chi_N(y)-\chi_N(x)-1}
\\
&+
\frac{1}{(2\pi\mathrm{i})^2}
\oint_{C_{r_z}}dz
\oint_{C_{r_w}}dw\,
\frac{G_{s,N}(z)}{G_{t,N}(w)}
\kappa_N(z,w)
z^{-\chi_N(x)-1}w^{\chi_N(y)-1}.
\end{split}
\end{equation}
Here $C_r$ denotes the positively oriented circle of radius $r$,
and $r_z,r_w$ are chosen in $(\alpha_N,\alpha_N^{-1})$ so that
$1< r_z/r_w < \min\left(\alpha_N^{-1},u_N^{-1}\right)$.
\end{proposition}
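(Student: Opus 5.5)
The plan is to obtain $\widehat{M}^N$ from the point process $\mathfrak{S}$ of Proposition~\ref{Prop.SchurDPP} by a deterministic, time-dependent affine bijection, and then to absorb the resulting shifts into the contour integrands. Specialize Proposition~\ref{Prop.SchurDPP} via (\ref{Eq.Specialization}) with $q=q_N$, $M=M_N$, $u=u_N$; then $\alpha=\max_i\{a_i,b_i\}=bq_N^{1/2}=\alpha_N$, and $S$ has the law of $S_N$. Consequently $\mathfrak{S}$ is determinantal on $\llbracket1,2M_N\rrbracket\times\mathbb{Z}$ with kernel (\ref{Eq.SchurKernel}). Note that the atoms of $\mathfrak{S}$ at time $j\in\llbracket 1,2M_N\rrbracket$ are $\lambda^{(j-M_N)}_i-i+S_N$, so with $k=j-M_N\in\llbracket -M_N+1,M_N\rrbracket$ these are exactly the slices appearing in (\ref{Eq.S3ScaledPointProcess}).

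Define the map $\psi(j,n)=\bigl(t_{j-M_N,N},\,h_N(n+m_{j-M_N,N}-c_1N)\bigr)$. It is a bijection from $\llbracket1,2M_N\rrbracket\times\mathbb{Z}$ onto $\widehat{\mathcal{T}}_N\times\Lambda_N$, since $m_{k,N}\in\mathbb{Z}$, $h_N>0$, and $t_{k,N}$ is injective in $k$. By construction $\widehat{M}^N=\mathfrak{S}\psi^{-1}$, and $\psi$ pushes counting measure forward to counting measure. By \cite[Proposition~2.13(5)]{ED24a}, as used in the proof of Proposition~\ref{Prop.SchurDPP}, $\widehat{M}^N$ is determinantal with reference measure the counting measure on $\widehat{\mathcal{T}}_N\times\Lambda_N$ and kernel $K(\psi^{-1}(s,x);\psi^{-1}(t,y))$. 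Writing $s=t_{k,N}$ and $t=t_{l,N}$, we have $\psi^{-1}(s,x)=(k+M_N,\ \chi_N(x)-m_{k,N})$, and similarly for $(t,y)$. The order $s>t$ is equivalent to $k+M_N>l+M_N$, so the indicator in (\ref{Eq.SchurKernel}) is preserved.

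It then remains to rewrite the kernel. Substitute $x'=\chi_N(x)-m_{k,N}$ and $y'=\chi_N(y)-m_{l,N}$ into (\ref{Eq.SchurKernel}). In the double integral, $F_s(z)z^{-x'-1}=F_s(z)z^{m_{k,N}}z^{-\chi_N(x)-1}$, and likewise $w^{y'-1}/F_t(w)=w^{\chi_N(y)-1}\big/\bigl(F_t(w)w^{m_{l,N}}\bigr)$. The single integral transforms in the same way, with $w^{y'-x'-1}$ absorbing both shifts. This reproduces (\ref{Eq.S3ScaledKernel}), except that $G$ carries the extra normalization $F_{k+M_N}(1)^{-1}$. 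That normalization contributes a factor $F_{l+M_N}(1)/F_{k+M_N}(1)$, which is a gauge conjugation $K(s,x;t,y)\mapsto \frac{g(t)}{g(s)}K(s,x;t,y)$. Gauge conjugations leave all determinants $\det[K(p_i,p_j)]$ unchanged, so the correlation functions are the same. I would cite the gauge invariance from \cite{ED24a}, or simply note it directly via the cyclic cancellation of the factors $g$ in each term of the Leibniz expansion. Here $F_{k+M_N}(1)\neq0$ and is finite because $\alpha_N<1$, so the factor is well defined. The identification of $\kappa_N$ with $\kappa$ at $u=u_N$ is immediate, and the contour conditions transfer verbatim.

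The main care point is bookkeeping rather than analysis. One must check that the index shift $j=k+M_N$ matches the definition of $F_s$ in (\ref{Eq.DefFInhom}) and (\ref{Eq.GPochSymbol}), including the overlap at $k=0$, where both formulas must agree. One must also verify that the time ordering and the counting-measure pushforward are compatible with $\psi$. Finally, (\ref{Eq.GPochSymbol}) follows by plugging $s=k+M_N$ into (\ref{Eq.SpecializedF}): for $k\le 0$ we have $M-s=-k$, and for $k\geq0$ we have $s-M=k$.
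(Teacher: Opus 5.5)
Your proposal is correct and follows essentially the same route as the paper. Both push $\mathfrak{S}$ forward under the time-dependent affine bijection $(j,n)\mapsto\bigl(t_{j-M_N,N},\,h_N(n+m_{j-M_N,N}-c_1N)\bigr)$ and then remove the factors $F_{k+M_N}(1)$ by a gauge transformation. The one small difference is that the paper extends this map to a measurable bijection of all of $\mathbb{R}^2$, with locally bounded inverse, by taking the floor of the real-variable version of $A_{k,N}$, so that the change-of-variables property you cite applies verbatim.
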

\begin{proof}
Let $\mathfrak{S}^N$ denote the point process in
(\ref{Eq.PointProcessSchur}) under the specialization
(\ref{Eq.Specialization}) with $q=q_N$ and $M=M_N$, and denote its
correlation kernel in (\ref{Eq.SchurKernel}) by
$K_{\mathfrak{S}}^N$.

For $r\in\mathbb{R}$, set
$$
\mathfrak{m}_N(r)
=
\left\lfloor
\frac{b|r-M_N|}{1-b}
-
\frac{b(r-M_N)^2}{4(1-b)^2N}
\right\rfloor,
$$
and define the bijection $\phi_N:\mathbb{R}^2\to\mathbb{R}^2$ by
$$
\phi_N(r,a)
=
\left(
\frac{R(r-M_N)}{2M_N},
h_N\left(a+\mathfrak{m}_N(r)-c_1N\right)
\right).
$$
Notice that $\mathfrak{m}_N(k+M_N)=m_{k,N}$ and
$$
\phi_N^{-1}(t_{k,N},x)
=
\left(k+M_N,\chi_N(x)-m_{k,N}\right).
$$
It follows from the definitions that $\widehat{M}^N=\mathfrak{S}^N\phi_N^{-1}$. Moreover, $\phi_N$ maps $\llbracket 1,2M_N\rrbracket\times\mathbb{Z}$ bijectively onto $\widehat{\mathcal{T}}_N\times\Lambda_N$. Consequently, the push-forward of the counting measure on
$\llbracket 1,2M_N\rrbracket\times\mathbb{Z}$ under $\phi_N$ is the
counting measure on $\widehat{\mathcal{T}}_N\times\Lambda_N$.

By Proposition~\ref{Prop.SchurDPP} and
\cite[Proposition~2.13(5)]{ED24a}, $\widehat{M}^N$ is a
determinantal point process with reference measure given by the
counting measure on $\widehat{\mathcal{T}}_N\times\Lambda_N$ and
correlation kernel
$$
\overline{K}^N(s,x;t,y)
= K_{\mathfrak{S}}^N
\left( k+M_N,\chi_N(x)-m_{k,N};
\ell+M_N,\chi_N(y)-m_{\ell,N}
\right),
$$
where $s=t_{k,N}$ and $t=t_{\ell,N}$. Applying a gauge transformation, see \cite[Proposition~2.13(4)]{ED24a} with $f_N(s,x)=F_{k+M_N}(1)^{-1}$ for $s=t_{k,N}$, shows that $\widehat{M}^N$ also has correlation kernel
$$
\widehat{K}^N(s,x;t,y)
=
\frac{F_{\ell+M_N}(1)}{F_{k+M_N}(1)}
\overline{K}^N(s,x;t,y).
$$
Substituting (\ref{Eq.SchurKernel}) into the last expression and
using (\ref{Eq.S3GFunctions}) gives (\ref{Eq.S3ScaledKernel}).
\end{proof}

%----------------------------------------------------------------------
%
% Section 4
%
%----------------------------------------------------------------------
\section{Analysis of the scaled kernel}\label{Section4}

The goal of this section is to derive various estimates and asymptotic statements for the correlation kernels $\widehat{K}^N$ from Proposition~\ref{Prop.ScaledKernel}, which will be used in the proofs of Theorems~\ref{Thm.Convergence} and~\ref{Thm.DeterminantalStructure} in Section~\ref{Section5}. In Section~\ref{Section4.1}, we rewrite $\widehat{K}^N$ in an alternative form, denoted by $\widehat{K}^{\mathrm{ext},N}$, which is suitable for asymptotic analysis; see Lemma~\ref{Lem.S4KernelAgreement}. The definition of $\widehat{K}^{\mathrm{ext},N}$ involves a new function $\Psi_{s,N}$, and in Section~\ref{Section4.2} we establish several analytic estimates for $\Psi_{s,N}$ that will be used repeatedly in Sections~\ref{Section4.3}, \ref{Section4.4}, and~\ref{Section5}. In particular, in Section~\ref{Section4.3} we use these estimates to prove the convergence of the kernels $\widehat{K}^{\mathrm{ext},N}$, while in Section~\ref{Section4.4} we use them to analyze the asymptotic behavior of the point processes on $\mathbb{R}$ formed by $\{\widehat{X}_i^{k,N}\}_{i\geq1}$ from (\ref{Eq.S3ScaledParticles}).

%----------------------------------------------------------------------
%
% Section 4.1
%
%----------------------------------------------------------------------
\subsection{Extended scaled correlation kernel}\label{Section4.1}

In Section \ref{Section3.2}, we introduced an $N$-indexed sequence of determinantal point processes whose correlation kernel $\widehat{K}^N(s,x;t,y)$ is given in (\ref{Eq.S3ScaledKernel}). The representation in (\ref{Eq.S3ScaledKernel}) is suitable for verifying many of the assumptions from Section \ref{Section2.4}; however, it is not quite in a form that is convenient for asymptotic analysis. Consequently, our goal in this section is to derive an alternative representation, denoted by $\widehat{K}^{\mathrm{ext},N}(s,x;t,y)$ and given in (\ref{Eq.S4ExtendedKernel}), for which we can obtain precise asymptotics in subsequent sections. We mention that $\widehat{K}^{\mathrm{ext},N}(s,x;t,y)$ is defined for all $x,y\in\mathbb{R}$, unlike $\widehat{K}^N(s,x;t,y)$, which in (\ref{Eq.S3ScaledKernel}) is defined only for $x,y\in\Lambda_N$, and by convention satisfies $\widehat{K}^N(s,x;t,y) = 0$ when $\{x,y\} \not \subseteq \Lambda_N$; see Remark \ref{Rem.KernelSupport}. This extension is mostly for convenience and allows us to apply results from \cite{ED24a} directly in our setting. It is for this reason that we refer to $\widehat{K}^{\mathrm{ext},N}(s,x;t,y)$ as the {\em extended} scaled correlation kernel.

\medskip

In what follows we introduce notation that will be used to define $\widehat{K}^{\mathrm{ext},N}(s,x;t,y)$ and will appear throughout Sections~\ref{Section4} and~\ref{Section5}. In addition, for the reader's convenience, we recall pertinent notation from Definition~\ref{Def.S1Scaling} and Section~\ref{Section3.2}.

Recall from Definition~\ref{Def.S1Scaling} that $b\in(0,1)$ and $R\in(0,\infty)$ are fixed and 
\begin{equation}\label{Eq.DefS1Recalled}
c_1=-2\log(1-b), \hspace{2mm} c_2=b^{1/3}(1-b)^{-2/3}, q_N=e^{-1/N}, \hspace{2mm} M_N=\left\lceil c_2^{-1}RN^{2/3}\right\rceil, \hspace{2mm} t_{k,N}=\frac{Rk}{2M_N},
\end{equation}
for $k \in \llbracket -M_N, M_N \rrbracket$. In addition, from (\ref{Eq.S3Parameters}) and Definition~\ref{Def.ScaledKernelNotation}, we recall that
\begin{equation}\label{Eq.DefS3Recalled}
\alpha_N=bq_N^{1/2},\hspace{2mm} u_N = q_N^{M_N},\hspace{2mm} h_N = c_2^{-1}N^{-1/3}, \hspace{2mm} A_{k,N} = \frac{b|k|}{1-b}-\frac{bk^2}{4(1-b)^2N},\hspace{2mm} m_{k,N}=\lfloor A_{k,N}\rfloor, 
\end{equation}
and also $\widehat{\mathcal{T}}_N=\left\{t_{k,N}:k\in\llbracket-M_N+1,M_N\rrbracket\right\}$, $\Lambda_N=h_N\left(\mathbb{Z}-c_1N\right)$.

For $\zeta \in \mathbb{C}$ with $|\zeta| < 1$ and $q \in (0,1)$, we define 
\begin{equation}\label{Eq.LogPoch}
\psi_q(\zeta) = \sum_{j \geq 0} \log(1 - \zeta q^j),
\end{equation}
where we take the principal branch of the logarithm. Using that $\log(1-u) = -\sum_{m \geq 1} \frac{u^m}{m}$ for $|u| < 1$, we see that 
\begin{equation}\label{Eq.LogPochV2}
\psi_q(\zeta) = - \sum_{j \geq 0}\sum_{m \geq 1} \frac{\zeta^m q^{jm}}{m} = - \sum_{m \geq 1} \sum_{j \geq 0} \frac{\zeta^m q^{jm}}{m} = - \sum_{m \geq 1} \frac{\zeta^m}{m(1- q^m)}.
\end{equation}

If $Z \in \mathbb{C}$ satisfies $\alpha_N < \left|e^{h_NZ}\right| < \alpha_N^{-1}$ and $s = t_{k,N} \in \widehat{\mathcal{T}}_N$, we define
\begin{equation}\label{Eq.S4PsiDefinition}
\begin{split}
&\Psi_{s,N}(Z)= (m_{k,N} -c_1N) h_N Z \\
&+ \begin{cases} &\psi_{q_N}(\alpha_Ne^{-h_NZ}) - \psi_{q_N}(\alpha_N) - \psi_{q_N}(\alpha_Nq_N^{-k}e^{h_NZ}) + \psi_{q_N}(\alpha_N q_N^{-k}), \hspace{2mm} k\in\llbracket-M_N+1,0\rrbracket, \\
&\psi_{q_N}(\alpha_N q_N^k e^{-h_NZ}) - \psi_{q_N}(\alpha_N q_N^k) - \psi_{q_N}(\alpha_Ne^{h_NZ}) + \psi_{q_N}(\alpha_N ), \hspace{2mm} k\in\llbracket 0, M_N\rrbracket.
\end{cases}
\end{split}
\end{equation}
From (\ref{Eq.GPochSymbol}), (\ref{Eq.LogPoch}), and (\ref{Eq.S4PsiDefinition}), we conclude 
\begin{equation}\label{Eq.S41GInTermsOfPsi}
e^{\Psi_{s,N}(Z)} =  G_{s,N}(e^{h_N Z})e^{-c_1 N h_N Z}, \quad \alpha_N<|e^{h_NZ}|<\alpha_N^{-1}.  
\end{equation}
In addition, from (\ref{Eq.LogPochV2}), (\ref{Eq.S4PsiDefinition}), and the identities $q_N = e^{-1/N}$, $\alpha_N=bq_N^{1/2}$, we have
\begin{equation}\label{Eq.S41PsiSinh}
\begin{split}
&\Psi_{s,N}(Z)
=(m_{k,N}-c_1N)h_NZ+\sum_{m\geq1} \frac{b^m B_{m,k,N}(Z)}{2m\sinh(m/(2N))}, \mbox{ where }\\ 
 &B_{m,k,N}(Z) = \begin{cases} 
 e^{km/N}(e^{mh_NZ}-1)-e^{-mh_NZ}+1, \hspace{2mm} &k\in\llbracket-M_N+1,0\rrbracket,\\
e^{mh_NZ}-1- e^{-km/N}(e^{-mh_NZ}-1), \hspace{2mm} &k\in\llbracket 1, M_N\rrbracket.
 \end{cases}
\end{split}
\end{equation}

\begin{definition}\label{Def.Contours}
For $a\in\mathbb R$, let
\begin{equation}\label{Eq.S4VerticalContour}
 \Gamma_{a,N}=a+\mathrm{i}[-\pi/h_N,\pi/h_N]
\end{equation}
denote the vertical line segment oriented upward. For
$s,t\in (-R/2, R/2]$, define
\begin{equation}\label{Eq.S4ContourParameters}
 a(s,t)=-s+\frac{R+s-t}{4},
 \qquad
 d(s,t)=t+\frac{R+s-t}{4},
\end{equation}
and observe that
\begin{equation}\label{Eq.S4ContourRelations}
 a(s,t)+s=d(s,t)-t=\frac{R+s-t}{4}>0, \quad a(s,t), d(s,t) \in [-R,R],
\end{equation}
and
\begin{equation}\label{Eq.S4ContourSum}
 0<a(s,t)+d(s,t)=\frac{R+t-s}{2}<R.
\end{equation}

Let $\mathsf{N}_0$ be sufficiently large, depending on $b$ and $R$, so that for $N \geq \mathsf{N}_0$ and all $s,t \in \widehat{\mathcal{T}}_N$, we have 
\begin{equation}\label{Eq.Inequalities}
\alpha_N < e^{h_Na(s,t)}, e^{-h_Nd(s,t)} < \alpha_N^{-1}, \mbox{ and } 1<  e^{h_Na(s,t) + h_Nd(s,t)} < e^{h_N R} \leq u_N^{-1} < \alpha_N^{-1}.
\end{equation}
\end{definition}
\begin{remark} Let us briefly explain why the choice of $\mathsf{N}_0$ in Definition \ref{Def.Contours} is possible. Since $\widehat{\mathcal{T}}_N \subset (-R/2, R/2]$, the inequalities in (\ref{Eq.S4ContourRelations}) and (\ref{Eq.S4ContourSum}) both hold for $s,t \in \widehat{\mathcal{T}}_N$. From (\ref{Eq.DefS3Recalled}) we have $h_N = c_2^{-1}N^{-1/3} \rightarrow 0$ and $\alpha_N = be^{-1/(2N)} \rightarrow b \in (0,1)$, while (\ref{Eq.S4ContourRelations}) gives $a(s,t), d(s,t) \in [-R, R]$, verifying the first set of inequalities in (\ref{Eq.Inequalities}) for large $N$. The second set of inequalities is verified using (\ref{Eq.S4ContourSum}), the fact that $u_N^{-1} =  e^{\lceil c_2^{-1}R N^{2/3} \rceil/N} \geq e^{h_NR}$, and the limits $u_N \rightarrow 1$, $\alpha_N \rightarrow b$.
\end{remark}

\begin{lemma}\label{Lem.S4KernelAgreement} Assume the same notation as in Definition \ref{Def.Contours}, let $\Psi_{s,N}$ be as in (\ref{Eq.S4PsiDefinition}), and $\kappa_N$ as in (\ref{Eq.S3KappaN}). Then for all $N \geq \mathsf{N}_0$, $s,t \in \widehat{\mathcal{T}}_N$, and $x,y \in \mathbb{R}$, the following kernel is well-defined and finite:
\begin{equation}\label{Eq.S4ExtendedKernel}
\begin{split}
&\widehat{K}^{\mathrm{ext},N}(s,x;t,y)
=-\frac{h_N\mathbf 1\{s>t\}}{2\pi\mathrm{i}}
\int_{\Gamma_{0,N}}
e^{ \Psi_{s,N}(W)-\Psi_{t,N}(W)+(y-x)W}dW\\
&+\frac{h_N^2}{(2\pi\mathrm{i})^2}
\int_{\Gamma_{a(s,t),N}}dZ
\int_{\Gamma_{-d(s,t),N}}dW\,
e^{ \Psi_{s,N}(Z)-\Psi_{t,N}(W)-xZ+yW} \cdot \kappa_N(e^{h_NZ},e^{h_NW}).
\end{split}
\end{equation}
In addition, if $N \geq \mathsf{N}_0$, $x,y\in\Lambda_N$, $s,t \in \widehat{\mathcal{T}}_N$, and $\widehat{K}^N$ is as in (\ref{Eq.S3ScaledKernel}), then
\begin{equation}\label{Eq.S4KernelAgreement}
 \widehat{K}^{\mathrm{ext},N}(s,x;t,y)=\widehat{K}^N(s,x;t,y).
\end{equation}
In particular, for $N \geq \mathsf{N}_0$, the random measure $\widehat{M}^N$ from (\ref{Eq.S3ScaledPointProcess}) is a determinantal point process with reference measure given by the counting measure on $\widehat{\mathcal{T}}_N\times\Lambda_N$ and correlation kernel $\widehat{K}^{\mathrm{ext},N}(s,x;t,y)$, for $s,t \in \widehat{\mathcal{T}}_N$ and $x,y \in \mathbb{R}$.
\end{lemma}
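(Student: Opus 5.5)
The proof is a change of variables $z=e^{h_NZ}$, $w=e^{h_NW}$ in the contour integrals of (\ref{Eq.S3ScaledKernel}), combined with the identity (\ref{Eq.S41GInTermsOfPsi}). The argument has three steps: well-definedness of (\ref{Eq.S4ExtendedKernel}), agreement with $\widehat K^N$ on $\Lambda_N$, and the concluding determinantal statement.

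\textbf{Well-definedness.} For $N\geq\mathsf N_0$, the conditions (\ref{Eq.Inequalities}) give the following.
\begin{itemize}
\item On $\Gamma_{a(s,t),N}$ we have $|e^{h_NZ}|=e^{h_Na(s,t)}\in(\alpha_N,\alpha_N^{-1})$.
\item On $\Gamma_{-d(s,t),N}$ we have $|e^{h_NW}|=e^{-h_Nd(s,t)}\in(\alpha_N,\alpha_N^{-1})$.
\item On $\Gamma_{0,N}$ we have $|e^{h_NW}|=1$.
\end{itemize}
So $\Psi_{s,N}$ and $\Psi_{t,N}$ are defined and continuous on every contour, since the relevant series (\ref{Eq.LogPochV2}) converge and the arguments of $\psi_{q_N}$ stay in the unit disc. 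The remaining factor is $\kappa_N(e^{h_NZ},e^{h_NW})$. Its only possible singularities are where $z/w=u_N^k$. On the contours,
\[
|z/w|=e^{h_N(a+d)}\in\bigl(1,u_N^{-1}\bigr),
\]
by (\ref{Eq.S4ContourSum}) and (\ref{Eq.Inequalities}). Hence $\kappa_N$ is bounded there, the integrands are continuous on compact segments, and both integrals are finite for all real $x,y$.

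\textbf{Agreement on $\Lambda_N$.} Take $x,y\in\Lambda_N$, so that $\chi_N(x),\chi_N(y)\in\mathbb Z$. Start from (\ref{Eq.S3ScaledKernel}) and choose the radii $r_z=e^{h_Na(s,t)}$ and $r_w=e^{-h_Nd(s,t)}$. These are admissible because both lie in $(\alpha_N,\alpha_N^{-1})$ and
\[
1<r_z/r_w=e^{h_N(a+d)}<\min\bigl(\alpha_N^{-1},u_N^{-1}\bigr).
\]
Now parametrize $z=e^{h_NZ}$ with $Z\in\Gamma_{a,N}$. This map traverses $C_{r_z}$ exactly once, positively, and $dz/z=h_N\,dZ$; likewise for $w$. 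Then, using (\ref{Eq.S41GInTermsOfPsi}), we have
\[
G_{s,N}(z)=e^{\Psi_{s,N}(Z)+c_1Nh_NZ},
\]
and from $\chi_N(x)=h_N^{-1}x+c_1N$ we get
\[
z^{-\chi_N(x)}=e^{-xZ-c_1Nh_NZ}.
\]
The $c_1N$ terms cancel, and the same cancellation happens in the $w$ variable. Because $\chi_N$ takes integer values, $z^{-\chi_N(x)-1}$ is single-valued on the circle, so the parametrization is legitimate. This turns the double integral into the second term of (\ref{Eq.S4ExtendedKernel}). The single integral over $C_1$ is treated the same way with $W\in\Gamma_{0,N}$. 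There the two $c_1N$ exponents cancel between $G_{s,N}/G_{t,N}$ and $w^{\chi_N(y)-\chi_N(x)}$, which gives the first term and proves (\ref{Eq.S4KernelAgreement}).

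\textbf{Determinantal conclusion.} By Proposition~\ref{Prop.ScaledKernel}, $\widehat M^N$ is determinantal with kernel $\widehat K^N$ on the support $\widehat{\mathcal T}_N\times\Lambda_N$. Remark~\ref{Rem.KernelSupport} says the kernel may be replaced by any kernel agreeing with it on the support of the reference measure. Since $\widehat K^{\mathrm{ext},N}$ does so by (\ref{Eq.S4KernelAgreement}), the final claim follows.

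The only step needing care is checking that the specified contours avoid the poles of $\kappa_N$ and meet the radius constraints; both follow from (\ref{Eq.Inequalities}).
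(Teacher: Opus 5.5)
Your proposal is correct and follows the paper's proof essentially step for step. It uses the same radii $r_z=e^{h_Na(s,t)}$ and $r_w=e^{-h_Nd(s,t)}$, justified by (\ref{Eq.Inequalities}), the same substitution $z=e^{h_NZ}$, $w=e^{h_NW}$ combined with (\ref{Eq.S41GInTermsOfPsi}), and the same appeal to Proposition~\ref{Prop.ScaledKernel} and Remark~\ref{Rem.KernelSupport} for the determinantal conclusion. One small wording point: in the single integral, the $c_1N$ terms cancel separately inside $G_{s,N}/G_{t,N}$ and inside $w^{\chi_N(y)-\chi_N(x)}$, rather than between these two factors, but your resulting formula is right.
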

\begin{proof}
From the first set of inequalities in (\ref{Eq.Inequalities}), we know that $\Gamma_{a(s,t),N}$, $\Gamma_{-d(s,t),N}$, and $\Gamma_{0,N}$ are all in the domain of $\Psi_{s,N}$ and $\Psi_{t,N}$, which are continuous on these contours. In addition, the second set of inequalities in (\ref{Eq.Inequalities}) shows that for
$Z\in\Gamma_{a(s,t),N}$ and $W\in\Gamma_{-d(s,t),N}$,
$$1< \left|e^{h_NZ}/e^{h_NW}\right| < u_N^{-1}.$$
Since the poles of $\kappa_N(z,w)$ occur when $z = u_N^kw$ for some $k \in \mathbb{Z}$, we conclude that $\kappa_N(e^{h_NZ},e^{h_NW})$ is well-defined and continuous for $Z\in\Gamma_{a(s,t),N}$ and $W\in\Gamma_{-d(s,t),N}$. Therefore, $\widehat{K}^{\mathrm{ext},N}$ is the sum of two integrals of continuous functions over compact contours, implying the first part of the lemma.

\medskip 

The equality in (\ref{Eq.S4KernelAgreement}) follows by setting $r_z=e^{h_Na(s,t)}$, $r_w=e^{-h_Nd(s,t)}$ in the formula for $\widehat{K}^N$ in (\ref{Eq.S3ScaledKernel}), changing variables $z=e^{h_NZ}$, $w=e^{h_NW}$, and using (\ref{Eq.S41GInTermsOfPsi}). We mention that the inequalities in (\ref{Eq.Inequalities}) show that $r_z,r_w\in(\alpha_N,\alpha_N^{-1})$ and $1<r_z/r_w<\min\{\alpha_N^{-1},u_N^{-1}\}$, so these choices satisfy the conditions in (\ref{Eq.S3ScaledKernel}).

The last part of the lemma follows from Proposition \ref{Prop.ScaledKernel}, the kernel equality (\ref{Eq.S4KernelAgreement}), and \cite[(2.13) and Lemma 2.8]{ED24a}; cf.~Remark \ref{Rem.KernelSupport}.
\end{proof}

%----------------------------------------------------------------------
%
% Section 4.2
%
%----------------------------------------------------------------------
\subsection{Analytic estimates}\label{Section4.2} 

In this section, we establish several estimates for the functions $\kappa_N$ and $\Psi_{s,N}$ from (\ref{Eq.S3KappaN}) and (\ref{Eq.S4PsiDefinition}), respectively. We also record several Airy identities used in subsequent sections. Throughout this section, we freely use the notation from Section~\ref{Section4.1}.

For $s\in\mathbb R$ and $Z\in\mathbb C$, set
\begin{equation}\label{Eq.S4CubicLimitPhase}
P_s(Z)=Z^3/3+sZ^2+s^2Z.
\end{equation}
Our first lemma below shows that $\Psi_{s,N}(Z)$ is close to $P_s(Z)$ uniformly over compact sets in $\mathbb{C}$.
\begin{lemma}\label{Lem.S4PhiCloseToP} Fix $\mathsf{A}_1 > 0$. There exist $\mathsf{C}_1 > 0$ and $N_1 \in \mathbb{N}$, depending on $b,R,\mathsf{A}_1$, such that for $N \geq N_1$, $s = t_{k,N} \in \widehat{\mathcal{T}}_N$, and $Z \in \mathbb{C}$ with $|Z| \leq \mathsf{A}_1$, we have $\alpha_N < |e^{h_NZ}| < \alpha_N^{-1}$, and 
\begin{equation}\label{Eq.S4Cubic}
 \left|\Psi_{s,N}(Z)-P_{s}(Z)\right|\le \mathsf{C}_1h_N.
\end{equation}
\end{lemma}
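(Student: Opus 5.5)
The plan is to work directly with the exact series representation (\ref{Eq.S41PsiSinh}), which avoids any Euler--Maclaurin analysis of $\psi_{q_N}$. First, the domain claim: for $|Z|\le \mathsf{A}_1$ we have $e^{-h_N\mathsf{A}_1}\le |e^{h_NZ}|\le e^{h_N\mathsf{A}_1}$. Since $h_N\to0$ and $\alpha_N\to b\in(0,1)$, this lies in $(\alpha_N,\alpha_N^{-1})$ for $N$ large. Next I split $\frac{1}{2m\sinh(m/(2N))}=\frac{N}{m^2}+\frac{1}{m}\varepsilon(m/(2N))$, where $\varepsilon(x)=\frac{1}{2\sinh x}-\frac{1}{2x}$. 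I will use the elementary bound $|\varepsilon(x)|\le x/12$ for all $x>0$.

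The error piece is handled as follows. Write $\delta_N=h_N+|k|/N$. Since $|t_{k,N}|\le R/2$ and $k/N\approx 2c_2^{-1}t_{k,N}N^{-1/3}$, we have $\delta_N\le Ch_N$. For $|Z|\le\mathsf{A}_1$, the mean value theorem applied to each exponential in $B_{m,k,N}$ gives $|B_{m,k,N}(Z)|\le Cm\,\delta_N e^{Cm\delta_N}$. Hence the error contribution is bounded by
\[
\sum_{m\ge1}\frac{b^m}{m}\cdot\frac{m}{24N}\cdot Cm\,\delta_N e^{Cm\delta_N}\le \frac{C}{N}.
\]
Here the geometric factor $b^m e^{Cm\delta_N}\le (b')^m$ for some $b'<1$ once $N$ is large. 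So this piece is $O(N^{-1})=O(h_N)$.

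The main piece is $N\sum_m b^m B_{m,k,N}(Z)/m^2$. I rewrite it through $\mathrm{Li}_2(\zeta)=\sum_m\zeta^m/m^2$. For $k\le0$, with $\tau=k/N$ and $\eta=h_NZ$, it equals
\[
N\,\Phi(\tau,\eta),\qquad \Phi(\tau,\eta)=\mathrm{Li}_2(be^{\tau+\eta})-\mathrm{Li}_2(be^{\tau})-\mathrm{Li}_2(be^{-\eta})+\mathrm{Li}_2(b).
\]
The case $k\ge1$ is the analogous expression with $\tau=-k/N$ and the roles of the two pairs exchanged. The function $\Phi$ is analytic in $(\tau,\eta)$ on a fixed polydisc around $0$, because $|be^{\pm(\cdot)}|<1$ there. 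I Taylor expand $\Phi$ to third order. The remainder is $O(|\tau|^4+|\eta|^4)$, which after multiplying by $N$ is $O(N\delta_N^4)=O(N^{-1/3})=O(h_N)$. Using $\frac{d}{dx}\mathrm{Li}_2(be^x)=-\log(1-be^x)$, the surviving Taylor monomials are of three types:
\begin{itemize}
\item the linear term $-2\log(1-b)\,\eta=c_1\eta$ (from the two $\eta$-dependent terms), which gives $Nc_1h_NZ$ and cancels $-c_1Nh_NZ$;
\item the terms $\tau\eta$ and $\tau^2\eta$, whose coefficients are $\frac{b}{1-b}$ and $\frac{b}{2(1-b)^2}$, giving $N h_N Z\bigl(\frac{b|k|}{N(1-b)}+\dots\bigr)$;
\item the terms $\tau\eta^2$ and $\eta^3$, which give $N h_N^2 k/N\cdot Z^2$ and $Nh_N^3Z^3$.
\end{itemize}
The $\eta^2$ terms cancel between $\mathrm{Li}_2(be^{\eta})$ and $\mathrm{Li}_2(be^{-\eta})$. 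The $\eta^3$ coefficient is $\frac{b}{3(1-b)^2}$, and $N h_N^3\frac{b}{(1-b)^2}=1$ by the choice of $c_2$, which yields $Z^3/3$. Using $k\approx 2c_2^{-1}sN^{2/3}$ with error $O(1)$, the $\tau\eta^2$ term becomes $sZ^2+O(h_N)$. The $Z$-linear terms of orders $N^{2/3}h_N$ and $N^{1/3}h_N$ combine with $m_{k,N}h_NZ$. Here $A_{k,N}$ is designed exactly to cancel the first-order $\tau\eta$ piece and the $\tau^2\eta$ correction beyond $s^2Z$. Replacing $m_{k,N}$ by $A_{k,N}$ costs at most $h_N|Z|\le\mathsf{A}_1h_N$.

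I expect the main obstacle to be this coefficient bookkeeping. Concretely, I must check that the $\tau^2\eta$ coefficient combined with $-\frac{bk^2}{4(1-b)^2N}h_NZ$ leaves exactly $s^2Z$, and that the two cases $k\le0$ and $k\ge1$ both produce $P_s$ with the correct sign of $s$. I would carry out the expansion in the $k\le0$ case first and then obtain the other case by the evident symmetry $k\mapsto-k$ together with swapping the two $\mathrm{Li}_2$ pairs. All constants depend only on $b$, $R$, and $\mathsf{A}_1$, which gives $\mathsf{C}_1$ and $N_1$.
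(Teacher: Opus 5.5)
Your argument is correct and is essentially the paper's proof: the paper also works directly from (\ref{Eq.S41PsiSinh}), replaces $\frac{b^m}{2m\sinh(m/(2N))}$ by $\frac{Nb^m}{m^2}$ at cost $C_0b^m/N$, and expands to third order, the only cosmetic difference being that it Taylor expands each exponential in $B_{m,k,N}(Z)$ termwise (remainder $O(m^4h_N^4e^{Cmh_N})$) and then sums with $\sum_m b^m/m=c_1/2$, $\sum_m b^m=\frac{b}{1-b}$, $\sum_m mb^m=c_2^3$, which are exactly the derivatives of $\mathrm{Li}_2(be^x)$ at $x=0$ that your dilogarithm packaging produces. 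One small slip: for $k\le 0$ the $\tau\eta$ term contributes $-\frac{b|k|}{1-b}h_NZ$, not $+\frac{b|k|}{1-b}h_NZ$, and it is this minus sign that lets it cancel against $m_{k,N}h_NZ$.
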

\begin{proof}
Throughout the proof, we write $C$ for a generic positive constant that depends on $b$, $R$, and $\mathsf{A}_1$, and whose value may change from line to line. In addition, all constants in the big $O$ notations depend on $b$, $R$, and $\mathsf{A}_1$. We do not mention this further.

From (\ref{Eq.DefS1Recalled}) and (\ref{Eq.DefS3Recalled}), we have $h_N = c_2^{-1}N^{-1/3} \rightarrow 0$ and $\alpha_N = be^{-1/(2N)} \rightarrow b$. Consequently, there exists $N_1 \in \mathbb{N}$ such that for all $|Z| \leq \mathsf{A}_1$ and $N \geq N_1$, we have $\alpha_N < |e^{h_NZ}| < \alpha_N^{-1}$. This fixes our choice of $N_1$, and we assume $N \geq N_1$ and $|Z| \leq \mathsf{A}_1$ in the remainder of the proof.

\medskip

We record the following trivial bounds, valid for all $z \in \mathbb{C}$:
\begin{equation}\label{Eq.ExponTaylor}
\left|e^z - 1 - z - z^2/2 - z^3/6\right| \leq |z|^4 e^{|z|}, \quad |e^z| \leq e^{|z|},
\end{equation}
which follow by Taylor expanding $e^z$ and $e^{|z|}$ and comparing terms. Applying (\ref{Eq.ExponTaylor}) to the different exponential terms in the definition of $B_{m,k,N}(Z)$ in (\ref{Eq.S41PsiSinh}), we conclude for all $k \in \llbracket -M_N + 1, M_N \rrbracket$
\begin{equation}\label{Eq.ExpandBrackets}
\begin{split}
B_{m,k,N}(Z) = &2mh_NZ-\frac{|k|m^2h_NZ}{N}
+\frac{km^3h_N^2Z^2}{2N}
 \\
 &+\frac{k^2m^3h_NZ}{2N^2}
+\frac{m^3h_N^3Z^3}{3} +O\left(m^4h_N^4e^{Cmh_N}\right).
\end{split}
\end{equation}
We mention that in deriving the above we also used $|k|/N=  O(h_N)$ for $k \in \llbracket -M_N + 1, M_N \rrbracket$.

Since $\left|\frac{1}{2\sinh(v/2)}-\frac{1}{v}\right|\leq C_0v$ for some universal $C_0$ and all $v > 0$, we conclude for all $m \geq 1$
\begin{equation}\label{Eq.S4SinhApproximation}
\left|
\frac{b^m}{2m\sinh(m/(2N))}
-\frac{Nb^m}{m^2}
\right|
\leq \frac{C_0b^m}{N}.
\end{equation}
If $be^{Ch_N}\leq\sqrt{b}$, we get from (\ref{Eq.S4SinhApproximation}) that
\begin{equation}\label{Eq.ErrorApproximation}
\sum_{m\geq1} \frac{b^m}{2m\sinh(m/(2N))} \cdot m^4h_N^4e^{Cmh_N} \leq \frac{C_0h_N^4}{N}\sum_{m\geq1} m^4b^{m/2} + Nh_N^4\sum_{m\geq1} m^2b^{m/2} = O(h_N),
\end{equation}
where the last equality used $h_N = c_2^{-1}N^{-1/3}$.

Substituting (\ref{Eq.ExpandBrackets}) into (\ref{Eq.S41PsiSinh}) and using the bounds in (\ref{Eq.S4SinhApproximation}) and (\ref{Eq.ErrorApproximation}), we conclude, after possibly enlarging $N_1$, that
\begin{equation}\label{Eq.S41PsiWithError}
\begin{split}
\Psi_{s,N}(Z) = &(m_{k,N}-c_1N)h_NZ+\sum_{m\geq1} \frac{Nb^m}{m^2} \\
&\times \left(2mh_NZ-\frac{|k|m^2h_NZ}{N}
+\frac{km^3h_N^2Z^2}{2N}
+\frac{k^2m^3h_NZ}{2N^2}
+\frac{m^3h_N^3Z^3}{3}\right) + O(h_N).
\end{split}
\end{equation}

From (\ref{Eq.DefS1Recalled}) and basic power series, we have
\begin{equation}\label{Eq.S4GeometricSums}
\sum_{m\geq1}\frac{b^m}{m}=\frac{c_1}{2},
\qquad
\sum_{m\geq1}b^m=\frac{b}{1-b},
\qquad
\sum_{m\geq1}mb^m=\frac{b}{(1-b)^2}=c_2^3,
\qquad
Nh_N^3c_2^3=1.
\end{equation}
Substituting (\ref{Eq.S4GeometricSums}) into (\ref{Eq.S41PsiWithError}), we obtain
\begin{equation}\label{Eq.S4PsiExpansion}
\begin{split}
\Psi_{s,N}(Z)
=m_{k,N}h_NZ
-\frac{b|k|}{1-b}h_NZ
+\frac{bk}{2(1-b)^2}h_N^2Z^2+\frac{bk^2}{2(1-b)^2N}h_NZ
+Z^3/3
+O(h_N).
\end{split}
\end{equation}
Equation (\ref{Eq.S4PsiExpansion}) implies (\ref{Eq.S4Cubic}) once we use that $s = t_{k,N}$ and so
$$m_{k,N}h_NZ -\frac{b|k|h_NZ}{1-b} + \frac{bk^2h_NZ}{4(1-b)^2N} = O(h_N), \hspace{2mm} \frac{bkh_N^2}{2(1-b)^2} = s + O(h_N), \hspace{2mm} \frac{bk^2h_N}{4(1-b)^2N} = s^2 + O(h_N),$$
as follows from (\ref{Eq.DefS1Recalled}), (\ref{Eq.DefS3Recalled}), and (\ref{Eq.S4GeometricSums}).
\end{proof}

The following lemma provides bounds for $\left|e^{\pm\Psi_{s,N}(Z)}\right|$ and $\left|e^{\Psi_{s,N}(Z) - \Psi_{t,N}(Z)}\right|$ along vertical contours.
\begin{lemma}\label{Lem.S4DescentContours} Fix $\mathsf{A}_2,\epsilon>0$. There exist $\mathsf{C}_2, \mathsf{c}_2 > 0$ and $N_2 \in \mathbb{N}$, depending on $b,R,\mathsf{A}_2,\epsilon$, such that the following statements hold for $N\geq N_2$, $x\in [-\mathsf{A}_2,\mathsf{A}_2]$, $y\in[-\pi/h_N,\pi/h_N]$, and $s, t \in\widehat{\mathcal{T}}_N$. If $Z = x + \im y$, then $\alpha_N < |e^{h_NZ}| < \alpha_N^{-1}$. In addition, 
\begin{equation}\label{Eq.S4AngularPositive}
 \left|e^{\Psi_{s,N}(x+\mathrm{i}y)}\right|
 \leq \mathsf{C}_2e^{-\mathsf{c}_2y^2}, \mbox{ if $x + s\geq\epsilon$},
\end{equation}
\begin{equation}\label{Eq.S4AngularNegative}
 \left|e^{-\Psi_{s,N}(x+\mathrm{i} y)}\right|
 \leq \mathsf{C}_2e^{-\mathsf{c}_2y^2}, \mbox{ if $x+s\leq-\epsilon$},
\end{equation}
\begin{equation}\label{Eq.S4AngularRatio}
 \left|e^{\Psi_{s,N}(x+\mathrm{i} y) - \Psi_{t,N}(x+\mathrm{i} y)}\right|
 \leq \mathsf{C}_2e^{-\mathsf{c}_2y^2}, \mbox{ if $s - t \geq \epsilon$}.
\end{equation}
\end{lemma}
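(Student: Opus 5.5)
We will work directly from the series representation \eqref{Eq.S41PsiSinh} and compute the real part of $\Psi_{s,N}(x+\mathrm{i}y)$ exactly, rather than relying on the Taylor expansion of Lemma~\ref{Lem.S4PhiCloseToP}, which is only valid for bounded $Z$. Write $\theta = h_N y\in[-\pi,\pi]$ and $\xi=h_Nx$. The statement $\alpha_N<|e^{h_NZ}|<\alpha_N^{-1}$ is immediate since $|e^{h_NZ}|=e^{h_Nx}$, $|x|\le\mathsf A_2$, $h_N\to0$, $\alpha_N\to b$. For the bounds, note that each summand involves $\mathrm{Re}\,e^{\pm mh_NZ}=e^{\pm m\xi}\cos(m\theta)$. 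The key identity is
\begin{equation*}
\mathrm{Re}\,\Psi_{s,N}(x+\mathrm{i}y)-\Psi_{s,N}(x)=\sum_{m\ge1}\frac{b^m}{2m\sinh(m/(2N))}\,\bigl(\cos(m\theta)-1\bigr)\,\bigl(c^+_{m,k}e^{m\xi}-c^-_{m,k}e^{-m\xi}\bigr),
\end{equation*}
where $(c^+,c^-)=(e^{km/N},1)$ for $k\le0$ and $(1,e^{-km/N})$ for $k\ge1$. Thus $\mathrm{Re}\,\Psi$ differs from its value at $y=0$ by a sum of terms with a fixed sign. Since $\Psi_{s,N}(x)=P_s(x)+O(h_N)$ is bounded by Lemma~\ref{Lem.S4PhiCloseToP}, it suffices to show that the sum is $\le -c y^2$ in case \eqref{Eq.S4AngularPositive}, and $\ge c y^2$ in case \eqref{Eq.S4AngularNegative}.

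The main step is a sign and size analysis of $D_m:=c^+e^{m\xi}-c^-e^{-m\xi}$. For $k\le0$ we have $D_m=e^{m(k/N+\xi)}-e^{-m\xi}$, and for $k\ge1$ we have $D_m=e^{m\xi}-e^{-m(k/N+\xi)}$. Both have the sign of $2\xi+k/N$, and their size is comparable to $m\,h_N(2x+k/(Nh_N))$ for $m\lesssim h_N^{-1}$. Now $k/(Nh_N)=2k h_N^2 c_2^{2}\cdot(\ldots)$. Rather than track constants, we observe that $D_m\approx 2m h_N(x+s')$, where $s'=kh_N^2\cdot b/(2(1-b)^2)\cdot(\ldots)$; matching against Lemma~\ref{Lem.S4PhiCloseToP} gives $s'=s+O(h_N)$. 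Indeed, the $Z^2$ coefficient there equals $s$, and the identity $N h_N^3c_2^3=1$ fixes the normalization. So for $x+s\ge\epsilon$ and $m\le \delta h_N^{-1}$, we get $D_m\ge c\,m h_N\epsilon$, and $D_m\ge0$ for all $m$. Then, using $1/(2m\sinh(m/2N))\ge c N/m^2$ for $m\le N$, we restrict the sum to $m\le m_0$ (a fixed $m_0$) and obtain
\begin{equation*}
\sum \le -cN h_N\sum_{m\le m_0}\frac{b^m}{m}\,(1-\cos m\theta)\le -cNh_N(1-\cos\theta)\le -c' N h_N\theta^2=-c'Nh_N^3y^2=-c''y^2,
\end{equation*}
using $1-\cos\theta\ge 2\theta^2/\pi^2$ on $[-\pi,\pi]$ and $Nh_N^3=c_2^{-3}$. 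Case \eqref{Eq.S4AngularNegative} is symmetric: for $x+s\le-\epsilon$ all $D_m\le0$, so $-\mathrm{Re}\,\Psi$ gains $-cy^2$.

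For \eqref{Eq.S4AngularRatio}, the linear term $(m_{k}-m_\ell)h_NZ$ has a bounded real part, and the series difference becomes $\sum \frac{b^m}{2m\sinh}(\cos m\theta-1)(D^{(k)}_m-D^{(\ell)}_m)$ up to the $y$-independent part. Since $D_m^{(k)}-D^{(\ell)}_m\approx 2mh_N(s-t)$ by the same comparison, the argument is identical, with $x+s$ replaced by $s-t\ge\epsilon$. The main obstacle is the uniform sign and lower bound for $D_m$ (and for $D^{(k)}_m-D^{(\ell)}_m$) over all $m$ and all $k$, including the crossing between the $k\le0$ and $k\ge1$ branches in the ratio case. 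Exact monotonicity of $e^{m a}-e^{-mb}$ in its exponents should give the signs directly. Nonnegativity for large $m$ suffices there, because only the small-$m$ terms are used for the gain.
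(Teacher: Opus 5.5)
Your proposal is correct and follows essentially the same route as the paper: the same exact identity for $\mathrm{Re}\bigl[\Psi_{s,N}(x+\mathrm{i}y)-\Psi_{s,N}(x)\bigr]$, with each summand carrying the sign of $h_Nx+k/(2N)$ (your $D_m$ is the paper's $2e^{-|k|m/(2N)}\sinh(mh_Nx+km/(2N))$). As in the paper, you keep only the $m=1$ term to get the $y^2$ gain, bound $\Psi_{s,N}(x)$ via Lemma~\ref{Lem.S4PhiCloseToP}, and for the ratio bound use monotonicity of $D^{(k)}_m$ in $k$ across the two branches (which agree at $k=0$), which is exactly the paper's monotonicity of $r\mapsto e^{-mh_N|r|}\sinh(mh_N(x+r))$. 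One minor slip: the real part $(m_{k,N}-m_{\ell,N})h_Nx$ of the linear term is not bounded (it can be of order $N^{1/3}$), but it is $y$-independent and cancels, so all you need is the boundedness of $\Psi_{s,N}(x)-\Psi_{t,N}(x)$ from Lemma~\ref{Lem.S4PhiCloseToP}.
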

\begin{proof} Throughout the proof, we write $c,C$ for generic positive constants that depend on $b$, $R$, $\mathsf{A}_2$ and $\epsilon$, and whose values may change from line to line. In addition, all constants in the big $O$ notations depend on $b$, $R$, $\mathsf{A}_2$ and $\epsilon$. We do not mention this further.

By assumption, we have $e^{-h_N \mathsf{A}_2} \leq |e^{h_NZ}| \leq e^{h_N \mathsf{A}_2}$. As in the proof of Lemma \ref{Lem.S4PhiCloseToP}, we can use the facts that $h_N \rightarrow 0$ and $\alpha_N \rightarrow b \in (0,1)$ to conclude that there exists $N_2 \in \mathbb{N}$ such that $e^{h_N \mathsf{A}_2} < \alpha_N^{-1}$ for $N \geq N_2$, which implies $\alpha_N < |e^{h_NZ}| < \alpha_N^{-1}$. This fixes our choice of $N_2$, and we assume $N \geq N_2$ in the remainder of the proof.

Fixing $s = t_{k,N} \in \widehat{\mathcal{T}}_N$ and taking real parts of (\ref{Eq.S41PsiSinh}), we get
\begin{equation}\label{Eq.DiffReal}
\begin{split}
\mathrm{Re}\left[ \Psi_{s,N}(x + \im y) - \Psi_{s,N}(x) \right] = -& 2\sum_{m \geq 1} \frac{b^m e^{-|k|m/(2N)}}{2m \sinh(m/(2N))} \\
& \times \sinh\left(mh_Nx + km/(2N)\right) \cdot (1 - \cos(ymh_N )).
\end{split}
\end{equation}
Notice that all the non-zero terms in the last sum have the same sign as $h_Nx + k/(2N)$.

Suppose first that $x + s \geq \epsilon$. Since $k/(2N) = sh_N + O(h_N^2)$, which follows from (\ref{Eq.DefS1Recalled}) and (\ref{Eq.DefS3Recalled}), we conclude that by possibly making $N_2$ larger, we have $h_Nx + k/(2N) \geq h_N \epsilon/2$. Since $\sinh(x) \geq x$ for $x \geq 0$, we conclude
\begin{equation}\label{Eq.FRT1}
\sinh(h_Nx + k/(2N)) \geq h_N \epsilon/2.
\end{equation}
From (\ref{Eq.S4SinhApproximation}) with $m = 1$ and the fact that $k = O(N^{2/3})$, we have
\begin{equation}\label{Eq.FRT2}
\frac{b e^{-|k|/(2N)}}{2 \sinh(1/(2N))} \geq c N.
\end{equation}
Lastly, using basic properties of the cosine function and the fact that $|yh_N| \leq \pi$, we conclude
\begin{equation}\label{Eq.FRT3}
1 - \cos\left(yh_N\right) \geq cy^2h_N^2.
\end{equation}
Keeping only the $m = 1$ term in (\ref{Eq.DiffReal}) and applying (\ref{Eq.FRT1}), (\ref{Eq.FRT2}), and (\ref{Eq.FRT3}) gives
$$\mathrm{Re}\left[ \Psi_{s,N}(x + \im y) - \Psi_{s,N}(x) \right] \leq - cN h_N^3 y^2,$$
which implies (\ref{Eq.S4AngularPositive}) once we use $h_N = c_2^{-1} N^{-1/3}$ and the fact that $\Psi_{s,N}(x) = O(1)$ by Lemma \ref{Lem.S4PhiCloseToP}.

Suppose next that $x + s \leq -\epsilon$. As before, by possibly making $N_2$ larger we may assume that $h_Nx + k/(2N) \leq -h_N \epsilon/2$. Since $\sinh(-x) \leq -x$ for $x \geq 0$, we conclude
\begin{equation}\label{Eq.FRT4}
\sinh(h_Nx + k/(2N)) \leq -h_N \epsilon/2.
\end{equation}
If we multiply (\ref{Eq.DiffReal}) by $-1$, keep only the $m = 1$ term, and apply the inequalities (\ref{Eq.FRT2}), (\ref{Eq.FRT3}), and (\ref{Eq.FRT4}), we obtain
$$\mathrm{Re}\left[  \Psi_{s,N}(x) - \Psi_{s,N}(x + \im y) \right] \leq - cN h_N^3 y^2.$$
As before, the last inequality implies (\ref{Eq.S4AngularNegative}) once we use that $h_N = c_2^{-1} N^{-1/3}$ and the fact that $\Psi_{s,N}(x) = O(1)$ by Lemma \ref{Lem.S4PhiCloseToP}.

\medskip

Suppose finally that $s-t\geq\epsilon$. Write $s=t_{k,N}$ and
$t=t_{\ell,N}$, and for $m\geq1$ set
\[
F_{m,N}(r;x)
=
e^{-mh_N|r|}\sinh\left(mh_N(x+r)\right).
\]
From \eqref{Eq.DiffReal}, we obtain
\begin{equation}\label{Eq.S4RatioAngularIdentity}
\begin{split}
&\mathrm{Re}\left[
\Psi_{s,N}(x+\im y)-\Psi_{t,N}(x+\im y)
-\Psi_{s,N}(x)+\Psi_{t,N}(x)
\right]\\
&\quad =
-2\sum_{m\geq1}
\frac{b^m \left(1-\cos(ymh_N)\right)}{2m\sinh(m/(2N))}
\left[
F_{m,N}\left(\frac{k}{2Nh_N};x\right)
-
F_{m,N}\left(\frac{\ell}{2Nh_N};x\right)
\right].
\end{split}
\end{equation}
For $r\neq0$, direct differentiation gives
\begin{equation}\label{Eq.S42Derivatives}
\frac{\partial}{\partial r}F_{m,N}(r;x)
= mh_N
\begin{cases}
e^{mh_N(x+2r)},&r<0,\\
e^{-mh_N(x+2r)},&r>0.
\end{cases}
\end{equation}
Thus, $r\mapsto F_{m,N}(r;x)$ is increasing for every $m\geq1$.

By \eqref{Eq.DefS1Recalled} and \eqref{Eq.DefS3Recalled}, $\frac{k}{2Nh_N}=s+O(h_N)$, $\frac{\ell}{2Nh_N}=t+O(h_N)$. In addition, $s,t \in \widehat{\mathcal{T}}_N \subset[-R/2, R/2]$ by assumption. Thus, after possibly increasing $N_2$, we conclude
\begin{equation}\label{Eq.S42DiffST}
\frac{k}{2Nh_N}-\frac{\ell}{2Nh_N}\geq\frac{\epsilon}{2}, \mbox{ and } \left|\frac{k}{2Nh_N}\right|, \left|\frac{\ell}{2Nh_N}\right| \leq R.
\end{equation}

The monotonicity of $F_{m,N}(r;x)$ and (\ref{Eq.S42DiffST}) show that all the summands in the second line of (\ref{Eq.S4RatioAngularIdentity}) are nonnegative, and so keeping only the $m = 1$ term gives
\begin{equation}\label{Eq.S4RatioAngularTower}
\begin{split}
&\mathrm{Re}\left[
\Psi_{s,N}(x+\im y)-\Psi_{t,N}(x+\im y)
-\Psi_{s,N}(x)+\Psi_{t,N}(x)
\right] \\
& \leq -
\frac{b \left(1-\cos(yh_N)\right)}{\sinh(1/(2N))}
\left[
F_{1,N}\left(\frac{k}{2Nh_N};x\right)
-
F_{1,N}\left(\frac{\ell}{2Nh_N};x\right)
\right].
\end{split}
\end{equation}

From (\ref{Eq.S42Derivatives}), we have $\frac{\partial}{\partial r}F_{1,N}(r;x)\geq ch_N$ for $|r|\leq R$, $r \neq 0$, which together with (\ref{Eq.S42DiffST}) shows
\begin{equation}\label{Eq.FRT5}
F_{1,N}\left(\frac{k}{2Nh_N};x\right) - F_{1,N}\left(\frac{\ell}{2Nh_N};x\right) \geq ch_N.
\end{equation}

Combining (\ref{Eq.S4RatioAngularTower}) with (\ref{Eq.FRT2}), (\ref{Eq.FRT3}), and (\ref{Eq.FRT5}), we conclude
\[
\mathrm{Re}\left[
\Psi_{s,N}(x+\im y)-\Psi_{t,N}(x+\im y)
-\Psi_{s,N}(x)+\Psi_{t,N}(x)
\right] \leq-cNh_N^3y^2.
\]
As before, the last inequality implies (\ref{Eq.S4AngularRatio}) once we use that $h_N = c_2^{-1} N^{-1/3}$ and the fact that $\Psi_{s,N}(x) = O(1)$ and $\Psi_{t,N}(x) = O(1)$ by Lemma \ref{Lem.S4PhiCloseToP}.
\end{proof}

The next lemma gives a Laurent expansion for $\kappa_N$ from
(\ref{Eq.S3KappaN}), together with a uniform bound and its scaling limit.
\begin{lemma}\label{Lem.S4KappaEstimates}
Let $\kappa_N$ be as in (\ref{Eq.S3KappaN}), and recall from (\ref{Eq.DefS3Recalled}) that $u_N = q_N^{M_N}$. We have the following absolutely convergent Laurent expansion for $\kappa_N$:
\begin{equation}\label{Eq.S4KappaLaurent}
\kappa_N(z,w)
=
\sum_{n\in\mathbb{Z}}
\frac{(z/w)^n}{1+u_N^{-n-1/2}},
\qquad 1<|z/w|<u_N^{-1}.
\end{equation}
In addition, for every $0<\alpha_-\leq\alpha_+<R$, there exist
$\mathsf{C}_3>0$ and $N_3\in\mathbb{N}$, depending on
$b,R,\alpha_-,\alpha_+$, such that for $N\geq N_3$ and
$Z,W\in\mathbb{C}$ satisfying
\begin{equation}\label{Eq.S4KappaStrip}
\alpha_-\leq\operatorname{Re}(Z-W)\leq\alpha_+,
\end{equation}
we have $1 < |e^{h_N(Z-W)}| < u_N^{-1}$ and
\begin{equation}\label{Eq.S4KappaUniformBound}
h_N\left|\kappa_N(e^{h_NZ},e^{h_NW})\right|
\leq\mathsf{C}_3.
\end{equation}
Moreover, as $N\to\infty$,
\begin{equation}\label{Eq.S4KappaScalingLimit}
h_N\kappa_N(e^{h_NZ},e^{h_NW})
\longrightarrow
\int_{\mathbb{R}}\frac{e^{(Z-W)v}}{1+e^{Rv}}\,dv,
\end{equation}
locally uniformly on
$\{(Z,W)\in\mathbb{C}^2: \alpha_- \leq \operatorname{Re}(Z-W) \leq \alpha_+\}$.
\end{lemma}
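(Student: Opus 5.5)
The plan is to treat the three claims in order, starting with the Laurent expansion, which is a classical theta-function identity. Set $\zeta = z/w$ and $u=u_N$. By the product formula in (\ref{Eq.DefTheta}),
\[
\theta_3(-u^{-1/2}\zeta;u) = (u;u)_\infty (\zeta;u)_\infty (u/\zeta;u)_\infty .
\]
Hence
\[
\kappa_N = -\frac{(u;u)_\infty^2\,\theta_3(\zeta;u)}{\theta_3(1;u)\,(\zeta;u)_\infty(u/\zeta;u)_\infty}.
\]
This is a meromorphic function of $\zeta$ with simple poles at $\zeta=u^k$, $k\in\mathbb{Z}$, and it has the quasi-periodicity inherited from $\theta_3(u\zeta;u)=u^{-1/2}\zeta^{-1}\theta_3(\zeta;u)$ and from the ratio of Pochhammers. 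I would verify the expansion by comparing it with the right-hand side
\[
g(\zeta)=\sum_{n}\frac{\zeta^n}{1+u^{-n-1/2}}.
\]
The series $g$ converges absolutely for $1<|\zeta|<u^{-1}$, since the coefficients decay like $u^{n+1/2}$ for $n\to+\infty$ and tend to $1$ for $n\to-\infty$. It satisfies
\[
g(\zeta)+u^{-1/2}\zeta^{-1}\cdots
\]
type functional equations; concretely, $(1+u^{-1/2}\zeta^{-1}\cdot u^{\cdot})$ relations give
\[
g(u\zeta)\cdot u^{-1/2}\ \text{vs}\ g(\zeta)
\]
up to a delta-type term. The cleanest route is to recognize $g$ as the standard Kronecker/Appell–Lerch type sum $\sum_n \zeta^n/(1-x u^n)$ with $x=-u^{1/2}$. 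The Kronecker theorem expresses this sum as
\[
\frac{(u;u)_\infty^2\,\theta(x\zeta)}{\theta(x)\,\theta(\zeta)},
\qquad \theta(a)=(a;u)_\infty(u/a;u)_\infty .
\]
Matching then reduces to $\theta(-u^{1/2}\zeta)\propto\theta_3(\zeta;u)$ and $\theta(-u^{1/2})\propto\theta_3(1;u)$, again by (\ref{Eq.DefTheta}). I would also track the overall sign: the expansion of $-u^{n+1/2}/(1+u^{n+1/2})$ must be rewritten as $1/(1+u^{-n-1/2})$ minus constant terms. Alternatively, one can avoid citing Kronecker and check that both sides have the same poles and residues, the same multiplicative quasi-periodicity, and agree at one point; their ratio is then a bounded elliptic function, hence constant.

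For the strip statements, set $D=Z-W$. Then $|e^{h_N D}|=e^{h_N\operatorname{Re}D}\in[e^{h_N\alpha_-},e^{h_N\alpha_+}]$. This lies in $(1,u_N^{-1})$ because $u_N^{-1}\geq e^{h_NR}$ and $\alpha_+<R$. From the expansion,
\[
h_N|\kappa_N|\le h_N\sum_n \frac{e^{nh_N\alpha_\pm}}{1+e^{-(n+1/2)\log u_N}},
\qquad -\log u_N = M_N/N\approx h_N R .
\]
Writing $v=nh_N$, this is a Riemann sum for
\[
\int \frac{e^{\alpha v}}{1+e^{Rv}}\,dv ,
\]
which is finite because $0<\alpha_\pm<R$. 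For $n\ge0$ I would bound the terms by $e^{-nh_N(R'-\alpha_+)}$, with $R'$ slightly less than $M_N/(Nh_N)\to R$, and for $n<0$ by $e^{nh_N\alpha_-}$. Both geometric sums are $O(1/h_N)$, which gives (\ref{Eq.S4KappaUniformBound}).

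The scaling limit (\ref{Eq.S4KappaScalingLimit}) follows from the same Riemann-sum picture:
\[
h_N\kappa_N=\sum_n h_N\frac{e^{Dv_n}}{1+e^{(v_n+h_N/2)M_N/(Nh_N)}},
\qquad v_n=nh_N .
\]
Since $M_N/(Nh_N)\to R$, dominated convergence applies with the uniform geometric tails above as the dominating function. The convergence is locally uniform in $D$ because the integrand is locally uniformly continuous in $D$ and $v$, and the tails are controlled uniformly on the strip.

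The main obstacle is the Laurent identity itself, in particular getting the constants and sign right: the $(u;u)^3_\infty$ factor and the $-$ sign in (\ref{Eq.S3KappaN}) must produce exactly coefficient $1$. I would first try the residue and quasi-periodicity comparison, computing the residue at $\zeta=1$ on both sides. The proof of Proposition~\ref{Prop.SchurDPP} already shows $\kappa\sim w/(z-w)$ there. On the series side, the residue comes from the jump between the expansions valid in $u<|\zeta|<1$ and $1<|\zeta|<u^{-1}$, which differ by $\sum_n\zeta^n$ and give the same residue.
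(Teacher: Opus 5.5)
Your proposal is correct and is essentially the paper's argument: Kronecker's identity is the $b=aq$ case of Ramanujan's ${}_1\psi_1$ summation, which the paper invokes through Borodin's Remark~2.4, and your bound and scaling limit (splitting into $n\geq 0$ and $n<0$ geometric tails, then a Riemann-sum limit with tails controlled uniformly on the strip) are exactly how the paper proceeds.

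Two small clean-ups are worth making. First, the sign is settled most directly by writing your series as $u_N^{1/2}\sum_{n\in\mathbb{Z}}(u_N\zeta)^n/(1+u_N^{n+1/2})$, so that $u_N\zeta$ lies in Kronecker's annulus $u_N<|u_N\zeta|<1$. Applying $\theta(u_N a)=-a^{-1}\theta(a)$ to both $\zeta$-dependent theta factors then yields exactly $\kappa_N$, including its minus sign. Second, in your backup argument it is cleaner to show that the \emph{difference} $F$ of the two sides vanishes than to argue that their ratio is bounded, since the ratio could a priori have a pole at a zero of the series. The difference is holomorphic on $\mathbb{C}^{\times}$ and satisfies $F(u_N\zeta)=-u_N^{-1/2}F(\zeta)$, which forces every Laurent coefficient to vanish because $u_N^{n}+u_N^{-1/2}\neq 0$.
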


\begin{proof}
The Laurent expansion in (\ref{Eq.S4KappaLaurent}) follows directly
from the summation formula in \cite[Remark~2.4]{Bor07}, which in turn is a consequence of Ramanujan's summation formula for the hypergeometric series ${}_1\psi_1$, see \cite[(5.2.2)]{GasperRahman}. Indeed, specializing that formula to Borodin's parameters $t=u_N$ and $z=1$, dividing both sides by $r^{1/2}$, and then writing $m=n+1/2$ gives
$$-\frac{(u_N;u_N)_\infty^3\theta_3(r;u_N)}
{\theta_3(-u_N^{-1/2}r;u_N)\theta_3(1;u_N)}
=
\sum_{n\in\mathbb{Z}}
\frac{r^n}{1+u_N^{-n-1/2}},
\qquad 1<|r|<u_N^{-1}.$$
Taking $r=z/w$ and recognizing the left side as $\kappa_N(z,w)$, we obtain (\ref{Eq.S4KappaLaurent}).

We can use the fact that $r_N := \frac{M_N}{Nh_N} \rightarrow R$, which follows from (\ref{Eq.DefS1Recalled}) and (\ref{Eq.DefS3Recalled}), to conclude that there exists an $\epsilon \in (0, R - \alpha_+)$ and $N_3 \in \mathbb{N}$, depending on $b,R,\alpha_-, \alpha_+$, such that for $N \geq N_3$, we have $r_N > \alpha_+ + \epsilon$. This fixes our choice of $N_3$, and we assume $N \geq N_3$ in the remainder of the proof. By assumption, we have $e^{ \alpha_- h_N} \leq |e^{h_N(Z-W)}| \leq e^{h_N \alpha_+}$, and our choice of $N_3$ ensures $u_N^{-1} = e^{h_Nr_N} > e^{h_N (\alpha_+ + \epsilon)}$. Consequently, $1 < |e^{h_N(Z-W)}| < u_N^{-1}$ for $N \geq N_3$.

\medskip
We next prove (\ref{Eq.S4KappaUniformBound}). Separating the terms with $n\geq0$ and $n<0$ in (\ref{Eq.S4KappaLaurent}), we obtain
\begin{equation}\label{Eq.S4KappaGeometricBound}
\begin{split}
&\left|\kappa_N(e^{h_NZ},e^{h_NW})\right|
\leq
\sum_{n\in\mathbb{Z}}
\frac{e^{h_N\operatorname{Re}(Z-W)n}}
{1+e^{M_N(n+1/2)/N}}\leq
\sum_{n\geq0}
e^{-h_N\left(r_N-\alpha_+\right)n}
+
\sum_{n\geq1}e^{-h_N\alpha_-n}\\
&=
\frac{1}{1-e^{-h_N\left(r_N-\alpha_+\right)}}
+ \frac{1}{e^{h_N\alpha_-}-1} \leq \frac{1}{1-e^{-h_N\epsilon}}
+ \frac{1}{e^{h_N\alpha_-}-1}.
\end{split}
\end{equation}
The inequality in (\ref{Eq.S4KappaGeometricBound}) implies (\ref{Eq.S4KappaUniformBound}) once we recall that $\alpha_- > 0$ and $h_N = c_2^{-1}N^{-1/3}$.  

\medskip

It remains to prove (\ref{Eq.S4KappaScalingLimit}). Fix a compact set $\mathcal{D} \subset \left\{(Z,W)\in\mathbb{C}^2:
\alpha_-\leq\operatorname{Re}(Z-W)\leq\alpha_+\right\}$, and note that for $(Z,W) \in \mathcal{D}$, we have from
(\ref{Eq.S4KappaLaurent}) that
\begin{equation}\label{Eq.S4KappaRiemannSum}
h_N\kappa_N(e^{h_NZ},e^{h_NW})
=
h_N\sum_{n\in\mathbb{Z}}
\frac{e^{(Z-W)h_Nn}}
{1+\exp\left(r_N(h_Nn+h_N/2)\right)}.
\end{equation}

Fix $\eta>0$. For $L>0$, using $r_N-\alpha_+\geq\epsilon$ and arguing as in (\ref{Eq.S4KappaGeometricBound}), we obtain
\begin{equation}\label{Eq.S4KappaDiscreteTails}
\begin{split}
&h_N\sum_{|h_Nn|>L}
\left|
\frac{e^{(Z-W)h_Nn}}
{1+\exp\left(r_Nh_N(n+1/2)\right)}
\right|\\
&\leq
h_N\sum_{ h_Nn>L}e^{-\epsilon h_Nn}
+
h_N\sum_{ h_Nn<-L}e^{-\alpha_-h_N|n|}
\leq
C\left(e^{-\epsilon L}+e^{-\alpha_-L}\right),
\end{split}
\end{equation}
for some $C > 0$, depending on $b$, $\alpha_-$ and $\epsilon$, uniformly for $(Z,W)\in\mathcal{D}$ and $N\geq N_3$. In addition,
\begin{equation}\label{Eq.S4KappaIntegralTails}
\begin{split}
\int_{|v|>L}
\left|\frac{e^{(Z-W)v}}{1+e^{Rv}}\right|\,dv
&\leq
\int_L^\infty e^{-\epsilon v}\,dv
+
\int_{-\infty}^{-L}e^{\alpha_-v}\,dv= 
\frac{e^{-\epsilon L}}{\epsilon}
+
\frac{e^{-\alpha_-L}}{\alpha_-},
\end{split}
\end{equation}
uniformly for $(Z,W)\in\mathcal{D}$. Consequently, we can fix $L>0$ sufficiently large, depending on $\alpha_-$, $\epsilon$, and $\eta$, so that the right-hand sides of both (\ref{Eq.S4KappaDiscreteTails}) and (\ref{Eq.S4KappaIntegralTails}) are less than $\eta/2$.

For this fixed $L$, since $r_N\rightarrow R$ and $h_N\rightarrow0$, we have
\begin{equation}\label{Eq.S4KappaTruncatedLimit}
h_N\sum_{|h_Nn|\leq L}
\frac{e^{(Z-W)h_Nn}}
{1+\exp\left(r_Nh_N(n+1/2)\right)}
\longrightarrow
\int_{-L}^L\frac{e^{(Z-W)v}}{1+e^{Rv}}\,dv
\end{equation}
uniformly for $(Z,W)\in\mathcal{D}$. Combining (\ref{Eq.S4KappaRiemannSum}--\ref{Eq.S4KappaTruncatedLimit}), we conclude that
\[
\limsup_{N\rightarrow\infty}
\sup_{(Z,W)\in\mathcal{D}}
\left|
h_N\kappa_N(e^{h_NZ},e^{h_NW})
-
\int_{\mathbb{R}}\frac{e^{(Z-W)v}}{1+e^{Rv}}\,dv
\right|
\leq \eta.
\]
Since $\eta>0$ was arbitrary, this proves (\ref{Eq.S4KappaScalingLimit}).
\end{proof}

We conclude this section by collecting some well-known statements about the Airy function $\mathrm{Ai}$ in the following proposition.
\begin{proposition}\label{Prop.S4AiryIdentities}
The following statements hold.
\begin{enumerate}
\item[(i)] For $a,d,s,t\in\mathbb R$ satisfying $a+s>0$ and $d-t>0$,
and $x,y,v\in\mathbb R$, we have
\begin{equation}\label{Eq.S4AiryFirst}
 \frac{1}{2\pi\mathrm{i}}
 \int_{a+\mathrm{i}\mathbb R}
 e^{P_s(Z)-(x-v)Z}\,dZ
 =e^{s(x-v)-s^3/3}\Ai(x-v),
\end{equation}
and
\begin{equation}\label{Eq.S4AirySecond}
 \frac{1}{2\pi\mathrm{i}}
 \int_{-d+\mathrm{i}\mathbb R}
 e^{-P_t(W)+(y-v)W}\,dW
 =e^{-t(y-v)+t^3/3}\Ai(y-v).
\end{equation}
\item[(ii)] For $x,y\in\mathbb{R}$ and $r>0$,
\begin{equation}\label{Eq.S4AirySemigroup}
 \int_{\mathbb R}e^{rv}\Ai(x+v)\Ai(y+v)\,dv
 =\frac{1}{\sqrt{4\pi r}}
 \exp\!\left(
 -\frac{(x-y)^2}{4r}-\frac{r(x+y)}2+\frac{r^3}{12}
 \right).
\end{equation}
\item[(iii)] For every $x\in\mathbb R$,
\begin{equation}\label{Eq.S4AirySquareIdentity}
 \int_x^\infty\Ai(z)^2\,dz
 =\Ai'(x)^2-x\Ai(x)^2.
\end{equation}
\item[(iv)] As $u\to\infty$, we have the following asymptotic expansions:
\begin{equation}\label{Eq.S4AiryPositiveAsymptotic}
 \Ai(u)
 =\frac{1}{2\sqrt{\pi}}u^{-1/4}
 e^{-\frac23u^{3/2}}
 \left(1+O(u^{-3/2})\right),
\end{equation}
\begin{equation}\label{Eq.S4AiryPositiveDerivativeAsymptotic}
 \Ai'(u)
 =-\frac{1}{2\sqrt{\pi}}u^{1/4}
 e^{-\frac23u^{3/2}}
 \left(1+O(u^{-3/2})\right),
\end{equation}
\begin{equation}\label{Eq.S4AiryAsymptotic}
 \Ai(-u)
 =\pi^{-1/2}u^{-1/4}
 \left(\sin\!\left(\frac23u^{3/2}+\frac\pi4\right)
 +O(u^{-3/2})\right),
\end{equation}
\begin{equation}\label{Eq.S4AiryDerivativeAsymptotic}
 \Ai'(-u)
 =-\pi^{-1/2}u^{1/4}
 \left(\cos\!\left(\frac23u^{3/2}+\frac\pi4\right)
 +O(u^{-3/2})\right).
\end{equation}
\item[(v)] For every fixed $s,t\in(-R/2,R/2]$, the function
$(x,y)\mapsto K^{\mathrm{per},R}(s,x;t,y)$ from
\eqref{Eq.PeriodicAiryKernel} is continuous on $\mathbb R^2$.
\end{enumerate}
\end{proposition}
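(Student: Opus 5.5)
This is a collection of standard Airy facts plus a continuity statement, so I will treat each item separately. For (i), I will start from the integral representation $\Ai(\xi)=\frac{1}{2\pi\mathrm{i}}\int_{c+\mathrm{i}\mathbb R}e^{Z^3/3-\xi Z}\,dZ$, valid for any $c>0$; the integrand decays like $e^{-c y^2}$ along the vertical line. In $P_s(Z)=Z^3/3+sZ^2+s^2Z$, I will complete the cube:
\[
P_s(Z)=\frac{(Z+s)^3}{3}-\frac{s^3}{3}.
\]
Substituting $\xi=Z+s$ moves the contour $a+\mathrm{i}\mathbb R$ to $(a+s)+\mathrm{i}\mathbb R$, and $a+s>0$ keeps it in the admissible region. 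The exponent then becomes $\xi^3/3-(x-v)\xi+s(x-v)-s^3/3$, which yields \eqref{Eq.S4AiryFirst}. For \eqref{Eq.S4AirySecond}, I will write $W=-\xi$; this turns $-d+\mathrm{i}\mathbb R$ into $d+\mathrm{i}\mathbb R$, reversing orientation. Then $-P_t(-\xi)=(\xi-t)^3/3+t^3/3$, and the shift $\xi\mapsto\xi+t$ together with $d-t>0$ gives the same representation. I will need to track the signs from the orientation flip carefully, but this is routine.

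For (ii), I will insert the contour representations of both Airy functions, using $Z$ on $c_1+\mathrm{i}\mathbb R$ and $W$ on $c_2+\mathrm{i}\mathbb R$. The $v$-integral is $\int e^{v(r-Z-W)}\,dv$, which does not converge absolutely. To handle this, I will split $v>0$ and $v<0$ and choose the contours so that $\mathrm{Re}(Z+W)>r$ on one piece and $<r$ on the other. This is possible because shifts across the region $c>0$ are allowed for the Airy representation. The two pieces then produce $\frac{1}{Z+W-r}$ with opposite contour configurations, and their difference is a residue at $W=r-Z$. What remains is a single Gaussian-type integral $\frac{1}{2\pi\mathrm{i}}\int e^{Z^3/3+(r-Z)^3/3-xZ-y(r-Z)}\,dZ$. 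The cubic terms cancel to the quadratic $rZ^2-r^2Z+r^3/3$, and evaluating the Gaussian integral gives the right-hand side. Alternatively, this can be cited as a standard identity (e.g. Okounkov/Johansson).

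For (iii), I will differentiate the right-hand side and use $\Ai''=x\Ai$:
\[
\frac{d}{dx}\bigl(\Ai'^2-x\Ai^2\bigr)=2\Ai'x\Ai-\Ai^2-2x\Ai\Ai'=-\Ai^2 .
\]
Both sides tend to $0$ as $x\to\infty$ by (iv), so they agree. Part (iv) consists of the classical asymptotics (Olver/DLMF 9.7), and I will cite them.

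Part (v) is the only step that needs genuine work, and I expect the dominated convergence bound to be the main obstacle. For $s\le t$, the weight $e^{-(t-s)v}/(1+e^{-Rv})$ decays like $e^{-(t-s)v}$ as $v\to+\infty$ when $t>s$, and it is bounded by $1$ when $t=s$. As $v\to-\infty$, it behaves like $e^{(R-(t-s))v}$ with $R-(t-s)>0$, since $s,t\in(-R/2,R/2]$. The case $s>t$ works symmetrically, with exponents $(s-t)$ and $R-(s-t)$. For $(x,y)$ in a compact set and $v\to+\infty$, the product $\Ai(x+v)\Ai(y+v)$ decays superexponentially by (iv). For $v\to-\infty$, it is bounded by $C(1+|v|)^{-1/2}$ by (iv), uniformly in the compact set. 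In every case, therefore, the integrand is dominated locally uniformly by an integrable function. Since $\Ai$ is continuous, dominated convergence gives continuity in $(x,y)$.
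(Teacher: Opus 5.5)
Your proposal is correct and follows the paper's route item by item. You complete the cube and shift the contour in (i), cite the classical asymptotics in (iv), and in (v) use dominated convergence, with the weights decaying exponentially because $|t-s|<R$ combined with the Airy bounds. The only real difference is in (ii): the paper just cites \cite[(44)]{Joh07}, whereas you derive the identity by splitting the $v$-integral at $0$, taking contours with $\mathrm{Re}(Z+W)>r$ or $<r$, and picking up the residue at $W=r-Z$. That derivation checks out, including the cubic terms collapsing to $rZ^2-r^2Z+r^3/3$ and the final Gaussian integral, and your differentiation argument for (iii) is also fine.
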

\begin{proof}
Part (i). Equations \eqref{Eq.S4AiryFirst} and \eqref{Eq.S4AirySecond} follow from the contour representation of the Airy function in \cite[Chapter~2, Eq.~(8.02)]{Olver} after the changes of variables $Z\mapsto Z-s$ and $W\mapsto -W-t$, respectively. The assumptions $a+s>0$ and $d-t>0$ ensure that the integrals converge absolutely along the given vertical contours.

Part (ii). Equation \eqref{Eq.S4AirySemigroup} is \cite[(44)]{Joh07}.

Part (iii). This is a simple consequence of the Airy equation and integration by parts; see \cite[p.~40]{VS10} for the derivation.

Part (iv). The asymptotic expansions follow directly from \cite[Chapter~11, Eqs.~(1.07)--(1.09)]{Olver}.

Part (v). Using \eqref{Eq.PeriodicAiryKernel}, together with the bounds in \eqref{Eq.S4AiryPositiveAsymptotic} and \eqref{Eq.S4AiryAsymptotic}, the dominated convergence theorem implies that
$$K^{\mathrm{per},R}(s,x_n;t,y_n)\longrightarrow K^{\mathrm{per},R}(s,x;t,y)$$
whenever $x_n\to x$ and $y_n\to y$.
\end{proof}

%----------------------------------------------------------------------
%
% Section 4.3
%
%----------------------------------------------------------------------
\subsection{Convergence of the correlation kernels}\label{Section4.3}

We continue with the same notation as in Section \ref{Section4.1}. The goal of this section is to use the estimates from Section~\ref{Section4.2} to establish the convergence of the extended scaled correlation kernels from (\ref{Eq.S4ExtendedKernel}). The precise statement is contained in the following proposition.

\begin{proposition}\label{Prop.S4KernelLimit}
Suppose $s,t\in(-R/2,R/2]$ and $s_N,t_N\in\widehat{\mathcal T}_N$ satisfy
\begin{equation}\label{Eq.S4TimeSequences}
 \lim_{N \rightarrow \infty} s_N = s \mbox{ and } \lim_{N \rightarrow \infty}t_N =  t.
\end{equation}
If $s=t$, assume in addition that $s_N=t_N$ for every $N$. Then, for every $A>0$,
\begin{equation}\label{Eq.S4KernelLimit}
\lim_{N\to\infty}
\sup_{x,y\in[-A,A]}
\left| h_N^{-1}\widehat{K}^{\mathrm{ext},N}(s_N,x;t_N,y)
-e^{sx-s^3/3-ty+t^3/3}K^{\mathrm{per},R}(s,x;t,y)
\right|=0,
\end{equation}
where $\widehat{K}^{\mathrm{ext},N}$ is as in (\ref{Eq.S4ExtendedKernel}), and $K^{\mathrm{per},R}$ is as in (\ref{Eq.PeriodicAiryKernel}).
\end{proposition}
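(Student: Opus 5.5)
We split $h_N^{-1}\widehat K^{\mathrm{ext},N}$ from (\ref{Eq.S4ExtendedKernel}) into the double-contour term $I_2^N$ and, when $s_N>t_N$, the single-contour term $I_1^N$, and identify the limit of each. The double-contour term carries the factor $h_N^2\cdot h_N^{-1}\kappa_N = h_N\kappa_N$, whose limit is given by (\ref{Eq.S4KappaScalingLimit}). The contour abscissae $a(s_N,t_N)\to a(s,t)$ and $-d(s_N,t_N)\to -d(s,t)$ satisfy $\operatorname{Re}(Z-W)=a+d=(R+t-s)/2\in(0,R)$ by (\ref{Eq.S4ContourSum}). Hence Lemma~\ref{Lem.S4KappaEstimates} applies with some fixed $0<\alpha_-\le\alpha_+<R$ for all large $N$. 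Since the contours depend on $N$ only through their real parts, we first shift them, by Cauchy's theorem, to the fixed abscissae $a(s,t)$ and $-d(s,t)$. This is legitimate because the integrands are $2\pi\mathrm{i}/h_N$-periodic in the imaginary direction, so the horizontal pieces cancel, and no poles of $\kappa_N$ are crossed for large $N$.

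For the double integral we then argue by dominated convergence. Pointwise, Lemma~\ref{Lem.S4PhiCloseToP} gives $\Psi_{s_N,N}(Z)\to P_s(Z)$ and $\Psi_{t_N,N}(W)\to P_t(W)$, and $h_N\kappa_N\to\int e^{(Z-W)v}(1+e^{Rv})^{-1}dv$. For domination, note that $a+s>0$ and $-d+t<0$, so (\ref{Eq.S4AngularPositive}) and (\ref{Eq.S4AngularNegative}) give Gaussian bounds $\mathsf C_2e^{-\mathsf c_2(\operatorname{Im}Z)^2}$ and $\mathsf C_2e^{-\mathsf c_2(\operatorname{Im}W)^2}$. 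Together with (\ref{Eq.S4KappaUniformBound}) and $|e^{-xZ+yW}|\le e^{A(|a|+|d|)}$ for $x,y\in[-A,A]$, this yields an integrable majorant that is uniform in $x,y$. Uniformity of the convergence in $x,y\in[-A,A]$ follows because the integrand is equicontinuous in $(x,y)$, or alternatively by truncating $|\operatorname{Im}|\le L$ and using local uniform convergence.

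To identify the limit, we write the $\kappa$-limit as a $v$-integral, which converges absolutely since $0<\operatorname{Re}(Z-W)<R$, and apply Fubini. This gives
\[
\int_{\mathbb R}\frac{dv}{1+e^{Rv}}\cdot\frac{1}{2\pi\mathrm{i}}\int e^{P_s(Z)-(x-v)Z}dZ\cdot\frac{1}{2\pi\mathrm{i}}\int e^{-P_t(W)+(y-v)W}dW .
\]
By (\ref{Eq.S4AiryFirst}) and (\ref{Eq.S4AirySecond}) this equals
\[
e^{sx-s^3/3-ty+t^3/3}\int\frac{e^{(t-s)v}}{1+e^{Rv}}\Ai(x-v)\Ai(y-v)\,dv .
\]
The substitution $v\mapsto -v$ turns this into the $s\le t$ branch of (\ref{Eq.PeriodicAiryKernel}). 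Justifying Fubini requires absolute integrability of the triple integral, which follows from the Gaussian decay of the $Z,W$ integrands combined with $|e^{(Z-W)v}|/(1+e^{Rv})$ decaying exponentially as $v\to\pm\infty$.

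For $s>t$ with $s_N>t_N$, the single-contour term $-\frac{1}{2\pi\mathrm{i}}\int_{\Gamma_{0,N}}e^{\Psi_{s_N}-\Psi_{t_N}+(y-x)W}dW$ converges, by (\ref{Eq.S4AngularRatio}) and Lemma~\ref{Lem.S4PhiCloseToP}, to $-\frac{1}{2\pi\mathrm{i}}\int_{\mathrm{i}\mathbb R}e^{P_s(W)-P_t(W)+(y-x)W}dW$. Since $s>t$, $P_s-P_t$ has leading term $(s-t)W^2$, which gives Gaussian decay along $\mathrm{i}\mathbb{R}$. This should be the heat kernel $e^{sx-ty-s^3/3+t^3/3}\int e^{(s-t)v}\Ai(x+v)\Ai(y+v)dv$ given by (\ref{Eq.S4AirySemigroup}) with $r=s-t$; we verify this by comparing with the double-integral representation of (\ref{Eq.S4AirySemigroup}). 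Adding the two terms, the identity $\frac{1}{1+e^{-Rv}}-1=-\frac{1}{1+e^{Rv}}$ converts the result into the $s>t$ branch.

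The main obstacle is the case $s=t$ with $s_N=t_N$, and also $s\ne t$ with the ordering of $s_N,t_N$ differing from that of $s,t$. Both can only matter near the boundary, where $a+d\to R$ or $a+d\to 0$ and Lemma~\ref{Lem.S4KappaEstimates} degenerates. The hypothesis $s_N=t_N$ when $s=t$ keeps $a+d=R/2$, and for $s\ne t$ eventually $\operatorname{sgn}(s_N-t_N)=\operatorname{sgn}(s-t)$, so the cases above cover everything. The remaining care is keeping all the bounds uniform in $N$, using the fixed $\epsilon=\min(a+s,d-t)/2$ in Lemma~\ref{Lem.S4DescentContours}.
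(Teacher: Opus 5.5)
Your plan matches the paper's proof in all essentials. You split $h_N^{-1}\widehat K^{\mathrm{ext},N}$ into the double- and single-contour terms and pass to the limit in each by dominated convergence, using Lemmas~\ref{Lem.S4PhiCloseToP}, \ref{Lem.S4DescentContours} and \ref{Lem.S4KappaEstimates}. You identify the double-integral limit via Fubini and \eqref{Eq.S4AiryFirst}--\eqref{Eq.S4AirySecond}, and the single-integral limit via a Gaussian integral and \eqref{Eq.S4AirySemigroup} with $r=s-t$. You then combine the two using $\frac{1}{1+e^{-Rv}}-1=-\frac{1}{1+e^{Rv}}$. Getting uniformity in $x,y$ from equicontinuity also works: differentiating in $x$ or $y$ only brings down a factor $Z$ or $W$, which the Gaussian majorants absorb. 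This is a fine substitute for the paper's route through sequences $x_N\to x$, $y_N\to y$, continuity of $K^{\mathrm{per},R}$, and compactness.

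The contour-shift step, however, is justified incorrectly. The integrand is \emph{not} $2\pi\mathrm{i}/h_N$-periodic for general $x,y$. By \eqref{Eq.S41GInTermsOfPsi}, $G_{s,N}(e^{h_NZ})$ is periodic, but the shift $Z\mapsto Z+2\pi\mathrm{i}/h_N$ multiplies $e^{\Psi_{s,N}(Z)-xZ}$ by $e^{-2\pi\mathrm{i}(x/h_N+c_1N)}$. This factor equals $1$ only when $x\in\Lambda_N$, and the same happens for $W$ and $y$. The supremum in \eqref{Eq.S4KernelLimit} runs over all $x,y\in[-A,A]$. Off the lattice, the value of $\widehat K^{\mathrm{ext},N}$ genuinely depends on the abscissae $a(s_N,t_N)$, $-d(s_N,t_N)$ fixed in \eqref{Eq.S4ExtendedKernel}, so the horizontal pieces do not cancel.

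The step is easy to repair. On the horizontal segments at height $\pm\pi/h_N$, the real parts lie between $a_N$ and $a(s,t)$, so $\operatorname{Re}Z+s_N\geq\epsilon$ for large $N$. There \eqref{Eq.S4AngularPositive} bounds the $Z$-factor by $\mathsf{C}_2e^{-\mathsf{c}_2\pi^2/h_N^2}$, and \eqref{Eq.S4AngularNegative} does the same for $W$. The shift therefore only costs an $O(e^{-c/h_N^2})$ error. It is simpler, though, to drop the shift altogether, as the paper does. Parametrize $Z=a_N+\mathrm{i}z_1$, $W=-d_N+\mathrm{i}z_2$ and apply dominated convergence directly. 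Pointwise convergence then comes from Lemma~\ref{Lem.S4PhiCloseToP}, $a_N\to a(s,t)$, $d_N\to d(s,t)$, continuity of $P_s$, and the \emph{local uniform} convergence in \eqref{Eq.S4KappaScalingLimit}.

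Separately, your explanation of why $s_N=t_N$ is needed when $s=t$ is off. In every case $a_N+d_N=(R+t_N-s_N)/2\to(R+t-s)/2\in(0,R)$, so Lemma~\ref{Lem.S4KappaEstimates} never degenerates. The hypothesis is needed for the single-contour term instead. If $s_N>t_N$ along a subsequence with $s_N-t_N\to0$, that term behaves like a heat kernel at vanishing time and has no uniform limit, and \eqref{Eq.S4AngularRatio} is unavailable with a fixed $\epsilon$. Under the stated hypotheses your case analysis is still correct, so this does not affect the argument.
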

\begin{proof}
Throughout the proof we fix $A>0$, $s,t \in (-R/2, R/2]$, and sequences $s_N,t_N\in\widehat{\mathcal T}_N$ as in the statement of the proposition. We also write $c,C$ for generic positive constants that depend on $b$, $R$, $s$, $t$, and $A$, and whose values may change from line to line. We recall that $a(s,t)$ and $d(s,t)$ are as in (\ref{Eq.S4ContourParameters}), and we write $a_N=a(s_N,t_N)$ and $d_N=d(s_N,t_N)$, for brevity.

For clarity, we split the proof into three steps. In Step 1, we reduce the statement of the proposition to establishing two asymptotic statements, see (\ref{Eq.S43KernelLimitRed1}) and (\ref{Eq.S43KernelLimitRed2}). These two statements are then established in Steps 2 and 3. 

\medskip 

{\bf \raggedleft Step 1.} Fix $x,y \in [-A,A]$, and sequences $x_N, y_N \in [-A,A]$ such that $\lim_{N \rightarrow \infty} x_N = x$ and $\lim_{N \rightarrow \infty} y_N = y$. We claim that 
\begin{equation}\label{Eq.S43KernelLimitRed1}
\begin{split}
&\lim_{N \rightarrow \infty} \frac{1}{(2\pi\mathrm{i})^2}
\int_{\Gamma_{a_N,N}}dZ
\int_{\Gamma_{-d_N,N}}dW
e^{\Psi_{s_N,N}(Z)-\Psi_{t_N,N}(W)-x_NZ+y_NW}
h_N\kappa_N(e^{h_NZ},e^{h_NW}) \\
& = \frac{1}{(2\pi\mathrm{i})^2}
\int_{a(s,t)+\mathrm{i}\mathbb R}dZ
\int_{-d(s,t)+\mathrm{i}\mathbb R}dW e^{P_s(Z)-P_t(W)-xZ+yW} \int_{\mathbb R}
\frac{e^{(Z-W)v}}{1+e^{Rv}}\,dv,
\end{split}
\end{equation}
\begin{equation}\label{Eq.S43KernelLimitRed2}
\begin{split}
&\lim_{N \rightarrow \infty} -\frac{{\bf 1}\{s_N > t_N\}}{2\pi\mathrm{i}}
\int_{\Gamma_{0,N}}
e^{\Psi_{s_N,N}(W)-\Psi_{t_N,N}(W)+(y_N-x_N)W}dW \\
& = -\frac{{\bf 1}\{s > t\}}{2\pi\mathrm{i}}
\int_{\mathrm{i}\mathbb R} e^{P_s(W)-P_t(W)+(y-x)W}dW,
\end{split}
\end{equation}
where we recall that $P_s(Z)$ is as in (\ref{Eq.S4CubicLimitPhase}), and the right-hand side of (\ref{Eq.S43KernelLimitRed2}) is understood to be $0$ if $s \leq t$. We prove (\ref{Eq.S43KernelLimitRed1}) and (\ref{Eq.S43KernelLimitRed2}) in the steps below. In the remainder of this step, we assume their validity and conclude the proof of the proposition.

\medskip

From (\ref{Eq.S4ContourSum}), we know $a(s,t)+d(s,t) \in (0,R)$ and so
$$\int_{\mathbb R} \frac{e^{(a(s,t)+d(s,t))v}}{1+e^{Rv}}\,dv<\infty.$$
In addition, from (\ref{Eq.S4ContourRelations}), we know $a(s,t)+s=d(s,t)-t=\frac{R+s-t}{4}>0$, and so if $\mathrm{Re}(Z) = a(s,t)$ and $\mathrm{Re}(W) = -d(s,t)$, we have
$$\left|e^{P_s(Z)-P_t(W)}\right| \leq Ce^{-c|Z|^2 - c|W|^2}.$$
The last two bounds show that we can exchange the order of the integrals in the right-hand side of (\ref{Eq.S43KernelLimitRed1}) by Fubini's theorem, and if we subsequently apply Proposition \ref{Prop.S4AiryIdentities}(i), we obtain
\begin{equation}\label{Eq.S43Red11}
\mbox{[RHS of (\ref{Eq.S43KernelLimitRed1})]} = e^{sx-s^3/3-ty+t^3/3}\int_{\mathbb{R}} \frac{e^{v(t-s)}}{1 + e^{Rv}} \mathrm{Ai}(x-v) \mathrm{Ai}(y-v)  dv.
\end{equation}
Since $P_s(W) - P_t(W) = (s-t)W^2 + (s^2 - t^2)W$, we see for $s > t$ that
$$ -\frac{1}{2\pi\mathrm{i}}
\int_{\mathrm{i}\mathbb R}
e^{P_s(W)-P_t(W)+(y-x)W}dW = -\frac{1}{\sqrt{4\pi(s-t)}} \exp\left( -\frac{(s^2-t^2+y-x)^2}{4(s-t)}\right),$$
where we used basic properties of the Fourier transform; see \cite[Chapter 5, Theorem 1.4]{SteinFourier}. Combining the last formula with (\ref{Eq.S4AirySemigroup}), we conclude that 
\begin{equation}\label{Eq.S43Red12}
\mbox{[RHS of (\ref{Eq.S43KernelLimitRed2})]}  = - {\bf 1}\{s>t\} e^{sx-s^3/3-ty+t^3/3} \int_{\mathbb R}e^{(s-t)v}\Ai(x+v)\Ai(y+v)dv,
\end{equation}
where the right-hand side is understood to be $0$ when $s \leq t$.

If we now add (\ref{Eq.S43Red11}) with $v\mapsto -v$ to (\ref{Eq.S43Red12}), and apply the identity 
$$\frac{1}{1 + e^{-Rv}} - 1 = -\frac{1}{1 + e^{Rv}},$$
we conclude, in view of (\ref{Eq.PeriodicAiryKernel}), that
\begin{equation*}
\mbox{[RHS of (\ref{Eq.S43KernelLimitRed1})]} + \mbox{[RHS of (\ref{Eq.S43KernelLimitRed2})]}  = e^{sx-s^3/3-ty+t^3/3}K^{\mathrm{per},R}(s,x;t,y).
\end{equation*}
Combining the latter with (\ref{Eq.S43KernelLimitRed1}), (\ref{Eq.S43KernelLimitRed2}), and the formula for $\widehat{K}^{\mathrm{ext},N}$ from (\ref{Eq.S4ExtendedKernel}) shows that
$$\lim_{N\to\infty} \left| h_N^{-1}\widehat{K}^{\mathrm{ext},N}(s_N,x_N;t_N,y_N)
-e^{sx-s^3/3-ty+t^3/3}K^{\mathrm{per},R}(s,x;t,y)
\right|=0.$$
Using the continuity of $K^{\mathrm{per},R}(s,\cdot;t,\cdot)$, established in Proposition \ref{Prop.S4AiryIdentities}(v), and a standard compactness argument, we see that the last statement implies (\ref{Eq.S4KernelLimit}).

\medskip

{\bf \raggedleft Step 2.} In this step, we prove (\ref{Eq.S43KernelLimitRed1}).

From (\ref{Eq.S4ContourParameters}) and (\ref{Eq.S4TimeSequences}), we see that $\lim_{N \rightarrow \infty} a_N = a(s,t)$ and $\lim_{N \rightarrow \infty} d_N = d(s,t)$. In particular, by (\ref{Eq.S4ContourRelations}) and (\ref{Eq.S4ContourSum}), we conclude
$$a_N + s_N = d_N -t_N \rightarrow \frac{R + s -t}{4}> 0 \mbox{ and } a_N+d_N \rightarrow \frac{R+t-s}{2}\in(0,R).$$
Consequently, we can find $\epsilon_1 \in (0, R)$ such that for all large $N$
\begin{equation}\label{Eq.S43ADBounds}
\epsilon_1 \leq a_N + d_N \leq R - \epsilon_1, \hspace{2mm} a_N + s_N \geq \epsilon_1, \hspace{2mm} -d_N + t_N \leq -\epsilon_1, \hspace{2mm} a_N, d_N \in [-R, R].
\end{equation}

From (\ref{Eq.S4AngularPositive}) with $x = a_N$ and (\ref{Eq.S4AngularNegative}) with $x = -d_N$ in Lemma~\ref{Lem.S4DescentContours}, applied with $\mathsf{A}_2 = R$ and $\epsilon = \epsilon_1$, we have
\begin{equation}\label{Eq.S4DoubleDominatingFunction}
\left|
e^{\Psi_{s_N,N}(a_N+\mathrm{i}z_1)
-\Psi_{t_N,N}(-d_N+\mathrm{i}z_2)
-x_N(a_N+\mathrm{i}z_1)+y_N(-d_N+\mathrm{i}z_2)}
\right|
\leq Ce^{-c(z_1^2+z_2^2)},
\end{equation}
for all sufficiently large $N$ and $z_1,z_2\in[-\pi/h_N,\pi/h_N]$. In addition, (\ref{Eq.S4KappaUniformBound}) in Lemma \ref{Lem.S4KappaEstimates}, applied with $\alpha_- = \epsilon_1$ and $\alpha_+ = R-\epsilon_1$, gives
\begin{equation}\label{Eq.S4DoubleKappaBound}
h_N\left|
\kappa_N(e^{h_N(a_N + \mathrm{i}z_1)},e^{h_N(-d_N + \mathrm{i}z_2)})
\right|\leq \mathsf{C}_3,
\end{equation}
for all sufficiently large $N$ and $z_1,z_2\in[-\pi/h_N,\pi/h_N]$. We mention that in deriving (\ref{Eq.S4DoubleDominatingFunction}) and (\ref{Eq.S4DoubleKappaBound}) we implicitly used that $x_N, y_N \in [-A,A]$ by assumption and the inequalities in (\ref{Eq.S43ADBounds}).

We next have from (\ref{Eq.S4Cubic}), (\ref{Eq.S4KappaScalingLimit}), the continuity of $P_s(Z)$ in $s$, and (\ref{Eq.S4TimeSequences}) that for each $z_1, z_2 \in \mathbb{R}$
\begin{equation*}
\begin{split}
&\lim_{N \rightarrow \infty} e^{\Psi_{s_N,N}(a_N + \mathrm{i} z_1)-\Psi_{t_N,N}(-d_N + \mathrm{i} z_2)-x_N(a_N + \mathrm{i} z_1)+y_N(-d_N + \mathrm{i} z_2)}
h_N\kappa_N(e^{h_N(a_N + \mathrm{i} z_1)},e^{h_N(-d_N + \mathrm{i} z_2)}) \\
& = e^{P_s(a(s,t) + \mathrm{i}z_1)-P_t(-d(s,t) + \mathrm{i}z_2)-x(a(s,t) + \mathrm{i}z_1)+y(-d(s,t) + \mathrm{i}z_2)} \int_{\mathbb R}
\frac{e^{(a(s,t) + d(s,t) + \mathrm{i} z_1 - \mathrm{i}z_2)v}}{1+e^{Rv}}\,dv.
\end{split}
\end{equation*}
Combining the last equation with (\ref{Eq.S4DoubleDominatingFunction}) and (\ref{Eq.S4DoubleKappaBound}) and applying the dominated convergence theorem, we obtain (\ref{Eq.S43KernelLimitRed1}).

\medskip

{\bf \raggedleft Step 3.} In this step, we prove (\ref{Eq.S43KernelLimitRed2}).

If $s \leq t$, then $s_N \leq t_N$ for all large $N$ by our assumptions, and (\ref{Eq.S43KernelLimitRed2}) holds trivially. We may thus assume that $s > t$. Since $s_N \rightarrow s$ and $t_N \rightarrow t$, we can find $\epsilon_2 > 0$ such that for all large $N$
\begin{equation}\label{Eq.S43ADBoundsV2}
s_N -t_N \geq \epsilon_2.
\end{equation}

From (\ref{Eq.S4AngularRatio}) with $x = 0$ in Lemma~\ref{Lem.S4DescentContours}, applied with $\mathsf{A}_2 = 1$ and $\epsilon = \epsilon_2$, we have
\begin{equation}\label{Eq.S4SingleDominatingFunction}
\left| e^{\Psi_{s_N,N}(\mathrm{i} z_1)-\Psi_{t_N,N}( \mathrm{i}z_1)+\mathrm{i}(y_N-x_N)z_1} \right|= \left| e^{\Psi_{s_N,N}(\mathrm{i} z_1)-\Psi_{t_N,N}( \mathrm{i}z_1)} \right|  \leq \mathsf{C}_2e^{-\mathsf{c}_2z_1^2}
\end{equation}
for all sufficiently large $N$ and $z_1 \in [-\pi/h_N,\pi/h_N]$.

We next have from (\ref{Eq.S4Cubic}), the continuity of $P_s(Z)$ in $s$, and (\ref{Eq.S4TimeSequences}) that for each $z_1 \in \mathbb{R}$
\begin{equation*}
\begin{split}
&\lim_{N \rightarrow \infty} e^{\Psi_{s_N,N}(\mathrm{i}z_1)-\Psi_{t_N,N}(\mathrm{i}z_1)+\mathrm{i}(y_N-x_N)z_1} = e^{P_s(\mathrm{i}z_1)-P_t(\mathrm{i}z_1)+\mathrm{i}(y-x)z_1}.
\end{split}
\end{equation*}
Combining the last equation with (\ref{Eq.S4SingleDominatingFunction}) and applying the dominated convergence theorem, we obtain (\ref{Eq.S43KernelLimitRed2}).
\end{proof}

%----------------------------------------------------------------------
%
% Section 4.4
%
%----------------------------------------------------------------------
\subsection{Single-time analysis}\label{Section4.4} 

We continue with the same notation as in Section \ref{Section4.1}. For $s = t_{k,N} \in \widehat{\mathcal{T}}_N$, define the point process $\widehat{M}^{s,N}$ on $\mathbb{R}$ by
\begin{equation}\label{Eq.S44SingleSliceMeasure}
\widehat{M}^{s,N}(A) = \sum_{i \geq 1}{\bf 1}\left\{\widehat{X}_i^{k,N} \in A \right\}, \quad \mbox{ Borel $A \subset \mathbb{R}$,}
\end{equation}
where we recall that $\widehat{X}_i^{k,N}$ is as in (\ref{Eq.S3ScaledParticles}).

The goal of this section is to analyze the asymptotic behavior of the measures $\widehat{M}^{s_N,N}$ for general $s_N \in \widehat{\mathcal{T}}_N \cap [0,R/2]$. The restriction to $[0,R/2]$ is mostly for convenience, and the statements below can be extended to $s_N \in \widehat{\mathcal{T}}_N$. Indeed, if $s,-s \in \widehat{\mathcal{T}}_N$, we have $\widehat{M}^{s,N} \overset{d}{=} \widehat{M}^{-s,N}$, which follows from the distributional equality
\begin{equation}\label{Eq.S44DistrEq}
\left(\lambda^{(-M_N)}, \lambda^{(-M_N+1)}, \ldots,\lambda^{(M_N)}\right) \overset{d}{=} \left(\lambda^{(M_N)}, \lambda^{(M_N-1)},\ldots,\lambda^{(-M_N)}\right)
\end{equation}
in the setup of Definition~\ref{Def.S1Scaling}, which in turn is easily seen from (\ref{Eq.S1ProbDistr}). Consequently, any statement established below for general $s_N \in \widehat{\mathcal{T}}_N \cap [0,R/2]$ can naturally be extended to $s_N \in \widehat{\mathcal{T}}_N$.

The most notable outputs of this section are the tail bounds in Lemmas~\ref{Lem.S4UpperTail} and~\ref{Lem.S4LowerTail}, which will play a key role in verifying Assumption B.3 from Section~\ref{Section2.4}.

\medskip

We begin by introducing some relevant notation that will be used throughout the section.

If $a(s,s)$ and $d(s,s)$ are as in (\ref{Eq.S4ContourParameters}), then for $s \in (-R/2,R/2]$ we have the following improvements over (\ref{Eq.S4ContourRelations}) and (\ref{Eq.S4ContourSum}):
\begin{equation}\label{Eq.S4ContourRelationsImproved}
 a(s,s)+s=d(s,s)-s=R/4>0, \quad a(s,s), d(s,s) \in [-R,R], \quad a(s,s) + d(s,s) = R/2.
\end{equation}
We further set $r_z(s,N) = e^{h_N a(s,s)}$ and $r_w(s,N) = e^{-h_Nd(s,s)}$. When $s$ and $N$ are clear from the context, we drop them from the notation and simply write $r_z, r_w$. We assume throughout that $N \geq \mathsf{N}_0$, where $\mathsf{N}_0$ is as in Definition \ref{Def.Contours}, so that by (\ref{Eq.Inequalities}) we have $r_z, r_w \in (\alpha_N, \alpha_N^{-1})$ and $1 < r_z/r_w < u_N^{-1} < \alpha_N^{-1}$.

From Lemma \ref{Lem.S4KernelAgreement}, \cite[Lemma~2.17]{ED24a}, and \cite[Proposition 2.13(6)]{ED24a}, we conclude that $\widehat{M}^{s,N}$ is a determinantal point process with reference measure $h_N \mu_{\Lambda_N}$ and correlation kernel
\begin{equation}\label{Eq.SingleSliceCorrKernel}
K^{s,N}(x,y) = h_N^{-1}\widehat{K}^{\mathrm{ext},N}(s,x;s,y), \quad x,y \in \mathbb{R},
\end{equation}
where $\widehat{K}^{\mathrm{ext},N}(s,x;s,y)$ is as in (\ref{Eq.S4ExtendedKernel}) and $\mu_{\Lambda_N}$ is the counting measure from Definition \ref{Def.S1Measures}.

\medskip

Our first result shows that for any sequence $s_N \in \widehat{\mathcal{T}}_N \cap [0,R/2]$ the random measures $\widehat{M}^{s_N,N}$ converge weakly.

\begin{lemma}\label{Lem.ConvergencePPSlice} Fix any sequence $s_N \in \widehat{\mathcal{T}}_N \cap [0,R/2]$, and let $\widehat{M}^{s_N,N}$ be as in (\ref{Eq.S44SingleSliceMeasure}). Then $\widehat{M}^{s_N,N}$ converges weakly to a determinantal point process $M^{\mathrm{Ai},R}$ on $\mathbb{R}$ with Lebesgue reference measure and correlation kernel
\begin{equation}\label{Eq.PerAiryKernelSlice}
K^{\mathrm{Ai}, R}(x,y) = \int_{\mathbb R}
 \frac{\Ai(x+v)\Ai(y+v)}{1+e^{-Rv}}\,dv, \quad x,y \in \mathbb{R}.
\end{equation}
\end{lemma}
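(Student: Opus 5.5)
The plan is to combine the kernel convergence of Proposition~\ref{Prop.S4KernelLimit} with the general convergence criterion for determinantal point processes from \cite[Section~2]{ED24a}. That criterion turns locally uniform convergence of correlation kernels, against reference measures that converge vaguely, into weak convergence of the point processes.

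\textbf{Step 1: pass to a subsequence.} Since $s_N \in [0,R/2]$, it suffices to show that every subsequence has a further subsequence along which $\widehat{M}^{s_N,N} \Rightarrow M^{\mathrm{Ai},R}$. By compactness we may therefore assume $s_N \to s \in [0,R/2]$.

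\textbf{Step 2: kernel convergence.} Apply Proposition~\ref{Prop.S4KernelLimit} with $t_N = s_N$; the extra hypothesis needed when $s=t$ holds trivially. It gives, uniformly for $x,y\in[-A,A]$,
$$K^{s_N,N}(x,y) = h_N^{-1}\widehat{K}^{\mathrm{ext},N}(s_N,x;s_N,y)\to e^{s(x-y)}K^{\mathrm{per},R}(s,x;s,y) = e^{s(x-y)}K^{\mathrm{Ai},R}(x,y).$$
The second equality is (\ref{Eq.PeriodicAiryKernel}) with $t=s$. The factor $e^{s(x-y)}$ is a gauge transformation and leaves all determinants $\det[K(x_i,x_j)]$ unchanged, by \cite[Proposition~2.13(4)]{ED24a}. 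So the limit is a kernel for $M^{\mathrm{Ai},R}$.

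Note that the case $s=R/2$ needs slight care: $R/2\in(-R/2,R/2]$, so Proposition~\ref{Prop.S4KernelLimit} applies there as well. Note also that the limiting kernel does not depend on $s$ after gauging, which is why no convergence assumption on $s_N$ appears in the statement.

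\textbf{Step 3: reference measures.} The reference measures $h_N\mu_{\Lambda_N}$ put mass $h_N$ on each point of the lattice $\Lambda_N$, whose spacing is $h_N \to 0$. Hence they converge vaguely to Lebesgue measure. The kernels $K^{s_N,N}$ are continuous in $(x,y)$ on $\mathbb{R}^2$, being contour integrals with continuous integrands. Together with the uniform-on-compacts convergence from Step 2, this is exactly the input needed for the kernel-to-point-process convergence statement in \cite[Section~2]{ED24a}.

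\textbf{Step 4: conclude.} That result yields weak convergence of $\widehat{M}^{s_N,N}$ to a determinantal point process with Lebesgue reference measure and kernel $K^{\mathrm{Ai},R}$. Its hypotheses should also include that $K^{\mathrm{Ai},R}$ defines a valid point process; if so, this is supplied by the convergence itself, or by \cite{Joh07}. The subsequence argument of Step 1 then upgrades this to convergence along the full sequence.

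\textbf{Main obstacle.} The main obstacle is matching the precise hypotheses of the \cite{ED24a} convergence theorem. If it requires only locally uniform kernel convergence plus vague convergence of the reference measures, the above suffices. If it also requires tightness or bounds away from compacts, I would obtain these from the same contour estimates of Lemmas~\ref{Lem.S4DescentContours} and~\ref{Lem.S4KappaEstimates}. These give bounds on $K^{s_N,N}(x,x)$ that are uniform in $N$ for $x$ in compacts, which is all local finiteness needs.
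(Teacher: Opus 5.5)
Your proposal is correct and follows essentially the same route as the paper: kernel convergence from Proposition~\ref{Prop.S4KernelLimit} with $t_N=s_N$, vague convergence of $h_N\mu_{\Lambda_N}$ to Lebesgue measure, and then \cite[Proposition~2.15]{ED24a}. That result needs only these two inputs plus continuity of the limiting kernel, which is Proposition~\ref{Prop.S4AiryIdentities}(v); the argument then finishes with a gauge transformation and a subsequence argument. The one point to make explicit is that your subsequence step uses uniqueness of the law of a determinantal point process with given kernel and reference measure, \cite[Proposition~2.13(3)]{ED24a}, to conclude that all subsequential limits coincide.
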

\begin{remark}\label{Rem.ConvergencePPSlice} From (\ref{Eq.PeriodicAiryKernel}), we have $K^{\mathrm{Ai}, R}(x,y) = K^{\mathrm{per},R}(s,x;s,y)$ for all $s \in (-R/2, R/2]$.
\end{remark}
\begin{proof} From the discussion preceding the lemma, we know that $\widehat{M}^{s_N,N}$ is a determinantal point process with reference measure $h_N\mu_{\Lambda_N}$ and correlation kernel $K^{s_N,N}$ given in (\ref{Eq.SingleSliceCorrKernel}). 

Suppose first that $t_N \in \widehat{\mathcal{T}}_N \cap [0,R/2]$ satisfy $t_N \rightarrow t \in [0,R/2]$. From (\ref{Eq.S4KernelLimit}), we have for any $A > 0$
\begin{equation}\label{Eq.S44KernelLimit}
\lim_{N\to\infty} \sup_{x,y\in[-A,A]} \left| h_N^{-1}\widehat{K}^{\mathrm{ext},N}(t_N,x;t_N,y) -e^{tx-ty}K^{\mathrm{per},R}(t,x;t,y) \right|=0.
\end{equation}
In addition, $h_N\mu_{\Lambda_N}$ converges vaguely to the Lebesgue measure on $\mathbb{R}$. From \cite[Proposition 2.15]{ED24a}, we conclude that $\widehat{M}^{t_N,N}$ converges weakly to a determinantal point process $M^{\mathrm{Ai}, R, t}$ with Lebesgue reference measure and correlation kernel $e^{tx-ty}K^{\mathrm{per},R}(t,x;t,y)$. We mention that in applying \cite[Proposition 2.15]{ED24a}, we implicitly used that $(x,y) \mapsto e^{tx-ty}K^{\mathrm{per},R}(t,x;t,y)$ is continuous, which follows from Proposition \ref{Prop.S4AiryIdentities}(v).

By applying a gauge transformation, see \cite[Proposition 2.13(4)]{ED24a}, and using the identity in Remark \ref{Rem.ConvergencePPSlice}, we conclude that $M^{\mathrm{Ai}, R, t}$ is a determinantal point process with Lebesgue reference measure and correlation kernel $K^{\mathrm{Ai}, R}(x,y)$. In particular, by \cite[Proposition 2.13(3)]{ED24a}, the law of $M^{\mathrm{Ai}, R, t}$ is the same for all $t \in [0,R/2]$, and we denote this common law by $M^{\mathrm{Ai},R}$.

Going back to the original setup, suppose $s_N \in \widehat{\mathcal{T}}_N \cap [0,R/2]$ is arbitrary and $s_{N_k}$ is a subsequence. As $s_{N_k} \in [0, R/2]$ and the latter set is compact, we can find a further subsequence $s_{N_{k_m}}$ that converges to some $s \in [0,R/2]$. Our previous argument shows $\widehat{M}^{s_{N_{k_m}}, N_{k_m}} \Rightarrow M^{\mathrm{Ai},R}$. As the subsequence $s_{N_k}$ was arbitrary, we conclude $\widehat{M}^{s_N,N} \Rightarrow M^{\mathrm{Ai},R}$ as desired. 
\end{proof}

Our next result provides an upper-tail estimate for the location of the rightmost atom $\widehat{X}_1^{k,N}$ of $\widehat{M}^{s,N}$.
\begin{lemma}\label{Lem.S4UpperTail}
Assume the same notation as in the beginning of this section. We can find $\mathsf{C}_4 > 0$ depending on $b$ and $R$ such that for all $A \geq 0$
\begin{equation}\label{Eq.S44UpperTail}
\limsup_{N\to\infty}\max_{0\leq j\leq M_N} \mathbb{P}(\widehat X_1^{j,N} \geq A) \leq \mathsf{C}_4e^{-RA/2}.
\end{equation}
\end{lemma}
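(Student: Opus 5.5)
The plan is to bound the probability by the first correlation function. Since $\widehat M^{s,N}$ is determinantal with reference measure $h_N\mu_{\Lambda_N}$ and kernel $K^{s,N}$ from (\ref{Eq.SingleSliceCorrKernel}), we have
$$\mathbb{P}(\widehat X_1^{j,N}\geq A)\leq \mathbb{E}\big[\widehat M^{s,N}([A,\infty))\big]=h_N\sum_{x\in\Lambda_N\cap[A,\infty)}K^{s,N}(x,x),\qquad s=t_{j,N}.$$
So it suffices to show that, uniformly in $s\in\widehat{\mathcal T}_N\cap[0,R/2]$ and for large $N$,
$$|K^{s,N}(x,x)|\leq Ce^{-Rx/2}\quad\text{for } x\geq 0.$$
Summing this over the lattice with spacing $h_N$ gives a Riemann sum bounded by $C\int_{A-h_N}^\infty e^{-Rx/2}dx\leq \mathsf C_4e^{-RA/2}$.

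For $s=t$ the indicator term in (\ref{Eq.S4ExtendedKernel}) vanishes, so only the double contour integral remains. We use the contours $\mathrm{Re} Z=a(s,s)$ and $\mathrm{Re} W=-d(s,s)$. The integrand then carries the factor
$$|e^{-xZ+xW}|=e^{-x(a(s,s)+d(s,s))}=e^{-Rx/2},$$
by (\ref{Eq.S4ContourRelationsImproved}). This is exactly the desired decay, and it is uniform in $x\ge0$.

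The remaining factor must be bounded uniformly in $N$ and $s$. The estimate (\ref{Eq.S4KappaUniformBound}) with $\alpha_-=\alpha_+=R/2$ gives $h_N|\kappa_N|\leq\mathsf C_3$. Lemma~\ref{Lem.S4DescentContours} applies with $\epsilon=R/4$ and $\mathsf A_2=R$, since $a(s,s)+s=R/4$ and $-d(s,s)+s=-R/4$. It gives Gaussian bounds $\mathsf C_2e^{-\mathsf c_2(z_1^2+z_2^2)}$ for $|e^{\Psi_{s,N}(Z)}|$ and $|e^{-\Psi_{s,N}(W)}|$ along the contours. Multiplying these bounds and integrating over $z_1,z_2\in\mathbb R$ bounds $h_N^{-1}\cdot h_N^2|\text{integral}|$ by a constant independent of $N$, $s$, and $x$. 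Here the constants from Lemmas~\ref{Lem.S4DescentContours} and \ref{Lem.S4KappaEstimates} depend only on $b,R$, since all the parameters chosen above depend only on $R$.

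The step that needs care is uniformity over all $j\in\llbracket0,M_N\rrbracket$. This includes $j=M_N$, where $s=R/2$; that point lies in $\widehat{\mathcal T}_N$, so Lemma~\ref{Lem.S4DescentContours} covers it. The bounds of that lemma are uniform over $s\in\widehat{\mathcal T}_N$ with fixed real parts in $[-R,R]$, which is what is needed. The remaining conditions are $N\geq \max(\mathsf N_0,N_2,N_3)$ and the domain conditions in (\ref{Eq.Inequalities}). The result is $\mathbb P(\widehat X^{j,N}_1\geq A)\leq \mathsf C_4e^{-RA/2}$ for all large $N$ and all $j$, which yields (\ref{Eq.S44UpperTail}).
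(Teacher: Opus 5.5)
Your proposal is correct and matches the paper's proof essentially step for step. The paper also uses the first-moment bound $\mathbb{P}(\widehat X_1^{j,N}\geq A)\leq \mathbb{E}[\widehat M^{s,N}([A,\infty))]$. It likewise bounds the diagonal kernel on the contours $\mathrm{Re}\,Z=a(s,s)$, $\mathrm{Re}\,W=-d(s,s)$, using Lemma~\ref{Lem.S4DescentContours} with $\mathsf{A}_2=R$, $\epsilon=R/4$ and Lemma~\ref{Lem.S4KappaEstimates} with $\alpha_-=\alpha_+=R/2$, to get $|\widehat K^{\mathrm{ext},N}(s,x;s,x)|\leq Ch_Ne^{-Rx/2}$, and then sums this over $\Lambda_N\cap[A,\infty)$.
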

\begin{proof}
Fix $s=t_{j,N}\in\widehat{\mathcal T}_N\cap[0,R/2]$. There exists $C>0$, depending on $b$ and $R$, such that for all sufficiently large $N$ and every $x \in \mathbb{R}$, we have the following sequence of inequalities:
\begin{equation*}
\begin{split}
&\left|\widehat{K}^{\mathrm{ext},N}(s,x;s,x)\right| \leq \frac{h_N^2}{(2\pi)^2}
\int_{\Gamma_{a(s,s),N}}|dZ|
\int_{\Gamma_{-d(s,s),N}} \hspace{-4mm} |dW|
\left|e^{ \Psi_{s,N}(Z)-\Psi_{s,N}(W)-xZ+xW}\right| \left|\kappa_N(e^{h_NZ},e^{h_NW})\right| \\
&\leq \frac{h_N}{(2\pi)^2}
\int_{\mathbb{R}} dy_1\int_{\mathbb{R}}dy_2\mathsf{C}_2^2e^{-\mathsf{c}_2 (y_1^2 + y_2^2)} \cdot e^{-x[a(s,s) +d(s,s)]} \cdot \mathsf{C}_3 \leq C  h_N e^{-Rx/2}.
\end{split}
\end{equation*} 
Indeed, the first inequality follows from the formula for $\widehat{K}^{\mathrm{ext},N}(s,x;s,x)$ in (\ref{Eq.S4ExtendedKernel}) by putting absolute values inside the integral. Here, $|dZ|$ and $|dW|$ denote integration with respect to arc length. The second inequality follows from Lemma~\ref{Lem.S4DescentContours}, applied with $\mathsf{A}_2 = R$ and $\epsilon = R/4$, and Lemma~\ref{Lem.S4KappaEstimates}, applied with $\alpha_- = \alpha_+ =  R/2$. The last inequality used (\ref{Eq.S4ContourRelationsImproved}).

Consequently, by (\ref{Eq.S2PairMeanDPP}) and (\ref{Eq.SingleSliceCorrKernel}), we obtain
$$
\mathbb E\left[\widehat M^{s,N}([A,\infty))\right]
=\sum_{x\in\Lambda_N\cap[A,\infty)}\widehat{K}^{\mathrm{ext},N}(s,x;s,x)
\leq Ch_N\sum_{x\in\Lambda_N\cap[A,\infty)}e^{-Rx/2}
\leq \mathsf{C}_4e^{-RA/2},
$$
where $\mathsf{C}_4$ depends on $b$ and $R$, and the last inequality used $\Lambda_N=h_N(\mathbb Z-c_1N)$ from (\ref{Eq.DefS3Recalled}). 

Finally, by (\ref{Eq.S44SingleSliceMeasure}), $\{\widehat X_1^{j,N}\geq A\}\subseteq\{\widehat M^{s,N}([A,\infty))\geq1\}$, and so
$$
\mathbb P(\widehat X_1^{j,N}\geq A) \leq \mathbb E\left[\widehat M^{s,N}([A,\infty))\right] \leq \mathsf{C}_4e^{-RA/2}.
$$
Taking the maximum over $0\leq j\leq M_N$ and then $\limsup_{N\to\infty}$ proves (\ref{Eq.S44UpperTail}).
\end{proof}

We end this section with the following result, which provides lower-tail estimates for the location of the $L$-th atom $\widehat{X}_{L}^{k,N}$ of $\widehat{M}^{s,N}$.
\begin{lemma}\label{Lem.S4LowerTail} Assume the same notation as in the beginning of this section and fix $L \in \mathbb{N}$. We can find universal constants $\mathsf{c}_5, \mathsf{A}_5 > 0$ such that for all $A \geq \mathsf{A}_5$
\begin{equation}\label{Eq.S44LowerTail}
\limsup_{N\to\infty}\max_{0\leq j\leq M_N} \mathbb{P}(\widehat X_L^{j,N} <-A) \leq 2^{L-1}e^{-\mathsf{c}_5A^{3/2}}.
\end{equation}
\end{lemma}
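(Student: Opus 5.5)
The plan is to pass to the limiting one-time process $M^{\mathrm{Ai},R}$ of Lemma~\ref{Lem.ConvergencePPSlice}. For that process the number of points in a half-line is a sum of independent Bernoulli variables, which gives a Chernoff bound. This requires a lower bound on the expected number of points that does not depend on $R$.

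\textbf{Step 1: reduction to the limit process.} Fix $A>0$ and $L$. For each $N$, choose $j_N\in\llbracket 0,M_N\rrbracket$ attaining the maximum in \eqref{Eq.S44LowerTail}, and set $s_N=t_{j_N,N}\in\widehat{\mathcal T}_N\cap[0,R/2]$. Since the atoms are ordered,
$$\{\widehat X_L^{j_N,N}<-A\}\subseteq\{\widehat M^{s_N,N}((-A,\infty))\le L-1\}.$$
The map $\mu\mapsto\mu(O)$ is lower semicontinuous in the vague topology for $O$ open, so the set $\{\mu:\mu((-A,\infty))\le L-1\}$ is closed. By Lemma~\ref{Lem.ConvergencePPSlice} and the portmanteau theorem,
$$\limsup_{N\to\infty}\mathbb P\bigl(\widehat X_L^{j_N,N}<-A\bigr)\le \mathbb P\bigl(M^{\mathrm{Ai},R}((-A,\infty))\le L-1\bigr).$$
It therefore suffices to bound the right-hand side by $2^{L-1}e^{-\mathsf c_5A^{3/2}}$.

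\textbf{Step 2: Bernoulli structure and Chernoff bound.} The kernel $K^{\mathrm{Ai},R}$ in \eqref{Eq.PerAiryKernelSlice} is real and symmetric. Restricted to $L^2((-A,\infty))$, it equals $\int \mathbf 1_{(-A,\infty)}\,\mathrm{Ai}_v\otimes\mathrm{Ai}_v\,\mathbf 1_{(-A,\infty)}\,\frac{dv}{1+e^{-Rv}}$ with $\mathrm{Ai}_v=\mathrm{Ai}(\cdot+v)$. It is therefore a positive operator, and it is bounded by the restriction of $\int\mathrm{Ai}_v\otimes\mathrm{Ai}_v\,dv=I$, so $0\le K\le I$. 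It is trace class because its trace $\int_{-A}^\infty K^{\mathrm{Ai},R}(x,x)\,dx$ is finite; this follows from \eqref{Eq.S4AiryPositiveAsymptotic} together with the factor $(1+e^{-Rv})^{-1}$, which is at most $e^{Rv}$ for $v<0$.

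By the standard result of Hough--Krishnapur--Peres--Vir\'ag for Hermitian determinantal processes, $\mathcal N:=M^{\mathrm{Ai},R}((-A,\infty))$ is then distributed as $\sum_i\xi_i$ with independent $\xi_i\sim\mathrm{Ber}(p_i)$ and $\sum_i p_i=\mathbb E\mathcal N$. Apply Markov's inequality to $2^{-\mathcal N}$ and use $1-x\le e^{-x}$:
$$\mathbb P(\mathcal N\le L-1)\le 2^{L-1}\,\mathbb E\bigl[2^{-\mathcal N}\bigr]=2^{L-1}\prod_i\Bigl(1-\frac{p_i}{2}\Bigr)\le 2^{L-1}e^{-\mathbb E\mathcal N/2}.$$

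\textbf{Step 3: a lower bound on $\mathbb E\mathcal N$ uniform in $R$ (main point).} Since $(1+e^{-Rv})^{-1}\ge\frac12$ for $v\ge0$, and using Proposition~\ref{Prop.S4AiryIdentities}(iii),
$$K^{\mathrm{Ai},R}(x,x)\ge\frac12\int_0^\infty\mathrm{Ai}(x+v)^2\,dv=\frac12\bigl(\mathrm{Ai}'(x)^2-x\,\mathrm{Ai}(x)^2\bigr).$$
For $x=-u$ with $u$ large, \eqref{Eq.S4AiryAsymptotic} and \eqref{Eq.S4AiryDerivativeAsymptotic} give
$$\mathrm{Ai}'(-u)^2+u\,\mathrm{Ai}(-u)^2=\pi^{-1}u^{1/2}\bigl(\cos^2+\sin^2+O(u^{-3/2})\bigr)\ge\frac{u^{1/2}}{2\pi}$$
for $u\ge u_0$, where $u_0$ is universal. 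Integrating over $x\in(-A,-u_0)$ yields $\mathbb E\mathcal N\ge 2\mathsf c_5A^{3/2}$ for all $A\ge\mathsf A_5$, with universal constants $\mathsf c_5,\mathsf A_5$. Combined with Steps~1 and~2, this gives \eqref{Eq.S44LowerTail}.

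The step that needs the most care is Step~2: one has to verify the Hermitian trace-class hypotheses for the limit kernel rather than for the prelimit kernel, which need not be symmetric. Working in the limit via Step~1 is exactly what avoids any such hypothesis at finite $N$.
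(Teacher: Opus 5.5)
Your proposal is correct and follows essentially the same route as the paper: reduce via Lemma~\ref{Lem.ConvergencePPSlice} to the limiting process $M^{\mathrm{Ai},R}$, bound $\mathbb{P}(\mathcal{N}\le L-1)$ by $2^{L-1}e^{-\mathbb{E}\mathcal{N}/2}$, and lower-bound $K^{\mathrm{Ai},R}(x,x)$ by $\tfrac12(\mathrm{Ai}'(x)^2 - x\,\mathrm{Ai}(x)^2)$ using the Airy asymptotics. The differences are cosmetic. The paper works with the bounded interval $[-A,0]$, using its null boundary and the mapping theorem, and cites Soshnikov's bound together with Johansson's positivity and trace-class verification. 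You instead use the open half-line with the portmanteau theorem and derive the same bound directly from the Bernoulli decomposition.
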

\begin{proof} Let $k(N) \in \llbracket 0, M_N \rrbracket$ be such that 
$$\mathbb{P}(\widehat X_L^{k(N),N} <-A) = \max_{0\leq j\leq M_N} \mathbb{P}(\widehat X_L^{j,N} <-A),$$
and set $s_N = t_{k(N),N}$. From (\ref{Eq.S44SingleSliceMeasure}), we have ${\bf 1}\{\widehat X_L^{k(N),N} <-A\} \leq {\bf 1}\{\widehat{M}^{s_N, N}([-A, 0]) \leq L-1\}$. Consequently, to prove (\ref{Eq.S44LowerTail}), it suffices to show that
\begin{equation}\label{Eq.S44LowerTailRed1}
\limsup_{N\to\infty}  \mathbb{P}\left(\widehat{M}^{s_N, N}([-A, 0]) \leq L-1\right) \leq 2^{L-1}e^{-\mathsf{c}_5A^{3/2}}.
\end{equation}

From Lemma~\ref{Lem.ConvergencePPSlice}, we have $\widehat{M}^{s_N, N} \Rightarrow M^{\mathrm{Ai},R}$. Since the reference measure of $M^{\mathrm{Ai},R}$ is the Lebesgue measure, we have from (\ref{Eq.S2PairMeanDPP}) that $\mathbb{E}[M^{\mathrm{Ai},R}(\{-A,0\})] = 0$, and so $M^{\mathrm{Ai},R}(\{-A,0\}) = 0$ almost surely. By the mapping theorem \cite[Theorem 2.7]{Billing}, we conclude $\widehat{M}^{s_N, N}([-A, 0]) \Rightarrow M^{\mathrm{Ai},R}([-A, 0])$, and as these variables are integer-valued, this implies
\begin{equation}\label{Eq.S44LowerTailRed11}
\lim_{N\to\infty}  \mathbb{P}\left(\widehat{M}^{s_N, N}([-A, 0]) \leq L-1\right) =   \mathbb{P}\left(M^{\mathrm{Ai},R}([-A, 0]) \leq L-1\right).
\end{equation}

From the proof of Proposition~1.1 in \cite{Joh07}, we know that $K^{\mathrm{Ai},R}$ is locally trace-class and positive definite, and then the last displayed equation in the proof of \cite[Theorem 4(a)]{Sos00} gives 
\begin{equation}\label{Eq.S44LowerTailRed12}
\mathbb{P}\left(M^{\mathrm{Ai},R}([-A, 0]) \leq L-1\right) \leq 2^{L-1} \cdot e^{-\tfrac{1}{2} g(A)}, \mbox{ where }g(A) = \int_{-A}^0 K^{\mathrm{Ai},R}(x,x)dx.
\end{equation}

Restricting the integral in (\ref{Eq.PerAiryKernelSlice}) to $v \geq 0$ gives
\begin{equation*}
K^{\mathrm{Ai},R}(x,x) \geq \int_{0}^{\infty}
 \frac{\Ai(x+v)^2}{1+e^{-Rv}}dv \geq \frac{1}{2} \int_{0}^{\infty}
 \Ai(x+v)^2dv = \frac12\left(\Ai'(x)^2-x\Ai(x)^2\right),
\end{equation*}
where the last equality follows from \eqref{Eq.S4AirySquareIdentity}. In addition, by \eqref{Eq.S4AiryAsymptotic}--\eqref{Eq.S4AiryDerivativeAsymptotic}, we have as $x \rightarrow -\infty$
\begin{equation*}
 \Ai'(x)^2-x\Ai(x)^2 =\pi^{-1}\sqrt{-x}+O(|x|^{-1}),
\end{equation*}
and so we can find $\mathsf{A}_5 > 0$ such that
\begin{equation*}
K^{\mathrm{Ai},R}(x,x) \geq (4\pi)^{-1}\sqrt{-x} \mbox{ for } x \leq -\mathsf{A}_5/2.
\end{equation*}
Consequently, for $A\geq \mathsf{A}_5$, we have
$$g(A) =  \int_{-A}^0 K^{\mathrm{Ai},R}(x,x)dx  \geq\frac{1}{4\pi}\int_{A/2}^A\sqrt u\,du =\frac{1-2^{-3/2}}{6\pi}A^{3/2}.$$
Substituting the latter into (\ref{Eq.S44LowerTailRed12}) and applying (\ref{Eq.S44LowerTailRed11}), we conclude that (\ref{Eq.S44LowerTailRed1}) holds with $\mathsf{c}_5 = \frac{1-2^{-3/2}}{12\pi}$, completing the proof of the lemma.
\end{proof}

%----------------------------------------------------------------------
%
% Section 5
%
%----------------------------------------------------------------------
\section{Convergence of the line ensembles}\label{Section5}
The goal of this section is to prove Theorems~\ref{Thm.Convergence} and~\ref{Thm.DeterminantalStructure}. In Section~\ref{Section5.1}, we recast the point processes from Section~\ref{Section3.2} in the framework of the tightness criterion from Section~\ref{Section2.4} and establish several estimates needed for its application. The setup in Section~\ref{Section5.1} differs slightly from that of Section~\ref{Section3.2}, since we restrict the time variables to $[0,R/2]$. The reason is that $\widehat{M}^N$ from (\ref{Eq.S3ScaledPointProcess}) does not include the atoms at $(t_{-M_N,N},\widehat{X}_i^{-M_N,N})$, corresponding to the left endpoint $-R/2$. Consequently, the point process associated with the full array on $[-R/2,R/2]$ is not obtained by restricting $\widehat{M}^N$. By working on $[0,R/2]$ instead, all atoms of the corresponding array are contained in $\widehat{M}^N$, and so the determinantal structure required for applying the tightness criterion follows from the one established for $\widehat{M}^N$ by restriction. In Section~\ref{Section5.2}, we verify Assumptions B.1, B.2, and B.3 from Section~\ref{Section2.4}, obtain tightness on $[0,R/2]$, and use reflection symmetry to deduce tightness on the full interval $[-R/2,R/2]$. Finally, in Section~\ref{Section5.3}, we establish finite-dimensional convergence and complete the proofs of Theorems~\ref{Thm.Convergence} and~\ref{Thm.DeterminantalStructure}.

Throughout this section, we freely use the notation of Sections~\ref{Section3.2} and~\ref{Section4.1}.

%----------------------------------------------------------------------
%
% Section 5.1
%
%----------------------------------------------------------------------
\subsection{General setup}\label{Section5.1} In this section, we use the variables from Section~\ref{Section3.2} to construct random arrays $X^N$ to which we will apply the tightness criterion from Section~\ref{Section2.4}. Subsequently, in Section~\ref{Section5.2}, we show that they satisfy Assumptions B.1, B.2 and B.3.

We set $\alpha = 0$, $\beta = R/2$, $d_N = M_N$, $\mathcal{T}_N = \widehat{\mathcal{T}}_N \cap [0, R/2] = \{t_0^N, \dots, t_{M_N}^{N}\}$ with $t_k^N = t_{k,N} = \frac{Rk}{2M_N}$. In addition, we define
$$X^N=\left(X_i^{j,N}:i\geq 1,\ 0\leq j\leq d_N\right),$$
by setting $X_{i}^{j,N} = \widehat{X}_i^{j,N}$ as in (\ref{Eq.S3ScaledParticles}). 

From (\ref{Eq.S3ScaledParticles}), we readily observe (\ref{Eq.S24Decreasing}). In addition, if $M^N$ is as in (\ref{Eq.S24RandomMeasure}), then we have $M^N(A) = \widehat{M}^N\bigl(A \cap ([0,\infty) \times \mathbb{R}) \bigr)$. By Proposition~\ref{Prop.ScaledKernel} and \cite[Proposition 2.13(2)]{ED24a}, we conclude that $M^N$ is a determinantal point process on $\mathbb{R}^2$ with reference measure $\mu_{\mathcal{T}_N} \times \mu_{\Lambda_N}$ and correlation kernel 
\begin{equation}\label{Eq.S51KNDef}
\begin{split}
&K^N(s,x;t,y) = \widetilde{K}^N_{st}(x,y) - {\bf 1}\{s > t\} \phi_{ts}^N(y,x), \mbox{ for } x,y \in \Lambda_N \mbox{ and } s,t \in \mathcal{T}_N, \mbox{ where }\\
& \widetilde{K}^N_{st}(x,y) = \frac{1}{(2\pi\mathrm{i})^2}
\oint_{C_{r_z}}dz
\oint_{C_{r_w}}dw\,
\frac{G_{s,N}(z)}{G_{t,N}(w)}
\kappa_N(z,w)
z^{-\chi_N(x)-1}w^{\chi_N(y)-1},\\
&\phi_{uv}^N(y,x) = \frac{1}{2\pi\mathrm{i}}
\oint_{C_{1}}dw\frac{G_{v,N}(w)}{G_{u,N}(w)}
w^{\chi_N(y)-\chi_N(x)-1} \mbox{, for }x,y \in \Lambda_N, \hspace{2mm} u, v \in \mathcal{T}_N, \hspace{2mm} u < v.
\end{split} 
\end{equation}
In (\ref{Eq.S51KNDef}), we recall that $G_{s,N}$ is as in (\ref{Eq.GPochSymbol}), $\chi_N(x)=h_N^{-1}x+c_1N$, and $r_z,r_w$ are arbitrary subject to satisfying $r_z, r_w \in (\alpha_N,\alpha_N^{-1})$ and $1< r_z/r_w < \min\left(\alpha_N^{-1},u_N^{-1}\right)$. As usual, we extend $\phi_{uv}^N$, $\widetilde{K}^N_{st}$, and $K^N(s, \cdot; t,\cdot)$ to $\mathbb{R}^2$ by setting them to be equal to zero when $(x,y) \not \in \Lambda_N^2$.

\medskip

Our work so far verifies all the conditions for $X^N$ in Section~\ref{Section2.4}, up to Assumptions B.1, B.2 and B.3. We next derive alternative representations for $\phi_{uv}^N$ and $\widetilde{K}^N_{st}$, and establish several estimates that will be useful in verifying these assumptions.

Let $\mathsf{N}_1 \in \mathbb{N}$ be sufficiently large, depending on $b$ and $R$, so that for $N \geq \mathsf{N}_1$, we have 
\begin{equation}\label{Eq.S51Inequalities}
\alpha_N < e^{h_NR/8}, e^{-3h_NR/4} < \alpha_N^{-1}, \mbox{ and } 1<  e^{7h_NR/8} < e^{h_N R} \leq u_N^{-1} < \alpha_N^{-1}.
\end{equation}
Let us briefly explain why this choice of $\mathsf{N}_1$ is possible. Using $h_N = c_2^{-1}N^{-1/3} \rightarrow 0$ and $\alpha_N = be^{-1/(2N)} \rightarrow b \in (0,1)$, we get the first set of inequalities in (\ref{Eq.S51Inequalities}) for large $N$. The second set of inequalities is verified using $u_N^{-1} =  e^{\lceil c_2^{-1}R N^{2/3} \rceil/N} \geq e^{h_NR}$, and the limits $u_N \rightarrow 1$, $\alpha_N \rightarrow b$.

For $N \geq \mathsf{N}_1$, we take $r_z = e^{h_N R/8}$ and $r_w = e^{-3h_NR/4}$. As in the proof of Lemma~\ref{Lem.S4KernelAgreement}, we can change variables $z=e^{h_NZ}$, $w=e^{h_NW}$, and use (\ref{Eq.S41GInTermsOfPsi}) to conclude that, for $N \geq \mathsf{N}_1$, we have
\begin{equation}\label{Eq.S51DefPhiV2}
\phi_{uv}^N(y,x) = \frac{h_N}{2\pi\mathrm{i}}
\int_{\Gamma_{0,N}} e^{ \Psi_{v,N}(W)-\Psi_{u,N}(W)+(y-x)W}dW \mbox{, for }x,y \in \Lambda_N, \hspace{2mm} u, v \in \mathcal{T}_N, \hspace{2mm} u < v,
\end{equation}
and also that, for $x,y \in \Lambda_N$ and $s,t \in \mathcal{T}_N$,
\begin{equation}\label{Eq.S51DefKTildeV2}
\begin{split}
\widetilde{K}^N_{st}(x,y) = \frac{h_N^2}{(2\pi\mathrm{i})^2}
\int_{\Gamma_{R/8,N}}dZ
\int_{\Gamma_{-3R/4,N}}dW\,
e^{ \Psi_{s,N}(Z)-\Psi_{t,N}(W)-xZ+yW} \cdot \kappa_N(e^{h_NZ},e^{h_NW}).
\end{split}
\end{equation}
We recall that the contour $\Gamma_{a,N}$ was defined in (\ref{Eq.S4VerticalContour}).

The following result provides an exponential bound for the kernel $\widetilde{K}^N_{st}$. 
\begin{lemma}\label{Lem.S51KernelBounds}
Assume the same notation as in the beginning of this section. We can find $N_6 \in \mathbb{N}$ and $\mathsf{C}_6 > 0$, depending on $b$ and $R$, such that for $N \geq N_6$, $s,t \in \mathcal{T}_N$, and $x,y \in \Lambda_N$ we have
\begin{equation}\label{Eq.S51KernelBound}
 |\widetilde{K}^N_{st}(x,y)| \leq \mathsf{C}_{6}h_Ne^{-Rx/8 - 3Ry/4}.
\end{equation}
\end{lemma}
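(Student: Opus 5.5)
The plan is to start from the contour representation (\ref{Eq.S51DefKTildeV2}) and bound the integrand in absolute value, exactly as in the proof of Lemma~\ref{Lem.S4UpperTail}. On $\Gamma_{R/8,N}$ we write $Z = R/8 + \mathrm{i}y_1$, and on $\Gamma_{-3R/4,N}$ we write $W = -3R/4 + \mathrm{i}y_2$, with $y_1,y_2\in[-\pi/h_N,\pi/h_N]$. Since $x,y$ are real, this gives $|e^{-xZ+yW}| = e^{-Rx/8-3Ry/4}$. That is precisely the exponential factor in (\ref{Eq.S51KernelBound}), so what remains is to bound
\begin{equation*}
\int_{\Gamma_{R/8,N}}|dZ|\int_{\Gamma_{-3R/4,N}}|dW|\,\bigl|e^{\Psi_{s,N}(Z)-\Psi_{t,N}(W)}\bigr|\cdot h_N\bigl|\kappa_N(e^{h_NZ},e^{h_NW})\bigr|
\end{equation*}
uniformly in $s,t\in\mathcal T_N$ and $N$. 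Once that is done, the prefactor $h_N^2$ leaves one surplus factor $h_N$, as desired.

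For the $\Psi$ factors we use Lemma~\ref{Lem.S4DescentContours} with $\mathsf A_2 = R$. Because $s,t\in\mathcal T_N\subset[0,R/2]$:
\begin{itemize}
\item For $Z$, we have $\operatorname{Re}Z + s = R/8 + s \ge R/8$, so (\ref{Eq.S4AngularPositive}) with $\epsilon = R/8$ gives $|e^{\Psi_{s,N}(Z)}|\le \mathsf C_2 e^{-\mathsf c_2 y_1^2}$.
\item For $W$, we have $\operatorname{Re}W + t = -3R/4 + t\le -R/4\le -R/8$, so (\ref{Eq.S4AngularNegative}) gives $|e^{-\Psi_{t,N}(W)}|\le \mathsf C_2 e^{-\mathsf c_2 y_2^2}$.
\end{itemize}
This is exactly why the contours $R/8$ and $-3R/4$ were chosen: they stay on the correct sides of $-s$ and $-t$ uniformly over the half-interval $[0,R/2]$.

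For the $\kappa_N$ factor, $\operatorname{Re}(Z-W) = 7R/8$ is a fixed number in $(0,R)$. Lemma~\ref{Lem.S4KappaEstimates} with $\alpha_-=\alpha_+=7R/8$ therefore gives $h_N|\kappa_N(e^{h_NZ},e^{h_NW})|\le\mathsf C_3$ for $N\ge N_3$. The inequalities (\ref{Eq.S51Inequalities}) guarantee that the contours lie in the domain where $\Psi_{s,N}$ and $\Psi_{t,N}$ are defined.

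Combining these three bounds, the double integral is at most $\mathsf C_2^2\mathsf C_3\int_{\mathbb R^2}e^{-\mathsf c_2(y_1^2+y_2^2)}\,dy_1\,dy_2$, which is finite. We then take $N_6=\max(\mathsf N_1,N_2,N_3)$ and absorb the factor $(2\pi)^{-2}$ into $\mathsf C_6$.

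The only point needing care is uniformity. The constants in Lemma~\ref{Lem.S4DescentContours} depend only on $b,R,\mathsf A_2,\epsilon$, and we have fixed $\epsilon=R/8$ independently of $s,t$. Hence no case split is needed, unlike in Proposition~\ref{Prop.S4KernelLimit}, where the contours depended on $(s,t)$. I expect no real obstacle beyond checking these hypotheses.
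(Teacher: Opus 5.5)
Your proposal is correct and follows the paper's proof essentially verbatim. You put absolute values inside the double contour integral (\ref{Eq.S51DefKTildeV2}), extract $e^{-Rx/8-3Ry/4}$ from the linear terms, and bound the remaining factors via Lemma~\ref{Lem.S4DescentContours} with $\epsilon=R/8$ and Lemma~\ref{Lem.S4KappaEstimates}, which leaves one surplus factor of $h_N$. The only differences are immaterial parameter choices: the paper takes $\mathsf{A}_2=3R/4$ and $\alpha_-=R/8$, $\alpha_+=7R/8$, whereas your $\mathsf{A}_2=R$ and $\alpha_-=\alpha_+=7R/8$ are equally valid since $\operatorname{Re}(Z-W)=7R/8$ exactly on these contours.
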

\begin{proof} For all sufficiently large $N$, $s,t \in \mathcal{T}_N$, and $x, y \in \Lambda_N$, we have the following sequence of inequalities:
\begin{equation*}
\begin{split}
&\left|\widetilde{K}^N_{st}(x,y)\right| \leq \frac{h_N^2}{(2\pi)^2}
\int_{\Gamma_{R/8,N}}|dZ|
\int_{\Gamma_{-3R/4,N}}|dW|
\left|e^{ \Psi_{s,N}(Z)-\Psi_{t,N}(W)-xZ+yW}\right| \left|\kappa_N(e^{h_NZ},e^{h_NW})\right| \\
&\leq \frac{h_N}{(2\pi)^2}
\int_{\mathbb{R}} dy_1\int_{\mathbb{R}}dy_2\mathsf{C}_2^2e^{-\mathsf{c}_2 (y_1^2 + y_2^2)} \cdot e^{-Rx/8 - 3Ry/4} \cdot \mathsf{C}_3,
\end{split}
\end{equation*} 
which readily implies (\ref{Eq.S51KernelBound}). Indeed, the first inequality follows from the formula for $\widetilde{K}^N_{st}$ in (\ref{Eq.S51DefKTildeV2}) by putting absolute values inside the integral. Here, $|dZ|$ and $|dW|$ denote integration with respect to arc length. The second inequality follows from Lemma~\ref{Lem.S4DescentContours}, applied with $\mathsf{A}_2 = 3R/4$ and $\epsilon = R/8$, and Lemma~\ref{Lem.S4KappaEstimates}, applied with $\alpha_- = R/8$ and $\alpha_+ =  7R/8$. 
\end{proof}

The following lemma shows that $\phi_{uv}^N(x,y)$ from (\ref{Eq.S51KNDef}) can be viewed as the transition probability of a time-inhomogeneous random walk. This interpretation reveals the probabilistic origin of the contraction identities in Assumption~A.1, and makes their verification in Section~\ref{Section5.2} especially transparent.  
\begin{lemma}\label{Lem.S51ProbabilityLemma}
Assume the same notation as in the beginning of this section. Fix $N \in \mathbb{N}$, $u, v \in \mathcal{T}_N$ with $u=t_{k,N}<v=t_{\ell,N}$, and let $Z_{j,N}$, $k\leq j<\ell$, be independent geometric random variables with
\begin{equation}\label{Eq.S51DefZ}
\mathbb{P}(Z_{j,N}=m)=(1-p_{j,N})p_{j,N}^m, \hspace{2mm} m\in\mathbb{Z}_{\geq0},\mbox{ where }
p_{j,N}=bq_N^{j+1/2}.
\end{equation}
Recalling $m_{j,N}$ from (\ref{Eq.DefS3Recalled}), we define the random variable
\begin{equation}\label{Eq.S51DDefinition}
D_{k,\ell,N}=\sum_{j=k}^{\ell-1}Z_{j,N} + m_{k,N}-m_{\ell,N}.
\end{equation}
Then, for all $w \in \mathbb{C}$ with $|w| \in (\alpha_N, \alpha_N^{-1})$, we have
\begin{equation}\label{Eq.S51MGF}
\mathbb{E}\left[|w|^{-D_{k,\ell,N}}\right] < \infty \mbox{ and }\mathbb{E}\left[w^{-D_{k,\ell,N}}\right] = \frac{G_{v,N}(w)}{G_{u,N}(w)}.
\end{equation}
Moreover, for $x,y \in \Lambda_N$, we have
\begin{equation}\label{Eq.S51PhiProbability}
\phi_{uv}^N(y,x) = \mathbb{P}\left(h_ND_{k,\ell,N}=y-x\right),
\end{equation}
and so in particular $\phi_{uv}^N(y,x) = 0$ whenever $y-x < h_N(m_{k,N} - m_{\ell,N})$, and
\begin{equation}\label{Eq.S51PhiProbabilityIneq}
0\leq \phi_{uv}^N(y,x) \leq 1, \hspace{2mm} \sum_{y' \in \Lambda_N}  \phi_{uv}^N(y',x) = \sum_{x' \in \Lambda_N}  \phi_{uv}^N(y,x') = 1.
\end{equation}
Lastly, for all $\zeta \in \mathbb{C}$ with $|\zeta| \in (\alpha_N, \alpha_N^{-1})$, we have
\begin{equation}\label{Eq.S51DiscreteContraction}
\sum_{y\in\Lambda_N}\phi^N_{uv}(y,x)\zeta^{-\chi_N(y)} =\zeta^{-\chi_N(x)}\frac{G_{v,N}(\zeta)}{G_{u,N}(\zeta)} \mbox{ and } \sum_{x\in\Lambda_N}\phi^N_{uv}(y,x)\zeta^{\chi_N(x)}=\zeta^{\chi_N(y)}\frac{G_{v,N}(\zeta)}{G_{u,N}(\zeta)},
\end{equation}
where the above sums are absolutely convergent.
\end{lemma}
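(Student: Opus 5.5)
The plan is to establish the generating-function identity (\ref{Eq.S51MGF}) first, read off (\ref{Eq.S51PhiProbability}) by coefficient extraction, and then obtain the remaining claims as consequences.

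\textbf{Generating function.} For a geometric variable $Z$ with parameter $p$ we have $\mathbb{E}[w^{-Z}]=(1-p)/(1-p/w)$, which converges absolutely whenever $|w|>p$. Since $p_{j,N}=bq_N^{j+1/2}\le \alpha_N$ for $j\ge 0$ (here $k\ge 0$ because $u\in\mathcal T_N\subset[0,R/2]$), every factor converges absolutely for $|w|>\alpha_N$. Independence then gives
$$\mathbb{E}\left[w^{-D_{k,\ell,N}}\right]=w^{m_{\ell,N}-m_{k,N}}\prod_{j=k}^{\ell-1}\frac{1-bq_N^{j+1/2}}{1-bq_N^{j+1/2}/w}.$$
For $k\ge0$ we compare this with the second case of (\ref{Eq.GPochSymbol}). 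In the ratio $G_{v,N}(w)/G_{u,N}(w)$, the factors $(bq_N^{1/2};q_N)_\infty/(bq_N^{1/2}w;q_N)_\infty$ cancel, leaving
$$w^{m_{\ell,N}-m_{k,N}}\frac{(bq_N^{\ell+1/2}/w;q_N)_\infty (bq_N^{k+1/2};q_N)_\infty}{(bq_N^{\ell+1/2};q_N)_\infty (bq_N^{k+1/2}/w;q_N)_\infty}.$$
Telescoping the $q$-Pochhammer ratios shows this equals the finite product above. Absolute finiteness follows from the same computation with $w$ replaced by $|w|$, since all terms are nonnegative.

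\textbf{Identification of $\phi^N_{uv}$.} Substitute (\ref{Eq.S51MGF}) into the contour formula (\ref{Eq.S51KNDef}). Since $D=D_{k,\ell,N}$ is integer-valued and the series $\sum_d \mathbb{P}(D=d)w^{-d}$ converges absolutely on $|w|=1$, we may interchange sum and integral. With $n=\chi_N(y)-\chi_N(x)=h_N^{-1}(y-x)$, we get
$$\frac{1}{2\pi\mathrm{i}}\oint_{C_1}\sum_d \mathbb{P}(D=d)\,w^{n-d-1}\,dw=\mathbb{P}(D=n)=\mathbb{P}\left(h_ND=y-x\right).$$
The support claim follows immediately, since $D\ge m_{k,N}-m_{\ell,N}$. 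The bounds in (\ref{Eq.S51PhiProbabilityIneq}) hold because, for fixed $x$ (respectively fixed $y$), the map $y\mapsto y-x$ (respectively $x\mapsto y-x$) is a bijection of $\Lambda_N$ onto $h_N\mathbb{Z}$. Each sum is therefore a sum of probabilities over all values of $h_ND$, which equals $1$.

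\textbf{Contraction identities.} For (\ref{Eq.S51DiscreteContraction}), write $\chi_N(y)=\chi_N(x)+d$ and sum over $d$. This gives
$$\zeta^{-\chi_N(x)}\sum_d \mathbb{P}(D=d)\zeta^{-d}=\zeta^{-\chi_N(x)}\,\mathbb{E}[\zeta^{-D}],$$
which equals the claimed expression by (\ref{Eq.S51MGF}). Absolute convergence follows from the finiteness of $\mathbb{E}[|\zeta|^{-D}]$. The second identity is the same computation with $\chi_N(x)=\chi_N(y)-d$.

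\textbf{Main obstacle.} The only real care needed is the convergence domain. For $|\zeta|<1$ we need $\mathbb{E}[|\zeta|^{-D}]<\infty$, and this holds exactly when $|\zeta|>\max_j p_{j,N}$. That condition is guaranteed because $p_{j,N}\le\alpha_N$ for $j\ge0$; for $|\zeta|\ge1$ finiteness is trivial since $D$ is bounded below. The other point to watch is matching the correct case of (\ref{Eq.GPochSymbol}), including the floor shifts $m_{k,N}$, which enter only as the deterministic shift in $D_{k,\ell,N}$.
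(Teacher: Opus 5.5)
Your proposal is correct and follows the paper's proof step for step. Like the paper, you compute $\mathbb{E}[w^{-D_{k,\ell,N}}]$ factor by factor using $p_{j,N}\le\alpha_N$ and match it with the $k\ge0$ case of (\ref{Eq.GPochSymbol}); you then identify $\phi^N_{uv}$ by exchanging the sum and the $C_1$ contour integral, and obtain the normalization and contraction identities by reindexing over $h_N\mathbb{Z}$, with your telescoping and bijection arguments just spelling out steps the paper states more briefly.
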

\begin{proof} Notice that 
$$\mathbb{E}\left[w^{-Z_{j,N}}\right] = \sum_{m \geq 0} (1 - p_{j,N})p_{j,N}^m w^{-m} = \frac{1-p_{j,N}}{1 - p_{j,N}/w},$$
where the series is absolutely convergent for $|w| > p_{j,N}$. Using that $p_{j,N} = bq_N^{j+1/2} \leq bq_N^{1/2} = \alpha_N$, we see from the last identity that for $w \in \mathbb{C}$ with $|w| \in (\alpha_N, \alpha_N^{-1})$, we have
\begin{equation}\label{Eq.S51MGFSingle}
\mathbb{E}\left[|w|^{-Z_{j,N}}\right] = \frac{1-p_{j,N}}{1 - p_{j,N}/|w|} < \infty \mbox{ and } \mathbb{E}\left[w^{-Z_{j,N}}\right] = \frac{1-p_{j,N}}{1 - p_{j,N}/w}.
\end{equation}
Equation (\ref{Eq.S51MGF}) follows from the definition of $D_{k,\ell,N}$ in (\ref{Eq.S51DDefinition}), the independence of $Z_{j,N}$, (\ref{Eq.S51MGFSingle}) and the formula for $G_{s,N}$ in (\ref{Eq.GPochSymbol}).

\medskip

By Fubini's theorem, we have for any $n \in \mathbb{Z}$ that
\begin{equation*}
\begin{split}
&\mathbb{P}\left(D_{k,\ell,N} = n \right) = \sum_{m \in \mathbb{Z}} \frac{1}{2\pi \im} \oint_{C_1}w^{n-m-1} \mathbb{P}\left(D_{k,\ell,N} = m\right)dw \\
&=  \frac{1}{2\pi \im} \oint_{C_1} \sum_{m \in \mathbb{Z}} w^{n-m-1} \mathbb{P}\left(D_{k,\ell,N} = m\right)dw = \frac{1}{2\pi \im} \oint_{C_1} w^{n-1}\mathbb{E}\left[w^{-D_{k,\ell,N}}\right]dw.
\end{split}
\end{equation*}
Combining the latter with (\ref{Eq.S51MGF}), we conclude
\begin{equation*}
\begin{split}
&\mathbb{P}\left(h_ND_{k,\ell,N}=y-x\right) =  \frac{1}{2\pi \im} \oint_{C_1} w^{h_N^{-1}(y-x)-1}\frac{G_{v,N}(w)}{G_{u,N}(w)}dw = \phi_{uv}^N(y,x),
\end{split}
\end{equation*}
where the last equality used the formula for $\phi_{uv}^{N}(y,x)$ from (\ref{Eq.S51KNDef}).

The last equation implies (\ref{Eq.S51PhiProbability}), (\ref{Eq.S51PhiProbabilityIneq}) follows from the axioms of probability, and the statement that $\phi_{uv}^N(y,x) = 0$ whenever $y-x < h_N(m_{k,N} - m_{\ell,N})$ follows from the fact that $D_{k,\ell,N} \geq m_{k,N} - m_{\ell,N}$ almost surely.

\medskip

We finally turn to (\ref{Eq.S51DiscreteContraction}). Recalling that $\chi_N(x) = h_N^{-1} x + c_1 N$ and using (\ref{Eq.S51PhiProbability}), we have
\begin{equation*}
\sum_{y\in\Lambda_N}\phi^N_{uv}(y,x)\zeta^{\chi_N(x)-\chi_N(y)} = \sum_{y \in \Lambda_N}\mathbb{P}\left(h_ND_{k,\ell,N}=y-x\right) \zeta^{h_N^{-1}(x-y)} = \mathbb{E}\left[\zeta^{-D_{k,\ell,N}}\right].
\end{equation*}
Since $\mathbb{E}\left[|\zeta|^{-D_{k,\ell,N}}\right] < \infty$ by (\ref{Eq.S51MGF}), the above series is absolutely convergent. Multiplying the last equation by $\zeta^{-\chi_N(x)}$ and using (\ref{Eq.S51MGF}), we obtain the first equality in (\ref{Eq.S51DiscreteContraction}). 

Similarly, using (\ref{Eq.S51PhiProbability}), we have
\begin{equation*}
\sum_{x\in\Lambda_N}\phi^N_{uv}(y,x)\zeta^{\chi_N(x)-\chi_N(y)} = \sum_{x \in \Lambda_N}\mathbb{P}\left(h_ND_{k,\ell,N}=y-x\right) \zeta^{h_N^{-1}(x-y)} = \mathbb{E}\left[\zeta^{-D_{k,\ell,N}}\right].
\end{equation*}
As above, this series is absolutely convergent by (\ref{Eq.S51MGF}). Multiplying the last equation by $\zeta^{\chi_N(y)}$ and using (\ref{Eq.S51MGF}), we obtain the second equality in (\ref{Eq.S51DiscreteContraction}). 
\end{proof}

The last result of this section provides bounds for certain sums involving $\phi_{uv}^N(x,y)$, see (\ref{Eq.S51Moments}), which will play a key role in verifying Assumption B.2 in Section~\ref{Section5.2}. By utilizing the probabilistic interpretation of $\phi_{uv}^N(x,y)$ from Lemma~\ref{Lem.S51ProbabilityLemma}, the inequalities in (\ref{Eq.S51Moments}) can be seen as moment bounds for the increments of the random walks that were introduced in Lemma~\ref{Lem.S51ProbabilityLemma}.  
\begin{lemma}\label{Lem.S51MomentInputs}
Assume the same notation as in the beginning of this section. For any $Q > 0$, we can find $N_7 \in \mathbb{N}$ and $\mathsf{C}_7 > 0$, depending on $b$, $R$, and $Q$, such that for $N \geq N_7$, $u,v \in \mathcal{T}_N$ with $u < v$, $x\in\Lambda_N$, and $r \in \llbracket 0, 4 \rrbracket$, we have
\begin{equation}\label{Eq.S51Moments}
\sum_{y\in\Lambda_N}\left|\phi^N_{uv}(y,x)\right| |y-x|^re^{Q|y-x|}
\leq \mathsf{C}_{7}(v-u)^{r/2}.
\end{equation}
\end{lemma}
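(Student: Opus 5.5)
By Lemma~\ref{Lem.S51ProbabilityLemma}, $\phi^N_{uv}(y,x)=\mathbb{P}(h_ND_{k,\ell,N}=y-x)\ge 0$, where $u=t_{k,N}$ and $v=t_{\ell,N}$. Hence the left side of (\ref{Eq.S51Moments}) equals
$$\mathbb{E}\bigl[|Y|^r e^{Q|Y|}\bigr],\qquad Y:=h_ND_{k,\ell,N}.$$
So the task is a uniform exponential-moment bound for the scaled random walk increment $Y$, with the gain $(v-u)^{r/2}$. The plan is to show that $Y$ has mean $O(v-u)$ and sub-Gaussian-type tails at scale $\sqrt{v-u}$.

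\textbf{Step 1: moment generating function.} For real $\theta$ with $|\theta|\le \theta_0$ fixed, use (\ref{Eq.S51MGF}) with $w=e^{-h_N\theta}$; this is admissible for large $N$ since $h_N\to0$ and $\alpha_N\to b<1$. It gives
$$\mathbb{E}[e^{\theta Y}]=\frac{G_{v,N}(e^{-h_N\theta})}{G_{u,N}(e^{-h_N\theta})}=\exp\bigl(\Psi_{v,N}(-\theta)-\Psi_{u,N}(-\theta)\bigr),$$
where the second equality is (\ref{Eq.S41GInTermsOfPsi}); the $c_1N$ terms cancel. From (\ref{Eq.S41PsiSinh}), the difference $\Psi_{v,N}(Z)-\Psi_{u,N}(Z)$ is an explicit series. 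Expanding $B_{m,\ell,N}-B_{m,k,N}$ in powers of $m h_N Z$, as in (\ref{Eq.ExpandBrackets}) but now keeping track of the factor $(\ell-k)/N\asymp h_N(v-u)$, I expect the following, which is consistent with $P_v-P_u=(v-u)Z^2+(v^2-u^2)Z$:
$$\Psi_{v,N}(Z)-\Psi_{u,N}(Z)=(v-u)Z^2+\gamma_N Z+O\bigl((v-u)h_N|Z|^3\bigr)+(\text{rounding}).$$
Here $\gamma_N=O(v-u)$ collects the linear terms. The rounding comes from $m_{k,N}-m_{\ell,N}$ versus $A_{k,N}-A_{\ell,N}$ and contributes $O(h_N)|Z|$ to the exponent.

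\textbf{Step 2: from the MGF to the moments.} Apply the bound at $\theta=\pm 2Q$ (fixing $\theta_0=2Q$) to get $\mathbb{E}[e^{2Q|Y|}]\le C$ uniformly. Then Cauchy--Schwarz reduces (\ref{Eq.S51Moments}) to the polynomial moment bound
$$\mathbb{E}\bigl[|Y|^{2r}\bigr]\le C(v-u)^{r}.$$
These polynomial moments I would obtain by centering. The cumulants of $Y$ are the Taylor coefficients of $\Psi_{v,N}(-\theta)-\Psi_{u,N}(-\theta)$ at $0$: the mean is $O(v-u)$ plus the rounding, the variance is $\asymp v-u$, and the $j$-th cumulant is $O((v-u)h_N^{j-2})$. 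Since $Y$ is a sum of independent geometric variables times $h_N$, a Rosenthal-type bound (or the usual moment--cumulant formula) gives $\mathbb{E}|Y-\mathbb{E}Y|^{2r}\le C\bigl((v-u)^r+(v-u)h_N^{2r-2}\bigr)$. Because $v-u\ge R/(2M_N)\asymp h_N^2$, we have $h_N^2\lesssim v-u$, so this is $\le C(v-u)^r$.

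\textbf{Main obstacle.} The delicate point is the lattice rounding in $m_{k,N}-m_{\ell,N}$, which adds a deterministic shift of size up to $h_N$ to $Y$. This shift is harmless only because $h_N\lesssim\sqrt{v-u}$ on the mesh, so it must be tracked rather than absorbed into an $O(1)$ error; at the smallest spacings $v-u\asymp h_N^2$ it is of exactly the critical size. The other point requiring care is justifying the cumulant expansion uniformly in $k$, which means controlling $\sum_m b^m m^j/\sinh(m/2N)$ as in (\ref{Eq.ErrorApproximation}).
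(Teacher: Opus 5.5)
Your proposal is correct and follows essentially the same route as the paper: rewrite the sum as $\mathbb{E}[|Y|^re^{Q|Y|}]$ with $Y=h_ND_{k,\ell,N}$, bound $|\mathbb{E}Y|\le C\sqrt{v-u}$ using $h_N^2\le C(v-u)$ to absorb the rounding, bound the centered $2r$-th moments by $C(v-u)^r$, show $\mathbb{E}[e^{2Q|Y|}]\le C$ uniformly, and conclude by Cauchy--Schwarz. The only difference is in how the inputs are computed: the paper obtains the mean, the centered moments (by independence and centering) and the exponential moment (by a per-increment sub-Gaussian bound) directly from the geometric increments $Z_{j,N}$, which spares you the series expansion of $\Psi_{v,N}-\Psi_{u,N}$; the Taylor coefficients of that expansion are in any case just the summed cumulants of those increments.
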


\begin{proof} Fix $Q>0$. Throughout the proof, we write $C,c$ for generic positive constants that depend on $b$, $R$, and $Q$, and whose values may change from line to line. Likewise, all constants in the $O$ notations may depend on $b$, $R$, and $Q$. Finally, for two sequences $\{a_N\}$ and $\{b_N\}$, we write $a_N\asymp b_N$ if there exist $C,c > 0$ such that $ca_N\leq b_N\leq Ca_N$. We do not mention this further.

Let $k, \ell \in \llbracket 0, M_N \rrbracket$ be such that $u=t_{k,N}<v=t_{\ell,N}$, and define $Z_{j,N}$ for $k \leq j < \ell$ and $D_{k,\ell,N}$ as in Lemma~\ref{Lem.S51ProbabilityLemma}. From (\ref{Eq.S51PhiProbability}), we have
\begin{equation}\label{Eq.S51MomentsP1}
\sum_{y\in\Lambda_N}\left|\phi^N_{uv}(y,x)\right| |y-x|^re^{Q|y-x|} = \mathbb{E}\left[|h_ND_{k,\ell,N}|^r
e^{Qh_N|D_{k,\ell,N}|}\right],
\end{equation}
and so it suffices to bound the right-hand side of (\ref{Eq.S51MomentsP1}).

\medskip

From (\ref{Eq.DefS1Recalled}) and (\ref{Eq.DefS3Recalled}), we have $h_N \asymp N^{-1/3}$, $(t_{\ell,N}- t_{k,N}) \asymp (\ell - k)N^{-2/3}$, and also
\begin{equation}\label{Eq.S51DeltaComparison}
(v-u) \asymp h_N^2 (\ell -k), \quad h_N^2 \leq C(v-u), \quad u,v \in [0, R/2], \quad  0 \leq \ell, k \leq C N^{2/3}.
\end{equation}
In addition, from (\ref{Eq.DefS1Recalled}), we have $q_N=e^{-1/N}$ and $M_N \asymp N^{2/3}$, which together with (\ref{Eq.S51DefZ}) show for all $j \in \llbracket 0, M_N \rrbracket$ that 
\begin{equation*}
\mathbb{E}\left[Z_{j,N}\right]=\frac{p_{j,N}}{1-p_{j,N}} = \frac{b}{1-b} + O(N^{-1/3}).
\end{equation*}
Consequently, using that $m_{j,N} = \frac{bj}{1-b}-\frac{bj^2}{4(1-b)^2N} + O(1)$ for $j \in \{k, \ell\}$ from (\ref{Eq.DefS3Recalled}), and (\ref{Eq.S51DDefinition}), we conclude
\begin{equation*}
\mathbb{E}\left[D_{k,\ell,N}\right]=\sum_{j = k}^{\ell-1} \mathbb{E}\left[Z_{j,N}\right] + m_{k,N} - m_{\ell,N} = O\left(1 + \frac{\ell-k}{N^{1/3}} + \frac{\ell^2 - k^2}{N} \right).
\end{equation*}
The last equation and the inequalities $h^2_N \leq C(v-u)$, $\ell^2 - k^2 \leq C(\ell-k)N^{2/3}$, and $h_N(\ell-k)N^{-1/3} \asymp (v-u) \leq C\sqrt{v-u}$, which follow from (\ref{Eq.S51DeltaComparison}), together imply
\begin{equation}\label{Eq.S51MeanBound}
h_N\left|\mathbb{E}\left[D_{k,\ell,N}\right]\right| \leq C \sqrt{v -u}.
\end{equation}

Since $p_{j,N}\leq b<1$, the geometric random variables in (\ref{Eq.S51DefZ}) have uniformly bounded moments of every fixed order. Consequently, independence and centering give
$$
\mathbb{E}\left[\left(D_{k,\ell,N}-\mathbb{E}\left[D_{k,\ell,N}\right]\right)^{2r}\right] \leq C(\ell-k)^r, \quad 0 \leq r \leq 4.
$$
Combining the latter with $h_N^2(\ell - k) \asymp (v-u)$ from (\ref{Eq.S51DeltaComparison}), (\ref{Eq.S51MeanBound}), and the trivial inequality $(A+B)^{2r} \leq 2^{2r}A^{2r} + 2^{2r}B^{2r}$, we get
\begin{equation}\label{Eq.S51MomentBound}
\mathbb{E}\left[|h_ND_{k,\ell,N}|^{2r}\right]\leq C(v-u)^r,
\quad 0\leq r\leq4.
\end{equation}

Moreover, since $p_{j,N}\leq b<1$, the explicit geometric moment generating function in (\ref{Eq.S51MGFSingle}) implies  
$$
\log\mathbb{E}\left[\exp\left(\theta\left(Z_{j,N}-\mathbb{E}\left[Z_{j,N}\right]\right)\right)\right]\leq C\theta^2
$$
for all $j\in\llbracket 0,M_N\rrbracket$ and $|\theta| \leq -(1/2) \log(b)$. Increasing $N$ if necessary depending on $Q$ so that $2Qh_N \leq -(1/2) \log(b)$ and using the last inequality and independence, we conclude
\begin{equation*}
\begin{split}
\mathbb{E}\left[\exp\left(\pm 2Qh_N\left(D_{k,\ell,N}-\mathbb{E}\left[D_{k,\ell,N}\right]\right)\right)\right]
&\leq \exp\left(C(\ell-k)h_N^2\right).
\end{split}
\end{equation*}
From the last inequality, (\ref{Eq.S51DeltaComparison}), and the trivial inequality $e^{|x|}\leq e^x+e^{-x}$, it follows that
$$
\mathbb{E}\left[\exp\left(2Qh_N\left|D_{k,\ell,N}-\mathbb{E}\left[D_{k,\ell,N}\right]\right|\right)\right]\leq C.
$$
Combining this with (\ref{Eq.S51DeltaComparison}) and (\ref{Eq.S51MeanBound}) gives
\begin{equation}\label{Eq.S51ExponentialMoment}
\mathbb{E}\left[\exp\left(2Qh_N|D_{k,\ell,N}|\right)\right]\leq C.
\end{equation}

Finally, by Cauchy--Schwarz, (\ref{Eq.S51MomentBound}), and (\ref{Eq.S51ExponentialMoment}), we obtain
\begin{equation*}
\begin{split}
\mathbb{E}\left[|h_ND_{k,\ell,N}|^r e^{Qh_N|D_{k,\ell,N}|}\right]\leq
\mathbb{E}\left[|h_ND_{k,\ell,N}|^{2r}\right]^{1/2}
\mathbb{E}\left[e^{2Qh_N|D_{k,\ell,N}|}\right]^{1/2} \leq C(v-u)^{r/2}.
\end{split}
\end{equation*}
Together with (\ref{Eq.S51MomentsP1}), this proves (\ref{Eq.S51Moments}).
\end{proof}

%----------------------------------------------------------------------
%
% Section 5.2
%
%----------------------------------------------------------------------
\subsection{Verification of the tightness criterion}\label{Section5.2}
The goal of this section is to use the results of Sections~\ref{Section4} and~\ref{Section5.1} to verify that the random arrays $X^N$ introduced in Section~\ref{Section5.1} satisfy Assumptions B.1, B.2, and B.3. Throughout this section, when we say that a statement holds for all sufficiently large $N$, we mean sufficiently large depending only on $b$ and $R$.

%----------------------------------------------------------------------
%
% Section 5.2.1
%
%----------------------------------------------------------------------
\subsubsection{Verifying Assumption B.1}\label{Section5.2.1} Fix $u, v \in \mathcal{T}_N$ with $u < v$, and let $\phi_{uv}^N$ and $\widetilde{K}^N_{st}$ for $s,t \in \{u,v\}$ be as in (\ref{Eq.S51KNDef}).

From (\ref{Eq.S51KNDef}) we know that $\phi_{uv}^N(x,y)$ and $\widetilde{K}^N_{st}(x,y)$ are well-defined and finite for $x,y \in \Lambda_N$, and equal to zero when $\{x,y\} \not \subset \Lambda_N$. This shows that $\phi_{uv}^N$ and $\widetilde{K}^N_{st}$ are locally bounded and measurable, confirming the first condition in Assumption A.1. The required kernel decomposition in (\ref{Eq.FourthMomentKernelDecomposition}) follows from the first line in (\ref{Eq.S51KNDef}).

We next have from (\ref{Eq.S51KNDef}) and the second identity in (\ref{Eq.S51DiscreteContraction}), with $x = y'$ and $\zeta = w$, that
\begin{equation*}
\begin{split}
&\sum_{y' \in \Lambda_N} \widetilde{K}^N_{uv}(x,y')\phi^N_{uv}(y,y') \\
& = \frac{1}{(2\pi\mathrm{i})^2}
\oint_{C_{r_z}}dz
\oint_{C_{r_w}}dw
\sum_{y' \in \Lambda_N}\frac{G_{u,N}(z)}{G_{v,N}(w)}
\kappa_N(z,w)
z^{-\chi_N(x)-1}w^{\chi_N(y')-1}\phi_{uv}^N(y,y')\\
& = \frac{1}{(2\pi\mathrm{i})^2}
\oint_{C_{r_z}}dz
\oint_{C_{r_w}}dw
\frac{G_{u,N}(z)}{G_{u,N}(w)}
\kappa_N(z,w)
z^{-\chi_N(x)-1}w^{\chi_N(y)-1} = \widetilde{K}^N_{uu}(x,y),
\end{split}
\end{equation*}
verifying (\ref{Eq.FourthMomentContractionUU}) for $x,y \in \Lambda_N$. We mention that the absolute convergence in (\ref{Eq.S51DiscreteContraction}) and the fact that $r_w \in (\alpha_N, \alpha_N^{-1})$ by assumption show that the above series is absolutely convergent and justify the exchange of the order of the sum and the integrals. We also mention that when $\{x,y\} \not \subset \Lambda_N$, (\ref{Eq.FourthMomentContractionUU}) holds as both sides equal zero by our convention $\phi_{uv}^N(x,y) = \widetilde{K}^N_{st}(x,y) = 0$.

Similarly, we have from (\ref{Eq.S51KNDef}) and the first identity in (\ref{Eq.S51DiscreteContraction}), with $y = y'$ and $\zeta = z$, that
\begin{equation*}
\begin{split}
&\sum_{y' \in \Lambda_N}\phi^N_{uv}(y',x)\widetilde{K}^N_{uv}(y',y) \\
& = \frac{1}{(2\pi\mathrm{i})^2}
\oint_{C_{r_z}}dz
\oint_{C_{r_w}}dw
\sum_{y' \in \Lambda_N}\frac{G_{u,N}(z)}{G_{v,N}(w)}
\kappa_N(z,w)
z^{-\chi_N(y')-1}w^{\chi_N(y)-1}\phi_{uv}^N(y',x)\\
& = \frac{1}{(2\pi\mathrm{i})^2}
\oint_{C_{r_z}}dz
\oint_{C_{r_w}}dw
\frac{G_{v,N}(z)}{G_{v,N}(w)}
\kappa_N(z,w)
z^{-\chi_N(x)-1}w^{\chi_N(y)-1} = \widetilde{K}^N_{vv}(x,y),
\end{split}
\end{equation*}
verifying (\ref{Eq.FourthMomentContractionVV}) for $x,y \in \Lambda_N$. As before, the absolute convergence in (\ref{Eq.S51DiscreteContraction}) and the fact that $r_z \in (\alpha_N, \alpha_N^{-1})$ by assumption show that the above series is absolutely convergent and justify the exchange of the order of the sum and the integrals. When $\{x,y\} \not \subset \Lambda_N$, (\ref{Eq.FourthMomentContractionVV}) holds as both sides equal zero by our convention $\phi_{uv}^N(x,y) = \widetilde{K}^N_{st}(x,y) = 0$.

Lastly, we have from (\ref{Eq.S51KNDef}), the first identity in (\ref{Eq.S51DiscreteContraction}), with $y = x'$ and $\zeta = z$, and the second identity in (\ref{Eq.S51DiscreteContraction}), with $x = y'$ and $\zeta = w$, that
\begin{equation*}
\begin{split}
&\sum_{x',y' \in \Lambda_N} \phi^N_{uv}(x',x)\widetilde K^N_{uv}(x',y')\phi^N_{uv}(y,y')\\
& = \frac{1}{(2\pi\mathrm{i})^2}
\oint_{C_{r_z}}dz
\oint_{C_{r_w}}dw
\sum_{x',y' \in \Lambda_N}\frac{G_{u,N}(z)}{G_{v,N}(w)}
\kappa_N(z,w)
z^{-\chi_N(x')-1}w^{\chi_N(y')-1}\phi_{uv}^N(x',x)\phi_{uv}^N(y,y')\\
& = \frac{1}{(2\pi\mathrm{i})^2}
\oint_{C_{r_z}}dz
\oint_{C_{r_w}}dw
\frac{G_{v,N}(z)}{G_{u,N}(w)}
\kappa_N(z,w)
z^{-\chi_N(x)-1}w^{\chi_N(y)-1} = \widetilde{K}^N_{vu}(x,y),
\end{split}
\end{equation*}
verifying (\ref{Eq.FourthMomentContractionVU}) for $x,y \in \Lambda_N$. As before, the absolute convergence in (\ref{Eq.S51DiscreteContraction}) and the fact that $r_w, r_z \in (\alpha_N, \alpha_N^{-1})$ by assumption show that the above series is absolutely convergent and justify the exchange of the order of the sum and the integrals. When $\{x,y\} \not \subset \Lambda_N$, (\ref{Eq.FourthMomentContractionVU}) holds as both sides equal zero by our convention $\phi_{uv}^N(x,y) = \widetilde{K}^N_{st}(x,y) = 0$.

The work so far shows that $\phi_{uv}^N$ and $\widetilde{K}^N_{st}$ for $s,t \in \{u,v\}$ satisfy Assumption A.1.

\medskip

We next verify the conditions in Assumption A.2. Fix $f\in\operatorname{Lip}_{b,-}(\mathbb{R})$, $m \in \llbracket 1, 4\rrbracket$, positive integers $k_1, \dots, k_m$ with $k_1 + \cdots + k_m = 4$, and a permutation $\sigma \in \mathfrak{S}_m$. Let $\rho \in \mathbb{R}$ be such that $f(y) = 0$ for $y \leq \rho$. We observe the following tower of inequalities for all large $N$:
\begin{equation*}
\begin{split}
&\sum_{x_1, \dots, x_m, z_1, \dots, z_m \in \Lambda_N}
\prod_{i=1}^m
\Bigl[
|\phi^N_{uv}(z_i,x_i)|
\bigl(|f(z_i)|+|f(x_i)|\bigr)^{k_i}
|\widetilde K^N_{uv}(z_i,x_{\sigma(i)})|
\Bigr]\\
&\leq \mathsf{C}_6^m\prod_{i = 1}^m\left(h_N\sum_{x, z \in \Lambda_N}
|\phi^N_{uv}(z,x)|
\bigl(|f(z)|+|f(x)|\bigr)^{k_i}e^{-\frac{Rz}{8} - \frac{3Rx}{4}}\right)\\
&\leq 2^4\|f\|_{\infty}^4\mathsf{C}_6^m\prod_{i = 1}^m\left(h_N\sum_{x,z \in \Lambda_N}|\phi^N_{uv}(z,x)|
\bigl({\bf 1}\{x > \rho\} + {\bf 1}\{z > \rho\}\bigr)e^{\frac{R|z-x|}{8}}e^{-\frac{7Rx}{8}}\right)\\
& \leq  2^4\|f\|_{\infty}^4\mathsf{C}_6^m\prod_{i = 1}^m\left(h_N\sum_{x \in \Lambda_N}e^{-\frac{7Rx}{8}} 
\bigl(2\cdot {\bf 1}\{x > \rho\} + e^{-\frac{15R(\rho-x)}{8}}{\bf 1}\{x \leq \rho\}\bigr) \sum_{z \in \Lambda_N} |\phi^N_{uv}(z,x)|e^{2R|z-x|}\right) \\
& \leq 2^4\|f\|_{\infty}^4\mathsf{C}_6^m\mathsf{C}_7^m\prod_{i = 1}^m\left(h_N\sum_{x \in \Lambda_N}e^{-\frac{7Rx}{8}} 
\bigl(2\cdot {\bf 1}\{x > \rho\} + e^{-\frac{15R(\rho-x)}{8}}{\bf 1}\{x \leq \rho\}\bigr)\right) \\
&  = 2^4\|f\|_{\infty}^4\mathsf{C}_6^m\mathsf{C}_7^m\prod_{i=1}^m
\left(2h_N \sum_{x\in\Lambda_N: x>\rho} e^{-\frac{7Rx}{8}} +h_Ne^{-\frac{15R\rho}{8}}\sum_{x\in\Lambda_N:x\leq\rho}e^{Rx}\right) <\infty.
\end{split}
\end{equation*}
Let us elaborate on the above inequalities briefly. The first inequality follows from Lemma~\ref{Lem.S51KernelBounds} with $\mathsf{C}_6$ as in that lemma. The second inequality follows from the fact that $k_1 + \cdots + k_m = 4$, $f(y) = 0$ for $y \leq \rho$, and the trivial inequalities
$$\bigl(|f(z)|+|f(x)|\bigr)^{k_i} \leq 2^{k_i}\|f\|_{\infty}^{k_i}\bigl({\bf 1}\{x > \rho\} + {\bf 1}\{z > \rho\}\bigr) \mbox{ and } -3Rx/4 - Rz/8 \leq -7Rx/8 + R|z-x|/8.$$
The third inequality follows from 
\begin{equation*}
\begin{split}
&\bigl({\bf 1}\{x > \rho\} + {\bf 1}\{z > \rho\}\bigr)e^{\frac{R|z-x|}{8}} \leq 2 \cdot {\bf 1}\{x > \rho\}e^{\frac{R|z-x|}{8}} + {\bf 1}\{z > \rho, x \leq \rho\}e^{\frac{R|z-x|}{8}}  \\
& \leq 2 \cdot {\bf 1}\{x > \rho\}e^{2R|z-x|} + {\bf 1}\{z > \rho, x \leq \rho\}e^{2R|z-x|- \frac{15R(\rho-x)}{8}}.
\end{split}
\end{equation*}
The fourth inequality follows from the $r = 0$ case of (\ref{Eq.S51Moments}) in Lemma~\ref{Lem.S51MomentInputs}, where $\mathsf{C}_7$ is as in that lemma for $Q = 2R$.

The last displayed tower of inequalities implies (\ref{Eq.FourthMomentAbsoluteIntegrability}), which verifies Assumption A.2 and hence Assumption B.1.

%----------------------------------------------------------------------
%
% Section 5.2.2
%
%----------------------------------------------------------------------
\subsubsection{Verifying Assumption B.2}\label{Section5.2.2} Fix $u, v \in \mathcal{T}_N$ with $u < v$, and let $\phi_{uv}^N$ and $\widetilde{K}^N_{st}$ for $s,t \in \{u,v\}$ be as in (\ref{Eq.S51KNDef}). In addition, fix $f\in\operatorname{Lip}_{b,-}(\mathbb{R})$, and let $\rho \in \mathbb{R}$ be such that $f(y) = 0$ for $y\leq \rho$, and put 
\begin{equation}\label{Eq.S52LipschitzConst}
\|f\|_{\mathrm{Lip}} = \sup_{x \neq y} \left|\frac{f(x) - f(y)}{x - y} \right|.
\end{equation}
In addition, we let $\mathsf{C}_6$ be as in Lemma~\ref{Lem.S51KernelBounds} and $\mathsf{C}_7$ be as in Lemma~\ref{Lem.S51MomentInputs} for $Q = R$.

With the above data, we proceed to define the functions $U_{u,v}^{N,f}$, $V_{u,v}^{N,f}$, and the constants $S(f)$ and $B_k(f)$ for $k \in \llbracket 1, 4\rrbracket$, for which we will verify Assumption B.2.

\medskip

We define
\begin{equation}\label{Eq.S52UVDef}
\begin{split}
U_{u,v}^{N,f}(x)&=h_Ne^{-\frac{Rx}{8}-R(\rho-x)_+}, \qquad V_{u,v}^{N,f}(x)=e^{-\frac{3Rx}{4}}.
\end{split}
\end{equation}
From (\ref{Eq.S52UVDef}) we can find $\mathsf{C}_8 > 0$, depending on $R$, such that for all large $N$
\begin{equation}\label{Eq.S52C8Def}
\sum_{x \in \Lambda_N}U_{u,v}^{N,f}(x) V_{u,v}^{N,f}(x) = e^{-\frac{7R\rho}{8}}\left(\sum_{\substack{x \in \Lambda_N\\ x \leq \rho}} h_Ne^{-\frac{R(\rho-x)}{8} } + \sum_{\substack{x \in \Lambda_N\\ x > \rho}}h_Ne^{-\frac{7R(x-\rho)}{8}}\right) \leq \mathsf{C}_8 e^{-\frac{7R\rho}{8}}.
\end{equation}
With the constants $\mathsf{C}_6$, $\mathsf{C}_7$, and $\mathsf{C}_8$ specified above, we define
\begin{equation}\label{Eq.S52SBDef}
\begin{split}
S(f) = \mathsf{C}_8 e^{-7R\rho/8} \mbox{ and } B_k(f) = 2\mathsf{C}_6\mathsf{C}_7\|f\|^k_{\mathrm{Lip}} \mbox{ for } 1 \leq k \leq 4.
\end{split}
\end{equation}

For any $k \in \llbracket 1, 4 \rrbracket$, we observe the following tower of inequalities for all large $N$:
\begin{equation}\label{Eq.S52Tower2}
\begin{split}
&\sum_{x' \in \Lambda_N} |\phi^N_{uv}(x',x)| |f(x')-f(x)|^k|\widetilde K^N_{uv}(x',y)|\\
&\leq \mathsf{C}_6h_N\|f\|_{\mathrm{Lip}}^k e^{-\frac{3Ry}{4}}
\sum_{x'\in\Lambda_N} |\phi^N_{uv}(x',x)| |x'-x|^k \bigl({\bf 1}\{x>\rho\}+{\bf 1}\{x'>\rho\}\bigr)e^{-\frac{Rx'}{8}}\\
&\leq 2\mathsf{C}_6h_N\|f\|_{\mathrm{Lip}}^ke^{-\frac{Rx}{8}-R(\rho-x)_+}e^{-\frac{3Ry}{4}} \sum_{x'\in\Lambda_N}|\phi^N_{uv}(x',x)| |x'-x|^k e^{R|x'-x|}\\
&\leq 2\mathsf{C}_6\mathsf{C}_7\|f\|_{\mathrm{Lip}}^k
(v-u)^{k/2} h_Ne^{-\frac{Rx}{8}-R(\rho-x)_+} e^{-\frac{3Ry}{4}} \leq B_k(f)(v-u)^{k/2} U_{u,v}^{N,f}(x)V_{u,v}^{N,f}(y).
\end{split}
\end{equation}
Let us elaborate on the above inequalities briefly. The first inequality follows from Lemma~\ref{Lem.S51KernelBounds} with $\mathsf{C}_6$ as in that lemma, (\ref{Eq.S52LipschitzConst}), and the fact that $f(x)=f(x')=0$ whenever $x,x'\leq\rho$. The second inequality follows from
\begin{equation*}
\begin{split}
&\bigl({\bf 1}\{x>\rho\}+{\bf 1}\{x'>\rho\}\bigr)e^{-\frac{Rx'}{8}}\leq
2\cdot{\bf 1}\{x>\rho\}e^{-\frac{Rx'}{8}}
+
{\bf 1}\{x'>\rho,x\leq\rho\}e^{-\frac{Rx'}{8}}\\
&\leq
2\cdot{\bf 1}\{x>\rho\}e^{-\frac{Rx}{8}}e^{\frac{R|x'-x|}{8}}
+
{\bf 1}\{x'>\rho,x\leq\rho\}
e^{-\frac{Rx}{8}-R(\rho-x)}e^{R|x'-x|}\\
&\leq
2e^{-\frac{Rx}{8}-R(\rho-x)_+}e^{R|x'-x|}.
\end{split}
\end{equation*}
The third inequality follows from the $r=k$ case of (\ref{Eq.S51Moments}) in Lemma~\ref{Lem.S51MomentInputs}, with $Q=R$ and $\mathsf{C}_7$ as in that lemma. The fourth inequality follows from (\ref{Eq.S52UVDef}) and (\ref{Eq.S52SBDef}).

Combining (\ref{Eq.S52UVDef}), (\ref{Eq.S52C8Def}), and (\ref{Eq.S52SBDef}), we conclude (\ref{Eq.FourthMomentEnvelopeIntegrability}), while (\ref{Eq.S52Tower2}) implies (\ref{Eq.FourthMomentDifferenceKernelBounds}). This verifies the conditions of Assumption B.2.

\medskip

%----------------------------------------------------------------------
%
% Section 5.2.3
%
%----------------------------------------------------------------------
\subsubsection{Verifying Assumption B.3}\label{Section5.2.3} Continue with the same notation as in Section~\ref{Section5.2.2}. From (\ref{Eq.S52SBDef}), we can find $\mathsf{C}_9 >0$, depending on $b$ and $R$, such that 
\begin{equation*}
\mathfrak{C}(f) \leq \mathsf{C}_9 (1 + \|f\|_{\mathrm{Lip}}^4)\left(1 + e^{-7R\rho/2} \right),
\end{equation*}
where we recall that $\mathfrak{C}(f)$ is as in (\ref{Eq.FourthMomentConstant}). Consequently, if $g_\varsigma^+$ and $g_\varsigma^-$ are as in (\ref{Eq.TranslatedCutoffFunctions}), we have
\begin{equation*}
\mathfrak{C}(g_\varsigma^+) \leq 4\mathsf{C}_9\mbox{ and } \mathfrak{C}(g_\varsigma^-) \leq 2\mathsf{C}_9 \left(1 + e^{7R(\varsigma + 1)/2} \right).
\end{equation*}
On the other hand, if $p_+(\varsigma)$ and $p_{k,-}(\varsigma)$ are as in (\ref{Eq.UpperTailProbability}) and (\ref{Eq.LowerTailProbability}), respectively, we have from Lemmas~\ref{Lem.S4UpperTail} and~\ref{Lem.S4LowerTail} that, for all sufficiently large $\varsigma$, 
\begin{equation*}
p_+(\varsigma) \leq \mathsf{C}_4e^{-R\varsigma/2} \mbox{ and } p_{k,-}(\varsigma) \leq 2^{k-1}e^{-\mathsf{c}_5\varsigma^{3/2}},
\end{equation*}
where the constants $\mathsf{C}_4$ and $\mathsf{c}_5$ are as in those lemmas.

The last two displayed inequalities imply
$$p_+(\varsigma)\mathfrak{C}(g_\varsigma^+) \rightarrow 0 \mbox{ and }p_{k,-}(\varsigma)\mathfrak{C}(g_\varsigma^-) \rightarrow 0, \qquad\text{as }\varsigma\to\infty,$$
verifying (\ref{Eq.UpperTailCompatibility}) and (\ref{Eq.LowerTailCompatibility}), and hence Assumption B.3.

%----------------------------------------------------------------------
%
% Section 5.2.4
%
%----------------------------------------------------------------------
\subsubsection{Applying the tightness criterion}\label{Section5.2.4}

In this section, we use the conclusions obtained above to deduce the following tightness statement. In what follows, we freely use the notation from Sections~\ref{Section2.2}, \ref{Section2.4}, and~\ref{Section5.1}.

\begin{proposition}\label{Prop.S52Tightness} Assume the same notation as in Definition~\ref{Def.S1Scaling} and (\ref{Eq.S3Parameters}), and let $(\widehat{X}_i^{k,N}: i\geq 1, k \in \llbracket -M_N, M_N \rrbracket)$ be as in (\ref{Eq.S3ScaledParticles}). For each $N$, define the $\mathbb{N}$-indexed line ensemble $\widehat{\mathcal{L}}^N=\{\widehat{\mathcal{L}}_i^N\}_{i\geq 1}$ on $[-R/2,R/2]$ by linearly interpolating the points $\{(t_{k,N},\widehat{X}_i^{k,N})\}_{k= -M_N}^{M_N}$. Then the sequence $\widehat{\mathcal{L}}^N$ is tight in $C(\mathbb{N} \times [-R/2, R/2])$. 
\end{proposition}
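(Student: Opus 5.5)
The plan is to first obtain tightness on the half-interval $[0,R/2]$ from Lemma~\ref{Lem.DPPTightnessCriterion}, and then transfer it to $[-R/2,0]$ using the reflection symmetry (\ref{Eq.S44DistrEq}).

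\textbf{Tightness on $[0,R/2]$.} In the setup of Section~\ref{Section5.1}, take $\alpha=0$, $\beta=R/2$, $d_N=M_N$ and $X_i^{j,N}=\widehat X_i^{j,N}$. The associated point process $M^N$ is determinantal with kernel (\ref{Eq.S51KNDef}), and Sections~\ref{Section5.2.1}--\ref{Section5.2.3} verify Assumptions B.1--B.3 for $N\geq N_0:=\max(\mathsf N_1,N_6,N_7,\dots)$.

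One point needs care. Lemma~\ref{Lem.S51MomentInputs} is applied with a $Q$ that depends only on $R$ ($Q=2R$ in Section~\ref{Section5.2.1}, $Q=R$ in Section~\ref{Section5.2.2}), so $N_0$ does not depend on $f$. The constants $B_k(f)$ and $S(f)$ are uniform in $N,u,v$, as Assumption B.2 requires.

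With these inputs, Lemma~\ref{Lem.DPPTightnessCriterion} gives tightness of the linear interpolations on $[0,R/2]$. These coincide with $\widehat{\mathcal L}^N|_{\mathbb N\times[0,R/2]}$, since the mesh $t_{k,N}$ for $k\in\llbracket0,M_N\rrbracket$ is exactly $\mathcal T_N$.

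\textbf{Transfer to $[-R/2,0]$.} The law $\mathbb P_N$ is invariant under reversing the sequence of partitions, by (\ref{Eq.S44DistrEq}). The shift $S_N$ is independent of the partitions, and $m_{k,N}=m_{-k,N}$ because $A_{k,N}$ depends only on $|k|$ and $k^2$. Hence
\[
(\widehat X_i^{k,N})_{i\geq1,\,|k|\leq M_N}\overset{d}{=}(\widehat X_i^{-k,N})_{i\geq1,\,|k|\leq M_N}.
\]
It follows that $\{\widehat{\mathcal L}_i^N(-t)\}_{i\geq1,\,t\in[0,R/2]}$ has the same law as $\widehat{\mathcal L}^N|_{\mathbb N\times[0,R/2]}$. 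The map $t\mapsto -t$ is continuous on path space, so the restrictions to $[-R/2,0]$ are also tight.

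\textbf{Gluing.} For each fixed $k$, I will check the two conditions of \cite[Lemma~2.4]{DEA21} on $[-R/2,R/2]$:
\begin{itemize}
\item[(i)] \emph{Boundedness at a point.} The value at $t=0$ is tight, by the half-interval result.
\item[(ii)] \emph{Modulus of continuity.} For any $z$ continuous on $[-R/2,R/2]$,
\[
w(z,a)\leq w\bigl(z|_{[-R/2,0]},a\bigr)+w\bigl(z|_{[0,R/2]},a\bigr),
\]
because any pair $s<0<t$ with $|s-t|\leq a$ can be split at $0$. A union bound over the two halves then gives the required estimate from the tightness of each restriction.
\end{itemize}
Both conditions holding for every $k$ yields tightness in $C(\mathbb N\times[-R/2,R/2])$.

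I expect the only subtle step to be the uniformity of $N_0$ in $f$ in the first step. Everything else is a direct assembly of results already established.
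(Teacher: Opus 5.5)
Your proposal is correct and follows the paper's proof essentially step for step. You get tightness on $[0,R/2]$ from Lemma~\ref{Lem.DPPTightnessCriterion} via Sections~\ref{Section5.1}--\ref{Section5.2.3}, transfer it to $[-R/2,0]$ using the reflection symmetry (\ref{Eq.S44DistrEq}) together with $m_{k,N}=m_{-k,N}$ and the independence of $S_N$, and glue the two halves through \cite[Lemma~2.4]{DEA21}. Your extra remarks on the $f$-independence of $N_0$ and on splitting the modulus of continuity at $0$ are correct details that the paper leaves implicit.
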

\begin{proof} Using the notation from Section~\ref{Section2.2}, we see that the restricted line ensemble $\widehat{\mathcal{L}}^N|_{\mathbb{N}\times [0,R/2]}$ is precisely the line ensemble $\mathcal{L}^N$ from Section~\ref{Section2.4} corresponding to the array $X^N$ from Section~\ref{Section5.1}.

By Sections~\ref{Section5.1}, \ref{Section5.2.1}, \ref{Section5.2.2}, and~\ref{Section5.2.3}, we know that the sequence of random arrays $X^N$ satisfies Assumptions B.1, B.2, and B.3. From Lemma~\ref{Lem.DPPTightnessCriterion}, we conclude that the sequence $\widehat{\mathcal{L}}^N|_{\mathbb{N}\times [0,R/2]}$ is tight in $C(\mathbb{N} \times [0, R/2])$.

Using the distributional equality in (\ref{Eq.S44DistrEq}) and (\ref{Eq.S3ScaledParticles}), we conclude
\begin{equation*}
 \bigl(\widehat{\mathcal L}_i^N(t):i\geq1,\,-R/2\leq t\leq R/2\bigr)
 \overset{d}{=}
 \bigl(\widehat{\mathcal L}_i^N(-t):i\geq1,\,-R/2\leq t\leq R/2\bigr).
\end{equation*}
The established tightness of $\widehat{\mathcal{L}}^N|_{\mathbb{N}\times [0,R/2]}$ then readily implies that $\widehat{\mathcal{L}}^N|_{\mathbb{N}\times [-R/2,0]}$ is tight in $C(\mathbb{N} \times [-R/2,0])$, and hence that $\widehat{\mathcal{L}}^N$ is tight in $C(\mathbb{N} \times [-R/2, R/2])$, for instance by the tightness criterion in \cite[Lemma 2.4]{DEA21}.
\end{proof}

%----------------------------------------------------------------------
%
% Section 5.3
%
%----------------------------------------------------------------------
\subsection{Proofs of Theorems~\ref{Thm.Convergence} and~\ref{Thm.DeterminantalStructure}}\label{Section5.3}

In this section, we prove Theorems~\ref{Thm.Convergence} and~\ref{Thm.DeterminantalStructure} simultaneously. In what follows, we assume the notation from Section~\ref{Section3.2} and we let $\widehat{\mathcal{L}}^N=\{\widehat{\mathcal{L}}_i^N\}_{i\geq 1}$ denote the $\mathbb{N}$-indexed line ensemble on $[-R/2,R/2]$ from Proposition~\ref{Prop.S52Tightness}. 

For clarity, we split the proof into two steps. In Step 1, we reduce the statements of the theorems to a finite-dimensional convergence statement for the line ensembles $\widehat{\mathcal{L}}^N$, see (\ref{Eq.S53FiniteDim}). In Step 2, we establish (\ref{Eq.S53FiniteDim}) using the determinantal point process convergence results from \cite[Section~2.3]{ED24a}. The required conditions for the application of these results will be verified using the kernel convergence from Proposition~\ref{Prop.S4KernelLimit} and the tightness of $\widehat{\mathcal{L}}^N$ from Proposition~\ref{Prop.S52Tightness}.

\medskip

{\bf \raggedleft Step 1.} Fix $m \in \mathbb{N}$, $-R/2 < s_1 < \cdots < s_m \leq R/2$ and set $\mathsf{S} = \{s_1, \dots, s_m\}$. In addition, let $s_{j,N} \in \widehat{\mathcal{T}}_N$ be sequences such that $\lim_{N \rightarrow \infty} s_{j,N} = s_j$ for $j \in \llbracket 1, m \rrbracket$. We claim that there exists a random element $Y = \left(Y_{i}^{j}: i \geq 1, j \in \llbracket 1, m \rrbracket  \right) \in \mathbb{R}^{\infty}$, see \cite[Example 1.2]{Billing}, such that
\begin{equation}\label{Eq.S53FiniteDim}
\left(\widehat{\mathcal{L}}_{i}^N(s_{j,N}): i \geq 1, j \in \llbracket 1, m \rrbracket  \right) \Rightarrow \left(Y_{i}^{j}: i \geq 1, j \in \llbracket 1, m \rrbracket  \right).
\end{equation}
Moreover, the random measure 
\begin{equation}\label{Eq.S53FormedY}
M^{\mathsf{S};Y}(A) = \sum_{i=1}^{\infty}\sum_{j=1}^{m} {\bf 1} \left\{ (s_j,Y^j_i)\in A \right\},
\end{equation}
is a determinantal point process on $\mathbb{R}^2$ with reference measure $\mu_{\mathsf{S}}\times\mathrm{Leb}$, as in Definition~\ref{Def.S1Measures}, and correlation kernel $K^{\mathrm{per},R}$ as in Definition~\ref{Def.S1PeriodicAiryKernel}.

We establish the above claims in the next step. Here, we assume their validity and conclude the proof of the two theorems.

\medskip

From Proposition~\ref{Prop.S52Tightness}, we know that $\widehat{\mathcal{L}}^N$ is tight. Let $\widehat{\mathcal{L}}^{\infty,1}$ and $\widehat{\mathcal{L}}^{\infty,2}$ denote any two subsequential limits. From (\ref{Eq.S53FiniteDim}) we have the following equality in the sense of finite-dimensional distributions
$$\left(\widehat{\mathcal{L}}_{i}^{\infty,1}(s): i \geq 1, s \in (-R/2, R/2]  \right) \overset{f.d}{=}\left(\widehat{\mathcal{L}}_{i}^{\infty,2}(s): i \geq 1, s \in (-R/2, R/2]  \right).$$
By the almost sure continuity of $\widehat{\mathcal{L}}_{i}^{\infty,1}$ and $\widehat{\mathcal{L}}_{i}^{\infty,2}$, we conclude that the above finite-dimensional equality extends to $[-R/2,R/2]$, and so by \cite[Lemma 3.1]{DM21}, we conclude $\widehat{\mathcal{L}}^{\infty,1} \overset{d}{=}\widehat{\mathcal{L}}^{\infty,2}$ as random elements in $C(\mathbb{N} \times [-R/2, R/2])$. Since $\widehat{\mathcal{L}}^N$ is tight, and all subsequential limits agree, we conclude that $\widehat{\mathcal{L}}^N$ converges weakly to some random element in $C(\mathbb{N} \times [-R/2, R/2])$, which we denote by $\mathcal{A}^{\mathrm{per},R} = \{\mathcal{A}^{\mathrm{per},R}_i\}_{i\geq1}$. Combining $\widehat{\mathcal{L}}^N \Rightarrow \mathcal{A}^{\mathrm{per},R}$ with (\ref{Eq.S53FiniteDim}), we conclude 
\begin{equation}\label{Eq.S53MatchingFD}
\left(\mathcal{A}^{\mathrm{per},R}_i(s_j): i \geq 1, j \in \llbracket 1, m \rrbracket \right) \overset{d}{=} \left(Y_{i}^{j}: i \geq 1, j \in \llbracket 1, m \rrbracket  \right)
\end{equation}
as random elements in $\mathbb{R}^{\infty}$.

From (\ref{Eq.S3ParticleComparisonBound}), (\ref{Eq.S53SmallSN}), and linear interpolation, we have for every fixed $i \geq 1$ that
\begin{equation*}
\sup_{t \in [-R/2, R/2]}\left|\widehat{\mathcal{L}}_i^{N}(t) - \mathcal{L}_i^N(t)\right| \leq h_N\left(|S_N|+i+1\right) \overset{\mathbb{P}}{\rightarrow} 0.
\end{equation*}
Consequently, by the convergence together lemma \cite[Theorem 3.1]{Billing}, we conclude that $\mathcal{L}^N \Rightarrow \mathcal{A}^{\mathrm{per},R}$. From Definition~\ref{Def.S1Scaling} we have almost surely for each $N$ that
\begin{equation*}
\mathcal{L}^N_i(t) \geq \mathcal{L}^N_{i+1}(t) \mbox{ for all } i \geq 1 \mbox{ and } t \in [-R/2,R/2],
\end{equation*}
and
\begin{equation*}
\mathcal{L}^N_i(-R/2) = \mathcal{L}^N_i(R/2) \mbox{ for all } i \geq 1.
\end{equation*}
Together with the weak convergence above, these relations imply (\ref{Eq.S1OrderedPerAiry}) and (\ref{Eq.S1PeriodicPerAiry}), completing the proof of Theorem~\ref{Thm.Convergence}.

From (\ref{Eq.S53MatchingFD}) we conclude that $M^{\mathsf{S};R}$ from Theorem~\ref{Thm.DeterminantalStructure} has the same distribution as $M^{\mathsf{S};Y}$ from (\ref{Eq.S53FormedY}), and so $M^{\mathsf{S};R}$ is a determinantal point process on $\mathbb{R}^2$ with reference measure $\mu_{\mathsf{S}}\times\mathrm{Leb}$ and correlation kernel $K^{\mathrm{per},R}$. Since $m\in\mathbb{N}$ and $-R/2<s_1<\cdots<s_m\leq R/2$ were arbitrary, this proves Theorem~\ref{Thm.DeterminantalStructure}.

\medskip

{\bf \raggedleft Step 2.} Define the random measures
\begin{equation}\label{Eq.S53FormedLN}
M^{\mathsf{S};N}(A) = \sum_{i=1}^{\infty}\sum_{j=1}^{m} {\bf 1} \left\{ (s_j,\widehat{\mathcal{L}}^N_i(s_{j,N}))\in A \right\}.
\end{equation}
We claim that 
\begin{equation}\label{Eq.S53PPConvergence}
M^{\mathsf{S};N} \Rightarrow M^{\mathsf{S};\infty},
\end{equation}
where $M^{\mathsf{S};\infty}$ is a determinantal point process on $\mathbb{R}^2$ with reference measure $\mu_{\mathsf{S}}\times\mathrm{Leb}$ and correlation kernel $K^{\mathrm{per},R}$.

From (\ref{Eq.S53PPConvergence}) and the tightness of the sequence of random variables $\{\widehat{\mathcal{L}}^N_i(s_{j,N})\}_{N \geq 1}$ for each fixed $i \in \mathbb{N}$, $j \in \llbracket 1, m\rrbracket$, which follows from Proposition~\ref{Prop.S52Tightness}, we see that the conditions of \cite[Proposition~2.19]{ED24a} are satisfied. The latter proposition implies the two claims in Step 1, and so it suffices to establish (\ref{Eq.S53PPConvergence}).

\medskip 

In the remainder of this step, we assume that $N$ is sufficiently large so that $N \geq \mathsf{N}_0$ as in Lemma~\ref{Lem.S4KernelAgreement} and $s_{1,N} < \cdots < s_{m,N}$.

Let $f_N:\mathbb{R}\rightarrow\mathbb{R}$ be an increasing piecewise-linear bijection such that
$f_N(s_{j,N})=s_j$ for $j\in\llbracket1,m\rrbracket$ and $f_N$ is linear on $(-\infty, s_{1,N}), (s_{1,N}, s_{2,N}), \dots, (s_{m-1,N},s_{m,N}), (s_{m,N}, \infty)$, and define $\psi_N(s,x) = (f_N(s),x)$. Notice that $\psi_N: \mathbb{R}^2 \rightarrow \mathbb{R}^2$ is a locally bounded bijection with a locally bounded inverse and that
\begin{equation}\label{Eq.S53Pullback}
M^{\mathsf{S};N}(A) = \widehat{M}^N (\psi_N^{-1}(A\cap D_{\mathsf{S}})),\mbox{ where } D_{\mathsf{S}} = \{(s,x) \in \mathbb{R}^2: s \in \mathsf{S}\}.
\end{equation}

From Lemma~\ref{Lem.S4KernelAgreement}, the change of variables formula for determinantal point processes, see \cite[Proposition 2.13(5)]{ED24a}, the fact that restrictions of determinantal point processes are still determinantal, see \cite[Proposition 2.13(2)]{ED24a}, and a rescaling of the kernel and reference measure, see \cite[Proposition 2.13(6)]{ED24a}, we conclude that $M^{\mathsf{S};N}$ is a determinantal point process on $\mathbb{R}^2$ with reference measure given by $\mu_{\mathsf{S}} \times (h_N \mu_{\Lambda_N})$ and correlation kernel given by 
\begin{equation}\label{Eq.S53NewKernel}
K^{\mathsf{S};N}(s,x;t,y) = h_N^{-1}\widehat{K}^{\mathrm{ext},N}(f_N^{-1}(s),x; f_N^{-1}(t), y) \mbox{ for }s,t \in \mathsf{S} \mbox{ and }x,y \in \mathbb{R}.
\end{equation}
Here, we recall from Definition~\ref{Def.S1Measures} that $\mu_{\mathsf{S}}$ and $\mu_{\Lambda_N}$ are counting measures, and that $\widehat{K}^{\mathrm{ext},N}$ is as in Lemma~\ref{Lem.S4KernelAgreement}.

Finally, we seek to apply \cite[Proposition~2.18]{ED24a} to $M^{\mathsf{S};N}$. Our measures are of the form considered in that proposition, once we set $r= m$, $K_N = K^{\mathsf{S};N}$, $t_i = s_i$, and $\nu_{t_i}^N = h_N \mu_{\Lambda_N}$. Note that $h_N \mu_{\Lambda_N}$ converges vaguely to the Lebesgue measure. In addition, from Proposition~\ref{Prop.S4KernelLimit} and (\ref{Eq.S53NewKernel}), for every $A>0$ and $s,t\in\mathsf{S}$, we have
\begin{equation}\label{Eq.S53KernelLimit}
\lim_{N\to\infty}
\sup_{x,y\in[-A,A]}
\left| K^{\mathsf{S};N}(s,x;t,y) -e^{sx-s^3/3-ty+t^3/3}K^{\mathrm{per},R}(s,x;t,y) \right|=0.
\end{equation}
Lastly, from Proposition~\ref{Prop.S4AiryIdentities}(v), we know that for every fixed $s,t\in(-R/2,R/2]$, the function
$(x,y)\mapsto K^{\mathrm{per},R}(s,x;t,y)$ is continuous on $\mathbb R^2$. 

The last few observations verify the conditions from \cite[Proposition~2.18]{ED24a}, which shows that $M^{\mathsf{S};N}$ converges weakly to a determinantal point process on $\mathbb{R}^2$ with reference measure $\mu_{\mathsf{S}}\times\mathrm{Leb}$, and correlation kernel $(s,x;t,y) \mapsto e^{sx-s^3/3-ty+t^3/3}K^{\mathrm{per},R}(s,x;t,y)$. By a gauge transformation, see \cite[Proposition 2.13(4)]{ED24a}, the limiting point process also has correlation kernel $K^{\mathrm{per},R}$. This proves (\ref{Eq.S53PPConvergence}).

%----------------------------------------------------------------------
%
% Section 6
%
%----------------------------------------------------------------------
\section{The periodic Brownian Gibbs property}\label{Section6}
In Section~\ref{Section6.1}, we introduce Brownian line ensembles with area tilts, define the Brownian Gibbs and periodic Brownian Gibbs properties with area tilts, and establish a characterization relating the two. In Section~\ref{Section6.2}, we introduce random loops that arise naturally from the random cylindric plane partition models of Section~\ref{Section1.2} and prove that, under an appropriate scaling, they converge to Brownian bridges. Finally, in Section~\ref{Section6.3}, we use these results to prove the periodic Brownian Gibbs property of the periodic Airy line ensemble stated in Theorem~\ref{Thm.GibbsProperty}. Throughout this section, we freely use the notation from Sections~\ref{Section1.2} and~\ref{Section2.2}.

%----------------------------------------------------------------------
%
% Section 6.1
%
%----------------------------------------------------------------------
\subsection{Line ensembles and area tilts}\label{Section6.1} 

In this section, we introduce the Brownian Gibbs property with area tilts. Our notation and presentation largely follow \cite[Section~2]{DimitrovSerio}. Unlike in that work, where different curves are allowed to have different area-tilt coefficients, we will only require the case in which all curves have the same coefficient.

Let $\widetilde{B}$ be the standard Brownian bridge on $[0,1]$, given by
$$
\widetilde{B}(t)=W_t-tW_1,
\qquad 0\leq t\leq1,
$$
where $W$ is a standard Brownian motion. Given $a,b,x,y\in\mathbb{R}$ with $a<b$, and $\sigma > 0$, the process
\begin{equation}\label{Eq.BrownianBridge}
B(t)
=
(b-a)^{1/2}
\sigma \widetilde{B}\left(\frac{t-a}{b-a}\right)
+
\frac{b-t}{b-a} \cdot x
+
\frac{t-a}{b-a} \cdot y,
\qquad t\in[a,b],
\end{equation}
is called a {\em Brownian bridge from $x$ to $y$ on $[a,b]$ with diffusion parameter $\sigma$}.

Fix integers $k_1\leq k_2$, set $k=k_2-k_1+1$, and let $\vec{x},\vec{y}\in\mathbb{R}^k$. We denote by $\mathbb{P}_{\mathrm{free}}^{k_1,k_2,a,b,\vec{x},\vec{y}}$ the law of $k$ independent Brownian bridges $\{B_i:[a,b]\rightarrow\mathbb{R}\}_{i=k_1}^{k_2}$, all with diffusion parameter $1$, such that $B_i(a)=x_{i-k_1+1}$ and $B_i(b)=y_{i-k_1+1}$ for $k_1\leq i\leq k_2$. We write $\mathbb{E}_{\mathrm{free}}^{k_1,k_2,a,b,\vec{x},\vec{y}}$ for the expectation with respect to this measure.

We next introduce Brownian line ensembles conditioned to avoid one another and remain between two boundary curves.

\begin{definition}\label{Def.AvoidingBrownianLE}
Fix integers $k_1\leq k_2$ and set $k=k_2-k_1+1$. Let $\weyl_k$ denote the open Weyl chamber in $\mathbb{R}^k$, i.e.,
$$
\weyl_k=\left\{\vec{z}=(z_1,\ldots,z_k)\in\mathbb{R}^k:z_1>z_2>\cdots>z_k\right\}.
$$
Let $\vec{x},\vec{y}\in\weyl_k$, let $a,b\in\mathbb{R}$ with $a<b$, and let $f:[a,b]\rightarrow(-\infty,\infty]$ and $g:[a,b]\rightarrow[-\infty,\infty)$ be continuous functions. Here, the latter condition means that either $f:[a,b]\rightarrow\mathbb{R}$ is continuous or $f=\infty$ everywhere, and similarly for $g$. We assume that $f(t)>g(t)$ for every $t\in[a,b]$, as well as $f(a)>x_1$, $f(b)>y_1$, $g(a)<x_k$, and $g(b)<y_k$.

With the above data, we define the {\em $(f,g)$-avoiding Brownian line ensemble on $[a,b]$ with entrance data $\vec{x}$ and exit data $\vec{y}$} to be the $\llbracket k_1,k_2\rrbracket$-indexed line ensemble $\mathcal{Q}$ whose law is $\mathbb{P}_{\mathrm{free}}^{k_1,k_2,a,b,\vec{x},\vec{y}}$, conditioned on the event
\begin{equation}\label{Eq.Event}
\left\{
f(t)>\mathcal{Q}_{k_1}(t)>\mathcal{Q}_{k_1+1}(t)>
\cdots>\mathcal{Q}_{k_2}(t)>g(t)
\text{ for every }t\in[a,b]
\right\}.
\end{equation}
We denote its law by $\mathbb{P}_{\mathrm{avoid}}^{k_1,k_2,a,b,\vec{x},\vec{y},f,g}$ and the corresponding expectation by $\mathbb{E}_{\mathrm{avoid}}^{k_1,k_2,a,b,\vec{x},\vec{y},f,g}$. 
\end{definition}
\begin{remark}\label{Rem.Event} The event in (\ref{Eq.Event}) has positive probability and hence the conditioning in Definition~\ref{Def.AvoidingBrownianLE} is well-defined; see \cite[Definition~2.4]{DM21} for details.
\end{remark}

We next introduce the area-tilted version of the preceding measure. 

\begin{definition}\label{Def.AreaTiltedBrownianLE}
Assume the same notation as in Definition~\ref{Def.AvoidingBrownianLE}, suppose that $g\neq-\infty$, and fix $A\in[0,\infty)$. We define the {\em $(f,g)$-avoiding Brownian line ensemble with $A$-area tilt on $[a,b]$ with entrance data $\vec{x}$ and exit data $\vec{y}$} to be the $\llbracket k_1,k_2\rrbracket$-indexed line ensemble $\mathcal{Q}$ whose law $\mathbb{P}_{\mathrm{avoid},A}^{k_1,k_2,a,b,\vec{x},\vec{y},f,g}$ is defined through the Radon--Nikodym derivative
\begin{equation}\label{Eq.AreaTiltedBrownianRN}
\frac{
\mathrm{d}\mathbb{P}_{\mathrm{avoid},A}^{k_1,k_2,a,b,\vec{x},\vec{y},f,g}
}{
\mathrm{d}\mathbb{P}_{\mathrm{avoid}}^{k_1,k_2,a,b,\vec{x},\vec{y},f,g}
}(\mathcal{Q})
=
\frac{1}{
Z_{\mathrm{avoid},A}^{k_1,k_2,a,b,\vec{x},\vec{y},f,g}
}
\exp\left(
-A\sum_{i=k_1}^{k_2}
\int_a^b\bigl(\mathcal{Q}_i(u)-g(u)\bigr)du
\right),
\end{equation}
where the normalization constant is given by
\begin{equation}\label{Eq.AreaTiltedNormalization}
Z_{\mathrm{avoid},A}^{k_1,k_2,a,b,\vec{x},\vec{y},f,g}
=
\mathbb{E}_{\mathrm{avoid}}^{k_1,k_2,a,b,\vec{x},\vec{y},f,g}
\left[
\exp\left(
-A\sum_{i=k_1}^{k_2}
\int_a^b\bigl(\mathcal{Q}_i(u)-g(u)\bigr)du
\right)
\right].
\end{equation}
We write $\mathbb{E}_{\mathrm{avoid},A}^{k_1,k_2,a,b,\vec{x},\vec{y},f,g}$ for expectation with respect to $\mathbb{P}_{\mathrm{avoid},A}^{k_1,k_2,a,b,\vec{x},\vec{y},f,g}$.

Since $A\geq0$ and $\mathcal{Q}_i(u)>g(u)$, the exponential appearing in \eqref{Eq.AreaTiltedBrownianRN} takes values in $(0,1]$. Consequently, $Z_{\mathrm{avoid},A}^{k_1,k_2,a,b,\vec{x},\vec{y},f,g}\in(0,1]$, and the preceding probability measure is well-defined.
\end{definition}

We next define the Brownian Gibbs property with a common area-tilt coefficient. In what follows, we use the convention that $\llbracket1,N\rrbracket=\mathbb{N}$ when $N=\infty$.

\begin{definition}\label{Def.AreaTiltedBGP}
Fix $N\in\mathbb{N}\cup\{\infty\}$, an interval $\Lambda\subset\mathbb{R}$, and $A\in[0,\infty)$, and set $\Sigma=\llbracket1,N\rrbracket$. A $\Sigma$-indexed line ensemble $\mathcal{L}$ on $\Lambda$ is said to satisfy the {\em partial Brownian Gibbs property with $A$-area tilt} if it is non-intersecting and the following holds.

Fix a finite interval of indices $K=\llbracket k_1,k_2\rrbracket\subseteq\Sigma$ with $k_2\leq N-1$, an interval $[a,b]\subset\Lambda$, and a bounded Borel-measurable function $F:C(K\times[a,b])\rightarrow\mathbb{R}$. Set $D_{K,a,b}=K\times(a,b)$ and define
$$
\mathcal{F}_{\mathrm{ext}}\bigl(K\times(a,b)\bigr)
=
\sigma\left\{
\mathcal{L}_i(t):
(i,t)\in(\Sigma\times\Lambda)\setminus D_{K,a,b}
\right\}.
$$
Let $\vec{x}=(\mathcal{L}_{k_1}(a),\ldots,\mathcal{L}_{k_2}(a))$ and $\vec{y}=(\mathcal{L}_{k_1}(b),\ldots,\mathcal{L}_{k_2}(b))$, and set $f=\mathcal{L}_{k_1-1}|_{[a,b]}$ and $g=\mathcal{L}_{k_2+1}|_{[a,b]}$, with the convention that $f=\infty$ if $k_1=1$. Then, almost surely,
\begin{equation}\label{Eq.AreaTiltedBGP}
\mathbb{E}\left[
F\left(\mathcal{L}|_{K\times[a,b]}\right)
\middle|
\mathcal{F}_{\mathrm{ext}}\bigl(K\times(a,b)\bigr)
\right]
=
\mathbb{E}_{\mathrm{avoid},A}^{k_1,k_2,a,b,\vec{x},\vec{y},f,g}
\left[F(\mathcal{Q})\right].
\end{equation}

When $N=\infty$, we say that $\mathcal{L}$ satisfies the {\em Brownian Gibbs property with $A$-area tilt}, dropping the word ``partial.'' When $A=0$, these definitions reduce to the usual {\em partial Brownian Gibbs property} and {\em Brownian Gibbs property}; see \cite[Definition~2.7]{DM21} and \cite[Definition~2.2]{CorHamA}.
\end{definition}

We next define the periodic Brownian Gibbs property with a common area-tilt coefficient.
\begin{definition}\label{Def.PeriodicAreaTiltedBGP}
Fix $N\in\mathbb{N}\cup\{\infty\}$, $\alpha,\beta\in\mathbb{R}$ with $\alpha<\beta$, and $A\in[0,\infty)$, and set $\Sigma=\llbracket1,N\rrbracket$. A $\Sigma$-indexed line ensemble $\mathcal{L}$ on $[\alpha, \beta]$ is said to satisfy the {\em partial periodic Brownian Gibbs property with $A$-area tilt} if it is non-intersecting, $\mathcal{L}_i(\alpha)=\mathcal{L}_i(\beta)$ for every $i\in\Sigma$ almost surely, and the following holds.

Fix a finite interval of indices $K=\llbracket1,k_2\rrbracket\subseteq\Sigma$ with $k_2\leq N-1$, and a bounded Borel-measurable function $F:C(K\times[\alpha,\beta])\rightarrow\mathbb{R}$. Set $D_{K,\alpha,\beta}=K\times(\alpha,\beta)$ and define
$$
\mathcal{F}_{\mathrm{ext}}\bigl(K\times(\alpha,\beta)\bigr)
=
\sigma\left\{
\mathcal{L}_i(t):
(i,t)\in(\Sigma\times[\alpha,\beta])\setminus D_{K,\alpha,\beta}
\right\}.
$$
Let $\vec{x}=(\mathcal{L}_1(\alpha),\ldots,\mathcal{L}_{k_2}(\alpha))$ and set $g=\mathcal{L}_{k_2+1}$. Then, almost surely,
\begin{equation}\label{Eq.PeriodicAreaTiltedBGP}
\mathbb{E}\left[
F\left(\mathcal{L}|_{K\times[\alpha,\beta]}\right)
\middle|
\mathcal{F}_{\mathrm{ext}}\bigl(K\times(\alpha,\beta)\bigr)
\right]
=
\mathbb{E}_{\mathrm{avoid},A}^{1,k_2,\alpha,\beta,\vec{x},\vec{x},\infty,g}
\left[F(\mathcal{Q})\right].
\end{equation}

When $N=\infty$, we say that $\mathcal{L}$ satisfies the {\em periodic Brownian Gibbs property with $A$-area tilt}, dropping the word ``partial.'' When $A=0$, we further drop ``with $A$-area tilt''.
\end{definition}

We end this section with the following lemma that explains the relationship between Definition \ref{Def.AreaTiltedBGP} and Definition \ref{Def.PeriodicAreaTiltedBGP}.

\begin{lemma}\label{Lem.PeriodicBGPCharacterization}
Fix $N\in\mathbb{N}\cup\{\infty\}$, $\alpha,\beta\in\mathbb{R}$ with
$\alpha<\beta$, and $A\in[0,\infty)$, and set
$\Sigma=\llbracket1,N\rrbracket$. Suppose that $\mathcal{L}$ is a $\Sigma$-indexed line ensemble on $[\alpha,\beta]$. Then the following statements are equivalent:
\begin{enumerate}
\item $\mathcal{L}$ satisfies the partial periodic Brownian Gibbs property with $A$-area tilt from Definition~\ref{Def.PeriodicAreaTiltedBGP}.
\item $\mathcal{L}$ satisfies the partial Brownian Gibbs property with $A$-area tilt from Definition~\ref{Def.AreaTiltedBGP} and is periodic, in the sense that $\mathcal{L}_i(\alpha)=\mathcal{L}_i(\beta)$ for every $i\in\Sigma$ almost surely.
\end{enumerate}
\end{lemma}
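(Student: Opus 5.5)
We prove the two implications separately. Both directions compare the two resampling identities \eqref{Eq.AreaTiltedBGP} and \eqref{Eq.PeriodicAreaTiltedBGP}. The main input is the observation that a Brownian bridge on $[\alpha,\beta]$ from $x$ to $x$ can be resampled on any subinterval $[a,b]$ by a Brownian bridge between its values at $a$ and $b$. In addition, the avoidance and area-tilt weights factor over disjoint time intervals.

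\emph{(1) $\Rightarrow$ (2).} Periodicity is part of Definition~\ref{Def.PeriodicAreaTiltedBGP}, so it remains to verify \eqref{Eq.AreaTiltedBGP} for $K=\llbracket k_1,k_2\rrbracket$ and $[a,b]\subseteq[\alpha,\beta]$. We first treat $k_1=1$. Let $\mathcal{F}_{\mathrm{ext}}'=\mathcal{F}_{\mathrm{ext}}(\llbracket1,k_2\rrbracket\times(\alpha,\beta))$, which is contained in $\mathcal{F}_{\mathrm{ext}}(K\times(a,b))$. The plan is to use the tower property. Under the measure $\mathbb{P}_{\mathrm{avoid},A}^{1,k_2,\alpha,\beta,\vec{x},\vec{x},\infty,g}$, the Markov property of Brownian bridges, together with the multiplicative split of the indicator of \eqref{Eq.Event} and of the exponential weight $\exp(-A\sum_i\int(\mathcal{Q}_i-g))$ over $[a,b]$ and its complement, shows the following. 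Conditionally on the curves outside $(a,b)$, the restriction to $[a,b]$ has law $\mathbb{P}_{\mathrm{avoid},A}^{1,k_2,a,b,\cdot,\cdot,\infty,g}$. Since the inner conditional expectation is then a function of $\mathcal{F}_{\mathrm{ext}}(K\times(a,b))$-measurable data, \eqref{Eq.AreaTiltedBGP} follows. This is the standard ``restriction of a Gibbs measure is Gibbs'' argument, as in the proofs of \cite[Lemma~2.13]{DM21} and \cite[Section~2]{DimitrovSerio}; we would carry it out with monotone class arguments on cylinder functions $F$.

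For general $k_1\geq 2$, apply the previous case with indices $\llbracket1,k_2\rrbracket$ to get the conditional law of the top $k_2$ curves on $[a,b]$. Then condition further on curves $1,\dots,k_1-1$. The avoiding, area-tilted measure for $k_2$ curves, conditioned on its top $k_1-1$ curves, is again avoiding and area-tilted for curves $k_1,\dots,k_2$ with upper boundary $f=\mathcal{L}_{k_1-1}$. This holds because the weight factors over curve indices, and $\int(\mathcal{Q}_i-g)$ is additive in $i$.

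\emph{(2) $\Rightarrow$ (1).} Non-intersection and periodicity are given, so the task is to derive \eqref{Eq.PeriodicAreaTiltedBGP} on the whole interval $[\alpha,\beta]$. Under \eqref{Eq.AreaTiltedBGP} with $[a,b]=[\alpha,\beta]$ and $K=\llbracket1,k_2\rrbracket$, the conditional law is $\mathbb{P}_{\mathrm{avoid},A}^{1,k_2,\alpha,\beta,\vec{x},\vec{y},\infty,g}$ with $\vec{y}=(\mathcal{L}_i(\beta))_i$. The $\sigma$-algebra $\mathcal{F}_{\mathrm{ext}}$ in Definition~\ref{Def.PeriodicAreaTiltedBGP} is exactly the one appearing in Definition~\ref{Def.AreaTiltedBGP} for this choice of $K$ and $[a,b]$. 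Periodicity gives $\vec{y}=\vec{x}$ almost surely, so \eqref{Eq.PeriodicAreaTiltedBGP} follows immediately.

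We expect the main obstacle to be the rigorous justification of the conditional-restriction step in (1) $\Rightarrow$ (2). That step requires a careful disintegration of the area-tilted avoiding measure over the $\sigma$-algebra generated by the curves outside $(a,b)$, together with verifying that the normalizing constants match. We would handle it by testing against products $F(\mathcal{Q}|_{[a,b]})G(\mathcal{Q}|_{\text{outside}})$ and using the Brownian bridge Markov property.
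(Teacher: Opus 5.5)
Your proposal is correct and follows essentially the paper's proof. For (2)$\Rightarrow$(1) you take $k_1=1$ and $[a,b]=[\alpha,\beta]$ and use $\vec y=\vec x$; for (1)$\Rightarrow$(2) you use the tower property over $\mathcal{F}_{\mathrm{ext}}(\llbracket1,k_2\rrbracket\times(\alpha,\beta))$, tested on cylinder functions. The one difference is the restriction step you flag as the main obstacle, namely that the finite area-tilted avoiding ensemble with lower boundary $g$ is itself Gibbs on every $\llbracket k_1,k_2\rrbracket\times[a,b]$: the paper gets it in one stroke by citing \cite[Lemma~2.6]{DimitrovSerio}, while your two-stage argument (Brownian-bridge Markov property, then conditioning on the top $k_1-1$ curves) proves the same fact by hand and is also valid.
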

\begin{proof}
We first assume (2) and prove (1). Fix a finite interval of indices $K=\llbracket1,k_2\rrbracket\subseteq\Sigma$ with $k_2 \leq N-1$, and let $F:C(K\times[\alpha,\beta])\rightarrow\mathbb{R}$ be bounded and Borel-measurable. Applying (\ref{Eq.AreaTiltedBGP}) with $k_1=1$, $a=\alpha$, and $b=\beta$ and using that $\mathcal{L}_i(\alpha)=\mathcal{L}_i(\beta)$ for every $i\in\Sigma$ almost surely, we obtain \eqref{Eq.PeriodicAreaTiltedBGP}. Thus, $\mathcal{L}$ satisfies the partial periodic Brownian Gibbs property with $A$-area tilt.

We next assume (1) and prove (2). By Definition~\ref{Def.PeriodicAreaTiltedBGP}, $\mathcal{L}$ is non-intersecting and periodic. Consequently, it remains only to verify \eqref{Eq.AreaTiltedBGP}. The proof below is similar to that of \cite[Lemma~2.9]{DY25}.

Fix a finite interval of indices $K=\llbracket k_1,k_2\rrbracket\subseteq\Sigma$ with $k_2\leq N-1$, and an interval $[a,b]\subseteq[\alpha,\beta]$. Let $\vec{x} = \bigl( \mathcal{L}_{k_1}(a),\ldots,\mathcal{L}_{k_2}(a) \bigr)$, $\vec{y} = \bigl( \mathcal{L}_{k_1}(b),\ldots,\mathcal{L}_{k_2}(b) \bigr)$, and set $f=\mathcal{L}_{k_1-1}|_{[a,b]}$ and $g=\mathcal{L}_{k_2+1}|_{[a,b]}$, with the convention that $f=\infty$ when $k_1=1$. By the defining properties of conditional expectation, to show \eqref{Eq.AreaTiltedBGP}, it suffices to prove that for every bounded Borel-measurable function $F:C(K\times[a,b])\rightarrow\mathbb{R}$ and every bounded $\mathcal{F}_{\mathrm{ext}}(K\times(a,b))$-measurable random variable $Y$, 
\begin{equation}\label{Eq.PeriodicBGPTestIdentity} \mathbb{E}\left[ F\left(\mathcal{L}|_{K\times[a,b]}\right)Y \right] = \mathbb{E}\left[ \mathbb{E}_{\mathrm{avoid},A}^{k_1,k_2,a,b, \vec{x},\vec{y},f,g} \left[F(\mathcal{Q})\right]Y \right]. 
\end{equation} 

Fix $m\in\mathbb{N}$, $n_1,\ldots,n_m\in\Sigma$, $t_1,\ldots,t_m\in[\alpha,\beta]$, and bounded continuous functions $h_1,\ldots,h_m:\mathbb{R}\rightarrow\mathbb{R}$. Set 
$$ I=\llbracket1,k_2\rrbracket, \hspace{2mm} \mathsf{R}_0 = \left\{ j\in\llbracket1,m\rrbracket:n_j\in I \right\}, \hspace{2mm} \mathsf{R} = \left\{ j\in\llbracket1,m\rrbracket: n_j\in K,\ t_j\in[a,b] \right\}, $$ 
and note that $\mathsf{R}\subseteq\mathsf{R}_0$. We claim that 
\begin{equation}\label{Eq.PeriodicBGPCylinderIdentity} \begin{split} \mathbb{E}\left[ \prod_{j=1}^m h_j\bigl(\mathcal{L}_{n_j}(t_j)\bigr) \right] ={}& \mathbb{E}\left[ \prod_{j\notin\mathsf{R}} h_j\bigl(\mathcal{L}_{n_j}(t_j)\bigr) \cdot \mathbb{E}_{\mathrm{avoid},A}^{k_1,k_2,a,b, \vec{x},\vec{y},f,g} \left[ \prod_{j\in\mathsf{R}} h_j\bigl(\mathcal{Q}_{n_j}(t_j)\bigr) \right] \right]. \end{split} 
\end{equation} 
By the monotone class argument in \cite[Lemma~A.1]{DY25}, we see that (\ref{Eq.PeriodicBGPCylinderIdentity}) implies \eqref{Eq.PeriodicBGPTestIdentity}, and so it suffices to establish the former.

Set $\vec{z} = \bigl( \mathcal{L}_1(\alpha),\ldots,\mathcal{L}_{k_2}(\alpha) \bigr)$ and $h=\mathcal{L}_{k_2+1}|_{[\alpha,\beta]}$. By the tower property of conditional expectation and (\ref{Eq.PeriodicAreaTiltedBGP}), we have
\begin{equation}\label{Eq.PeriodicBGPFirstApplication} \begin{split} \mathbb{E}\left[ \prod_{j=1}^m h_j\bigl(\mathcal{L}_{n_j}(t_j)\bigr) \right] = \mathbb{E}\left[ \prod_{j\notin\mathsf{R}_0} h_j\bigl(\mathcal{L}_{n_j}(t_j)\bigr) \cdot \mathbb{E}_{\mathrm{avoid},A}^{1,k_2,\alpha,\beta, \vec{z},\vec{z},\infty,h} \left[ \prod_{j\in\mathsf{R}_0} h_j\bigl(\mathcal{Q}_{n_j}(t_j)\bigr) \right] \right]. 
\end{split} 
 \end{equation} 
For the remainder of the argument inside the expectation $\mathbb{E}_{\mathrm{avoid},A}^{1,k_2,\alpha,\beta, \vec{z},\vec{z},\infty,h}$, we set $\mathcal{Q}_{k_2+1}=h$ and define $\vec{x}_{\mathcal{Q}} = \bigl( \mathcal{Q}_{k_1}(a),\ldots,\mathcal{Q}_{k_2}(a) \bigr)$, $\vec{y}_{\mathcal{Q}} = \bigl( \mathcal{Q}_{k_1}(b),\ldots,\mathcal{Q}_{k_2}(b) \bigr)$, as well as $f_{\mathcal{Q}} = \mathcal{Q}_{k_1-1}|_{[a,b]}$, $g_{\mathcal{Q}} = \mathcal{Q}_{k_2+1}|_{[a,b]}$, with the convention that $f_{\mathcal{Q}}=\infty$ when $k_1=1$. 

Applying \cite[Lemma~2.6]{DimitrovSerio} with
$A_i=A$ for every $i\in\llbracket1,k_2\rrbracket$, we see that the
line ensemble consisting of $\mathcal{Q}_1,\ldots,\mathcal{Q}_{k_2}$
together with the lower boundary $\mathcal{Q}_{k_2+1}=h$ satisfies the
partial Brownian Gibbs property with $A$-area tilt. Applying (\ref{Eq.AreaTiltedBGP}) to this ensemble and the tower property gives
\begin{equation}\label{Eq.PeriodicBGPFiniteEnsemble} 
\begin{split} & \mathbb{E}_{\mathrm{avoid},A}^{1,k_2,\alpha,\beta, \vec{z},\vec{z},\infty,h} \left[ \prod_{j\in\mathsf{R}_0} h_j\bigl(\mathcal{Q}_{n_j}(t_j)\bigr) \right]\\ &\quad= \mathbb{E}_{\mathrm{avoid},A}^{1,k_2,\alpha,\beta, \vec{z},\vec{z},\infty,h} \left[ \prod_{j\in\mathsf{R}_0\setminus\mathsf{R}} h_j\bigl(\mathcal{Q}_{n_j}(t_j)\bigr) \cdot \mathbb{E}_{\mathrm{avoid},A}^{k_1,k_2,a,b, \vec{x}_{\mathcal{Q}},\vec{y}_{\mathcal{Q}}, f_{\mathcal{Q}},g_{\mathcal{Q}}} \left[ \prod_{j\in\mathsf{R}} h_j\bigl(\widetilde{\mathcal{Q}}_{n_j}(t_j)\bigr) \right] \right], 
\end{split} 
\end{equation} 
where $\widetilde{\mathcal{Q}}$ denotes the line ensemble appearing in the innermost expectation. 

Substituting \eqref{Eq.PeriodicBGPFiniteEnsemble} into \eqref{Eq.PeriodicBGPFirstApplication} and applying (\ref{Eq.PeriodicAreaTiltedBGP}) to the right side, we obtain (\ref{Eq.PeriodicBGPCylinderIdentity}). This proves (2) and completes the proof.
\end{proof}

%----------------------------------------------------------------------
%
% Section 6.2
%
%----------------------------------------------------------------------
\subsection{Random loops}\label{Section6.2} In this section, we introduce certain random periodic discrete paths that naturally arise within the random cylindric plane partition models of Section \ref{Section1.2}. The properties of these paths will be used in Section \ref{Section6.3} to prove the Gibbs property of the periodic Airy line ensemble in Theorem \ref{Thm.GibbsProperty}. 

Fix $M\in\mathbb{N}$ and $x\in\mathbb{Z}$. We call a function
$L:\llbracket -M,M\rrbracket\rightarrow\mathbb{Z}$
a {\em loop} with endpoint $x$ if
\begin{equation}\label{Eq.GeometricLoopConditions}
L(-M)=L(M)=x,
\qquad
L(t)-L(t-1)\in \begin{cases} \mathbb{Z}_{\geq0} & \text{ for }t\in\llbracket-M+1,0\rrbracket, \\ \mathbb{Z}_{\leq 0}
& \text{ for }t\in\llbracket1,M\rrbracket. \end{cases}
\end{equation}
We denote the set of all such loops by $\Omega_{x,M}^{\mathrm{loop}}$. Thus, a loop is weakly increasing on $\llbracket-M,0\rrbracket$, weakly decreasing on $\llbracket0,M\rrbracket$, and returns to its starting value at time $M$. We also let $\Omega_{M}^{\mathrm{loop}} = \bigcup_{x \in \mathbb{Z}} \Omega_{x,M}^{\mathrm{loop}}$ denote the set of all loops on $[-M, M]$.

If $L\in\Omega_{x,M}^{\mathrm{loop}}$ and $L'\in\Omega_{y,M}^{\mathrm{loop}}$, we say that $L$ and $L'$ {\em interlace}, and write $L\preceq L'$, if
\begin{equation}\label{Eq.GeometricLoopInterlacing}
L(t)\leq L'(t-1)
\quad\text{for }t\in\llbracket-M+1,0\rrbracket,
\qquad
L(t)\leq L'(t+1)
\quad\text{for }t\in\llbracket0,M-1\rrbracket.
\end{equation}

We next introduce the probability measure on loops that we will consider.
\begin{definition}\label{Def.GeometricLoopMeasure}
Fix $M\in\mathbb{N}$, $x\in\mathbb{Z}$, and $b\in(0,1)$. We denote by $\mathbb{P}_{\mathrm{loop}}^{b,M,x}$ the probability measure on $\Omega_{x,M}^{\mathrm{loop}}$ given by
\begin{equation}\label{Eq.GeometricLoopMeasure}
\mathbb{P}_{\mathrm{loop}}^{b,M,x}(L)
=\frac{b^{2L(0)}}{Z_{\mathrm{loop}}^{b,M,x}}, \mbox{ where } Z_{\mathrm{loop}}^{b,M,x} = \sum_{L\in\Omega_{x,M}^{\mathrm{loop}}}b^{2L(0)}.
\end{equation}
We write $\mathbb{E}_{\mathrm{loop}}^{b,M,x}$ for expectation with respect to $\mathbb{P}_{\mathrm{loop}}^{b,M,x}$.
\end{definition}
\begin{remark}\label{Rem.ZLoop} Let us briefly explain why $Z_{\mathrm{loop}}^{b,M,x} \in (0, \infty)$, and hence $\mathbb{P}_{\mathrm{loop}}^{b,M,x}$ is a well-defined probability measure. If $L(0)=x+n$ for some $n\in\mathbb{Z}_{\geq0}$, then the $M$ nonnegative increments of $L$ on $\llbracket-M,0\rrbracket$ have sum $n$. By the usual stars-and-bars argument, there are
$$
\binom{n+M-1}{M-1}
$$
possible choices for these increments. Arguing the same way for $\llbracket 0, M \rrbracket$, we get 
\begin{equation}\label{Eq.GeometricLoopMidpointCount}
\#\left\{L\in\Omega_{x,M}^{\mathrm{loop}}:L(0)=x+n\right\}
=
\binom{n+M-1}{M-1}^{2}.
\end{equation}
It follows that
\begin{equation}\label{Eq.GeometricLoopNormalization}
\begin{split}
Z_{\mathrm{loop}}^{b,M,x}
= b^{2x} \sum_{n=0}^{\infty}
\binom{n+M-1}{M-1}^{2}b^{2n}.
\end{split}
\end{equation}
From (\ref{Eq.GeometricLoopNormalization}), we see that $Z_{\mathrm{loop}}^{b,M,x} \in (0, \infty)$ as the series converges for $b \in (0,1)$ by the ratio test. 
\end{remark}

As in Section~\ref{Section2.2}, we identify a loop $L$ with the continuous function on $[-M,M]$ obtained by linearly interpolating the points $(t,L(t))$, $t\in\llbracket-M,M\rrbracket$. In this way, a random loop with law $\mathbb{P}_{\mathrm{loop}}^{b,M,x}$ may be viewed as a random element of $C([-M,M])$. Similarly, collections of loops defined on the same probability space will be viewed as line ensembles using the notation introduced in Section~\ref{Section2.2}, without further comment.

\medskip

Our goal in the remainder of this section is to show that, under an appropriate scaling, a sequence of loops $L^n$ with laws $\mathbb{P}_{\mathrm{loop}}^{b,M_n,x_n}$ converges to a Brownian bridge. The precise statement appears in Lemma~\ref{Lem.ConvBM}. Before stating it, we introduce some notation from \cite{D24b} and recall a few results that will be used in its proof.

\begin{definition}\label{Def.IncreasingPath}
Fix $t_1, t_2 \in \mathbb{Z}$ with $t_1 < t_2$. We call a function $f: \llbracket t_1, t_2 \rrbracket \rightarrow \mathbb{Z}$ such that $f(j+1) - f(j) \in \mathbb{Z}_{\geq 0}$ when $j \in\llbracket t_1, t_2 -1 \rrbracket$  an {\em increasing path}. As usual, by linearly interpolating the points $(t,f(t))$, we can view increasing paths as random elements of $C([t_1,t_2])$, as in Section~\ref{Section2.2}.

 Suppose next that $z_1, z_2 \in \mathbb{Z}$ satisfy $z_1 \leq z_2$. We denote by $\Omega(t_1,t_2,z_1,z_2)$ the collection of increasing paths that start from $(t_1,z_1)$ and end at $(t_2,z_2)$, by $\mathbb{P}_{\operatorname{Geom}}^{t_1,t_2, z_1, z_2}$ the uniform distribution on $\Omega(t_1,t_2,z_1,z_2)$ and write $\mathbb{E}^{t_1,t_2,z_1,z_2}_{\operatorname{Geom}}$ for the expectation with respect to this measure. 
\end{definition}
\begin{remark} One thinks of the distribution $\mathbb{P}_{\operatorname{Geom}}^{t_1,t_2, z_1, z_2}$ as the law of a random walk with i.i.d.\ geometric increments with parameter $b \in (0,1)$ that starts from $z_1$ at time $t_1$ and is conditioned to end at $z_2$ at time $t_2$---this interpretation does not depend on the choice of $b \in (0,1)$. Notice that by our assumptions on the parameters the state space $\Omega(t_1,t_2,z_1,z_2)$ is non-empty.
\end{remark}

We next state a strong coupling between geometric random walk bridges and Brownian bridges, which follows from the results in \cite{DW19}. 

\begin{proposition}\label{Prop.KMT} Fix $p \in (0, \infty)$, and set $\sigma = \sqrt{p(p+1)}$. There exist constants $0< C, a, \alpha' < \infty$, depending on $p$ alone, such that, for any $n \in \mathbb{N}$ and any $z \in \mathbb{Z}_{\geq 0}$, we can find a Brownian bridge $B^{\sigma}$ from $0$ to $0$ on $[0,1]$ with diffusion parameter $\sigma$, and a random increasing path $L$ with law $\mathbb{P}_{\operatorname{Geom}}^{0, n, 0,z}$, defined on the same probability space, such that
\begin{equation}\label{Eq.KMT}
\mathbb{E}\left[e^{a \Delta(n,z)} \right] \leq C n^{\alpha'} e^{(z- pn)^2/n}, \mbox{ where $\Delta(n,z):=  \sup_{0 \leq t \leq n} \left| \sqrt{n} B^\sigma_{t/n} + \frac{t}{n} \cdot z - L(t) \right|$.}
\end{equation}
\end{proposition}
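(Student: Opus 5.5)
This follows from the strong coupling of \cite{DW19}; the work is matching the uniform measure on increasing paths with a conditioned geometric walk and then transferring the coupling estimate. First I would note that $\mathbb{P}_{\operatorname{Geom}}^{0,n,0,z}$ is exactly the law of a random walk with i.i.d.\ geometric increments of parameter $q$, started at $0$ and conditioned on $S_n=z$. Indeed, the weight $\prod_j (1-q)q^{\Delta_j}=(1-q)^n q^{z}$ is constant on $\Omega(0,n,0,z)$, so conditioning gives the uniform law. I would choose $q=p/(1+p)$. Then the increment mean is $q/(1-q)=p$ and the variance is $q/(1-q)^2=p(p+1)=\sigma^2$, which explains the choice of $\sigma$.

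Next I would invoke the main theorem of \cite{DW19}, a KMT-type coupling for random walk bridges with lattice increments having exponential moments. For each $n$ and each $z$ in the support, it provides a probability space carrying the conditioned walk $L$ and a Brownian bridge $B^\sigma$ from $0$ to $0$ with diffusion $\sigma$, such that
$$\mathbb{E}\bigl[e^{a\Delta(n,z)}\bigr]\le C n^{\alpha'} e^{(z-pn)^2/n}.$$
In that theorem the bridge endpoint enters through the drift term $\tfrac{t}{n}z$. The exponential factor in $(z-pn)^2/n$ reflects the cost of the atypical endpoint, and the polynomial factor $n^{\alpha'}$ comes from the local limit theorem for the normalizing probability $\mathbb{P}(S_n=z)$. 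The increment law is lattice with span $1$ and has finite exponential moments near $0$, so the hypotheses of \cite{DW19} are satisfied. I would then record that $\sqrt{n}B^\sigma_{t/n}+\tfrac{t}{n}z$ is a Brownian bridge from $0$ to $z$ on $[0,n]$ with diffusion $\sigma$, which matches \eqref{Eq.KMT}.

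The main obstacle is making sure the constants $C,a,\alpha'$ in \cite{DW19} are uniform in the endpoint $z$, since $z$ ranges over all of $\mathbb{Z}_{\ge0}$. Their theorem is stated with exactly this explicit $e^{(z-pn)^2/n}$ dependence, so I would check it against the precise form of their statement, possibly after recentering the walk by $p$ and rescaling. I would also check that the case $z$ outside a region where their local estimates apply is absorbed into $C$. This can be done by enlarging $C$ and $\alpha'$, because $\Delta(n,z)\le 2z+\sigma\sqrt n\sup|B|$ trivially and $e^{(z-pn)^2/n}$ grows quickly there.
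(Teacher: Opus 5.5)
Your proposal is correct and matches the paper's proof. The paper takes the geometric law with parameter $b=p/(1+p)$, so that the mean is $p$ and the variance is $p(p+1)=\sigma^2$, and applies \cite[Theorem 4]{DW19}; the hypotheses of that theorem are verified for the geometric distribution in \cite[Example 2]{DW19}. The uniformity over all $z\in\mathbb{Z}_{\geq 0}$, with the explicit factor $e^{(z-pn)^2/n}$, is already part of the statement of \cite[Theorem 4]{DW19}, so the fallback argument in your last paragraph is not needed.
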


\begin{proof} We fix $b \in (0,1)$ such that $p = \frac{b}{1-b}$. From \cite[Example 2, page 719]{DW19} we have that the geometric distribution with parameter $b$ satisfies the conditions of \cite[Theorem 4]{DW19}, which implies the statement of the proposition.
\end{proof}

We also require the following simple decomposition of a Brownian bridge, which is related to the L{\'e}vy--Ciesielski construction of Brownian motion discussed in \cite[Section 1.2]{McK69}.
\begin{lemma}\label{Lem.LevyBM} Fix $x \in \mathbb{R}$, $\mathfrak{m} > 0$, and let $Z$ be a Gaussian random variable with $\mathbb{E}[Z] = 0$ and $\mathrm{Var}(Z) = \mathfrak{m}/2$. In addition, suppose that $B_1, B_2$ are two Brownian bridges from $0$ to $0$ on $[0, \mathfrak{m}]$ with diffusion parameter $1$, such that $B_1$, $B_2$, and $Z$ are all independent. Define the random element $\mathcal{L}$ of $C([-\mathfrak{m}, \mathfrak{m}])$ by 
\begin{equation}\label{Eq.LevyBM}
\mathcal{L}(t) = x + \begin{cases} B_1(t + \mathfrak{m}) + \frac{t +\mathfrak{m}}{\mathfrak{m}} \cdot Z & \mbox{ if $t \in [-\mathfrak{m}, 0]$, } \\ B_2(t ) + \frac{\mathfrak{m} - t}{\mathfrak{m}} \cdot Z & \mbox{ if $t \in [0,\mathfrak{m}]$.}\end{cases}
\end{equation}
Then $\mathcal{L} \overset{d}{=} B$, where $B$ is a Brownian bridge from $x$ to $x$ on $[-\mathfrak{m}, \mathfrak{m}]$ with diffusion parameter $1$.
\end{lemma}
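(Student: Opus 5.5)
Since both $\mathcal{L}$ and $B$ are continuous Gaussian processes on $[-\mathfrak{m},\mathfrak{m}]$, the plan is to show they have the same mean and covariance functions. By \cite[Lemma 3.1]{DM21} (or simply because continuous processes are determined by their finite-dimensional laws), this gives equality in law as random elements of $C([-\mathfrak{m},\mathfrak{m}])$. Gaussianity of $\mathcal{L}$ is immediate: it is a linear image of the independent Gaussian objects $B_1$, $B_2$, $Z$. Continuity at $t=0$ holds because $B_1(\mathfrak{m})=0=B_2(0)$, so both branches of (\ref{Eq.LevyBM}) equal $x+Z$ there.

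For the target process, (\ref{Eq.BrownianBridge}) with $a=-\mathfrak{m}$, $b=\mathfrak{m}$ and $\sigma=1$ shows that $B$ has mean $x$ and covariance
$$\mathrm{Cov}(B(s),B(t))=\frac{(s+\mathfrak{m})(\mathfrak{m}-t)}{2\mathfrak{m}}, \qquad s\le t.$$
Similarly, $B_1$ and $B_2$ on $[0,\mathfrak{m}]$ have covariance $s(\mathfrak{m}-t)/\mathfrak{m}$ for $s\le t$. The process $\mathcal{L}$ clearly has mean $x$, so only the covariances need to be compared.

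Fix $s\le t$ and check three cases.

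\textbf{Case 1: $s\le t\le 0$.} Only $B_1$ and $Z$ contribute. With $\sigma'=s+\mathfrak{m}$ and $\tau=t+\mathfrak{m}$,
$$\mathrm{Cov}(\mathcal{L}(s),\mathcal{L}(t))=\frac{\sigma'(\mathfrak{m}-\tau)}{\mathfrak{m}}+\frac{\sigma'\tau}{\mathfrak{m}^2}\cdot\frac{\mathfrak{m}}{2}=\frac{\sigma'(2\mathfrak{m}-\tau)}{2\mathfrak{m}}.$$
Since $2\mathfrak{m}-\tau=\mathfrak{m}-t$, this equals $(s+\mathfrak{m})(\mathfrak{m}-t)/(2\mathfrak{m})$, as required.

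\textbf{Case 2: $0\le s\le t$.} Only $B_2$ and $Z$ contribute, and the computation is symmetric:
$$\frac{s(\mathfrak{m}-t)}{\mathfrak{m}}+\frac{(\mathfrak{m}-s)(\mathfrak{m}-t)}{2\mathfrak{m}}=\frac{(\mathfrak{m}+s)(\mathfrak{m}-t)}{2\mathfrak{m}}.$$

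\textbf{Case 3: $s\le 0\le t$.} Since $B_1$ and $B_2$ are independent, only the $Z$ term survives:
$$\frac{(s+\mathfrak{m})(\mathfrak{m}-t)}{\mathfrak{m}^2}\cdot\frac{\mathfrak{m}}{2}=\frac{(s+\mathfrak{m})(\mathfrak{m}-t)}{2\mathfrak{m}}.$$

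In all three cases the covariances match, which completes the argument. There is no real obstacle here; the only point needing care is keeping the time shift $t+\mathfrak{m}$ consistent in Case 1.
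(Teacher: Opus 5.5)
Your proposal is correct and follows essentially the paper's route. The paper shows that $\mathcal{L}$ and $B$ are Gaussian with the same mean and covariance, hence have the same finite-dimensional distributions. It then gets equality in law on $C([-\mathfrak{m},\mathfrak{m}])$ because finite-dimensional sets form a separating class. Your three covariance computations, which the paper leaves implicit, all check out.
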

\begin{proof} 
The statement is quite standard and analogous to \cite[Lemma 2.8]{CorHamA} with $j = 2$. It is proved by first showing that $\mathcal{L}$ and $B$ are Gaussian processes with the same mean and covariance. This implies that $\mathcal{L}$ and $B$ have the same finite-dimensional distributions. The equality of laws as random elements in $C([-\mathfrak{m}, \mathfrak{m}])$ then follows from the fact that finite-dimensional sets form a separating class in $C([-\mathfrak{m}, \mathfrak{m}])$, see \cite[Example 1.3]{Billing}.
\end{proof}

With the above notation in place, we can now state the main result of this section.
\begin{lemma}\label{Lem.ConvBM} Assume the same notation as in Definitions \ref{Def.GeometricLoopMeasure} and \ref{Def.IncreasingPath}. Fix parameters $b \in (0,1)$, $\mathfrak{m} > 0$, $x \in \mathbb{R}$, and set $p = \frac{b}{1-b}$ and $\sigma = \sqrt{p(p+1)} = \frac{b^{1/2}}{1-b}$. Let $M_n \in \mathbb{N}$, $d_n \in (0,\infty)$, and $x_n \in \mathbb{Z}$ be sequences such that 
\begin{equation}\label{Eq.ConvBMScaling}
d_n \rightarrow \infty, \hspace{2mm} M_n/d_n \rightarrow \mathfrak{m}, \hspace{2mm} \sigma^{-1}d_n^{-1/2}(x_n + p M_n) \rightarrow x.
\end{equation}
Suppose that $L^n$ has law $\mathbb{P}_{\mathrm{loop}}^{b,M_n,x_n}$, and define the following random element in $C([-\mathfrak{m},\mathfrak{m}])$
\begin{equation}\label{Eq.ScaledPaths}
\mathcal{L}^n(t) = \sigma^{-1}d_n^{-1/2} \cdot \left(L^n\left(\frac{tM_n}{\mathfrak{m}}\right) + \frac{pM_n|t|}{\mathfrak{m}} \right), \hspace{2mm} t\in[-\mathfrak{m}, \mathfrak{m}].
\end{equation} 
Then $\mathcal{L}^n \Rightarrow B$, where $B$ is a Brownian bridge from $x$ to $x$ on $[-\mathfrak{m}, \mathfrak{m}]$ with diffusion parameter $1$.
\end{lemma}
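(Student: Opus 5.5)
The plan is to decompose the loop at its midpoint and match the pieces with the decomposition of Lemma~\ref{Lem.LevyBM}. Under $\mathbb{P}_{\mathrm{loop}}^{b,M_n,x_n}$ the weight $b^{2L(0)}$ depends only on $L(0)$. Hence, conditionally on $L^n(0)=x_n+N$, the two halves $L^n|_{\llbracket -M_n,0\rrbracket}$ and $L^n|_{\llbracket 0,M_n\rrbracket}$ (the latter reversed in time) are independent and uniform on $\Omega(0,M_n,0,N)$ after shifting, i.e.\ they have law $\mathbb{P}_{\operatorname{Geom}}^{0,M_n,0,N}$. By (\ref{Eq.GeometricLoopMidpointCount}), the midpoint increment $N_n=L^n(0)-x_n$ has law proportional to $\binom{N+M_n-1}{M_n-1}^2 b^{2N}$. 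This is exactly the law of $\sum_{j=1}^{M_n}G_j$ for i.i.d.\ geometric $G_j$ with parameter $b$, reweighted by $\mathbb{P}(\sum G_j = N)$; equivalently, it is the law of $S^{(1)}$ conditioned on $S^{(1)}=S^{(2)}$, where $S^{(1)},S^{(2)}$ are independent sums of $M_n$ geometrics.

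The first step is to show that $\sigma^{-1}d_n^{-1/2}(N_n-pM_n)$ converges weakly to $Z\sim N(0,\mathfrak{m}/2)$. A local CLT for geometric sums gives $\mathbb{P}(\sum_{j\le M_n} G_j=N)=(2\pi\sigma^2M_n)^{-1/2}e^{-(N-pM_n)^2/(2\sigma^2M_n)}(1+o(1))$ uniformly for $|N-pM_n|\le A\sqrt{M_n}$. The law of $N_n$ is proportional to the square of this, so it is asymptotically Gaussian with variance $\sigma^2M_n/2\approx \sigma^2 d_n\mathfrak{m}/2$, which is the claimed limit. Tails are controlled by Chernoff bounds, giving tightness of $(N_n-pM_n)/\sqrt{M_n}$. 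I expect this local-CLT step, together with the uniform tail control, to be the main technical point, although it is standard.

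The second step handles each half. Conditional on $N_n=N$ with $|N-pM_n|\le A\sqrt{M_n}$, Proposition~\ref{Prop.KMT} couples the left half with a bridge $\sqrt{M_n}B^{\sigma}_{t/M_n}+tN/M_n$ so that $\mathbb{E}[e^{a\Delta}]\le CM_n^{\alpha'}e^{A^2}$. Markov's inequality then gives $\Delta/\sqrt{d_n}\to0$ in probability, uniformly over such $N$. Rescaling as in (\ref{Eq.ScaledPaths}), $\sqrt{M_n}\sigma B^1_{(t+M_n)/M_n}/(\sigma\sqrt{d_n})$ becomes a Brownian bridge on $[0,\mathfrak{m}]$ with diffusion parameter $1$, since $M_n/d_n\to\mathfrak{m}$. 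The linear drift $tN/M_n$ combined with the correction $pM_n|t|/\mathfrak{m}$ yields $\frac{t+\mathfrak{m}}{\mathfrak{m}}\sigma^{-1}d_n^{-1/2}(N-pM_n)$ plus $\sigma^{-1}d_n^{-1/2}(x_n+pM_n)\to x$. The right half is treated symmetrically, using an independent coupling.

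Finally, I combine the two steps. For a bounded Lipschitz test function $F$ on $C([-\mathfrak{m},\mathfrak{m}])$, I condition on $N_n$ and use the coupling to replace $\mathcal{L}^n$ by the process in (\ref{Eq.LevyBM}) with $Z$ replaced by $Z_n=\sigma^{-1}d_n^{-1/2}(N_n-pM_n)$ and $x$ replaced by $x_n'\to x$, making an error $o(1)$ on the event $|Z_n|\le A'$. The convergence $Z_n\Rightarrow Z$ from the first step, together with the independence of $B_1,B_2$ from $Z_n$, then shows that the expectation converges to that of the process in (\ref{Eq.LevyBM}). Lemma~\ref{Lem.LevyBM} identifies this limit as a Brownian bridge from $x$ to $x$, and letting $A'\to\infty$ completes the argument.
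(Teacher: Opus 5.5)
Your proof is correct, and its overall architecture is the paper's. Both prove a midpoint central limit theorem, couple the two halves independently via KMT (Proposition~\ref{Prop.KMT}) conditionally on the midpoint, and splice them through Lemma~\ref{Lem.LevyBM}. Your second step and final combination are essentially the paper's Step~2, with cosmetic differences only. The paper realizes couplings for all midpoint values $z\in\mathbb{Z}_{\geq0}$ on one probability space with an independent copy of the midpoint, works on a growing window $|N-pM_n|\leq d_n^{1/10}\sqrt{M_n}$, and concludes with the convergence-together lemma. You use a fixed window $A\sqrt{M_n}$, bounded Lipschitz test functions, and then $A\to\infty$. (The rescaled bridges have diffusion parameter $\sqrt{M_n/(\mathfrak{m}d_n)}\to1$ rather than exactly $1$, which your test-function argument absorbs.)

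The genuine difference is the midpoint CLT. The paper computes the characteristic function in closed form: it writes $\sum_{m}\binom{m+M-1}{M-1}^2z^m=F(M,M;1;z)=(1-z)^{-M}P_{M-1}\bigl(\tfrac{1+z}{1-z}\bigr)$ and applies Szeg\H{o}'s asymptotics for Legendre polynomials off $[-1,1]$ to the ratio $F(M_n,M_n;1;b^2e^{\mathrm{i}u_n})/F(M_n,M_n;1;b^2)$. You instead observe that $\binom{N+M-1}{M-1}(1-b)^Mb^N=\mathbb{P}(S_M=N)$ for a sum $S_M$ of $M$ i.i.d.\ geometric variables. Hence the midpoint increment is distributed as $S^{(1)}$ given $S^{(1)}=S^{(2)}$, and you conclude with the lattice local CLT plus a tail bound. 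Chebyshev already suffices for the tail: $\sum_{|N-pM|>A\sqrt{M}}\mathbb{P}(S_M=N)^2\leq\sup_N\mathbb{P}(S_M=N)\cdot\mathbb{P}(|S_M-pM|>A\sqrt{M})\leq CA^{-2}M^{-1/2}$, against a total mass at least $cM^{-1/2}$.

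Your route is more elementary and needs no exact hypergeometric identity, so it would adapt to loops built from other lattice step laws. The paper's route gives convergence of the characteristic functions for every $\xi$ at once, so it needs no separate tail control for the midpoint.
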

\begin{proof} For clarity, we split the proof into two steps. In the first step, we prove that $\mathcal{L}^n(0)$ converges to $B(0)$. In the second step, we use Proposition~\ref{Prop.KMT} to couple the restrictions of $\mathcal{L}^n$ to $[-\mathfrak{m},0]$ and $[0,\mathfrak{m}]$ with two independent Brownian bridges that are spliced together at $\mathcal{L}^n(0)$. Since $\mathcal{L}^n(0)$ converges to $B(0)$ by Step 1, Lemma~\ref{Lem.LevyBM} shows that the resulting spliced Brownian bridges converge to $B$. The convergence together lemma then implies that $\mathcal{L}^n$ converges to $B$ as well. 

\medskip

{\bf \raggedleft Step 1.} In this step, we prove that 
\begin{equation}\label{Eq.ConvToGsn}
\mathcal{L}^n(0) \Rightarrow x + Z,
\end{equation}
where $Z$ is a Gaussian random variable with $\mathbb{E}[Z] = 0$ and $\mathrm{Var}(Z) = \mathfrak{m}/2$.

From (\ref{Eq.GeometricLoopMeasure}) and (\ref{Eq.GeometricLoopMidpointCount}), we have, for $y\in\mathbb{Z}_{\geq0}$,
\begin{equation}\label{Eq.GeometricLoopMidpointDistribution}
\mathbb{P}\bigl(L^n(0)=x_n+y\bigr)
= \frac{ \binom{y+M_n-1}{M_n-1}^{2}b^{2y}
}{\sum_{m \geq 0} \binom{m+M_n-1}{M_n-1}^{2}b^{2m}}.
\end{equation}
From \cite[(15.2.1)]{DLMF}, we have, for $M\in\mathbb{N}$ and $|z|<1$,
\begin{equation}\label{Eq.GaussSum} F(M,M;1;z) = \sum_{m=0}^{\infty}
\binom{m+M-1}{M-1}^{2}z^m,
\end{equation}
where $F(M,M;1;z)$ is the Gauss hypergeometric function. Combining (\ref{Eq.ScaledPaths}), (\ref{Eq.GeometricLoopMidpointDistribution}), and (\ref{Eq.GaussSum}), we conclude that if $\xi\in\mathbb{R}$ and $u_n=\frac{\xi}{\sigma\sqrt{d_n}}$, then
\begin{equation}\label{Eq.MidpointCF}
\mathbb{E}\left[e^{\mathrm{i}\xi\mathcal{L}^n(0)}\right]
=
\exp\left(\frac{\mathrm{i}\xi x_n}{\sigma\sqrt{d_n}}\right)
\frac{F(M_n,M_n;1;b^2e^{\mathrm{i}u_n})}
{F(M_n,M_n;1;b^2)}.
\end{equation}

We next analyze the ratio on the right-hand side of (\ref{Eq.MidpointCF}). From \cite[(15.8.1) and (15.9.7)]{DLMF}, we have
\begin{equation}\label{Eq.HyperLegendre}
F(M,M;1;z)
= (1-z)^{-M} F\left(1-M,M;1;\frac{z}{z-1}\right) = (1-z)^{-M} P_{M-1}\left(\frac{1+z}{1-z}\right),
\end{equation}
where $P_j$ denotes the Legendre polynomial of degree $j$. From \cite[Theorem 8.21.1]{Sze75}, we have, for any compact set 
$K \subset \mathbb{C} \setminus [-1,1]$, as $M \rightarrow \infty$,
\begin{equation}\label{Eq.LegendreAsymptotics}
P_{M-1}(w)=
\frac{\left(w+(w^2-1)^{1/2}\right)^{M-1/2}(w^2-1)^{-1/4}}
{\sqrt{2\pi(M-1)}}
\left(1+o(1)\right),
\end{equation}
uniformly for $w \in K$, where $(w^2-1)^{-1/4}$, $(w^2-1)^{1/2}$ and $\left(w+(w^2-1)^{1/2}\right)^{M-1/2}$ are real and positive if $w$ is real and greater than $1$.

For $s$ in a sufficiently small complex neighborhood of $b$, set
$$
w(s)=\frac{1+s^2}{1-s^2}.
$$
This neighborhood can be chosen so that $w(s)$ remains in a compact subset of $\mathbb{C}\setminus[-1,1]$, and, with the principal branches,
$$
\sqrt{w(s)^2-1}
=
\frac{2s}{1-s^2},
\qquad
w(s)+\sqrt{w(s)^2-1}
=
\frac{1+s}{1-s}.
$$
Combining these identities with (\ref{Eq.HyperLegendre}) and (\ref{Eq.LegendreAsymptotics}), we get
\begin{equation}\label{Eq.HyperRatioAsymptotics}
\frac{F(M_n,M_n;1;b^2e^{\mathrm{i}u_n})}
{F(M_n,M_n;1;b^2)}
=
\left(\frac{1-be^{\mathrm{i}u_n/2}}{1-b}\right)^{1-2M_n}
e^{-\mathrm{i}u_n/4}
\left(1+o(1)\right).
\end{equation}

For $u$ sufficiently small,
\begin{equation}\label{Eq.ExpandLog}
\log(1-be^{\mathrm{i}u/2})
=
\log(1-b)-\frac{\mathrm{i} p}{2}u+\frac{\sigma^2}{8}u^2+O(u^3),
\end{equation}
where we recall that $p=\frac{b}{1-b}$ and $\sigma^2=p(p+1)=\frac{b}{(1-b)^2}$. Using (\ref{Eq.ExpandLog}) in (\ref{Eq.HyperRatioAsymptotics}), together with
$u_n=\xi/(\sigma\sqrt{d_n})$ and the assumptions in
(\ref{Eq.ConvBMScaling}), we obtain
\begin{equation}\label{Eq.HyperRatioLog}
\frac{F(M_n,M_n;1;b^2e^{\mathrm{i}u_n})}
{F(M_n,M_n;1;b^2)}
=
\exp\left(
\mathrm{i}pM_nu_n-\frac{\sigma^2M_n}{4}u_n^2+o(1)
\right).
\end{equation}
Substituting (\ref{Eq.HyperRatioLog}) into (\ref{Eq.MidpointCF}) gives
\begin{equation*}
\mathbb{E}\left[e^{\mathrm{i}\xi\mathcal{L}^n(0)}\right]
=
\exp\left(
\frac{\mathrm{i}\xi(x_n+pM_n)}{\sigma\sqrt{d_n}}
-\frac{\xi^2M_n}{4d_n}
+o(1)
\right)\rightarrow
\exp\left(
\mathrm{i}\xi x-\frac{\mathfrak{m}\xi^2}{4}
\right).
\end{equation*}
The expression on the right is the characteristic function of $x+Z$, which implies (\ref{Eq.ConvToGsn}).

\medskip 

{\bf \raggedleft Step 2.} In this step, we finish the proof of the lemma. 

For each
$n\in\mathbb{N}$, let $(\Omega_n,\mathcal{F}_n,\mathbb{P}_n)$ be a probability
space that supports, for every $z\in\mathbb{Z}_{\geq0}$ and $j\in\{1,2\}$, a pair $(L_j^{n,z},B_j^{\sigma,n,z})$ such that the following holds. Under $\mathbb{P}_n$, $L_j^{n,z}$ has law $\mathbb{P}_{\operatorname{Geom}}^{0,M_n,0,z}$, $B_j^{\sigma,n,z}$ is a Brownian bridge from $0$ to $0$ on $[0,1]$ with diffusion parameter $\sigma$, and, upon setting
\begin{equation}\label{Eq.KMTErrorNZ}
\Delta_j^{n,z}
:=
\sup_{0\leq t\leq M_n}
\left|
\sqrt{M_n}B^{\sigma,n,z}_{j,t/M_n}
+\frac{t}{M_n}z-L_j^{n,z}(t)
\right|,
\end{equation}
we have
\begin{equation}\label{Eq.KMTErrorBoundNZ}
\mathbb{E}_n\left[e^{a\Delta_j^{n,z}}\right]
\leq
CM_n^{\alpha'}e^{(z-pM_n)^2/M_n}.
\end{equation}
We assume that all pairs $(L_j^{n,z},B_j^{\sigma,n,z})$ for $z \geq 0$ are mutually independent. The existence of this space is ensured by Proposition \ref{Prop.KMT}.

By possibly enlarging the probability space, we may also assume that it
supports a random variable $Z_n$, independent of all the above pairs, with
distribution
\begin{equation}\label{Eq.ZnDistribution}
\mathbb{P}_n(Z_n=y)
=
\mathbb{P}\bigl(L^n(0)=x_n+y\bigr),
\qquad y\in\mathbb{Z}_{\geq0}.
\end{equation}
Define a random loop $\widetilde L^n$ on $[-M_n,M_n]$ by
\begin{equation}\label{Eq.TildeLoop}
\widetilde L^n(t)
=
x_n+
\begin{cases}
L_1^{n,Z_n}(t+M_n), & t\in[-M_n,0],\\
L_2^{n,Z_n}(M_n-t), & t\in[0,M_n].
\end{cases}
\end{equation}
Conditional on $Z_n=z$, the paths $L_1^{n,z}$ and $L_2^{n,z}$ are
independent and each is uniformly distributed on $\Omega(0,M_n,0,z)$.
On the other hand, using Definition~\ref{Def.GeometricLoopMeasure}, we see
that conditional on $L^n(0)=x_n+z$, the first half of $L^n$ and the
time-reversal of its second half have precisely this distribution. Consequently,
\begin{equation}\label{Eq.TildeLoopLaw}
\widetilde L^n\overset{d}{=}L^n.
\end{equation}

Let $\widetilde{\mathcal L}^n$ denote the rescaling of $\widetilde L^n$ as
in (\ref{Eq.ScaledPaths}), and set
\begin{equation}\label{Eq.anYn}
a_n=\frac{x_n+pM_n}{\sigma\sqrt{d_n}},
\qquad
Y_n= \frac{x_n+Z_n}{\sigma\sqrt{d_n}}.
\end{equation}
In view of (\ref{Eq.TildeLoopLaw}), we have $\widetilde{\mathcal L}^n\overset{d}{=}\mathcal{L}^n$, and so to prove the lemma, it suffices to show that
\begin{equation}\label{Eq.ConvBMRed1}
\widetilde{\mathcal L}^n\Rightarrow B.
\end{equation}

\medskip

Our next task is to define a Brownian approximation to
$\widetilde{\mathcal L}^n$ and show that it converges to $B$. For $t\in[0,\mathfrak{m}]$, define
\begin{equation}\label{Eq.BnDefinition}
B_1^n(t)
=
\frac{\sqrt{M_n}}{\sigma\sqrt{d_n}}
B_{1,t/\mathfrak{m}}^{\sigma,n,Z_n},
\qquad
B_2^n(t)
=
\frac{\sqrt{M_n}}{\sigma\sqrt{d_n}}
B_{2,1-t/\mathfrak{m}}^{\sigma,n,Z_n}.
\end{equation}
Since the law of $B_j^{\sigma,n,z}$ does not depend on $z$, the random
elements $Y_n,B_1^n,B_2^n$ are independent. Moreover, $B_1^n$ and $B_2^n$
are Brownian bridges from $0$ to $0$ on $[0,\mathfrak{m}]$ with diffusion
parameter $\sqrt{\frac{M_n}{\mathfrak{m}d_n}}$, which converges to $1$ by (\ref{Eq.ConvBMScaling}). In addition, by (\ref{Eq.ConvBMScaling}), (\ref{Eq.ConvToGsn}), (\ref{Eq.ZnDistribution}), and (\ref{Eq.anYn}), we have
\begin{equation}\label{Eq.YnConvergence}
Y_n\Rightarrow x+Z, \hspace{2mm} \frac{Z_n - pM_n}{\sigma \sqrt{d_n}} \Rightarrow Z, \mbox{ and } a_n \rightarrow x.
\end{equation}
Consequently, if
$B_1,B_2$ are independent Brownian bridges from $0$ to $0$ on
$[0,\mathfrak{m}]$ with diffusion parameter $1$, also independent of $Z$,
then
\begin{equation}\label{Eq.JointBridgeConvergence}
(a_n,Y_n,B_1^n,B_2^n)
\Rightarrow
(x, x+Z,B_1,B_2) \mbox{ in $\mathbb{R}^2 \times
C_0([0,\mathfrak{m}])^2$,}
\end{equation}
see \cite[Theorem 2.8(ii)]{Billing}, where
$$
C_0([0,\mathfrak{m}])
=
\left\{
f\in C([0,\mathfrak{m}]):f(0)=f(\mathfrak{m})=0
\right\}.
$$

Define
$\Phi:\mathbb{R}^2\times C_0([0,\mathfrak{m}])^2\rightarrow
C([-\mathfrak{m},\mathfrak{m}])$ by
\begin{equation}\label{Eq.PhiDefinition}
\Phi(a,y,f_1,f_2)(t)
=
\begin{cases}
f_1(t+\mathfrak{m})
-\dfrac{t}{\mathfrak{m}}a
+\dfrac{t+\mathfrak{m}}{\mathfrak{m}}y,
& t\in[-\mathfrak{m},0],\\[3mm]
f_2(t)
+\dfrac{t}{\mathfrak{m}}a
+\dfrac{\mathfrak{m}-t}{\mathfrak{m}}y,
& t\in[0,\mathfrak{m}].
\end{cases}
\end{equation}
The map $\Phi$ is continuous with respect to the usual Euclidean and uniform
metrics. Set
\begin{equation}\label{Eq.GnDefinition}
\mathcal{G}^n
=
\Phi(a_n,Y_n,B_1^n,B_2^n).
\end{equation}
From (\ref{Eq.JointBridgeConvergence}) and the mapping theorem \cite[Theorem 2.7]{Billing}, we conclude
\begin{equation}\label{Eq.GnConvergence}
\mathcal{G}^n
\Rightarrow
\Phi(x,x+Z,B_1,B_2).
\end{equation}
From (\ref{Eq.PhiDefinition}), the process on the right-hand side of
(\ref{Eq.GnConvergence}) is given by
$$
\Phi(x,x+Z,B_1,B_2)(t)
=
x+
\begin{cases}
B_1(t+\mathfrak{m})
+\dfrac{t+\mathfrak{m}}{\mathfrak{m}}Z,
& t\in[-\mathfrak{m},0],\\[3mm]
B_2(t)
+\dfrac{\mathfrak{m}-t}{\mathfrak{m}}Z,
& t\in[0,\mathfrak{m}].
\end{cases}
$$
Thus, by Lemma~\ref{Lem.LevyBM},
\begin{equation}\label{Eq.GnToB}
\mathcal{G}^n\Rightarrow B.
\end{equation}

It remains to compare $\mathcal{G}^n$ and
$\widetilde{\mathcal L}^n$. From (\ref{Eq.TildeLoop}),
(\ref{Eq.BnDefinition}), and (\ref{Eq.GnDefinition}), we have
\begin{equation}\label{Eq.KMTPathDistance}
\left\|
\widetilde{\mathcal L}^n-\mathcal{G}^n
\right\|_{\infty} = \sup_{t \in [-\mathfrak{m}, \mathfrak{m}]} \left|\widetilde{\mathcal L}^n(t)-\mathcal{G}^n(t)\right|
\leq
\frac{1}{\sigma\sqrt{d_n}}
\max_{j\in\{1,2\}}\Delta_j^{n,Z_n}.
\end{equation}
Moreover, from (\ref{Eq.YnConvergence}) we have
\begin{equation}\label{Eq.ZnTypical}
\lim_{n \rightarrow \infty} \mathbb{P}_n\left(
A_n^c\right)= 0, \mbox{ where }A_n = \{ |Z_n-pM_n| \leq d_n^{1/10} \sqrt{M_n}\}.
\end{equation}

Using (\ref{Eq.KMTErrorBoundNZ}) conditionally on $Z_n$, followed by
Markov's inequality, for $j\in\{1,2\}$ we obtain 
\begin{align*}
&\mathbb{P}_n\left(
\Delta_j^{n,Z_n}> d_n^{-1/10}\sigma\sqrt{d_n}, A_{n}
\right)\leq CM_n^{\alpha'}e^{d_n^{2/10}-ad_n^{-1/10} \sigma\sqrt{d_n}}
\longrightarrow0.
\end{align*}
Together with (\ref{Eq.KMTPathDistance}) and (\ref{Eq.ZnTypical}), this shows that
\begin{equation}\label{Eq.ConvergenceTogetherDistance}
\left\|
\widetilde{\mathcal L}^n-\mathcal{G}^n
\right\|_{\infty}
\longrightarrow0
\qquad\mbox{in probability}.
\end{equation}
The convergence together lemma \cite[Theorem 3.1]{Billing}, (\ref{Eq.GnToB}), and (\ref{Eq.ConvergenceTogetherDistance}) imply (\ref{Eq.ConvBMRed1}), completing the proof of the lemma.
\end{proof}

%----------------------------------------------------------------------
%
% Section 6.3
%
%----------------------------------------------------------------------
\subsection{Proof of Theorem \ref{Thm.GibbsProperty}}\label{Section6.3}

Throughout this section, we freely use the notation and results from Sections~\ref{Section6.1} and~\ref{Section6.2}, in addition to those from Sections~\ref{Section1.2} and~\ref{Section2.2}. We start by introducing some notation and collecting some consequences of the results we have already proved. 

Assuming the same notation as in Definition \ref{Def.S1Scaling}, we define 
\begin{equation}\label{Eq.S63LIN}
L_i^N(t) = \lambda_i^{(t)} - \lfloor c_1 N \rfloor \mbox{ for } i \geq 1, t \in \llbracket -M_N, M_N \rrbracket.
\end{equation}
As usual, by linear interpolation we view $L_i^N$ as random elements of $C([-M_N, M_N])$; see Section \ref{Section2.2}. We next define 
\begin{equation}\label{Eq.S63NewScalingParameters}
\tilde{\sigma} = \frac{b^{1/2}}{1-b}, \hspace{2mm} d_N = 2c_2^{-1} N^{2/3}, 
\end{equation}
and also
\begin{equation}\label{Eq.S63ScaledLIN}
\widetilde{\mathcal{L}}_i^N(t) = \tilde{\sigma}^{-1} d_N^{-1/2} \left( L^N_i\left(\frac{2tM_N}{R}\right)+\frac{b}{1-b} \cdot \frac{2M_N|t|}{R} \right) \mbox{ for } i\geq 1, t \in [-R/2, R/2].
\end{equation}
From Definition \ref{Def.S1Scaling}, we observe that 
\begin{equation}\label{Eq.S63ReconcileScaling}
d_N \rightarrow \infty, \hspace{2mm} M_N/d_N \rightarrow R/2, \hspace{2mm} \widetilde{\mathcal{L}}_i^N(t) = 2^{-1/2}  \left(\mathcal{L}_i^N(t) + t^2\right)  + O(N^{-1/3}),
\end{equation}
where the constant in the big $O$ notation depends on $b$ alone. Finally, we define the line ensemble $\widetilde{\mathcal{L}}^{\mathrm{per},R}
=
\{\widetilde{\mathcal{L}}^{\mathrm{per},R}_i\}_{i\geq1}$ on $[-R/2,R/2]$
through
\begin{equation}\label{Eq.S63PlusParabolaLE}
\widetilde{\mathcal{L}}^{\mathrm{per},R}_i(t)
= 2^{-1/2}\left(\mathcal{A}^{\mathrm{per},R}_i(t)+t^2\right),
\qquad
i\geq1,\quad t\in[-R/2,R/2].
\end{equation}

Combining Theorem \ref{Thm.Convergence}, the third statement in
(\ref{Eq.S63ReconcileScaling}), and (\ref{Eq.S63PlusParabolaLE}) with the
mapping theorem \cite[Theorem 2.7]{Billing} and the convergence together
lemma \cite[Theorem 3.1]{Billing}, we obtain
\begin{equation}\label{Eq.S6ConvToPALE}
\widetilde{\mathcal{L}}^N \Rightarrow \widetilde{\mathcal{L}}^{\mathrm{per},R} \mbox{ in } C(\mathbb{N} \times [-R/2,R/2]).
\end{equation}
From (\ref{Eq.S1PeriodicPerAiry}), we have almost surely
\begin{equation}\label{Eq.S6PALEIsPeriodic}
\widetilde{\mathcal{L}}^{\mathrm{per},R}_i(-R/2)
=
\widetilde{\mathcal{L}}^{\mathrm{per},R}_i(R/2)
\qquad\text{for all $i\geq1$}.
\end{equation}
In addition, from Theorem \ref{Thm.DeterminantalStructure} we know that the measure $M^{\mathsf{S};R}$ with $\mathsf{S} = \{s\}$ is a determinantal point process on $\mathbb{R}^2$ for each $s \in (-R/2, R/2]$. As determinantal point processes are almost surely simple, see \cite[Proposition 2.13(1)]{ED24a}, and the line ensemble $\mathcal{A}^{\mathrm{per},R}$ is ordered, we conclude that for each $s \in (-R/2, R/2]$, almost surely,
\begin{equation*}
\mathcal{A}^{\mathrm{per},R}_1(s) > \mathcal{A}^{\mathrm{per},R}_2(s) > \cdots.
\end{equation*}
The latter, (\ref{Eq.S63PlusParabolaLE}), and (\ref{Eq.S6PALEIsPeriodic}) imply that for each $s \in [-R/2, R/2]$, almost surely,
\begin{equation}\label{Eq.S6PALEIsSimpleOrdered}
\widetilde{\mathcal{L}}^{\mathrm{per},R}_1(s) > \widetilde{\mathcal{L}}^{\mathrm{per},R}_2(s) > \cdots.
\end{equation}

For clarity, we split the remainder of the proof into three steps. In Step 1, we reduce the statement of the theorem to establishing a certain equality of expectations involving $\widetilde{\mathcal{L}}^{\mathrm{per},R}_{n_j}(t_j)$ for finitely many $j$'s, see (\ref{Eq.S63ExpectationEquality}). In Step 2, we recast the definition of our cylindric plane partition model from Section \ref{Section1.2} in terms of loops, and establish a discrete analogue of the Brownian Gibbs property, see (\ref{Eq.S63DiscreteExpectationEqualityV1}). In Step 3, we deduce (\ref{Eq.S63ExpectationEquality}) by utilizing (\ref{Eq.S63DiscreteExpectationEqualityV1}) from Step 2 as well as Lemma \ref{Lem.ConvBM}.

\medskip

{\bf \raggedleft Step 1.} Fix $k_2, m\in\mathbb{N}$, $n_1,\ldots,n_m\in \mathbb{N}$, $t_1,\ldots,t_m\in[-R/2,R/2]$, and bounded continuous functions $h_1,\ldots,h_m:\mathbb{R}\rightarrow\mathbb{R}$. Set $\mathsf{R} = \left\{ j\in\llbracket1,m\rrbracket:n_j\in  \llbracket 1, k_2 \rrbracket \right\}$. 

We claim that 
\begin{equation}\label{Eq.S63ExpectationEquality} 
\begin{split} \mathbb{E}\left[ \prod_{j=1}^m h_j\bigl(\widetilde{\mathcal{L}}^{\mathrm{per},R}_{n_j}(t_j)\bigr) \right] ={}& \mathbb{E}\left[ \prod_{j\notin\mathsf{R}} h_j\bigl(\widetilde{\mathcal{L}}^{\mathrm{per},R}_{n_j}(t_j)\bigr) \cdot \mathbb{E}_{\mathrm{avoid},2^{3/2}}^{1,k_2,-R/2,R/2, \vec{x},\vec{x},\infty,g} \left[ \prod_{j\in\mathsf{R}} h_j\bigl(\mathcal{Q}_{n_j}(t_j)\bigr) \right] \right], 
\end{split} 
\end{equation} 
where $\vec{x} = (\widetilde{\mathcal{L}}^{\mathrm{per},R}_1(-R/2), \dots, \widetilde{\mathcal{L}}^{\mathrm{per},R}_{k_2}(-R/2))$ and $g = \widetilde{\mathcal{L}}^{\mathrm{per},R}_{k_2 +1}$. On the right-hand side of (\ref{Eq.S63ExpectationEquality}), we have that $\mathcal{Q} = \{\mathcal{Q}_i\}_{i = 1}^{k_2}$ is the $\llbracket 1, k_2 \rrbracket$-indexed line ensemble on $[-R/2,R/2]$ with law given by $\mathbb{P}_{\mathrm{avoid},2^{3/2}}^{1,k_2,-R/2,R/2, \vec{x},\vec{x},\infty,g}$ as in Definition \ref{Def.AreaTiltedBrownianLE}. We mention that in view of (\ref{Eq.S6PALEIsSimpleOrdered}) with $s = -R/2$, we have that the expectation over $\mathcal{Q}$, and hence the right-hand side of (\ref{Eq.S63ExpectationEquality}), is well-defined and finite. 

We establish (\ref{Eq.S63ExpectationEquality}) in the steps below. In the remainder of this step, we assume its validity and conclude the proof of the theorem.

\medskip
By the monotone class argument in \cite[Lemma~A.1]{DY25},
(\ref{Eq.S63ExpectationEquality}) implies that, for $\alpha=-R/2$,
$\beta=R/2$, every bounded Borel-measurable function
$F:C(\llbracket 1, k_2 \rrbracket\times[\alpha,\beta])\rightarrow\mathbb{R}$,
and every bounded
$\mathcal{F}_{\mathrm{ext}}(\llbracket 1, k_2 \rrbracket \times(\alpha,\beta))$-measurable
random variable $Y$, we have
\begin{equation}\label{Eq.S63PeriodicGibbs}
\mathbb{E}\left[ F\left(\widetilde{\mathcal{L}}^{\mathrm{per},R}|_{\llbracket 1, k_2 \rrbracket\times[\alpha,\beta]}\right)Y \right] = \mathbb{E}\left[ \mathbb{E}_{\mathrm{avoid},2^{3/2}}^{1,k_2,\alpha,\beta, \vec{x},\vec{x},\infty,g} \left[F(\mathcal{Q})\right]Y \right]. 
\end{equation} 
Here, $\mathcal{F}_{\mathrm{ext}}(\llbracket 1, k_2 \rrbracket \times(\alpha,\beta))$ is as in Definition \ref{Def.PeriodicAreaTiltedBGP} with $\mathcal{L}$ replaced by $\widetilde{\mathcal{L}}^{\mathrm{per},R}$.

Setting in (\ref{Eq.S63PeriodicGibbs}) $Y = 1$ and 
$$F(f_1, \dots, f_{k_2}) = {\bf 1}\{f_1(t) > \cdots > f_{k_2}(t) \mbox{ for all } t \in [\alpha, \beta]\},$$
we conclude that $\widetilde{\mathcal{L}}^{\mathrm{per},R}$ is non-intersecting almost surely. By (\ref{Eq.S6PALEIsPeriodic}), it is also periodic. The last two observations, together with (\ref{Eq.S63PeriodicGibbs}) and the defining properties of conditional expectation, show that $\widetilde{\mathcal{L}}^{\mathrm{per},R}$ satisfies the periodic Brownian Gibbs property with $2^{3/2}$-area tilt from Definition \ref{Def.PeriodicAreaTiltedBGP}.

From (\ref{Eq.ParabolicPeriodicAiry}) and (\ref{Eq.S63PlusParabolaLE}), we conclude that 
$$\widetilde{\mathcal{L}}^{\mathrm{per},R}_i(t) = \mathcal{L}^{\mathrm{per},R}_i(t) + 2^{1/2} t^2 \mbox{ for } i \geq 1, t\in [-R/2,R/2],$$
and so \cite[Lemma 2.8]{DimitrovSerio} implies that $\mathcal{L}^{\mathrm{per},R}$ satisfies the partial Brownian Gibbs property from Definition \ref{Def.AreaTiltedBGP}. Combining the latter with the periodicity of $\mathcal{L}^{\mathrm{per},R}$ and Lemma \ref{Lem.PeriodicBGPCharacterization}, we conclude the statement of the theorem.

\medskip

{\bf \raggedleft Step 2.} Fix $b,q \in (0,1)$, $M \geq 1$, and let $(\lambda^{(-M)}, \dots, \lambda^{(M)}) \in \Seq$ be distributed as in (\ref{Eq.S1ProbDistr}), where we recall that $\Seq$ is as in (\ref{Eq.DefLambda}). Given $(\lambda^{(-M)}, \dots, \lambda^{(M)}) \in \Seq$, we define 
\begin{equation}\label{Eq.S6FromLambdaToLoop}
L_i(t) = \lambda_i^{(t)},
\end{equation}
and note that (\ref{Eq.S6FromLambdaToLoop}) defines a bijection between $\Seq$ and 
\begin{equation}\label{Eq.InterlacingLoopsSpace}
S^{\mathrm{loop}} = \left\{\mathfrak{L} = \{L_i\}_{i \geq 1} \in (\Omega_{M}^{\mathrm{loop}})^{\mathbb{N}}: L_1 \succeq L_2 \succeq \cdots \mbox{ and } \lim_{n \rightarrow \infty} L_n(s) = 0 \hspace{2mm}  \forall s \in \llbracket -M, M \rrbracket \right\}.
\end{equation}
In view of (\ref{Eq.S1ProbDistr}), we see that the induced measure on $S^{\mathrm{loop}}$ under the above correspondence satisfies
\begin{equation}\label{Eq.S6LoopMeasure}
\mathbb{P}\left(\mathfrak{L} \right) = \frac{1}{Z_{M,b,q}} \cdot b^{ 2\sum_{i \geq 1} \left(L_i(0) - L_i(M)\right)} \cdot q^{-(M+1)\sum_{i \geq 1}L_i(M)} \cdot q^{\sum_{i \geq 1} \|L_i\|_1},
\end{equation}
where $\|L_i\|_1 = \sum_{s \in \llbracket -M, M \rrbracket} L_i(s)$.

Fix $k_2 \in \mathbb{N}$ and set $\mathcal{F}^{k_2}_{\mathrm{ext}} = \sigma \left(\{L_i(s): s \in \llbracket -M, M \rrbracket, i \geq k_2 + 1\} \cup \{L_i(-M): i \in \llbracket 1, k_2 \rrbracket\}\right)$. Our goal in this step is to show that for any bounded $F: (\Omega_{M}^{\mathrm{loop}})^{\llbracket 1, k_2\rrbracket} \rightarrow \mathbb{R}$, we have 
\begin{equation}\label{Eq.S63DiscreteExpectationEqualityV1}
\mathbb{E}\left[F(L_1, \dots, L_{k_2}) \vert \mathcal{F}^{k_2}_{\mathrm{ext}}\right] = \frac{\mathbb{E}\left[F(Q_1, \dots, Q_{k_2}) {\bf 1}\{Q_1 \succeq \cdots \succeq Q_{k_2} \succeq L_{k_2+1} \} q^{\sum_{i = 1}^{k_2} \|Q_i\|_1}\right]}{\mathbb{E}\left[ {\bf 1}\{Q_1 \succeq \cdots \succeq Q_{k_2} \succeq L_{k_2+1} \} q^{\sum_{i = 1}^{k_2} \|Q_i\|_1}\right]},
\end{equation}
where $Q_1, \dots, Q_{k_2}$ are independent and $Q_i$ has law $\mathbb{P}_{\mathrm{loop}}^{b,M,L_i(-M)}$ as in Definition \ref{Def.GeometricLoopMeasure}. 

Fix $\vec{x}=(x_1,\ldots,x_{k_2})\in\mathbb{Z}^{k_2}$ and loops
$\ell_i\in\Omega_M^{\mathrm{loop}}$ for $i\geq k_2+1$, such that the event
\[
E=
\left\{
L_i(-M)=x_i \mbox{ for } i \in \llbracket 1, k_2 \rrbracket,
\quad
L_i=\ell_i \mbox{ for } i\geq k_2+1
\right\}
\]
has positive probability. Fix additional loops $\ell_i\in\Omega_{x_i,M}^{\mathrm{loop}}$ for $i\in\llbracket1,k_2\rrbracket$. From (\ref{Eq.S6LoopMeasure}), we have
\begin{equation*}
\mathbb{P}\left(
L_i=\ell_i \text{ for } i\in\llbracket1,k_2\rrbracket
\middle| E
\right) =
\frac{
{\bf 1}\{\ell_1\succeq\cdots\succeq \ell_{k_2}
\succeq \ell_{k_2+1}\}
\prod_{i=1}^{k_2}
b^{2\ell_i(0)}
q^{\|\ell_i\|_1}
}{
\sum_{(\widetilde{\ell}_1,\ldots,\widetilde{\ell}_{k_2})
\in \prod_{i=1}^{k_2}\Omega_{x_i,M}^{\mathrm{loop}}}
{\bf 1}\{
\widetilde{\ell}_1\succeq\cdots\succeq
\widetilde{\ell}_{k_2}\succeq\ell_{k_2+1}
\}
\prod_{i=1}^{k_2}
b^{2\widetilde{\ell}_i(0)}
q^{\|\widetilde{\ell}_i\|_1}
}.
\end{equation*}
Using the last identity and Definition~\ref{Def.GeometricLoopMeasure}, we conclude that if $Q_1,\ldots,Q_{k_2}$ are independent and $Q_i$ has law $\mathbb{P}_{\mathrm{loop}}^{b,M,x_i}$, then
\begin{equation*}
\mathbb{P}\left(
L_i=\ell_i \text{ for } i\in\llbracket1,k_2\rrbracket
\middle| E
\right) =
\frac{
\mathbb{P}\left(
Q_i=\ell_i \text{ for } i\in\llbracket1,k_2\rrbracket
\right)
{\bf 1}\{\ell_1\succeq\cdots\succeq \ell_{k_2}
\succeq \ell_{k_2+1}\}
q^{\sum_{i=1}^{k_2}\|\ell_i\|_1}
}{
\mathbb{E}\left[
{\bf 1}\{Q_1\succeq\cdots\succeq Q_{k_2}
\succeq\ell_{k_2+1}\}
q^{\sum_{i=1}^{k_2}\|Q_i\|_1}
\right]
}.
\end{equation*}
Multiplying the last identity by $F(\ell_1,\ldots,\ell_{k_2})$ and summing over
$\ell_i\in\Omega_{x_i,M}^{\mathrm{loop}}$,
$i\in\llbracket1,k_2\rrbracket$ gives
\begin{equation*}
\mathbb{E}\left[
F(L_1,\ldots,L_{k_2})\middle|E
\right]  =
\frac{
\mathbb{E}\left[
F(Q_1,\ldots,Q_{k_2})
{\bf 1}\{Q_1\succeq\cdots\succeq Q_{k_2}
\succeq\ell_{k_2+1}\}
q^{\sum_{i=1}^{k_2}\|Q_i\|_1}
\right]
}{
\mathbb{E}\left[
{\bf 1}\{Q_1\succeq\cdots\succeq Q_{k_2}
\succeq\ell_{k_2+1}\}
q^{\sum_{i=1}^{k_2}\|Q_i\|_1}
\right]
}.
\end{equation*}
Since this holds for every atom $E$ of
$\mathcal{F}_{\mathrm{ext}}^{k_2}$ having positive probability,
(\ref{Eq.S63DiscreteExpectationEqualityV1}) follows.

\medskip

{\bf \raggedleft Step 3.} In this step, we prove (\ref{Eq.S63ExpectationEquality}).

Using the tower property for conditional expectation, we obtain
\begin{equation}\label{Eq.S63DiscreteExpectationEquality}
\begin{split}
&\mathbb{E}\left[ \prod_{j=1}^m
h_j\bigl(\widetilde{\mathcal{L}}^N_{n_j}(t_j)\bigr)
\right] = \mathbb{E}\left[ \prod_{j\notin\mathsf{R}}
h_j\bigl(\widetilde{\mathcal{L}}^N_{n_j}(t_j)\bigr) \mathbb{E}\left[ \prod_{j\in\mathsf{R}}
h_j\bigl(\widetilde{\mathcal{L}}^N_{n_j}(t_j)\bigr)\vert \mathcal{F}_{\mathrm{ext}}^{k_2}  \right]   \right] \\
&=
\mathbb{E}\left[
\prod_{j\notin\mathsf{R}}
h_j\bigl(\widetilde{\mathcal{L}}^N_{n_j}(t_j)\bigr)
\cdot
\frac{
\mathbb{E}\left[
\prod_{j\in\mathsf{R}}
h_j\bigl(\widetilde{\mathcal{Q}}^N_{n_j}(t_j)\bigr)
{\bf 1}_{\mathsf{A}_N}
q_N^{\sum_{i=1}^{k_2}\|Q_i^N - L_{k_2+1}^N\|_1}
\right]
}{
\mathbb{E}\left[
{\bf 1}_{\mathsf{A}_N}
q_N^{\sum_{i=1}^{k_2}\|Q_i^N - L_{k_2+1}^N\|_1}
\right]}
\right],
\end{split}
\end{equation}
where $Q_1^N,\ldots,Q_{k_2}^N$ are independent and $Q_i^N$ has law $\mathbb{P}_{\mathrm{loop}}^{b,M_N,L_i^N(-M_N)}$ for $i \in \llbracket 1, k_2 \rrbracket$, the event $\mathsf{A}_N$ is given by
\begin{equation}\label{Eq.S63EventInterlace}
\mathsf{A}_N = \left\{ Q_1^N\succeq\cdots\succeq Q_{k_2}^N \succeq L_{k_2+1}^N\right\},
\end{equation}
we have set 
\begin{equation}\label{Eq.S63ScaledQN}
\widetilde{\mathcal{Q}}_i^N(t) = \tilde{\sigma}^{-1}d_N^{-1/2}
\left( Q_i^N\left(\frac{2tM_N}{R}\right)
+\frac{b}{1-b}\cdot\frac{2M_N|t|}{R} \right), \mbox{ for } i \in \llbracket 1, k_2 \rrbracket, t\in[-R/2,R/2],
\end{equation}
and for two loops $Q,G\in\Omega_{M_N}^{\mathrm{loop}}$, we set
$$ \|Q-G\|_1 = \sum_{s\in\llbracket-M_N,M_N\rrbracket}(Q(s)-G(s)).$$
We mention that in obtaining the second equality in (\ref{Eq.S63DiscreteExpectationEquality}), we used (\ref{Eq.S63DiscreteExpectationEqualityV1}) with $M=M_N$ and $q=q_N$, with both numerator and denominator in the right-hand side of (\ref{Eq.S63DiscreteExpectationEqualityV1}) multiplied by
$$q_N^{ - k_2\|L_{k_2+1}\|_1},$$
and we have set $Q_i = Q_i^N + \lfloor c_1 N \rfloor$ for $i \in \llbracket 1, k_2 \rrbracket$, $L_i = L_i^N + \lfloor c_1 N \rfloor$ for $i \geq 1$.

\medskip

We first analyze the expectations on the second line of (\ref{Eq.S63DiscreteExpectationEquality}). Suppose that
$z_1^N,\ldots,z_{k_2}^N\in\mathbb{Z}$ and
$G^N\in\Omega_{M_N}^{\mathrm{loop}}$ are deterministic and satisfy
\begin{equation}\label{Eq.S63BoundaryConvergence}
\tilde{\sigma}^{-1}d_N^{-1/2}(z_i^N+pM_N)
\longrightarrow x_i,
\qquad
g^N\longrightarrow g
\quad\text{uniformly on }[-R/2,R/2],
\end{equation}
where $p=b/(1-b)$,
\[
g^N(t)
=
\tilde{\sigma}^{-1}d_N^{-1/2}
\left(
G^N\left(\frac{2tM_N}{R}\right)
+
p\frac{2M_N|t|}{R}
\right),
\]
and $\vec{x} = (x_1, \dots, x_{k_2})$ is such that
\[
x_1>\cdots>x_{k_2}>g(-R/2)=g(R/2).
\]
Let $Q_i^N$ be independent with law
$\mathbb{P}_{\mathrm{loop}}^{b,M_N,z_i^N}$, and define
$\widetilde{\mathcal{Q}}_i^N$ as in
(\ref{Eq.S63ScaledQN}), and $\mathsf{A}_N$ as in (\ref{Eq.S63EventInterlace}) with $L_{k_2+1}^N$ replaced by $G^N$. In view of
(\ref{Eq.S63ReconcileScaling}), Lemma~\ref{Lem.ConvBM}, and \cite[Theorem 2.8(ii)]{Billing}, we have
\begin{equation}\label{Eq.S63FreeBridgeConvergence}
(\widetilde{\mathcal{Q}}_1^N,\ldots,
\widetilde{\mathcal{Q}}_{k_2}^N)
\Rightarrow
(\mathcal{Q}_1,\ldots,\mathcal{Q}_{k_2}),
\end{equation}
where $\mathcal{Q}_1,\ldots,\mathcal{Q}_{k_2}$ are independent Brownian
bridges from $x_i$ to $x_i$ on $[-R/2,R/2]$, all with diffusion parameter
$1$. By the Skorohod representation theorem
\cite[Theorem~6.7]{Billing}, we may assume that the convergence in
(\ref{Eq.S63FreeBridgeConvergence}) holds $\omega$-wise uniformly.
We note that the Skorohod representation theorem is applicable since $C(\llbracket 1, k_2 \rrbracket \times [-R/2, R/2])$ is Polish,
as discussed in Section~\ref{Section2.2}.

Set
$$ \mathsf{A}
=
\left\{
\mathcal{Q}_1(t)>\cdots>\mathcal{Q}_{k_2}(t)>g(t)
\text{ for all }t\in[-R/2,R/2]
\right\},$$
and 
$$ \mathsf{B}
=\left\{
\mathcal{Q}_i(t) < \mathcal{Q}_{i+1}(t) \mbox{ for some } i \in \llbracket 1, k_2 \rrbracket \mbox{ and } t\in[-R/2,R/2]
\right\},$$
where $\mathcal{Q}_{k_2 + 1} = g$ by convention. From \cite[Corollary~2.9]{CorHamA} we know 
$$\mathbb{P}(\mathsf{A} \cup \mathsf{B}) = 1.$$
By the assumed uniform convergence, we also have
$$\lim_{N \rightarrow \infty} {\bf 1}_{\mathsf{A}_N} = 1 \mbox{ on } \mathsf{A}, \mbox{ and }\lim_{N \rightarrow \infty} {\bf 1}_{\mathsf{A}_N} = 0 \mbox{ on } \mathsf{B}.$$
Consequently,
\begin{equation}\label{Eq.S63IndicatorConvergence}
\lim_{N \rightarrow \infty} {\bf 1}_{\mathsf{A}_N} = {\bf 1}_{\mathsf{A}}
\qquad\text{almost surely}.
\end{equation}

Setting $t_{s,N}=\frac{Rs}{2M_N}$ for $s\in\llbracket-M_N,M_N\rrbracket$ and recalling that $q_N=e^{-1/N}$, we obtain
\begin{equation*}
q_N^{\sum_{i=1}^{k_2}\|Q_i^N - G^N\|_1} =
\exp\left(
-a_N
\sum_{i=1}^{k_2}
\frac{R}{2M_N}
\sum_{s=-M_N}^{M_N}
\left(
\widetilde{\mathcal{Q}}_i^N(t_{s,N})
-g^N(t_{s,N})
\right)
\right),
\end{equation*}
where $a_N = \frac{2M_N\tilde{\sigma}d_N^{1/2}}{RN}$. Using (\ref{Eq.S63NewScalingParameters}), the fact that $M_N = \left\lceil c_2^{-1}RN^{2/3}\right\rceil$, and
$\tilde{\sigma}=c_2^{3/2}$, we have
\begin{equation}\label{Eq.S63AreaCoefficient}
\lim_{N \rightarrow \infty} a_N = 2^{3/2}.
\end{equation}
Thus, the almost sure uniform convergence above and the convergence of
Riemann sums imply
\begin{equation}\label{Eq.S63AreaWeightConvergence}
\lim_{N \rightarrow \infty} q_N^{\sum_{i=1}^{k_2}\|Q_i^N - G^N\|_1}
=
\exp\left(
-2^{3/2}\sum_{i=1}^{k_2}
\int_{-R/2}^{R/2}
\bigl(\mathcal{Q}_i(t)-g(t)\bigr) dt
\right)
\end{equation}
almost surely. In addition, we observe that on $\mathsf{A}_N$, we have $Q_i^N(s) \geq G^N(s)$, and so
\begin{equation}\label{Eq.S63AreaWeightBdd}
0 \leq {\bf 1}_{\mathsf{A}_N} q_N^{\sum_{i=1}^{k_2}\|Q_i^N - G^N\|_1} \leq 1.
\end{equation}

Combining (\ref{Eq.S63IndicatorConvergence}), (\ref{Eq.S63AreaWeightConvergence}), and (\ref{Eq.S63AreaWeightBdd}) and using bounded convergence and the boundedness and continuity of the functions $h_j$, we conclude that
\begin{equation}\label{Eq.S63RatioConvergence}
\begin{split}
&\lim_{N \rightarrow \infty} \frac{
\mathbb{E}\left[
\prod_{j\in\mathsf{R}}
h_j\bigl(\widetilde{\mathcal{Q}}^N_{n_j}(t_j)\bigr)
{\bf 1}_{\mathsf{A}_N}
q_N^{\sum_{i=1}^{k_2}\|Q_i^N - G^N\|_1}
\right]
}{
\mathbb{E}\left[
{\bf 1}_{\mathsf{A}_N}
q_N^{\sum_{i=1}^{k_2}\|Q_i^N - G^N\|_1}
\right]}\\
&= 
\frac{
\mathbb{E}_{\mathrm{free}}^{1,k_2,-R/2,R/2,\vec{x},\vec{x}}
\left[
\prod_{j\in\mathsf{R}}h_j(\mathcal{Q}_{n_j}(t_j))
{\bf 1}_{\mathsf{A}}
\exp\left(
-2^{3/2}\sum_{i=1}^{k_2}
\int_{-R/2}^{R/2}
(\mathcal{Q}_i(t)-g(t))dt
\right)
\right]
}{
\mathbb{E}_{\mathrm{free}}^{1,k_2,-R/2,R/2,\vec{x},\vec{x}}
\left[
{\bf 1}_{\mathsf{A}}
\exp\left(
-2^{3/2}\sum_{i=1}^{k_2}
\int_{-R/2}^{R/2}
(\mathcal{Q}_i(t)-g(t))dt
\right)
\right]
}
\\
&=
\mathbb{E}_{\mathrm{avoid},2^{3/2}}^{1,k_2,-R/2,R/2,
\vec{x},\vec{x},\infty,g}
\left[
\prod_{j\in\mathsf{R}}
h_j\bigl(\mathcal{Q}_{n_j}(t_j)\bigr)
\right].
\end{split}
\end{equation}
We mention that, as discussed in Remark \ref{Rem.Event}, the denominator on the second line of (\ref{Eq.S63RatioConvergence}) is strictly positive. In addition, the last equality in (\ref{Eq.S63RatioConvergence}) follows from
Definitions~\ref{Def.AvoidingBrownianLE} and~\ref{Def.AreaTiltedBrownianLE}.

\medskip

We now return to (\ref{Eq.S63DiscreteExpectationEquality}). From
(\ref{Eq.S6ConvToPALE}) and another application of the Skorohod
representation theorem \cite[Theorem~6.7]{Billing}, we may assume that
\begin{equation}\label{Eq.S63SkorohodOuter}
\widetilde{\mathcal{L}}^N
\longrightarrow
\widetilde{\mathcal{L}}^{\mathrm{per},R}
\qquad\text{almost surely in }
C(\mathbb{N}\times[-R/2,R/2]).
\end{equation}
As above, this application is justified by the fact that $C(\mathbb{N}\times[-R/2,R/2])$ is Polish.

On the probability-one event on which
(\ref{Eq.S63SkorohodOuter}) holds and
(\ref{Eq.S6PALEIsSimpleOrdered}) holds at $s=-R/2$, the deterministic
convergence statement (\ref{Eq.S63RatioConvergence}) applies with
$$ z_i^N=L_i^N(-M_N), \hspace{2mm} G^N=L_{k_2+1}^N,\hspace{2mm} x_i=\widetilde{\mathcal{L}}^{\mathrm{per},R}_i(-R/2), \hspace{2mm} g=\widetilde{\mathcal{L}}^{\mathrm{per},R}_{k_2+1}.$$
Consequently, the ratio appearing on the right-hand side of
(\ref{Eq.S63DiscreteExpectationEquality}) converges almost surely to
\[
\mathbb{E}_{\mathrm{avoid},2^{3/2}}^{1,k_2,-R/2,R/2,
\vec{x},\vec{x},\infty,g}
\left[
\prod_{j\in\mathsf{R}}
h_j\bigl(\mathcal{Q}_{n_j}(t_j)\bigr)
\right].
\]
Moreover, by the continuity of the $h_j$,
\[
\prod_{j\notin\mathsf{R}}
h_j\bigl(\widetilde{\mathcal{L}}^N_{n_j}(t_j)\bigr)
\longrightarrow
\prod_{j\notin\mathsf{R}}
h_j\bigl(\widetilde{\mathcal{L}}^{\mathrm{per},R}_{n_j}(t_j)\bigr)
\]
almost surely. The ratio in
(\ref{Eq.S63DiscreteExpectationEquality}) is bounded in absolute value by
$\prod_{j\in\mathsf{R}}\|h_j\|_{\infty}$. Consequently, (\ref{Eq.S63ExpectationEquality}) follows by taking the $N \rightarrow \infty$ limit on both sides of (\ref{Eq.S63DiscreteExpectationEquality}) and applying the bounded convergence theorem.

%----------------------------------------------------------------------
%
% Section 7
%
%----------------------------------------------------------------------
\section{Limiting regimes of the periodic Airy line ensemble}\label{Section7}
In this section, we prove the asymptotic results for the periodic Airy line
ensemble stated in Section~\ref{Section1.2}. The proofs of Theorems~\ref{Thm.TranslationInvariance}, \ref{Thm.ConvergenceToAiryLE} and \ref{Thm.ConvergenceToGumbel} are given in Sections~\ref{Section7.1}, \ref{Section7.2}, and \ref{Section7.3}, respectively. Throughout this section, we freely use the notation from Sections~\ref{Section1.2}, \ref{Section2.1} and \ref{Section2.2}.

%----------------------------------------------------------------------
%
% Section 7.1
%
%----------------------------------------------------------------------
\subsection{Proof of Theorem \ref{Thm.TranslationInvariance}}\label{Section7.1}
We continue with the notation from the statement of the theorem. The argument proceeds by reducing the claim to showing that two point processes on $\mathbb{R}^2$ have the same distribution, see (\ref{Eq.TransRed2}). Using Theorem~\ref{Thm.DeterminantalStructure}, we show that both are determinantal point processes whose correlation kernels are related by a gauge transformation. Their equality in distribution then follows from \cite[Proposition~2.13(3)--(4)]{ED24a}. We now turn to the details.

\medskip

By \cite[Lemma~3.1]{DM21}, it suffices to prove that, for every $\theta\in\R$, the line ensembles $\{\widehat{\mathcal A}^{\mathrm{per},R}_i(t+\theta):i\geq1,t\in\R\}$ and $\{\widehat{\mathcal A}^{\mathrm{per},R}_i(t):i\geq1,t\in\R\}$ have the same finite-dimensional distributions. By $R$-periodicity, it is enough to show that
\begin{equation}\label{Eq.TransRed1}
\bigl(\widehat{\mathcal A}^{\mathrm{per},R}_i(s_j+\theta):1\leq i\leq k,\ 1\leq j\leq m\bigr)
\overset{d}{=}
\bigl(\mathcal A^{\mathrm{per},R}_i(s_j):1\leq i\leq k,\ 1\leq j\leq m\bigr)
\end{equation}
for every $\theta\in[0,R)$, $k,m\in\N$, and $-R/2<s_1<\cdots<s_m\leq R/2$.

Fix $\theta\in[0,R)$ and $\mathsf{S}=\{s_1,\ldots,s_m\}\subset(-R/2,R/2]$, and define the random measure
$$
M^{\mathsf{S}}_{\theta}(A)=\sum_{i=1}^{\infty}\sum_{j=1}^{m}{\bf 1}\left\{\left(s_j,\widehat{\mathcal A}^{\mathrm{per},R}_i(s_j+\theta)\right)\in A\right\},\qquad A\subseteq\mathbb{R}^2\text{ Borel}.
$$
From Theorem~\ref{Thm.DeterminantalStructure}, we know that $M^{\mathsf{S}}_{\theta}$ and $M^{\mathsf{S};R}$ are point processes on $\mathbb{R}^2$. Combining this with the fact that $\mathcal A^{\mathrm{per},R}$ is ordered, cf.~Theorem~\ref{Thm.Convergence}, and \cite[Corollary~2.20]{ED24a}, we see that it suffices to show that
\begin{equation}\label{Eq.TransRed2}
M^{\mathsf{S}}_{\theta}\overset{d}{=}M^{\mathsf{S};R}.
\end{equation}

\medskip

Define $f:\mathbb R\to\mathbb R$ by
$$
f(t)=
\begin{cases}
t+\theta, & -R/2<t\leq R/2-\theta,\\
t+\theta-R, & R/2-\theta<t\leq R/2,\\
t, & \text{otherwise},
\end{cases}
$$
and put $\phi(t,x)=(f^{-1}(t),x)$. Note that $\phi:\mathbb{R}^2\to\mathbb{R}^2$ is a locally bounded measurable bijection with a locally bounded inverse, and that
$$
M_{\theta}^{\mathsf{S}}=M^{f(\mathsf{S});R}\phi^{-1}.
$$
By Theorem~\ref{Thm.DeterminantalStructure}, $M^{f(\mathsf{S});R}$ is a determinantal point process with reference measure $\mu_{f(\mathsf{S})}\times\mathrm{Leb}$ and correlation kernel $K^{\mathrm{per},R}$. Consequently, by \cite[Proposition~2.13(5)]{ED24a}, $M_{\theta}^{\mathsf{S}}$ is a determinantal point process with reference measure
$$
(\mu_{f(\mathsf{S})}\times\mathrm{Leb})\phi^{-1}=\mu_{\mathsf{S}}\times\mathrm{Leb}
$$
and correlation kernel
$$
K_{\theta}(s,x;t,y)=K^{\mathrm{per},R}(f(s),x;f(t),y),\qquad s,t\in(-R/2,R/2].
$$
We claim that, for $x,y\in\mathbb{R}$ and $s,t\in(-R/2,R/2]$,
\begin{equation}\label{Eq.TransRed3}
K^{\mathrm{per},R}(f(s),x;f(t),y)
=
\frac{g(s)}{g(t)}K^{\mathrm{per},R}(s,x;t,y),
\end{equation}
where
$$
g(s)=
\begin{cases}
1, & s\in(-R/2,R/2-\theta],\\
-1, & s\in(R/2-\theta,R/2].
\end{cases}
$$
Assuming (\ref{Eq.TransRed3}) for the moment, \cite[Proposition~2.13(4)]{ED24a} shows that $M_{\theta}^{\mathsf{S}}$ is also a determinantal point process with reference measure $\mu_{\mathsf{S}}\times\mathrm{Leb}$ and correlation kernel $K^{\mathrm{per},R}$. Theorem~\ref{Thm.DeterminantalStructure} gives the same determinantal description for $M^{\mathsf{S};R}$, and hence (\ref{Eq.TransRed2}) follows from \cite[Proposition~2.13(3)]{ED24a}.

\medskip

It remains to establish (\ref{Eq.TransRed3}). Fix $s,t\in(-R/2,R/2]$. If $s,t\leq R/2-\theta$ or $s,t>R/2-\theta$, then $f(t)-f(s)=t-s$, and the relative order of $s$ and $t$ is preserved under $f$. It follows from (\ref{Eq.PeriodicAiryKernel}) that
$$
K^{\mathrm{per},R}(f(s),x;f(t),y)=K^{\mathrm{per},R}(s,x;t,y).
$$
Since $g(s)=g(t)$, this proves (\ref{Eq.TransRed3}) in this case.

Suppose next that $s\leq R/2-\theta<t$. Then $s<t$, while $f(s)>f(t)$, and
$$
f(t)-f(s)=t-s-R.
$$
Consequently, by (\ref{Eq.PeriodicAiryKernel}),
\begin{equation*}
\begin{split}
&K^{\mathrm{per},R}(f(s),x;f(t),y) =-\int_{-\infty}^{\infty}\frac{e^{-(t-s-R)v}}{1+e^{Rv}}\Ai(x+v)\Ai(y+v)\,dv\\
&=-\int_{-\infty}^{\infty}\frac{e^{-(t-s)v}}{1+e^{-Rv}}\Ai(x+v)\Ai(y+v)\,dv =-K^{\mathrm{per},R}(s,x;t,y).
\end{split}
\end{equation*}
Since $g(s)=1$ and $g(t)=-1$, this proves (\ref{Eq.TransRed3}) in this case.

Finally, suppose that $t\leq R/2-\theta<s$. Then $s>t$, while $f(s)<f(t)$, and
$$
f(t)-f(s)=t-s+R.
$$
It follows that
\begin{equation*}
\begin{split}
&K^{\mathrm{per},R}(f(s),x;f(t),y)=\int_{-\infty}^{\infty}\frac{e^{-(t-s+R)v}}{1+e^{-Rv}}\Ai(x+v)\Ai(y+v)\,dv\\
&=\int_{-\infty}^{\infty}\frac{e^{-(t-s)v}}{1+e^{Rv}}\Ai(x+v)\Ai(y+v)\,dv=-K^{\mathrm{per},R}(s,x;t,y).
\end{split}
\end{equation*}
Since $g(s)=-1$ and $g(t)=1$, (\ref{Eq.TransRed3}) follows in this case. This completes the proof.

%----------------------------------------------------------------------
%
% Section 7.2
%
%----------------------------------------------------------------------
\subsection{Proof of Theorem \ref{Thm.ConvergenceToAiryLE}}\label{Section7.2}

We continue with the notation from the statement of the theorem and divide the proof into three steps. In Step 1, we fix a sequence $R_N\uparrow\infty$ and prove convergence of the point processes obtained by restricting the periodic Airy line ensembles to finitely many time slices. In Step 2, we combine this point process convergence with one-time upper-tail bounds to obtain finite-dimensional convergence of the ordered curves. In Step 3, we use the periodic Brownian Gibbs property established in Theorem \ref{Thm.GibbsProperty} and the tightness argument of Corwin and Hammond \cite[Section~3]{CorHamA} to upgrade the finite-dimensional convergence to weak convergence in $C(\mathbb N\times\mathbb R)$. The last step is analogous to \cite[Section~4.2]{DimitrovSerio}.

\medskip

{\bf \raggedleft Step 1.} Fix an arbitrary sequence $R_N\uparrow\infty$, an integer $m\in\mathbb N$, and a set $\mathsf{S}=\{s_1<\cdots<s_m\}\subset\mathbb R$. For all sufficiently large $N$, we have $\mathsf{S}\subset(-R_N/2,R_N/2)$. In particular, for such $N$ we have
$$
M^{\mathsf{S};R_N}(A)
= \sum_{i=1}^{\infty}\sum_{j=1}^{m}
{\bf 1}\left\{
\left(s_j,\widehat{\mathcal A}^{\mathrm{per},R_N}_i(s_j)\right)\in A
\right\},
\qquad A\subseteq\mathbb R^2\text{ Borel},
$$
where we recall that $M^{\mathsf{S};R_N}$ is the point process from (\ref{Eq.PeriodicAiryPointProcess}).
In what follows, we assume that $N$ is sufficiently large so that $\mathsf{S}\subset(-R_N/2,R_N/2)$, and do not mention this further. 

Define the random measure $M^{\mathsf{S};\mathrm{Ai}}$ on $\mathbb R^2$ by
$$
M^{\mathsf{S};\mathrm{Ai}}(A)
=
\sum_{i=1}^{\infty}\sum_{j=1}^{m}
{\bf 1}\left\{
\left(s_j,\mathcal A_i(s_j)\right)\in A
\right\},
\qquad A\subseteq\mathbb R^2\text{ Borel}.
$$

The goal of this step is to prove that 
\begin{equation}\label{Eq.LargePeriodPointProcessConvergence}
M^{\mathsf{S};R_N}
\Rightarrow
M^{\mathsf{S};\mathrm{Ai}}.
\end{equation}

By \cite[Definition~1.3 and Theorem~1.4]{CS14}, $M^{\mathsf{S};\mathrm{Ai}}$ is a determinantal point process with reference measure $\mu_{\mathsf{S}}\times\mathrm{Leb}$ and correlation kernel
\begin{equation}\label{Eq.AiryKernel}
K^{\mathrm{Ai}}(s,x;t,y)=
\begin{cases}
\displaystyle\int_0^\infty e^{-(t-s)v}
\Ai(x+v)\Ai(y+v)\,dv, & s\leq t,\\[3ex]
\displaystyle-\int_{-\infty}^0 e^{-(t-s)v}
\Ai(x+v)\Ai(y+v)\,dv, & s>t.
\end{cases}
\end{equation}
We mention that the kernel in (\ref{Eq.AiryKernel}) is the transpose of the one in \cite[Definition~1.3]{CS14}; by \cite[Proposition~2.13(4)]{ED24a}, the transposed kernel is also a correlation kernel for the same determinantal point process.

From the Airy function asymptotics in Proposition~\ref{Prop.S4AiryIdentities}(iv), we can find $C,c>0$ such that
\begin{equation}\label{Eq.LargePeriodAiryBounds}
|\Ai(x)|\leq Ce^{-cx^{3/2}}\quad\mbox{for }x\geq0,
\qquad
|\Ai(x)|\leq C\quad\mbox{for }x<0.
\end{equation}
From (\ref{Eq.AiryKernel}), (\ref{Eq.LargePeriodAiryBounds}), and the dominated convergence theorem, we know that $K^{\mathrm{Ai}}(s,\cdot;t,\cdot)$ is continuous on $\mathbb R^2$ for every $s,t\in\mathsf{S}$. In addition, by Theorem~\ref{Thm.DeterminantalStructure}, $M^{\mathsf{S};R_N}$ is a determinantal point process with reference measure $\mu_{\mathsf{S}}\times\mathrm{Leb}$ and correlation kernel $K^{\mathrm{per},R_N}$ as in \eqref{Eq.PeriodicAiryKernel}. Consequently, by \cite[Proposition~2.18]{ED24a}, \cite[Proposition~2.13(3)]{ED24a}, the continuity of $K^{\mathrm{Ai}}(s,\cdot;t,\cdot)$, and a standard compactness argument, to prove (\ref{Eq.LargePeriodPointProcessConvergence}) it suffices to show that 
\begin{equation}\label{Eq.LargePeriodKernelConvergence}
\lim_{N\to\infty}
K^{\mathrm{per},R_N}(s,x_N;t,y_N)
=
K^{\mathrm{Ai}}(s,x;t,y)
\end{equation}
whenever $s,t\in\mathsf{S}$ and $x_N,y_N\in\mathbb R$ satisfy $x_N\to x$ and $y_N\to y$.

\medskip

To prove (\ref{Eq.LargePeriodKernelConvergence}), choose $B>0$ such that $x_N,y_N,x,y\in[-B,B]$ for all $N$. We will use the elementary bound
\begin{equation}\label{Eq.LargePeriodScalarBound}
0<
\frac{1}{1+e^{-Rv}}
\leq
\min\{1,e^{Rv}\},
\qquad R> 0, v\in\mathbb R.
\end{equation}
Using (\ref{Eq.LargePeriodScalarBound}) together with (\ref{Eq.LargePeriodAiryBounds}), we can find $C_B,c_B>0$ such that for $x,y \in [-B,B]$, $s,t \in \mathsf{S}$, and $R \geq s_m - s_1 + 1$, we have
\begin{equation}\label{Eq.ConvALEBounds}
\begin{split}
&\left|\frac{e^{-(t-s)v}} {1+e^{-Rv}}
\Ai(x+v)\Ai(y+v)\right| \leq C_B\left(
e^{v}{\bf 1}\{v<0\} + e^{-c_Bv^{3/2}}{\bf 1}\{v\geq0\} \right) \mbox{ if } s \leq t, \\
&\left|\frac{e^{-(t-s)v}}{1+e^{Rv}} \Ai(x+v)\Ai(y+v)\right| \leq C_B\left(
e^{(s-t)v}{\bf 1}\{v<0\} + e^{-c_Bv^{3/2}}{\bf 1}\{v\geq0\} \right) \mbox{ if } s > t.
\end{split}
\end{equation}
Here, the constants $C_B$ and $c_B$ depend on $B$ alone.

We also have the following pointwise limits for $v \neq 0$:
\begin{equation}\label{Eq.ConvALELimits}
\begin{split}
&\lim_{N \rightarrow \infty} \frac{e^{-(t-s)v}} {1+e^{-R_Nv}}
\Ai(x_N+v)\Ai(y_N+v) = e^{-(t-s)v}\Ai(x+v)\Ai(y+v){\bf 1}\{v>0\} \mbox{ if } s \leq t, \\
&\lim_{N \rightarrow \infty} \frac{e^{-(t-s)v}}{1+e^{R_Nv}} \Ai(x_N+v)\Ai(y_N+v) =  e^{-(t-s)v}\Ai(x+v)\Ai(y+v) {\bf 1}\{v < 0\} \mbox{ if } s > t.
\end{split}
\end{equation}

Applying the pointwise limits in (\ref{Eq.ConvALELimits}) to the integrands in the formula for $K^{\mathrm{per},R_N}(s,x_N;t,y_N)$ in (\ref{Eq.PeriodicAiryKernel}), we obtain (\ref{Eq.LargePeriodKernelConvergence}) from the dominated convergence theorem, with dominating functions given by the right-hand sides of (\ref{Eq.ConvALEBounds}).

\medskip

{\bf \raggedleft Step 2.} In this step, we use (\ref{Eq.LargePeriodPointProcessConvergence}) to prove that, for every $k,m\in\mathbb N$ and $\mathsf{S}=\{s_1<\cdots<s_m\}\subset\mathbb R$,
\begin{equation}\label{Eq.LargePeriodFDD}
\left(
\widehat{\mathcal A}^{\mathrm{per},R_N}_i(s_j):
1\leq i\leq k,\ 1\leq j\leq m
\right)
\Rightarrow
\left(
\mathcal A_i(s_j):
1\leq i\leq k,\ 1\leq j\leq m
\right).
\end{equation}

By (\ref{Eq.LargePeriodPointProcessConvergence}), \cite[Proposition~2.19]{ED24a}, and \cite[Corollary~2.20]{ED24a}, to prove (\ref{Eq.LargePeriodFDD}) it suffices to show that
\begin{equation}\label{Eq.LargePeriodCoordinateTightness}
\left\{\widehat{\mathcal A}^{\mathrm{per},R_N}_i(s_j)
\right\}_{N\geq1}\quad\mbox{is tight}
\end{equation}
for every $i\geq1$ and $1\leq j\leq m$. 

\medskip

In the remainder of this step, we fix $1 \leq j \leq m$ and prove (\ref{Eq.LargePeriodCoordinateTightness}). Discarding finitely many terms, we may assume that $s_j\in[-R_N/2,R_N/2]$ for every $N$. Define the random measure $M^{j,N}$ on $\mathbb R$ by
$$
M^{j,N}(A)
=
\sum_{i=1}^{\infty}
{\bf 1}\left\{
\mathcal A_i^{\mathrm{per},R_N}(s_j)\in A
\right\},
\qquad A\subseteq\mathbb R\text{ Borel}.
$$
By Theorem~\ref{Thm.DeterminantalStructure} and \cite[Lemma~2.17]{ED24a}, $M^{j,N}$ is a determinantal point process with Lebesgue reference measure and correlation kernel
\begin{equation}\label{Eq.LargePeriodOneTimeKernel}
K^{j,N}(x,y)
:= K^{\mathrm{per},R_N}(s_j,x;s_j,y) =
\int_{\mathbb R}
\frac{\Ai(x+v)\Ai(y+v)}
{1+e^{-R_Nv}}\,dv.
\end{equation}
From (\ref{Eq.LargePeriodKernelConvergence}) we conclude that $K^{j,N}(x,y)$ converges uniformly over compact sets to the Airy kernel $K^{\mathrm{Ai}}(s_j,x;s_j,y) = K^{\mathrm{Ai}}(0,x;0,y)$. From \cite[Proposition~2.15]{ED24a} and \cite[Proposition~2.13(3)]{ED24a}, we conclude that $M^{j,N}$ converges weakly to the Airy point process.

The last sentence and the fact that the Airy point process a.s.\ has infinitely many atoms, see \cite[(7.11)]{ED24a}, imply that $\left\{\left(\mathcal{A}^{\mathrm{per},R_N}_i(s_j): i \geq 1\right)\right\}_{N \geq 1}$ satisfies conditions~(1) and~(2) in \cite[Proposition~2.21]{ED24a}. Condition (3) in that proposition requires that 
\begin{equation}\label{Eq.LargePeriodRem1}
\lim_{a \rightarrow \infty} \limsup_{N \rightarrow \infty} \mathbb P\left(\mathcal{A}^{\mathrm{per},R_N}_1(s_j)\geq a
\right) = 0.
\end{equation}
Assuming (\ref{Eq.LargePeriodRem1}) for the moment, we see that all conditions of \cite[Proposition~2.21]{ED24a} are satisfied, and hence $\left\{\left(\mathcal{A}^{\mathrm{per},R_N}_i(s_j): i \geq 1\right)\right\}_{N \geq 1}$ is tight in $\mathbb{R}^{\infty}$. As $\widehat{\mathcal A}^{\mathrm{per},R_N}_i(s_j) = \mathcal{A}^{\mathrm{per},R_N}_i(s_j)$ for all large $N$, this implies (\ref{Eq.LargePeriodCoordinateTightness}).

\medskip

It remains to establish (\ref{Eq.LargePeriodRem1}). For all sufficiently large $N$, we have $R_N\geq1$, and thus, for $v<0$,
$$
\frac{1}{1+e^{-R_Nv}}
\leq e^{R_Nv}
\leq e^v.
$$
Using the latter, (\ref{Eq.LargePeriodAiryBounds}), (\ref{Eq.LargePeriodOneTimeKernel}) and splitting the integral at $v=-x/2$, for $x\geq0$ we obtain for some universal constants $D,d > 0$
\begin{equation}\label{Eq.LargePeriodDiagonalBound}
\begin{split}
K^{j,N}(x,x)
&\leq
\int_{-x/2}^{\infty}\Ai(x+v)^2dv
+C^2\int_{-\infty}^{-x/2}e^vdv =
\int_{x/2}^{\infty}\Ai(u)^2du
+ C^2e^{-x/2} \leq De^{-dx}.
\end{split}
\end{equation}

For $a\geq 0$, taking expectations on both sides of the trivial inequality
$$
{\bf 1}\{\mathcal A^{\mathrm{per},R_N}_1(s_j) \geq a\}\leq M^{j,N}([a,\infty)),
$$
and using (\ref{Eq.S2PairMeanDPP}) and (\ref{Eq.LargePeriodDiagonalBound}), we conclude that
$$
\mathbb P\left(
\mathcal A^{\mathrm{per},R_N}_1(s_j)\geq a
\right)
\leq
\int_a^\infty K^{j,N}(x,x)\,dx
\leq \int_a^{\infty} De^{-dx}\,dx = (D/d)e^{-da}.
$$
The last inequality proves (\ref{Eq.LargePeriodRem1}), completing our work in this step.

\medskip

{\bf \raggedleft Step 3.} In this step, we conclude the proof of the theorem. The main idea is to apply the tightness argument of Corwin and Hammond \cite[Section~3]{CorHamA}. As similar arguments have already appeared earlier, see \cite[Section~4.2]{DimitrovSerio}, our discussion will be brief.

In view of (\ref{Eq.LargePeriodFDD}), it suffices to prove that the sequence $\{\widehat{\mathcal A}^{\mathrm{per},R_N}\}_{N\geq1}$ is tight in $C(\mathbb N\times\mathbb R)$. Indeed, if $\widehat{\mathcal A}^{\mathrm{per},R_{N_j}}$ converges weakly along a subsequence to some line ensemble $\mathcal A'$, then (\ref{Eq.LargePeriodFDD}) implies that $\mathcal A'$ and $\mathcal A$ have the same finite-dimensional distributions. By \cite[Lemma~3.1]{DM21}, this implies that $\mathcal A'$ and $\mathcal A$ have the same distribution. Consequently, once tightness is established, every subsequential limit has the same distribution as $\mathcal A$, and hence $\widehat{\mathcal A}^{\mathrm{per},R_N}\Rightarrow\mathcal A$.
It therefore remains only to prove tightness.

Define the parabolic Airy line ensemble $\mathcal L^{\mathrm{Ai}}=\{\mathcal L_i^{\mathrm{Ai}}\}_{i\geq1}$ by
\begin{equation}\label{Eq.LargePeriodParabolicAiry}
\mathcal L_i^{\mathrm{Ai}}(t)
=
2^{-1/2}
\left(
\mathcal A_i(t)-t^2
\right),
\qquad i\geq1,\quad t\in\mathbb R.
\end{equation}
Also, let $\mathcal L^N=\{\mathcal L_i^N\}_{i=1}^{N+1}$ be the $\llbracket 1,N+1\rrbracket$-indexed line ensemble on $[-R_N/2,R_N/2]$ defined by
\begin{equation}\label{Eq.LargePeriodParabolicEnsemble}
\mathcal L_i^N(t)
=
2^{-1/2}
\left(
\widehat{\mathcal A}_i^{\mathrm{per},R_N}(t)-t^2
\right)
=
2^{-1/2}
\left(
\mathcal A_i^{\mathrm{per},R_N}(t)-t^2
\right),
\end{equation}
for $(i,t)\in\llbracket 1,N+1\rrbracket\times[-R_N/2,R_N/2]$, where the last equality follows from the definition of $\widehat{\mathcal A}^{\mathrm{per},R_N}$.

Fix $k\geq1$ and $T>0$, and choose $N_0\geq k$ so that
$$
R_N\geq2T + 6
\qquad\mbox{for all }N\geq N_0.
$$
We verify the hypotheses of \cite[Definition~3.3]{CorHamA} for $\mathcal L^N$, with $N\geq N_0$. By (\ref{Eq.LargePeriodFDD}), (\ref{Eq.LargePeriodParabolicAiry}), and (\ref{Eq.LargePeriodParabolicEnsemble}), the ensembles $\mathcal L^N$ satisfy hypothesis $(H2)_{k,T+2}$. Since $\mathcal L^{\mathrm{Ai}}$ is non-intersecting almost surely, they also satisfy hypothesis $(H3)_{k,T+2}$. Finally, Theorem~\ref{Thm.GibbsProperty} and Lemma~\ref{Lem.PeriodicBGPCharacterization} imply that $\mathcal L^N$ satisfies hypothesis $(H1)_{k,T+2}$ with ``Brownian Gibbs property'' replaced by ``partial Brownian Gibbs property.'' As observed in \cite[Section~4.2]{DimitrovSerio}, the results of \cite[Section~3]{CorHamA} remain valid with this modification, since their arguments do not use properties of the bottom curves in the ensembles.

Consequently, \cite[Proposition~3.6]{CorHamA} implies that
$$
\left\{
\left.
\mathcal L^N
\right|_{\llbracket 1,k\rrbracket\times[-T,T]}
\right\}_{N\geq N_0}
$$
is tight in $C(\llbracket 1,k\rrbracket\times[-T,T])$. By (\ref{Eq.LargePeriodParabolicEnsemble}), it follows that
$$
\left\{
\left.
\widehat{\mathcal A}^{\mathrm{per},R_N}
\right|_{\llbracket 1,k\rrbracket\times[-T,T]}
\right\}_{N\geq N_0}
$$
is tight in the same space. Since $k\geq1$ and $T>0$ were arbitrary, \cite[Lemma~2.4]{DEA21} implies that $\{\widehat{\mathcal A}^{\mathrm{per},R_N}\}_{N\geq1}$ is tight in $C(\mathbb N\times\mathbb R)$, completing the proof of the theorem.

%----------------------------------------------------------------------
%
% Section 7.3
%
%----------------------------------------------------------------------
\subsection{Proof of Theorem \ref{Thm.ConvergenceToGumbel}}\label{Section7.3}

In this section, we deal with random elements $Y = (Y_i: i\geq 1)$ of $\mathbb{R}^{\infty}$, cf.~\cite[Example 1.2]{Billing}. The random elements considered below will almost surely lie in the subspace
$$
\mathbb{R}_{\geq}^{\infty} = \{ x= (x_1,x_2,\dots) \in \mathbb{R}^{\infty}: x_1 \geq x_2 \geq \cdots \mbox{ and } \lim_{n \rightarrow \infty}x_n = -\infty  \}.
$$
For a random element $Y$ of $\mathbb{R}_{\geq}^{\infty}$, we also define its associated point process $M^Y$ by
\begin{equation}\label{Eq.DefMY}
M^Y(A)=\sum_{i=1}^{\infty}{\bf 1}\{Y_i\in A\},
\qquad A\subseteq\mathbb R\text{ Borel}.
\end{equation}

Before turning to the proof of Theorem \ref{Thm.ConvergenceToGumbel}, we record the following one-dimensional consequence of \cite[Proposition~2.19 and Corollary~2.20]{ED24a}.

\begin{lemma}\label{Lem.OneDimensionalPointProcess}
For each $N\in\mathbb N$, let $X^N=(X_i^N:i\geq1)$ be a random element of $\mathbb R_{\geq}^\infty$. Suppose that $M^{X^N}\Rightarrow M$ for some point process $M$ and that $\{X_i^N\}_{N\geq1}$ is tight for every $i\geq1$. Then $X^N\Rightarrow X$ for some random element $X=(X_i:i\geq1)$ of $\mathbb R^\infty_{\geq}$, whose associated point process $M^X$ has the same distribution as $M$. Moreover, if $Y=(Y_i:i\geq1)$ is a random element of $\mathbb R^\infty_{\geq}$ whose associated point process has the same distribution as $M$, then $X^N\Rightarrow Y$.
\end{lemma}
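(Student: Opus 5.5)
This is a one-dimensional specialization of the two-dimensional results of \cite[Proposition~2.19 and Corollary~2.20]{ED24a}, with a single time slice. We therefore plan to embed everything into $\mathbb{R}^2$ using the time slice $\{0\}\times\mathbb{R}$.

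\emph{Embedding.} Set $\mathsf{S}=\{0\}$ and consider the random measures $\widetilde M^N(A)=\sum_{i\geq1}{\bf 1}\{(0,X_i^N)\in A\}$ on $\mathbb{R}^2$. Since $\iota(x)=(0,x)$ is a continuous map $\mathbb{R}\to\mathbb{R}^2$ that is proper onto its image, the pushforward map $\mathcal{M}_{\mathbb{R}}\to\mathcal{M}_{\mathbb{R}^2}$, $\nu\mapsto\nu\iota^{-1}$, is continuous in the vague topology. Indeed, for compactly supported continuous $g$ on $\mathbb{R}^2$, the function $g\circ\iota$ is compactly supported and continuous on $\mathbb{R}$. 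The mapping theorem \cite[Theorem 2.7]{Billing} then gives $\widetilde M^N\Rightarrow M\iota^{-1}$.

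\emph{Existence of the limit.} The tightness of each $\{X_i^N\}_{N\ge1}$ is exactly the coordinatewise tightness hypothesis of \cite[Proposition~2.19]{ED24a} with $m=1$. That proposition yields a random element $X=(X_i:i\geq1)$ with $X^N\Rightarrow X$ in $\mathbb{R}^\infty$ (no subsequence is needed). It also shows that the point process formed by $\{(0,X_i)\}$ has the law of $M\iota^{-1}$, so $M^X\overset{d}{=}M$ after pulling back along $\iota$.

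\emph{$X$ lies in $\mathbb{R}^\infty_{\geq}$.} Monotonicity $X_1\ge X_2\ge\cdots$ passes to the limit because $\{x_i\ge x_{i+1}\}$ is closed in $\mathbb{R}^\infty$. For $X_n\to-\infty$, note that since $M^X\overset{d}{=}M$ is a point process, it is locally finite. Hence only finitely many $X_i$ lie in any bounded interval, and a nonincreasing sequence with this property must tend to $-\infty$.

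\emph{Uniqueness.} Suppose $Y$ is a random element of $\mathbb{R}^\infty_{\geq}$ with $M^Y\overset{d}{=}M$. Both $X$ and $Y$ are then ordered sequences forming point processes with the same law. By \cite[Corollary~2.20]{ED24a}, again applied through the embedding $\iota$, we get $X\overset{d}{=}Y$ in $\mathbb{R}^\infty$, and hence $X^N\Rightarrow Y$.

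The main obstacle is purely bookkeeping. We must check that the hypotheses of \cite[Proposition~2.19]{ED24a} are literally met in the degenerate case $m=1$, and that membership in $\mathbb{R}^\infty_{\geq}$ matches its ordering assumptions. If these do not match exactly, the fallback is to repeat its argument directly. That argument identifies $\mathbb{P}(X_1\le a_1,\dots,X_k\le a_k)$ through the counting variables $M(\,(a,\infty)\,)$ at continuity points, using vague convergence together with the fact that the limiting point process has no atoms at fixed levels almost surely.
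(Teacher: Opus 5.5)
Your proposal is correct and follows the paper's proof. The paper likewise identifies $\mathbb{R}$ with $\{0\}\times\mathbb{R}$, applies \cite[Proposition~2.19]{ED24a} with a single time slice to get $X^N\Rightarrow X$ with $M^X\overset{d}{=}M$, and then uses \cite[Corollary~2.20]{ED24a} to pass the convergence to any $Y$ with the same associated point process; your extra checks (vague continuity of the pushforward under $x\mapsto(0,x)$, and $X\in\mathbb{R}^\infty_{\geq}$ via local finiteness of $M^X$) are valid details the paper leaves implicit.
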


\begin{proof}
Identify $\mathbb R$ with $\{0\}\times\mathbb R$ through $x\mapsto(0,x)$. The first statement follows from \cite[Proposition~2.19]{ED24a} applied with $r=1$ and $t_1=0$. The second statement then follows from \cite[Corollary~2.20]{ED24a}.
\end{proof}

\begin{proof}[Proof of Theorem~\ref{Thm.ConvergenceToGumbel}]
We continue with the notation from the statement of the theorem and divide the proof into three steps. In Step 1, we construct the limiting point process, identify its determinantal structure, and reduce the proof of the theorem to establishing a certain weak convergence statement, see (\ref{Eq.GumbelRed1}). In Step 2, we introduce rescaled random elements built from $\{\mathcal{A}^{\mathrm{per},R}_i(0)\}_{i \geq 1}$ and prove convergence of their associated point processes. In Step 3, we establish (\ref{Eq.GumbelRed1}) using the point process convergence from Step 2 and Lemma~\ref{Lem.OneDimensionalPointProcess}. Throughout this subsection, we freely use the notation for Poisson random measures from \cite[Chapter VI]{Cin11}.

\medskip
{\bf \raggedleft Step 1.} Let $\mathsf{P}$ be a Poisson random measure on $[0,\infty)$ with mean measure given by Lebesgue measure. From \cite[Propositions~VI.1.11 and~VI.2.13, and Theorem~VI.2.17]{Cin11}, we can find random variables $\{X_i\}_{i\geq1}$ that form $\mathsf{P}$ and satisfy $0<X_1<X_2<\cdots$ and $\lim_{n \rightarrow \infty} X_n = \infty$ almost surely.

Define $Y_i=-\log X_i$ for $i \geq 1$, and let $M^Y$ be as in (\ref{Eq.DefMY}). By \cite[Remark~VI.2.4(b)]{Cin11}, $M^Y$ is a Poisson random measure with mean measure $\mu$, defined by $\mu(A)=\int_A e^{-x}\,dx$. In particular, $\{\mathcal{G}_i\}_{i\geq1}$ in the statement of the theorem has the same distribution as $\{Y_i\}_{i\geq1}$. Consequently, to prove (\ref{Eq.ConvToGumbelK}), it suffices to show that for any sequence $R_N \downarrow 0$
\begin{equation}\label{Eq.GumbelRed1}
\left(
R_N\mathcal{A}^{\mathrm{per},R_N}_i(0)
+\frac{1}{2}\log(4\pi R_N^3): i \geq 1
\right)
\Rightarrow
\left(Y_i: i \geq 1\right),
\end{equation}
where the convergence is in $\mathbb{R}^{\infty}$. Note that
$$
\mathbb{P}(Y_1 \leq s) = \mathbb{P}\bigl(\mathsf{P}([0,e^{-s})) = 0 \bigr) = e^{-e^{-s}},
$$
so that (\ref{Eq.GumbelRed1}) also implies (\ref{Eq.ConvToGumbel}).

\medskip

In the remainder of this step, we show that $M^Y$ is a determinantal point process on $\mathbb{R}$ with Lebesgue reference measure and correlation kernel
\begin{equation}\label{Eq.KerExponPP}
K(x,y) = e^{-x}{\bf 1}\{x=y\},\qquad x,y\in\mathbb R.
\end{equation}
By \cite[Lemma~2.8]{ED24a}, it suffices to show that for every collection of pairwise disjoint bounded Borel sets $B_1,\ldots,B_m\subseteq\mathbb R$ and $n_1,\ldots,n_m\in\mathbb{N}$, with $n=n_1+\cdots+n_m$, we have
\begin{equation}\label{Eq.ExponPPFactorial}
\mathbb E\left[\prod_{j=1}^m\frac{M^Y(B_j)!}{(M^Y(B_j)-n_j)!}\right]
=
\int_{B_1^{n_1}\times\cdots\times B_m^{n_m}}
\det[K(x_i,x_j)]_{i,j=1}^n\,dx_1\cdots dx_n,
\end{equation}
where we use the usual convention $\frac{1}{k!} = 0$ for $k \leq -1$.

We know that the random variables $M^Y(B_1),\ldots,M^Y(B_m)$ are independent Poisson random variables with respective means $\mu(B_1),\ldots,\mu(B_m)$. Since a Poisson random variable $Z$ with mean $\lambda$ satisfies
$$
\mathbb E\left[\frac{Z!}{(Z-k)!}\right]=\lambda^k,
$$
we obtain
\begin{equation}\label{Eq.MYFact}
\mathbb E\left[\prod_{j=1}^m\frac{M^Y(B_j)!}{(M^Y(B_j)-n_j)!}\right]
=
\prod_{j=1}^m\left(\int_{B_j}e^{-x}\,dx\right)^{n_j}.
\end{equation}
The latter agrees with the right-hand side of (\ref{Eq.ExponPPFactorial}) once we use (\ref{Eq.KerExponPP}), hence verifying (\ref{Eq.ExponPPFactorial}).

\medskip

{\bf \raggedleft Step 2.} Fix an arbitrary sequence $R_N\downarrow0$ and set
$$
c_N=\frac12\log(4\pi R_N^3),\qquad
Y_i^N=R_N\mathcal A_i^{\mathrm{per},R_N}(0)+c_N,\qquad i\geq1.
$$
Set $Y^N=(Y_i^N:i\geq1)$. By Theorems~\ref{Thm.Convergence} and~\ref{Thm.DeterminantalStructure}, $Y^N$ lies in $\mathbb{R}^{\infty}_{\geq}$ almost surely, and we let $M^{Y^N}$ be its associated point process as in (\ref{Eq.DefMY}). The goal of this step is to prove that
\begin{equation}\label{Eq.GumbelPPConvergence}
M^{Y^N}\Rightarrow M^Y.
\end{equation}

By Theorem~\ref{Thm.DeterminantalStructure}, \cite[Lemma~2.17]{ED24a}, and \cite[Proposition~2.13(5)--(6)]{ED24a}, $M^{Y^N}$ is a determinantal point process on $\mathbb R$ with Lebesgue reference measure and correlation kernel
\begin{equation}\label{Eq.GumbelScaledKernel}
K_N(x,y)=
\frac{1}{R_N}\int_{\mathbb R}
\frac{\Ai((x-c_N)/R_N+v)\Ai((y-c_N)/R_N+v)}
{1+e^{-R_Nv}}\,dv.
\end{equation}
With $\alpha=R_N$, the kernel in (\ref{Eq.GumbelScaledKernel}) is precisely the rescaled kernel on the left-hand side of \cite[(15)]{Joh07}. Consequently, \cite[Theorem~1.2]{Joh07} gives
\begin{equation}\label{Eq.GumbelKernelLimit}
\lim_{N\to\infty}K_N(x,y)=K(x,y),\qquad x,y\in\mathbb R,
\end{equation}
where $K$ is as in (\ref{Eq.KerExponPP}). In addition, using
$$
\frac{1}{1+e^{-R_Nv}}\leq e^{R_Nv}
$$
and \cite[(44)]{Joh07}, we obtain
\begin{equation}\label{Eq.GumbelDiagonalBound}
0\leq K_N(x,x)\leq e^{R_N^3/12-x},\qquad x\in\mathbb R.
\end{equation}

For $x_1,\ldots,x_n\in\mathbb R$, the integral representation (\ref{Eq.GumbelScaledKernel}) shows that $[K_N(x_i,x_j)]_{i,j=1}^n$ is positive semidefinite. By Hadamard's inequality \cite[Corollary~33.2.1.1]{Prasolov} and (\ref{Eq.GumbelDiagonalBound}), we obtain
\begin{equation}\label{Eq.GumbelDetBound}
0\leq
\det[K_N(x_i,x_j)]_{i,j=1}^n
\leq
\prod_{i=1}^n K_N(x_i,x_i)
\leq
e^{nR_N^3/12}\prod_{i=1}^ne^{-x_i}.
\end{equation}
On the other hand, if $x_1,\ldots,x_n$ are pairwise distinct, (\ref{Eq.GumbelKernelLimit}) gives
\begin{equation}\label{Eq.GumbelDetLimit}
\lim_{N\to\infty}
\det[K_N(x_i,x_j)]_{i,j=1}^n
=
\prod_{i=1}^ne^{-x_i}.
\end{equation}

Let $B_1,\ldots,B_m$ be pairwise disjoint bounded Borel sets and let $n_1,\ldots,n_m\in\mathbb N$, with $n=n_1+\cdots+n_m$. Since the tuples with coincident coordinates form a Lebesgue null set, (\ref{Eq.GumbelDetBound}), (\ref{Eq.GumbelDetLimit}), dominated convergence, and \cite[(2.13)]{ED24a} give
\begin{equation}\label{Eq.GumbelFactorialConvergence}
\lim_{N\to\infty}
\mathbb E\left[
\prod_{j=1}^m
\frac{M^{Y^N}(B_j)!}{(M^{Y^N}(B_j)-n_j)!}
\right]
=
\prod_{j=1}^m
\left(\int_{B_j}e^{-x}\,dx\right)^{n_j}
=
\mathbb E\left[
\prod_{j=1}^m
\frac{M^Y(B_j)!}{(M^Y(B_j)-n_j)!}
\right],
\end{equation}
where the last equality follows from (\ref{Eq.MYFact}). Moreover, for every bounded Borel set $B$ and $\epsilon>0$,
\begin{equation}\label{Eq.SummableEpsilon}
\sum_{n=1}^{\infty}\frac{\epsilon^n}{n!}
\left(\int_Be^{-x}\,dx\right)^n<\infty.
\end{equation}
From (\ref{Eq.GumbelFactorialConvergence}), (\ref{Eq.SummableEpsilon}), and \cite[Proposition~2.2 and Corollary~2.4]{ED24a}, we see that (\ref{Eq.GumbelPPConvergence}) holds.

\medskip

{\bf \raggedleft Step 3.} In this step, we prove (\ref{Eq.GumbelRed1}). In view of (\ref{Eq.GumbelPPConvergence}) and Lemma~\ref{Lem.OneDimensionalPointProcess}, it suffices to prove that the sequence $\{Y^N\}_{N\geq1}$ is tight in $\mathbb{R}^{\infty}$.

We seek to apply \cite[Proposition~2.21]{ED24a} to $Y^N$. Condition~(1) is precisely (\ref{Eq.GumbelPPConvergence}), while condition~(2) follows from the construction in Step~1, since $M^Y(\mathbb R)=\infty$ almost surely. It remains to verify condition~(3).

Since $Y_1^N$ is the rightmost atom of $M^{Y^N}$, for every $a\in\mathbb R$,
$$
{\bf 1}\{Y_1^N\geq a\}\leq M^{Y^N}([a,\infty)).
$$
Taking expectations on both sides and using (\ref{Eq.S2PairMeanDPP}) and (\ref{Eq.GumbelDiagonalBound}), we obtain
\begin{equation}\label{Eq.GumbelUpperTail}
\mathbb P(Y_1^N\geq a)
\leq
\mathbb E[M^{Y^N}([a,\infty))]
=
\int_a^\infty K_N(x,x)\,dx
\leq
e^{R_N^3/12-a}.
\end{equation}
Consequently,
\begin{equation*}
\lim_{a\to\infty}\limsup_{N\to\infty}
\mathbb P(Y_1^N\geq a)=0.
\end{equation*}
Thus, condition~(3) of \cite[Proposition~2.21]{ED24a} holds, and hence $\{Y^N\}_{N\geq1}$ is tight in $\mathbb R^\infty$, completing the proof.
\end{proof}

\bibliographystyle{alpha}
\bibliography{P}

@article{DLMS15,
  author = {Dean, D. S. and Le Doussal, P. and
            Majumdar, S. N. and Schehr, G.},
  title = {Finite-temperature free fermions and the
           {K}ardar--{P}arisi--{Z}hang equation at finite time},
  journal = {Phys. Rev. Lett.},
  volume = {114},
  number = {11},
  pages = {110402},
  year = {2015},
  doi = {10.1103/PhysRevLett.114.110402}
}

@article{DLMS16,
  author = {Dean, D. S. and Le Doussal, P. and
            Majumdar, S. N. and Schehr, G.},
  title = {Noninteracting fermions at finite temperature in a
           {$d$}-dimensional trap: Universal correlations},
  journal = {Phys. Rev. A},
  volume = {94},
  number = {6},
  pages = {063622},
  year = {2016},
  doi = {10.1103/PhysRevA.94.063622}
}

@article{AH21,
author = {Aggarwal, A. and Huang, J.},
title = {Edge statistics for lozenge tilings of polygons, {I}{I}: {A}iry line ensemble},
journal = {Forum Math. Pi},
volume = {13},
year = {2025},
pages = {e2, 1--60},
}

@article{LW20,
  author = {Liechty, K. and Wang, D.},
  title = {Asymptotics of free fermions in a quadratic well at
           finite temperature and the
           {M}oshe--{N}euberger--{S}hapiro random matrix model},
  journal = {Ann. Inst. Henri Poincar{\'e} Probab. Stat.},
  volume = {56},
  number = {2},
  pages = {1072--1098},
  year = {2020},
  doi = {10.1214/19-AIHP994}
}

@article{AH23,
author = {Aggarwal, A. and Huang, J.},
title = {Strong characterization for the {A}iry line ensemble},
journal = {Invent. Math.},
volume = {243},
number = {2},
pages = {293--605},
year = {2026},
}

@article{FNH99,
author = {Forrester, P. J. and Nagao, T. and Honner, G.},
title = {Correlations for the orthogonalunitary and symplectic-unitary transitions at the hard and soft edges},
journal = {Nuclear Phys. B},
year = 1999,
volume = {53},
number = {3},
pages = {601-643}
}

@article{Mac94,
author = {Mac{\^e}do, A.M.S.},
title = {Universal parametric correlations at the soft edge of the spectrum of random matrix ensembles},
journal = {Europhys. Lett.},
year = 1994,
volume = {26},
issue = {9},
pages = {641}
}

@article{Ser23,
author = {Serio, C.},
title = {Scaling limit for line ensembles of random walks with geometric area tilts},
journal = {Electron. J. Probab.},
year = {2023},
volume = {28},
pages = {1--14},
note = {Paper No. 1026}
}

@article{HKS25,
author = {Hegde, M. and Kim, Y.H. and Serio, C.},
title = {Scaling limit and tail bounds for a random walk model of {SOS} level lines},
year = {2025},
note = {Preprint: arXiv:2502.10384}
}

@article{BO22,
  author = {Betea, D. and Occelli, A.},
  title = {Peaks of cylindric plane partitions},
  journal = {S{\'e}m. Lothar. Combin.},
  volume = {86B},
  note = {Art. 49, 12 pp.},
  year = {2022},
  url = {https://www.mat.univie.ac.at/~slc/wpapers/FPSAC2022/49.html}
}

@article{Zha24,
author = {Zhang, X.},
title = {{TASEP} in half-space},
year = {2024},
note = {Preprint: arXiv:2409.09974},
}

@article{BLL26,
author = {Baik, J. and Liao, Y. and Liu, Z.},
title = {Periodic {KPZ} fixed point with general initial conditions},
year = {2026},
note = {Preprint: arXiv:2603.01964},
}

@article{ACS26,
author = {Aggarwal, A. and Corwin, I. and Schmid, D.},
title = {Periodic directed landscape},
year = {2026},
note = {Preprint: arXiv:2607.18104},
}

@article{DZ26,
author = {Dauvergne, D. and Zhang, L.},
title = {The directed landscape in half-space},
year = {2026},
note = {Preprint: arXiv:2604.10020},
}

@article{DDY26,
author = {Das, S. and Dimitrov, E. and Yang, Z.},
title = {Pinning in non-critical half-space geometric last passage percolation},
year = {2026},
note = {Preprint: arXiv:2603.24616},
}

@article{DSY26,
author = {Dimitrov, E. and Serio, C. and Yang, Z.},
title = {The pinned half-space {A}iry line ensemble},
year = {2026},
note = {Preprint: arXiv:2601.04546},
}

@book{GasperRahman,
author = {Gasper, G. and Rahman, M.},
title = {Basic {H}ypergeometric {S}eries},
edition = {2},
year = {2004},
publisher = {Cambridge University Press},
address = {Cambridge}
}

@book{Prasolov,
author = {Prasolov, V.},
title = {Problems and {T}heorems in {L}inear {A}lgebra},
year = {1994},
publisher = {Amer. Math. Soc.},
address = {Providence, RI}
}

@book{VS10,
author = {Vall{\'e}e, O. and Soares, M.},
title = {Airy Functions and Applications to Physics},
edition = {2},
publisher = {Imperial College Press},
address = {London},
year = {2010},
}

@book{Cin11,
author = {\c{C}{\i}nlar, E.},
title = {Probability and Stochastics},
series = {Graduate Texts in Mathematics},
volume = {261},
publisher = {Springer},
address = {New York},
year = {2011},
}

@article{Joh07,
author = {Johansson, K.},
title = {From {G}umbel to {T}racy--{W}idom},
journal = {Probab. Theory Related Fields},
volume = {138},
number = {1--2},
pages = {75--112},
year = {2007},
doi = {10.1007/s00440-006-0012-7},
}

@article{GK97,
  title={Cylindric partitions},
  author={Gessel, I. M. and Krattenthaler, C.},
  journal={Trans. Amer. Math. Soc.},
  volume={349},
  number={2},
  pages={429--479},
  year={1997},
}

@article{DimitrovSerio,
  title={Uniform convergence of {D}yson {F}errari--{S}pohn diffusions to the {A}iry line ensemble},
  author={Dimitrov, E. and Serio, C.},
  journal = {Ann. Inst. Henri Poincar{\'e} Probab. Stat.},
  volume={61},
  number={1},
  pages={385--402},
  year={2025},
}

@book{SteinFourier,
  title={Fourier analysis: an introduction},
  author={Stein, E. M. and Shakarchi, R.},
  volume={1},
  year={2011},
  publisher={Princeton University Press}
}

@book{McK69,
  author    = {McKean, H. P.},
  title     = {Stochastic integrals},
  publisher = {Academic Press},
  address = {New York},
  year      = 1969,
}

@book{Sze75,
  author    = {Szeg{\H{o}}, G.},
  title     = {Orthogonal polynomials},
  edition   = {4},
  series    = {American Mathematical Society Colloquium Publications},
  volume    = {23},
publisher = {American Mathematical Society},
address = {Providence, RI},
  year      = {1975},
}

@article{DY25,
author = {Dimitrov, E. and Yang, Z.},
title = {Half-space {A}iry line ensembles},
year = {2026},
journal = {Probab. Theory Related Fields},
note = {DOI: 10.1007/s00440-026-01469-x},
}

@article{D24b,
  title={Tightness for interlacing geometric random walk bridges},
  author={Dimitrov, E.},
  note={Preprint: arXiv:2410.23899},
  year={2024},
}

@article{Pet14,
author = {Petrov, L.},
title = {Asymptotics of random lozenge tilings via {G}elfand--{T}setlin schemes},
journal = {Probab. Theory Related Fields},
volume = {160},
number = {3-4},
pages = {429--487},
year = {2014},
doi = {10.1007/s00440-013-0532-x},
}

@article{BBCCR17,
author = {Boutillier, C. and Bouttier, J. and Chapuy, G. and Corteel, S. and Ramassamy, S.},
title = {Dimers on rail yard graphs},
journal = {Ann. Inst. Henri Poincar{\'e} D},
volume = {4},
number = {4},
year = {2017},
pages = {479--539},
doi = {10.4171/AIHPD/46},
}

@article{Spohn,
author = {Pr{\"a}hofer, M. and Spohn, H.},
title = {Scale invariance of the {P}{N}{G} droplet and the {A}iry process},
journal = {J. Stat. Phys.},
year = 2002,
volume = {108},
number = {5--6},
pages = {1071--1106}
}

@book{EMOT,
author = {Erd{\'e}lyi, A. and Magnus, W. and Oberhettinger, F. and Tricomi, F. G.},
title = {Higher Transcendental Functions},
publisher = {McGraw--Hill},
address = {New York},
year = {1953},
volume = {2},
}

@article{DEA21,
author = {Dimitrov, E. and Fang, X. and Fesser, L. and Serio, C. and Teitler, C. and Wang, A. and Zhu, W.},
title = {Tightness of {B}ernoulli {G}ibbsian line ensembles},
journal = {Electron. J. Probab.},
year = 2021,
volume = {26},
pages = {1--93},
}

@article{BR05,
  author    = {Borodin, A. and Rains, E. M.}, 
  title     = {Eynard-{M}ehta theorem, {S}chur process, and their {P}faffian analogs},
  journal = {J. Stat. Phys.},
  year      = 2005,
  number = {3--4},
  volume = {121}, 
  pages = {291--317},
}

@article{CS14,
author = {Corwin, I. and Sun, X.},
title = {Ergodicity of the {A}iry line ensemble},
journal = {Electron. Commun. Probab.},
volume = {19},
number = {49},
pages = {1--11},
year = {2014},
}

@article{CU2,
author = { Corwin, I.},
title = {The {K}ardar-{P}arisi-{Z}hang equation and universality class},
journal = {Random Matrices: Theory Appl.},
volume = {1},
year = 2012,
}

@article{Sod15,
  author    = {Sodin, S.}, 
  title     = {A limit theorem at the spectral edge for corners of time-dependent {W}igner matrices},
  journal = {Int. Math. Res. Not.},
  year   = 2015,
  volume = {2015}, 
  pages = {7575-7607},
}

@article{DOV22,
author = {Dauvergne, D. and Ortmann, J. and Vir{\'a}g, B.},
title = {The directed landscape},
journal = {Acta Math.},
year = 2022,
volume = {229},
number = {2},
pages = {201-285}
}

@misc{DLMF,
  key          = "{\relax DLMF}",
  title        = "{\it NIST Digital Library of Mathematical Functions}",
  howpublished = "\url{https://dlmf.nist.gov/}, Release 1.2.8 of 2026-09-15",
  url          = "https://dlmf.nist.gov/",
  note         = "F.~W.~J. Olver, A.~B. {Olde Daalhuis}, D.~W. Lozier,
                  B.~I. Schneider, R.~F. Boisvert, C.~W. Clark,
                  B.~R. Miller, B.~V. Saunders, H.~S. Cohl,
                  and M.~A. McClain, eds."
}

@article{ED24a,
author = {Dimitrov, E.},
title = {Airy wanderer line ensembles},
year = {2026},
journal = {J. Lond. Math. Soc.},
volume = {113},
number = {3},
pages = {e70482},
}

@article{DNV23,
  author    = {Dauvergne, D. and Nica, M. and Vir{\'a}g, B.}, 
  title     = {Uniform convergence to the {A}iry line ensemble},
  journal = {Ann. Inst. Henri Poincar{\'e} Probab. Stat.},
  year      = 2023,
  volume = {59}, 
  number = {4},
  pages = {2220--2256},
}

@article{DM21,
  author    = {Dimitrov, E. and Matetski, K.}, 
  title     = {Characterization of {B}rownian {G}ibbsian line ensembles},
  journal = {Ann. Probab.},
  year      = 2021,
  volume = {49}, 
  number = {5},
  pages = {2477--2529},
}

@book{Munkres,
  author    = {Munkres, J.}, 
  title     = {Topology},
  edition = {2},
  publisher = {Prentice Hall, Inc.},
  address = {Upper Saddle River, NJ},
  year      = 2003,
}

@book{kallenberg2017random,
  author    = {Kallenberg, O.}, 
  title     = {Random Measures, Theory and Applications},
  publisher = {Springer International Publishing Switzerland},
  year      = 2017,
}

@article{DW19,
author = {Dimitrov, E. and Wu, X.},
title = {K{M}{T} coupling for random walk bridges},
journal = {Probab. Theory Related Fields},
year = 2021,
number = {3--4},
volume = {179},
pages = {649--732}
}

@article{CorHamA,
author = {Corwin, I. and Hammond, A.},
title = {Brownian {G}ibbs property for {A}iry line ensembles},
journal = {Invent. Math.},
number = {2},
year = 2014,
volume = {195},
pages = {441--508}
}

@book{Billing,
  author    = {Billingsley, P.}, 
  title     = {Convergence of probability measures},
  edition = {2},
  publisher = {John Wiley \& Sons},
  address = {New York},
  year      = 1999,
}

@article{OR03,
  author    = {Okounkov, A. and Reshetikhin, N.}, 
  title     = {Correlation function of the {S}chur process with application to local geometry of a random 3-dimensional {Y}oung diagram},
  journal = {J. Amer. Math. Soc.},
  year      = 2003,
  number = {3},
  volume = {16}, 
  pages = {581--603},
}

@article{Bor07,
  title={Periodic {S}chur process and cylindric partitions},
  author={Borodin, A.},
  journal={Duke Math. J.},
  volume={140},
  number={3},
  pages={391--468},
  year={2007}
}

@article{BB19,
  title={The periodic {S}chur process and free fermions at finite temperature},
  author={Betea, D. and Bouttier, J.},
  journal={Math. Phys. Anal. Geom.},
  volume={22},
  number = {1},
  year={2019},
  note={Paper No. 3, 47 pp.},
}

@article{le2017periodic,
  title={Periodic {A}iry process and equilibrium dynamics of edge fermions in a trap},
  author={Le Doussal, P. and Majumdar, S. N. and Schehr, G.},
  journal={Ann. Physics},
  volume={383},
  pages={312--345},
  year={2017},
}

@article{Sos00,
  title={Determinantal random point fields},
  author={Soshnikov, A.},
  journal={Russian Math. Surveys},
  volume={55},
  number={5},
  pages={923--975},
  year={2000}
}

@article {Kurt03,
    AUTHOR = {Johansson, K.},
     TITLE = {Discrete polynuclear growth and determinantal processes},
   JOURNAL = {Comm. Math. Phys.},
    VOLUME = {242},
      YEAR = {2003},
    number = {1--2},
     PAGES = {277--329},
}

@article {BBCS18,
    AUTHOR = {Baik, J. and Barraquand, G. and Corwin, I. and Suidan, T.},
     TITLE = {Pfaffian {S}chur processes and last passage percolation in a half-quadrant},
   JOURNAL = {Ann. Probab.},
    VOLUME = {46},
      YEAR = {2018},
    number = {6},
     PAGES = {3015--3089},
}

@article {BCD25,
    AUTHOR = {Barraquand, G. and Corwin, I. and Das, S.},
     TITLE = {{KPZ} exponents for the half-space log-gamma polymer},
   JOURNAL = {Probab. Theory Related Fields},
    VOLUME = {192},
      YEAR = {2025},
    number = {3--4},
     PAGES = {1113--1243},
}

@article {MQR21,
    AUTHOR = {Matetski, K. and Quastel, J. and Remenik, D.},
     TITLE = {The {KPZ} fixed point},
   JOURNAL = {Acta Math.},
    VOLUME = {227},
      YEAR = {2021},
    number = {1},
     PAGES = {115--203},
}

@book{Olver,
author = {Olver, F. W. J.},
title = {Asymptotics and Special Functions},
year = {1997},
publisher = {A K Peters},
address = {Wellesley, MA},
note = {Originally published by Academic Press, New York, 1974}
}

\end{document}